\documentclass[10pt]{amsart}

\usepackage{amsmath,amssymb}

\usepackage{mathrsfs}

\usepackage[active]{srcltx}
\usepackage{graphicx,color}

\usepackage{stackengine}
\usepackage{scalerel}

\usepackage{epic}
\usepackage{pstricks}
\usepackage{comment}
\usepackage{amsthm}
\def\plus{\raisebox{0.95pt}{\scalebox{0.5}{\ensuremath{+}}}}

\setcounter{secnumdepth}{2} 
\setcounter{section}{0}

\newtheorem{theorem}{Theorem}[section]
\newtheorem{proposition}[theorem]{Proposition}

\newtheorem{lemma}[theorem]{Lemma}

\newtheorem{corollary}[theorem]{Corollary}
\theoremstyle{definition}

\newtheorem{remark}[theorem]{Remark}

\def\R{\mathbb{R}}

\def\N{\mathbb{N}}
\def\C{\mathbb{C}}
\def\ZZ{\mathbb{Z}}

\def\la{\lambda}

\def \supp{\operatorname{supp}}
\def\esssup{\textrm{ess\,sup}}

\def\cA{{\mathcal A}}

\def\cD{{\mathcal D}}
\def\cX{{\mathcal X}}
\def\cF{{\mathcal F}}

\def\cK{{\mathcal K}}
\def\cL{{\mathcal L}}
\def\cM{{\mathcal M}}

\def\cR{{\mathcal R}}
\def\cS{{\mathcal S}}

\def\cE{{\mathcal E}}

\def\cY{{\mathcal Y}}
\def\cX{{\mathcal X}}

\def\cT{{\mathcal T}}
\def\cM{{\mathcal M}}

\def\cP{{\mathcal P}}
\def\cZ{{\mathcal Z}}

\def\bK{\mathbf{K}}

\def\bfx{\mathbf{x}}

\newcommand{\ud}{\mathrm{d}}

\newcommand{\X}{{X}}
\newcommand{\A}{{A}}

\newcommand{\wX}{\widetilde{X}}
\newcommand{\wT}{\widetilde{T}}
\newcommand{\wA}{\widetilde{A}}

\newcommand{\es}{\mathrm{ess\,sup}}

\newcommand{\widecheck}{\check}

\newcommand{\xo}{\vec{X}}

\definecolor{darkred}{rgb}{0.7,0.1,0.1}

\title[]{Maximal regularity estimates for the abstract Cauchy problems}

\author[]{Sebastian Kr\'ol \& Mieczysław Mastyło \& Jarosław Sarnowski}

\address{Sebastian Kr{\'o}l, Faculty of Mathematics and Computer Science, Adam Mickiewicz University in Pozna{\'n}, ul. Uniwersytetu Pozna{\'n}skiego 4, 61-614 Pozna{\'n}, Poland}
\email{sebastian.krol@amu.edu.pl}
 
\address{Mieczysław Mastyło, Faculty of Mathematics and Computer Science, Adam Mickiewicz University in Pozna{\'n}, ul. Uniwersytetu Pozna{\'n}skiego 4, 61-614 Pozna{\'n}, Poland}
\email{mieczyslaw.mastylo@amu.edu.pl}

\address{Jarosław Sarnowski, Faculty of Mathematics and Computer Science, Nicolaus Copernicus University in Toru{\'n}, ul. Chopina 12/18, 87-100 Toru{\'n}, Poland}
\email{jsarnowski@doktorant.umk.pl}

\begin{document}

\thanks{The first and third author were partially supported by the NAWA/NSF grant BPN/NSF/2023/1/00001. The second author was supported by the National Science Centre, Poland, Project
no. 2019/33/B/ST1/00165.} 

\keywords{Da Prato-Grisvard theorem, maximal regularity, interpolation spaces, abstract Cauchy problems}

\subjclass{35B65, 35B40, 47D06, 46N20}

\begin{abstract} 
In this work, we extend the Da Prato–Grisvard theory of maximal regularity estimates for sectorial operators in interpolation spaces. Specifically, for any generator $-\mathcal{A}$ of an analytic semigroup on a Banach space $\mathcal{X}$, we identify the interpolation spaces between $\mathcal{X}$ and the domain $D_{\mathcal{A}}$ of $\mathcal{A}$ in which the part of $\mathcal{A}$ satisfies certain maximal regularity estimates. We also establish several new results concerning both homogeneous and inhomogeneous $L^1$-maximal regularity estimates, extending and completing recent findings in the literature. These results are motivated not only by applications to problems in areas such as fluid mechanics but also by the intrinsic theoretical interest of the subject. In particular, we address the optimal choice of data spaces for the Cauchy problem associated with $\mathcal{A}$, ensuring the existence of strong solutions with global-in-time control of their derivatives. This control is measured via the homogeneous parts of the interpolation norms in the spatial variable and weighted Lebesgue norms over the time interval. Furthermore, we characterize weighted $L^1$-estimates and establish their relationship with unweighted estimates. Additionally, we reformulate the characterization condition for $L^1$-maximal regularity due to Kalton and Portal in {\it a priori} terms that do not rely on semigroup operators. Finally, we introduce a new interpolation framework for $L^p$-maximal regularity estimates, where $p \in (1, \infty)$, within interpolation spaces generated by non-classical interpolation functors.
\end{abstract}

\renewcommand{\subjclassname}{\textup{2020} Mathematics Subject Classification}

\maketitle

\medskip

\section{Introduction}

In this paper, we study the solvability and maximal regularity estimates for solutions to the following abstract inhomogeneous Cauchy problem:
\begin{equation}
u' + A u = f \, \textrm{ on } (0,\tau)\quad  \textrm{ and }\quad  u(0)= x, \tag*{$(CP)_{A,f,x}$}
\end{equation}
where $A$ denotes a closed, linear operator on a complex Banach space $(X,\|\cdot\|_X)$, 
 $f$~is the forcing term,  $x\in X$ represents the initial value, and $\tau\in (0,\infty]$ is given. 
To outline our main results we briefly present some facts that motivated our study. 

The basic concept we deal with is the notion of {\it $L^p$-maximal regularity estimates}; see (MRE) for $E = L^p$, $p\in [1,\infty]$, in Subsection \ref{sec mr def} for the corresponding definition.  
In the case when an operator $A$ satisfy such $L^p$-estimates, we say that it has the {\it $L^p$-maximal regularity on $(0,\tau)$} (in short, $L^p$-m.r.\,on $(0,\tau)$). This concept has a rich history, and its interest stems from the fact that it is an important tool in the theory of non-linear evolution equations arising, e.g. in fluid mechanics. We refer the reader to, e.g. \cite{Am95}, \cite{KuWe04}, \cite{Lu09}, \cite{PrSi16} and \cite{HNVW24}, for the abstract theory and its application; see also references therein. The developed theory combines techniques and methods from harmonic analysis (singular integral operators, Fourier multipliers),  the geometry of Banach spaces, the interpolation theory, and  abstract evolution equations (semigroup theory). Some of these connections are further explored in this paper.

In this paper, we primarily study $L^p$-maximal regularity estimates when $p= 1$ and $p=\infty$. Such estimates are more restrictive for the geometry of $X$ than those for $p\in (1,\infty)$. Indeed, the classical result by Baillon \cite{Ba80} (resp. by Guerre-Delabri\`ere \cite{GuDe95}) asserts that the existence of an unbounded generator of an analytic semigroup on a Banach space $X$ which has $L^\infty$-m.r. (resp. $L^1$-m.r.) is equivalent to the fact that  $X$ contains an isomorphic copy of $c_0$ (resp. complemented copy of  
$\ell^1$). In particular, when $X$ is reflexive, no unbounded operator $A$ does have $L^1$- and $L^\infty$-m.r. 
Even in the case when $X$ contains such copies, there exist
examples of concrete differential operators $A$, which play a crucial role in applications to PDEs, that do not satisfy the corresponding estimates. For instance, 
the generators of the heat and Poisson semigroups on $X := C_0 (\R^n)$ or $X = L^1(\R^n)$; see, e.g. \cite[Subsection 17.4.a, p.657]{HNVW24}.

One way to overcome these limitations is by employing interpolation techniques. The well-known result by Da Prato and Grisvard \cite{DaPrGr75, DaPrGr79} states that for any generator $-\cA$ of an analytic semigroup $\cT$ on any Banach space $\cX$, the part $A$ of $\cA$ in the classical real interpolation spaces $X:=(\cX,D_\cA)_{\theta,p}$ with $\theta\in (0,1)$ and $p\in [1,\infty]$ has $L^p$-maximal regularity on finite intervals. Recall that by the definition of the part of an operator, $D(A) := \{x \in D(\cA) \cap X : \cA x \in X\}$ and $Ax := \cA x$ for $x \in D(A)$. Here, $D_\cA$ stands for the domain $D(\cA)$ of $\cA$ equipped with the graph norm, and  $(\,\cdot\,)_{\theta, p}$ is the real interpolation functor. 
An adaptation of the Kalton-Lancien result \cite[Theorem 3.3]{KaLa00} shows that, in general, we do not have $L^p$-maximal regularity estimates over $\R_+$ in the Da Prato-Grisvard theorem; see Remark \ref{rem on l1}(b). 
While this example may be considered  artificial, we demonstrate that such a lack of  $L^1$-estimates over $\R_+$ occurs in the case of the part $A$ of the negative Laplacian $\cA = -\Delta$ on $\cX: = L^1(\R^n)$; see Theorem \ref{thm main}(iii). In addition, we study the measuring of the global time-regularity via weighted $L^1$ norms; cf. the assertions (iii), (v) and (vi) of Theorem \ref{thm main}. For, we provide a weighted counterpart of the Kalton-Portal result \cite[Theorem 3.6]{KaPo08} in Theorem \ref{weighted L1 mr} and explore its connection to the unweighted one in Corollary \ref{L1 for decr} and Proposition \ref{L1 extrapol var 2}. In particular, Proposition \ref{L1 extrapol var 2} provides the converse implication to \cite[Proposition 17.2.36(2), p.610]{HNVW24}, which has been left open therein. We treat this example as motivation for studying estimates over $\R_+$ with respect to weaker (semi)norms than the original interpolation norms.

In this article, we examine the possibility of extending the Da Prato-Grisvard result to interpolation functors other than the classical real interpolation functors $(\cdot)_{\theta, p}$. 
We restrict our studies mainly to a class of functors $K_\Phi$ of $K$-method (our abstract results, Theorem \ref{interp result} and Corollary \ref{cor1} also refer to other class of functors). 
The class of admissible parameters $\Phi$, which could {\it a priori} range among all Banach function spaces over $(\R_+, \ud t)$, depends essentially on $p$ and/or the geometry of an underlying Banach space $\cX$. 
In the case of $p=1,\infty$, on the one hand, we have to take into account the results of Baillon and Guerre-Delabri\`{e}re mentioned above. It connects the problem of describing admissible $\Phi$ to  L\v{e}vy's type results from the interpolation theory (see, e.g. \cite[Theorems 4.6.25 and 4.6.28]{BrKr91}).
On the other hand we have to ensure an appropriate, working representation of the corresponding interpolation norm $\|\cdot\|_{K_\Phi(\cX,D_\cA)}$; see Proposition \ref{Komastu rep}. 
 It naturally imposes certain constrains on the class of $\Phi$, and consequently we deal here with weighted Lebesgue spaces $L^1_w$ and $L^\infty_w$ for arbitrary weight functions $w$; see also complementary results Propositions \ref{represent for K} and \ref{KJ}.

In Subsection \ref{subs H}, we discuss equivalent representations of the {\it homogeneous part} (see \eqref{homoge part}) of the classical real interpolation norms to the one provided in Proposition \ref{Komastu rep}. Our Proposition \ref{Haase equi} extends the result due to Haase \cite[Theorem 6.4.2]{Ha06} and shows the sharpness of the assumptions of some its ingredients. Furthermore, Proposition \ref{equiv for l1} allows to express the characterization condition for the $L^1$-m.r. due to Kalton and Portal \cite{KaPo08} in {\it a priori} terms of $A$ (i.e. its resolvent). Moreover, in Theorem \ref{optimal for l1}, for the negative generator $\cA$ of an analytic semigroup on a Banach space $\cX$, we introduce a subspace $X$ of $\cX$, which becomes a Banach space with respect to a natural norm and such that the part $A$ of $\cA$ in $X$ satisfies $L^1$-m.r. estimates on whole $\R_+$. 
The space $X$ can be naturally seen as an end-point of the real interpolation scale $(\cX, D_\cA)_{\theta, 1}$ with $\theta\in (0,1)$; cf. Remark \ref{rem on X1}. In comparison with the Da Prato-Grisvard theorem, a remarkable phenomenon occurs here.

We also study an optimal choice of the space of forcing terms $f$ in $(CP)_{A,f,x}$, which guarantees the existence of strong solutions with prescribed global-in-time control of their derivatives, measured via the homogeneous parts of the corresponding interpolation norms (in the space variable) and weighted Banach function norms (in the time variable); see Theorems \ref{thm main}(ii) and \ref{thm main 2}(ii), Corollary \ref{hom ext 1-p}, Propositions \ref{hom ext p} and \ref{hom ext infty}, as well as Remark \ref{force term}. Our assumptions on $f$ for deriving the homogeneous estimates \eqref{hom est1}, \eqref{hom est oo} and \eqref{E mr est} essentially relax those stated, e.g. in \cite[Proposition 2.14 and Theorem 2.20]{DaHiMuTo}.

In Section 5, we apply our abstract results to study the maximal regularity estimates for the negative Laplace operator in homogeneous Besov spaces. With the help of the main preparatory result, Proposition \ref{equivalent norm on besov}, we establish the maximal regularity of the negative homogeneous Laplacian on homogeneous Besov spaces in Corollary \ref{final application}(i). Additionally, part (ii) of this corollary identifies a \textit{limiting} space of force terms $f$, which, on the one hand, ensures the existence of strong solutions to the Cauchy problem associated with the Laplace operator on $L^p(\mathbb{R}^n)$, where $p \in [1, \infty)$, or $C_0(\mathbb{R}^n)$. On the other hand, compared to the results in Section 3, it provides additional control over the time behavior of the homogeneous norms of the solutions $u$ on $\mathbb{R}_+$.

\section{Preliminaries} \label{sect prelim}

In this section we provide auxiliary results for the proofs of our main results in the following sections. Some of these preparatory results may be of independent interest in the context of the general theory; see e.g. Lemma \ref{regularity of u}, Propositions \ref{weak L1}, \ref{L1 extrapol var 2}, \ref{represent for K}, \ref{KJ}, \ref{Haase equi}, \ref{equiv for l1}, and Theorem \ref{optimal for l1}.
The subsections 2.1-2.3 are devoted to the maximal regularity. The subsections 2.5-2.8 deal with interpolation spaces and their corresponding (semi)-norms. 
The particular context of these subsections is well-reflected by their corresponding titles.

\subsection{The maximal regularity}\label{sec mr def}

Consider the abstract Cauchy problem $(CP)_{A,f,x}$ associated with a closed, linear operator $A$ on a complex Banach space $X$.  
A~function $u\in W^{1,1}_{loc}([0,\tau);X)$ is called a {\it strong solution} to $(CP)_{A,f,x}$ if $u(0) = x$,  $u(t)\in D(A)$ and $u'(t) + A u(t) = f(t)$ for a.e. $t\in (0,\tau)$.

In this paper we also consider the notion of {\it maximal regularity on $I=(0,\tau)$} with respect to function spaces $E$ other than the Lebesgue $L^p$ spaces. The standard notion of $L^p$-maximal regularity is reformulated verbatim if $E$ represents, for instance,  Banach function space such as a weighted $L^p$ space or more general symmetric space (e.g. Lorentz or Orlicz spaces).

For completeness, recall that a Banach space $E = (E, \|\cdot\|_E)$ is called a {\it Banach function space over $\R_+ := (0,\infty)$} if $E$ is an order ideal in the space $L^0 := L^0(\R_+, \ud t)$ of all Lebesgue measurable, complex functions on $\R_+$, equipped with the topology of convergence in Lebesgue measure, and if the norm $\|\cdot\|_E$ is monotone. This means that if $|h| \leq |g|$ for $g \in E$ and $h \in L^0$, then $h \in E$ and $\|h\|_E \leq \|g\|_E$.

Suppose that $1 \leq p < \infty$ and that $v$ is a {\it weight function} on $\R_+$ ({\it weight} for short), that is, a positive, measurable function on $\R_+$.
We denote by $L^{p}_v$ (or $L^p(v)$) the (weighted) Lebesgue space $L^p(\R_+, v\ud t)$ with the standard norm. In the case $p = \infty$, we denote by $L_v^\infty$ (or $L^\infty(v)$) 
the Banach function space over $\R_+$ of all functions $f \in L^0$ 
such that $fv \in L^\infty(\R_+, \ud t)$, equipped with the norm
\[
\|f\|_{L_v^\infty} := \esssup_{t \in \R_+} |f(t)v(t)|.
\]

If $(X,\|\cdot\|_X)$ is a Banach space, then the $X$-valued variant of $E$ is defined to be the space $E(\R_+; X)$ ($E(X)$ for short) of all strongly measurable functions $f \colon \R_+ \to X$ such that $\|f(\cdot)\|_X \in E$. This is a~Banach space equipped with the norm
\[
\|f\|_{E(X)} := \| \|f(\cdot)\|_X \|_E.
\]
where, as usual, we identify the space of measurable functions with the space of equivalence classes of measurable functions.
For the interval $I=(0,\tau)$, $\tau \in (0,\infty)$, let $E(I;X)$ (or $E(0,\tau;X)$) denote the space of all functions from $I$ into $X$ such that their extensions by $0\in X$ to functions on $\R_+$ belong to $E(X)$. As usual, one can identify $E(I;X)$ with a subspace of $E(X)$.
We write  $L^1_{loc}$ to denote  $L^1_{loc}([0,\infty), \ud t)$, that is,  the space of all locally integrable, complex functions on $[0,\infty)$. For simplicity, we often write $\|f\|_X$ instead of $\|f(\cdot)\|_X$ below.

For a Banach function space $E$ which embeds continuously into $L^1_{loc}$ and $I=(0,\tau)$, $\tau\in (0,\infty]$,  we say that the operator $A$ has the {\it $E$-maximal regularity on $I$} (for short, $E$-m.r.\,on $I$), if for every $f\in E(I; X)$ there exists a unique {\it strong} solution $u$ to $(CP)_{A,f,0}$  such that $u', A u \in E(I; X)$. In this case, by the closedness of $A$, the following {\it $E$-maximal regularity estimate} holds:
\begin{equation}\label{m.r est}
\big\| u' \big\|_{E(I;X)} + \big\| A u \|_{E(I;X)} \leq C \big\| f \big\|_{E(I;X)} \tag{MRE}
\end{equation}
for some constant $C$ independent of $f\in E(I; X)$. We say that $A$ has '$E$-m.r.' if $A$ has $E$-m.r.\,on $(0,\tau)$ for some $\tau \in (0,\infty]$. 

Some caution is required concerning this notion in the case $I=\R_+$.  Note that we do not require that $u\in E(\R_+;X)$. 
In the case when $E = L^p$, $p\in [1,\infty]$, the definition stated above is consistent with the one from, e.g. Kalton and Portal \cite{KaPo08} or from the recent monograph on the subject \cite[Chapter 17]{HNVW24}, 
but differs to that considered, e.g. by Dore in \cite{Do00}, where $u \in W^{1,p}(\R_+;X)$ is assumed. This variant is more adequate in some situations, 
for instance, in the case when $A$ is not invertible, but the estimates on $\R_+$ of the type \eqref{m.r est} play a role; see e.g. \cite{DaHiMuTo, OgSh20} and the references therein. 
Examples of this arise naturally from differential operators on unbounded regions, such as the Laplace operator or the Stokes operator on exterior domains; cf. Giga and Sohr \cite{GiSa11}.

To clarify the novelty of our results and their proofs, we provide the following remark, which reformulates Dore's results on $L^p$-maximal regularity and adapts them to our framework.

\begin{remark}\label{Dore results}
As mentioned above, the notion of $L^p$-maximal regularity on $\R_+$ in Dore \cite{Do00} differs from the one that we make use of in this work. However, by Lebesgue’s integral representation of the strong solutions, it is readily seen that, in the case of finite intervals, both definitions coincide.

In particular, for all $\tau \in (0,\infty]$,  the $L^p$-m.r.\,on $(0,\tau)$ for a given operator $A$ (the notion we work with here), yields $u\in L^p_{loc}([0,\tau); X)$ for the corresponding solution $u$ of $(CP)_{A,f,x}$ with $f\in L^p(0,\tau;X)$. 
Furthermore, by \cite[Corollary 5.4]{Do00}, if $A$ has the $L^p$-m.r.\,on $(0,\tau)$ for some $\tau$, then it has $L^p$-m.r.\,on each finite interval, that is, for all $\tau\in (0,\infty)$. 

Consequently, for each $\tau$ and $p\in [1,\infty]$, by \cite[Corollary 4.4]{Do00}, the $L^p$-m.r. on $(0,\tau)$ implies that $-A$ is the generator of an analytic (but not necessarily strongly continuous at $0$) semigroup $T=(T(t))_{t> 0}$ on $X$. In addition, in the case when $\tau =\infty$, an analysis analysis of the proof of \cite[Theorem 4.3]{Do00}  yields that the generated semigroup is uniformly bounded, i.e. $\sup_{t> 0}\|T(t)\|_{\cL(X)}<\infty$; see also \cite[Theorem 17.2.15(2), p. 586]{HNVW24}.

Following \cite[p.\,128]{ABHN01}, for a Banach space $X$ and a strongly continuous function $T : (0,\infty) \to \cL(X)$, we say that $T$  is a {\it semigroup on $X$}, if $T(t+s) = T(t)T(s)$ for all $s,t > 0$, $\sup_{t\in (0,1)} \|T(t)\|_{\cL(X)} <\infty$, and $T(t)x = 0$ for all  $t>0$ implies $x=0$. 
From \cite[Proposition 3.2.4, p.\,126]{ABHN01} it follows that there exists an operator $A$ on $X$, such that $(-\infty, \omega) \subset \rho(A)$ for some $\omega \in \R$ and 
\[
(\la + A)^{-1} = \int_0^{\infty} e^{-\lambda t}T(t) \ud t, \qquad \lambda > \omega.
\]
The operator $-A$ is called a {\it generator} of $T$. For the generator $-A$ of a semigroup $T$ on $X$, we set for every $f\in L^1_{loc}([0,\infty);X)$
\begin{equation}\label{op U}
U f(t) := U_A f(t) := \int_0^t T(t-s) f(s) \ud s, \qquad t\geq 0. 
\end{equation}
We call $U$ the {\it solution operator} corresponding to  the operator $A$.  
The above-mentioned independence of the property of the $L^p$-m.r.\,on $(0, \tau)$ for  $\tau \in (0,\infty)$, shows that for any operator $A$ with $L^p$-m.r.\,on a finite interval, the corresponding solution operator $U$ is continuous on $L^p_{loc}([0,\infty);X)$ and, in addition, for every $f\in L^p_{loc}([0,\infty);X)$ 
\begin{equation}\label{loc est}
U f (t) \in D(A)\quad  \textrm{ for a.e. } t>0 \quad \textrm{and}\quad  A U f, \, U f \in L^p_{loc}([0,\infty);X).  
\end{equation}

In addition, according to Dore \cite[Theorem 4.1]{Do00}, if $U$ is bounded on $L^p(\R_+; X)$, then $0\in \rho(A)$, or, equivalently, the semigroup $T$ has the negative exponential growth bound. Conversely, if $A$ has $L^p$-m.r. on a finite interval and $0\in \rho(A)$, then $U$ is bounded on $L^p(\R_+; X)$. In particular, if we a priori know that $-A$ generates a bounded, analytic semigroup on $X$ and $0\in \rho(A)$, then the both definitions of the $L^p$-m.r. on $\R_+$ coincide; see \cite[Theorem 5.2]{Do00}.
\end{remark}

In addition, according to Dore \cite[Theorem 4.1]{Do00}, if $U$ is bounded on $L^p(\R_+; X)$, then $0 \in \rho(A)$, or, equivalently, the semigroup $T$ has the negative exponential growth bound. Conversely, if $A$ has $L^p$-m.r.\,on a finite interval and $0 \in \rho(A)$, then $U$ is bounded on $L^p(\R_+; X)$. In particular, if we a priori know that $-A$ generates a bounded, analytic semigroup on $X$ and $0 \in \rho(A)$, then both definitions of the $L^p$-m.r. on $\R_+$ coincide; see \cite[Theorem 5.2]{Do00}.

\subsection{The mild solutions}

For any densely defined generator $-A$ of a semigroup $T$ (not necessarily analytic), it is a standard fact of the semigroup theory (following from the infinitesimal characterization of $A$), that the regularity of the function $U f$ (see \eqref{op U}) is in a correspondence with the condition: $U f(t)\in D(A)$ and $AU f(t) = (U f)' (t) + f(t)$ for a.e. $t>0$; see e.g. Pazy \cite[Theorem 2.9, p.109]{Pa83}. 
In the case of non-densely defined $-A$ (which naturally arise in the context of the Da Prato-Grisvard result for the functor $(\, \cdot\,)_{\theta, \infty}$; see Lemma \ref{dense dom}) a corresponding statement can be found in \cite[Proposition 17.1.3, p.572]{HNVW24}. 

Let $\tau >0$, following \cite[p.130]{ABHN01}, we say that a function $u \in C([0,\tau];X)$ is a mild solution to $(CP)_{A,f,x}$, if $\int_0^t u(s) \ud s \in D(A)$ and 
\[
u(t) = A \int_0^t u(s) \ud s + x + \int_0^t f(s) \ud s, \qquad t \in [0,\tau].
\]  
In the case when $\tau = \infty$, the definition is analogous.

The following lemma is a refined version of \cite[Proposition 17.1.3, p.572]{HNVW24} and \cite[Theorem 2.9, p.109]{Pa83}. In particular, the statement (iii) of Lemma \ref{regularity of u}, which says that for any generator of a semigroup the function $v(t): =T(t)x + U f(t)$, $t>0$, is always a mild solution to $(CP)_{A,f,x}$ if $x\in \overline{D(A)}$, seems to be not stated in the literature on the semigroup theory, and may be of an independent interest. In particular, it extends the Da Prato-Sinestrari result, see \cite[Proposition 3.7.22, p.162]{ABHN01}, by showing that the assumption of the analyticity of $T$ is redundant. 
Furthermore, notice that assertion (ii) enables an equivalent reformulation of the definition of $L^p$-m.r., provided we already know that $-A$ is the generator of a semigroup.
\begin{lemma}\label{regularity of u}
Let $-A$ be the generator of a semigroup $T$ on a Banach space $X$. 
Let $f\in L^1_{loc}([0,\infty);X)$ and set 
\begin{equation}\label{funct u}
u (t) := U f(t) = \int_0^t T(t-s)f(s)\ud s, \qquad t>0.
\end{equation}
The following assertions hold.
\begin{itemize}
\item [(i)] If $u$ has the strong derivative at Lebesgue's point $r>0$ of the function $f$, then $u(r) \in D(A)$ and $Au(r) =  f(r) - u'(r)$. 

\item [(ii)] For every $r>0$,  $\int_0^r u(s) \ud s\in D(A)$ and  
\[
u(r) + A\int_0^r u(s) \ud s  = \int_0^r f(s) \ud s.
\]
In particular, if $u(t) \in D(A)$ for a.e. $t>0$ and $Au \in L^1_{loc}([0,\infty);X)$, then for each point $r$ which is Lebesgue's point of $f$ and $Au$, the function $u$ has a strong derivative at $r$ and $u'(r) = f(r) - Au(r)$.

\item [(iii)] For every $x\in X$ the function $v(t): = T(t)x + u(t)$, $t>0$, is continuous and bounded on each interval $(0,\tau)$, $\tau>0$, and for every  $r>0$,    
\[
\int_0^r v(s) \ud s\in D(A)\quad \textrm{and}\quad v(r) + A\int_0^r v(s) \ud s  = \int_0^r f(s) \ud s.
\] In particular, if $x \in  \overline{D(A)}$, then $v$ is a mild solution to $(CP)_{A,f,x}$, that is, in addition, $\lim_{t\rightarrow 0\plus}v(t) = x$. 
\end{itemize}
\end{lemma}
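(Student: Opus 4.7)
The driver of the whole proof is the splitting identity
\[
u(t+h) \;=\; T(h)\,u(t) \;+\; \int_0^h T(\sigma)\,f(t+h-\sigma)\,\ud\sigma, \qquad t,h>0,
\]
obtained from $u(t+h) = \int_0^{t+h} T(t+h-s)f(s)\,\ud s$ by splitting at $s=t$, factoring $T(h)$ out of the head via $T(t+h-s) = T(h)T(t-s)$, and substituting $\sigma = t+h-s$ in the tail. A useful byproduct used throughout is $u(t) \in \overline{D(A)}$ for each $t>0$: since $T(h)[T(\epsilon)y] = T(h+\epsilon)y \to T(\epsilon)y$ as $h\to 0+$, we have $T(\epsilon)X \subseteq \overline{D(A)}$ for every $\epsilon>0$, so the Bochner-integral representation of $u(t)$ and the closedness of $\overline{D(A)}$ give the claim. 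Together with the crude estimate $\|u(t)\| \le M\int_0^t\|f\|\,\ud s$ this yields $u \in C([0,\infty);X)$ with $u(0)=0$.

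For (i), I divide the splitting identity at $t=r$ by $h$. The Lebesgue-point hypothesis replaces $f(r+h-\sigma)$ by $f(r)$ in the tail with an $o(h)$ remainder, reducing the second term to the ergodic mean $h^{-1}\int_0^h T(\sigma)f(r)\,\ud\sigma$, which tends to $f(r)$. Since the left-hand side tends to $u'(r)$ by hypothesis, the difference quotient $(T(h)u(r)-u(r))/h$ must converge to $u'(r)-f(r)$, proving $u(r)\in D(A)$ and $Au(r) = f(r) - u'(r)$.

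The proof of (ii) proceeds via a resolvent regularisation. Fix $\lambda>\omega$ and set $g := (\lambda+A)^{-1}f$ and $w := Ug = (\lambda+A)^{-1}u$. Then $g$ takes values in $D(A)$, $Ag = f - \lambda g \in L^1_{\mathrm{loc}}$, and the primitive $\int_0^r g = (\lambda+A)^{-1}\int_0^r f$ lies in $D(A) \subseteq \overline{D(A)}$. For this smoother $g$, applying $(T(h)-I)/h$ to $\int_0^r w$ and invoking Fubini gives
\[
\frac{T(h)-I}{h}\int_0^r w \;=\; \frac{1}{h}\int_r^{r+h} w - \frac{1}{h}\int_0^h w - \frac{1}{h}\int_0^r\!\!\int_s^{s+h}T(s+h-\tau)g(\tau)\,\ud\tau\,\ud s,
\]
whose limit, by continuity of $w$ and by the ergodic mean applied to $\int_0^r g \in \overline{D(A)}$, identifies $\int_0^r w \in D(A)$ and $A\int_0^r w = \int_0^r g - w(r)$. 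Since both summands on the right lie in $D(A)$, we have $\int_0^r w \in D(A^2)$; applying $(\lambda+A)$ then gives $\int_0^r u = (\lambda+A)\int_0^r w \in D(A)$, and a short algebraic manipulation using $(\lambda+A)g = f$, $(\lambda+A)w = u$, and the closedness of $A$ recovers $A\int_0^r u = \int_0^r f - u(r)$. The second assertion of (ii) is immediate by applying (i) at any common Lebesgue point of $f$ and $Au$.

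For (iii), continuity and boundedness of $v := T(\cdot)x + u$ on $(0,\tau)$ follow from strong continuity of $T$ on $(0,\infty)$, $\sup_{(0,1)}\|T\|_{\cL(X)}<\infty$, and the properties of $u$ already noted. The same resolvent regularisation yields $\int_0^r T(s)x\,\ud s \in D(A)$ and $A\int_0^r T(s)x\,\ud s = x - T(r)x$ for every $x \in X$; combined with (ii) this produces the integrated identity for $v$. When $x\in\overline{D(A)}$, $T(t)x \to x$ as $t\to 0+$ and $u(t)\to 0$, so $v(t)\to x$ and $v$ is a mild solution. The chief technical obstacle throughout is the non-denseness of $D(A)$: ergodic means $h^{-1}\int_0^h T(\sigma)y\,\ud\sigma$ converge to $y$ only when $y\in\overline{D(A)}$, forcing the resolvent-smoothing detour so as to ensure the relevant vectors lie in $\overline{D(A)}$ before one passes to the limit.
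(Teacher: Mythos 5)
Your proof of (i) has a genuine gap, and it is exactly the one you diagnose yourself in your closing sentence but then fail to guard against. The semigroup $T$ here is a semigroup in the sense of \cite[p.\,128]{ABHN01}: it need not be strongly continuous at $0$ and $D(A)$ need not be dense (this generality is the whole point of the lemma; cf. the discussion of the functor $(\,\cdot\,)_{\theta,\infty}$ and Lemma \ref{dense dom}(ii)). After splitting, your tail term reduces to the ergodic mean $\frac{1}{h}\int_0^h T(\sigma)f(r)\,\ud\sigma$, and you assert it converges to $f(r)$. But $f(r)$ is an arbitrary vector of $X$ — nothing places it in $\overline{D(A)}$ — so this convergence is unavailable, and with it the identification of $\lim_h (T(h)u(r)-u(r))/h$. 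The repair is within reach of your own toolkit: apply $(\la+A)^{-1}$ to the splitting identity before dividing by $h$; then the tail involves $(\la+A)^{-1}f(r+h-\sigma)\in D(A)\subset\overline{D(A)}$, the limit $(\la+A)^{-1}(u'(r)-f(r))$ is legitimate, and the resulting identity $\la(\la+A)^{-1}u(r)-u(r)=(\la+A)^{-1}(u'(r)-f(r))$ is precisely the graph characterization \eqref{graph eq}. (This is in spirit what the paper does, except that it computes $\la(\la+A)^{-1}u(r)-u(r)$ directly from the Laplace representation of the resolvent via Riemann--Stieltjes integration by parts, so that only $T(t)(u'(r)-f(r))$ for $t>0$ ever appears.) A secondary point: even where your difference quotients do converge, the implication ``$(T(h)x-x)/h\to z$ $\Rightarrow$ $x\in D(A)$, $Ax=-z$'' is not the definition of the Laplace-transform generator and needs a short verification (it does hold, by inserting the limit into $(\la+A)^{-1}=\int_0^\infty e^{-\la t}T(t)\,\ud t$), both here and in your proof of (ii).

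Your proofs of (ii) and (iii) are essentially correct but take a genuinely different route from the paper. For (ii) you regularize by $(\la+A)^{-1}$, run a difference-quotient computation on $\int_0^r w$ with $w=(\la+A)^{-1}u$, and then lift back through $D(A^2)$; this works once the Fubini/dominated-convergence details and the difference-quotient characterization are supplied, and it has the merit of making the role of non-denseness explicit. The paper instead writes $\int_0^r Uf(s)\,\ud s=\int_0^r\phi(\tau)\,\ud\tau$ with $\phi(\tau)=\int_0^{r-\tau}T(s)f(\tau)\,\ud s$ by Fubini, feeds $\phi(\tau)\in D(A)$ with $A\phi(\tau)=f(\tau)-T(r-\tau)f(\tau)$ from Lemma \ref{lem on density} into Hille's theorem, and reads off the identity in one line — shorter and with no limit passages at $h=0$. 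For (iii) both arguments coincide (Lemma \ref{lem on density} plus (ii)).
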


The proof of (i) makes use of the basic properties of the vector-valued Riemann-Stieltjes  integral; see, e.g. \cite[Chapter 1.9]{ABHN01}. It clearly shows how the definition of the generator $A$ via Laplace's transform still allows one to involve the continuity of the generated semigroup only on $(0,\infty)$. 

For the proof of (ii) one could follow the idea of that of (i), but details are more technical. We obtained it as a direct consequence of the (vector) Fubini theorem and the following auxiliary result, which also plays a crucial role in the proofs of some results presented in the next sections; see, e.g. Propositions \ref{Komastu rep}.

\begin{lemma}[{\cite[Proposition A.8.2, p.298]{Ha06}}]\label{lem on density} Let $-A$ be the generator of a semigroup $T$ on a Banach space $X$. Then, for every $x\in X$ and $t>0$ we have that 
\[
\int_0^t T(s)x\ud s \in D(A) \quad \textrm{ and } \quad A \int_0^t T(s)x\ud s = x - T(t)x.
\]
\end{lemma}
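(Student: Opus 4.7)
\emph{Proof plan.} The plan is to establish the claim by verifying, for any fixed $\lambda > \omega$ (so that $R_\lambda := (\lambda + A)^{-1}$ exists and is given by the Laplace transform of $T$), the resolvent identity
\[
R_\lambda\bigl[\lambda y + x - T(t)x\bigr] = y,
\]
where $y := \int_0^t T(s) x\,ds$. Since $R_\lambda$ maps $X$ into $D(A)$ and inverts $\lambda + A$, this identity will immediately yield $y \in D(A)$ and $(\lambda + A) y = \lambda y + x - T(t) x$, hence $A y = x - T(t) x$. Before starting the computation I would first note that $y$ is well-defined as a Bochner integral: strong continuity of $T$ on $(0,\infty)$ together with local boundedness near $0$, which follows from the condition $\sup_{s \in (0,1)}\|T(s)\|_{\cL(X)} < \infty$ built into the paper's definition of a semigroup, make $s\mapsto T(s)x$ continuous and bounded on $(0,t]$.

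The verification will split naturally into two computations, both based on the representation $R_\lambda z = \int_0^\infty e^{-\lambda u} T(u) z\,du$. Writing $y_\lambda := \int_0^t e^{-\lambda v} T(v) x \,dv$, the first step is to apply the semigroup identity $T(u)T(t) = T(u+t)$ (valid for $u, t > 0$) inside the Laplace integral and substitute $v = u + t$, which will produce
\[
R_\lambda\bigl[x - T(t) x\bigr] = (1 - e^{\lambda t}) R_\lambda x + e^{\lambda t} y_\lambda.
\]
For $\lambda R_\lambda y$, the strategy is to apply Fubini (justified by the absolute integrability $\int_0^t \int_0^\infty e^{-\lambda u}\|T(u+s) x\|\,du\,ds < \infty$, which holds precisely because $\lambda > \omega$), substitute $v = u + s$, and perform a second Fubini interchange on the resulting double integral; elementary integration then gives
\[
\lambda R_\lambda y = (e^{\lambda t} - 1) R_\lambda x - e^{\lambda t} y_\lambda + y.
\]
Summing the two identities, the contributions of $R_\lambda x$ and of $y_\lambda$ will cancel exactly and leave $R_\lambda[x - T(t)x] + \lambda R_\lambda y = y$, which is the required identity.

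The main conceptual obstacle is that one cannot simply set $\lambda = 0$ in the auxiliary formula $A y_\lambda = x - e^{-\lambda t} T(t) x - \lambda y_\lambda$ (which can be read off from the identity $y_\lambda = R_\lambda[x - e^{-\lambda t} T(t) x]$ obtained as a byproduct of the first computation above), since $0$ need not belong to the resolvent set of $-A$ when the growth bound $\omega$ is nonnegative; thus the convenient limiting argument via closedness of $A$ is unavailable in general. The direct verification sketched above circumvents this difficulty by working at a fixed $\lambda > \omega$, with the spurious exponential factors $e^{\pm\lambda t}$ arising in the two halves of the computation cancelling out — an algebraic coincidence that reflects precisely the integration-by-parts nature of the identity being proved.
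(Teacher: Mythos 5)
Your proposal is correct. Note, however, that the paper does not supply its own proof of this lemma: it is quoted from Haase \cite[Proposition A.8.2]{Ha06}, with only the remark that for $C_0$-semigroups the statement follows from the infinitesimal characterization of the generator. Your argument fills exactly the gap that remark leaves open, since in the present setting $T$ is only strongly continuous on $(0,\infty)$ and the generator is \emph{defined} through the Laplace transform $(\lambda+A)^{-1}=\int_0^\infty e^{-\lambda u}T(u)\,\ud u$ for $\lambda>\omega$; a direct resolvent computation is therefore the natural route, and it is also the style of argument the paper itself uses nearby (cf.\ the identity \eqref{graph eq} and the Riemann--Stieltjes manipulations in the proof of Lemma \ref{regularity of u}). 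I checked both halves of your computation: $R_\lambda[x-T(t)x]=(1-e^{\lambda t})R_\lambda x+e^{\lambda t}y_\lambda$ follows from the substitution $v=u+t$, and the double Fubini interchange (justified by the bound $\|T(r)\|\leq Me^{\omega r}$ and $\lambda>\omega$) gives $\lambda R_\lambda y=(e^{\lambda t}-1)R_\lambda x-e^{\lambda t}y_\lambda+y$; the sum is $y$, and since $R_\lambda$ maps onto $D(A)$ and inverts $\lambda+A$, the conclusion follows. One small quibble with your closing remark: the obstruction to the limiting argument is not the closedness of $A$ (which needs no assumption on $\rho(A)$) but the fact that the identity $(\lambda+A)y_\lambda=x-e^{-\lambda t}T(t)x$ is only established for $\lambda>\omega$, so one cannot send $\lambda\to0\plus$ when $\omega>0$; that route can actually be rescued by analytic continuation of the entire, graph-valued map $\lambda\mapsto(y_\lambda,\,x-e^{-\lambda t}T(t)x-\lambda y_\lambda)$, but your fixed-$\lambda$ verification is cleaner and entirely adequate.
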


For any generator $-A$ of a $C_0$-semigroup the conclusion of Lemma \ref{lem on density} is the well-known fact from the semigroup theory, which simply follows  from the infinitesimal characterization of $-A$.

\begin{proof}[Proof of Lemma $\ref{regularity of u}$] 
The proof of the both statements is based on the following characterization of the elements from the graph of $A$: for each $x,y\in X$ and $\la \in \rho(A)$, the equality 
\begin{equation}\label{graph eq}
\la (\la + A)^{-1}x - x = (\la + A)^{-1} y
\end{equation}
is equivalent to $x$ belonging to $D(A)$ with $Ax = - y$.

Fix $\la > \omega$, where $\omega>0$ is such that $\sup_{t\geq 0} e^{-\omega t}\|T(t)\|_{\cL(X)}<\infty$. 
Then,  $(\la + A)^{-1} = \int_0^\infty e^{-t\la} T(t) \ud t$ with the convergence in the strong operator topology of $X$.

For (i), it is sufficient to show \eqref{graph eq} for $x$ and $y$ replaced with $u(r)$ and $u'(r) - f(r)$, respectively. Note that the integration by parts formula for the Riemann-Stieltjes integral we get that
\begin{align*}
\la (\la + A)^{-1}u(r) - u(r) 
& = \lim_{\epsilon \rightarrow 0\plus} \int_\epsilon^{1/\epsilon} \Bigl( \int_0^r T(t+r  -s)f(s) \ud s - u(r) \Bigr) \ud (-e^{-\la t})\\
& = \lim_{\epsilon\rightarrow 0\plus} e^{-\la/\epsilon} \Bigl(\int_0^r T(1/\epsilon + r -s)f(s) \ud s - u(r)  \Bigr)\\
& \qquad - \lim_{\epsilon\rightarrow 0\plus} e^{-\la\epsilon} \Bigl(\int_0^r T(\epsilon + r -s)f(s) \ud s - u(r)  \Bigr) \\
& \qquad + \lim_{\epsilon\rightarrow 0\plus} \int_\epsilon^{1/\epsilon} e^{-\la t} {\ud}\left(\int_0^r T(t+r-s)f(s) \ud s\right).
\end{align*} 
It is readily seen that the first two limits are equal to zero (with the help of Lebesgue's domination theorem for the second one). 

Therefore, by, e.g. \cite[Proposition 1.9.11, p.65]{ABHN01}, it is sufficient to show that the function $G(t): = \int_0^r T(t+r-s)f(s) \ud s$, $t>0$, is an antiderivative of $g(t):= T(t)u'(r) - T(t)f(r)$, $t>0$, which is continuous, and consequently Bochner integrable on finite intervals. Then, since the function $e^{-\lambda(\cdot)} g(\cdot)$ is Bochner integrable on $\R_+$, we get that
\[
\lim_{\epsilon\rightarrow 0\plus} \int_\epsilon^{1/\epsilon} e^{-\la t} g(t) {\ud} t = (\lambda + A)^{-1} (u'(r) - f(r)).
\]
Moreover, it is easily seen that by the continuity of $g$, if we show that the right derivative of $G$ exists and is equal to $g$ on $(0,\infty)$, then its (full) derivative does.
For, first note that for all $h>0$ we have
\begin{align*}
\int_0^{r} & T(t+r + h -s)f(s) \ud s - \int_0^{r}  T(t+r  -s)f(s) \ud s  \\
& = T(t) [u(r+h) - u(r)] - \int_r^{r+h}  T(t+r + h - s)f(s) \ud s\\
& = T(t) [u(r+h) - u(r)] - \int_r^{r+h}  T(t+r-s)f(s) \ud s\\
&\quad  - \int_r^{r+h}  [T(t+r + h - s)- T(t + r - s)] f(s)\ud s 
\end{align*} 
Since $\lim_{h\rightarrow 0}\frac{1}{h}\int_r^{r+h} \|f(s) - f(r)\|_X \ud s = 0$, by the strong continuity of $T$ on $(0,\infty)$, we easily get
\[
\lim_{h\rightarrow 0} \frac{1}{h}\int_r^{r+h}  T(t+r - s)f(s) \ud s = T(t)f(r)
\]
\[
\lim_{h\rightarrow 0} \frac{1}{h} \int_r^{r+h}  [T(t+r + h - s)- T(t + r - s)] f(s)\ud s  = 0. 
\]
Therefore, it gives that $G'_+(t) = g(t)$, and as was noted above, concludes the proof of (i).

For (ii), let $r>0$. By Fubini's theorem we get
\begin{align*}
\int_0^r U f(s) \ud s  & = \int_0^r \int_0^s T(s-\tau) f(\tau) \ud \tau \ud s\\
& = \int_0^r \int_0^{r-\tau} T(s) f(\tau) \ud s \ud \tau.
\end{align*}
Note that the function $\phi: [0,r]\ni \tau \mapsto \int_0^{r-\tau} T(s) f(\tau) \ud s \in X$ is in $L^1(0,r; X)$. Moreover, Lemma \ref{lem on density} gives that $\phi(\tau) \in D(A)$ with $A\phi(\tau) = f(\tau) - T(r-\tau)f(\tau)$, $\tau\in [0,r]$. Therefore, by Hille's theorem (see, e.g. \cite[Proposition 1.1.7]{ABHN01}) we get that $\int_0^r\phi(\tau) \ud \tau \in D(A)$ and 
\begin{align*}
A\int_0^r\phi(\tau) \ud \tau & = \int^r_0 A \phi(\tau) \ud \tau  = \int^r_0 ( f(\tau) - T(r-\tau)f(\tau)) \ud \tau
\end{align*}
It gives the first statement of (ii). 
 The second one is a direct consequence of Hille's theorem  and \cite[Proposition 1.2.2]{ABHN01}. 

 For (iii), by \cite[Proposition A.8.2, p.298]{Ha06}, for every $x\in X$ and $r>0$ we have that 
 \[
 \int_0^rT(s)x\ud s \in D(A)\qquad \textrm{and} \qquad A\int_0^rT(s)x\ud s = x - T(r)x.
 \]
Combining it with (ii) we easily see that the function $v$ has the desired properties stated in (iii); for the last statement therein see, for instance, \cite[Lemma 3.3.12, p.149]{ABHN01}. 
Therefore, the proof is complete.
\end{proof}

For further references we extract useful consequences in the following lemma.

\begin{lemma}\label{lem on hom est} 
Let $-A$ be the generator of an analytic semigroup $T$ on  a Banach space $X$. Then the following statements hold. 
\begin{itemize}
\item [(i)]  For every $f\in L^1_{loc}([0,\infty); X)$ and every $t>0$ such that $U f(t) \in D(A)$ we have that      
\begin{equation}\label{hom est in X}
\bigl\| A U f(t) \big\|_X \leq  \int_0^t \|AT(t-s) f(s)\|_X \ud s. 
\end{equation}

\item [(ii)] If $A$ has $L^1$-m.r., then for any $f\in L^1_{loc}([0,\infty); X)$ and for a.e. $t>0$ the function $(0,t)\ni s\mapsto AT(t-s)f(s)\in X$ is Bochner integrable. 
  
\item [(iii)] Let $\tau>0$. If for every $f\in L^1(0,\tau; X)$ and for a.e. $t\in (0,\tau)$ the function $(0,t)\ni s\mapsto AT(t-s)f(s)\in X$ is Bochner integrable on $(0,t)$ with $\|\int_0^t AT(t-s)f(s) \ud s\|_{L^1(0,\tau;X)} < \infty$, then $A$ has $L^1$-m.r.\,on $(0,\tau)$ $($on each finite interval$)$. 
\end{itemize}
\end{lemma}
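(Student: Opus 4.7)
I would treat the three statements in turn, with Hille's theorem for closed operators as the central tool in (i) and (iii) and a Dirac-approximation argument combined with Fubini's theorem as the core of (ii). For (i), I would apply Hille's theorem to the closed operator $A$ and the Bochner-integrable map $s \mapsto T(t-s)f(s)$ on $(0,t)$; its integral is $Uf(t)$ and its values lie in $D(A)$ by analyticity, while strong measurability of $s \mapsto AT(t-s)f(s)$ follows from norm continuity of $r \mapsto AT(r)$ on $(0,\infty)$. When $\int_0^t \|AT(t-s)f(s)\|_X \,\ud s < \infty$, Hille's theorem gives $Uf(t) \in D(A)$ and $AUf(t) = \int_0^t AT(t-s)f(s)\,\ud s$, so the triangle inequality yields \eqref{hom est in X}; when the right-hand side is infinite, the inequality is trivial.

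For (ii), the decisive step is to extract from the $L^1$-maximal regularity the a priori bound
\[
\int_0^\tau \|AT(t)x\|_X\,\ud t \leq C_\tau\,\|x\|_X, \qquad x\in X,\;\tau>0.
\]
By Dore's result (Remark~\ref{Dore results}), $A$ has $L^1$-m.r.\,on every finite interval with some constant $C_\tau$, and I would test this estimate on the Dirac-approximating sequence $f_n(s) := n\,x\,\mathbf{1}_{[0,1/n]}(s)$, which has $\|f_n\|_{L^1(0,\tau;X)} = \|x\|_X$. Part (i) (or Hille directly) gives $AUf_n(t) = n\int_{t-1/n}^t AT(r)x\,\ud r$ for $t > 1/n$; this converges in $X$ to $AT(t)x$ as $n\to\infty$ for every $t>0$ by the norm continuity of $r \mapsto AT(r)$ on $(0,\infty)$. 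Fatou's lemma together with $\|AUf_n\|_{L^1(0,\tau;X)}\leq C_\tau\|x\|_X$ then delivers the displayed estimate. For any $f\in L^1_{loc}([0,\infty);X)$ and any $\tau>0$, Tonelli's theorem now gives
\[
\int_0^\tau \int_0^t \|AT(t-s)f(s)\|_X\,\ud s\,\ud t = \int_0^\tau \int_0^{\tau-s}\|AT(r)f(s)\|_X\,\ud r\,\ud s \leq C_\tau\,\|f\|_{L^1(0,\tau;X)} < \infty,
\]
so the inner integral is finite for a.e.\ $t\in(0,\tau)$; combined with strong measurability, this yields the Bochner integrability, and letting $\tau\to\infty$ completes the proof.

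For (iii), the hypothesis combined with Hille's theorem gives $Uf(t) \in D(A)$ a.e.\ with $AUf(t) = \int_0^t AT(t-s)f(s)\,\ud s$, and this function lies in $L^1(0,\tau;X)$ by assumption. The second assertion of Lemma~\ref{regularity of u}(ii) then shows that $Uf$ has strong derivative $(Uf)'(t) = f(t) - AUf(t) \in L^1(0,\tau;X)$ a.e., and $Uf(0) = 0$, so $Uf$ is a strong solution of $(CP)_{A,f,0}$ with $(Uf)',\,AUf \in L^1(0,\tau;X)$; uniqueness follows from $-A$ generating a semigroup, the $L^1$-m.r.\,estimate from the closed graph theorem, and the extension to all finite intervals is again Dore's result. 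The only non-routine ingredient in the whole argument is the a priori bound in (ii); the rest is bookkeeping around Hille's theorem, Tonelli's theorem, and Lemma~\ref{regularity of u}.
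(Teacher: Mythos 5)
Your proposal is correct. Parts (i) and (iii) follow essentially the same path as the paper: Hille's theorem plus Bochner's theorem for (i) (with the same trivial-case remark when the right-hand side is infinite), and Lemma \ref{regularity of u}(ii) plus the closed graph theorem for (iii). The genuine difference is in (ii). The paper first passes to $A+\lambda$ for large $\lambda$, invokes $L^1$-m.r.\ on all of $\R_+$ for the shifted operator (via Dore), and then cites the Kalton--Portal characterization to obtain $\int_0^\infty \|(A+\lambda)e^{-\lambda s}T(s)x\|_X\,\ud s \le C_\lambda\|x\|_X$, from which the finite-interval bound $\int_0^\tau\|AT(s)x\|_X\,\ud s \lesssim_\tau \|x\|_X$ is extracted by a short computation. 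You instead derive this bound directly from the finite-interval maximal regularity estimate by testing against the Dirac approximations $f_n = n\,x\,\chi_{[0,1/n]}$ and applying Fatou's lemma; this is in spirit the same test-function device the paper uses to prove the necessity direction of Theorem \ref{weighted L1 mr}, but carried out on a finite interval so that neither the rescaling nor the Kalton--Portal theorem is needed. Your route is more self-contained and slightly more elementary; the paper's buys brevity by reusing machinery it has already set up. The only points worth making explicit in a write-up are (a) that the unique strong solution furnished by the m.r.\ hypothesis for $f_n$ coincides with $Uf_n$, so the estimate really applies to $AUf_n$, and (b) that the identity $AUf_n(t)=n\int_{t-1/n}^t AT(r)x\,\ud r$ for $t>1/n$ comes from Hille's theorem (part (i) only gives an inequality); both are routine. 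The concluding Tonelli computation is identical to the paper's.
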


\begin{proof}
(i) For all $t>0$, the function $(0,t)\ni s\mapsto T(t-s)f(s) \in X$ is Bochner integrable and $(0,t)\ni s\mapsto AT(t-s)f(s) \in X$ is strongly measurable (see, e.g. \cite[Proposition 1.3.4]{ABHN01}). 
By Bochner's theorem, for $t>0$ such that $(0,t)\ni s\mapsto \|AT(t-s)f(s)\|_X$ is Lebesgue integrable, we have 
\[
\Big\| \int_0^t AT(t-s) f(s) \ud s \Big\|_X \leq \int_0^t \|AT(t-s) f(s)\|_X \ud s.
\]
Moreover, by Hille's theorem \cite[Proposition 1.1.7]{ABHN01}, we get for such $t$ that
\[
A\int_0^t T(t-s) f(s) \ud s = \int_0^t AT(t-s) f(s) \ud s. 
\]
It gives \eqref{hom est in X} in such case. Otherwise, that is, for $t>0$ such that $(0,t)\ni s\mapsto \|AT(t-s)f(s)\|_X$ is not Lebesgue integrable, i.e. $\int_0^t \|AT(t-s) f(s)\|_X \ud s = \infty$, if we know that $U f(t) \in D(A)$, or in the other words,  $AU f(t)$ is identified as a vector in $X$, then \eqref{hom est in X} holds trivially.

(ii) By Remark \ref{Dore results}(b), for $\lambda>0$ large enough the operator $A+\lambda$ has $L^1$-m.r. on $\R_+$. The Kalton-Portal characterization (see \cite[Theorem 3.6]{KaPo08}, or Theorem \ref{weighted L1 mr} with $v\equiv 1$ below) yields the existence of a constant $C_\lambda$, which may depend on $\lambda$, such that
\[
\int^\infty_0 \|(A+\lambda) e^{-\lambda s} T(s)x\|_X  \leq C_\lambda \|x\|_X , \qquad x\in \X.
\]
 In particular,  for every $\tau>0$ and $x\in \X$, we easily get 
\begin{align}\label{KP for tau}
\nonumber \int^\tau_0 \|AT(s)x\|_X \ud s & \leq e^\tau\lambda\int^\tau_0 \|(A+\lambda) e^{-\lambda s} T(s)x\|_X \ud s + \int^\tau_0 \|\lambda T(s) x\|_X\ud s\\
\nonumber & \leq e^{\tau \lambda} \int^\infty_0 \|(A+\lambda) e^{-\lambda s} T(s)x\|_X \ud s  
+ \tau \lambda \sup_{0\leq s\leq \tau}\|T(s)\|_{\cL(X)}\,\|x\|_X\\
&\leq (C_\lambda e^{\tau \lambda} + \tau \lambda \sup_{0\leq s\leq \tau}\|T(s)\|_{\cL(X)}) \,\|x\|_X.
\end{align} 
Further, since for every $f\in L^1_{loc}([0,\infty);X)$ and $\tau >0$, the function $(0, \tau)^2\ni (t,s) \mapsto \|AT(t-s)f(s)\|_X \chi_{\{0< s\leq t \leq \tau\}}$ is measurable, by the scalar Fubini theorem we have that   
\begin{align*}
\int_0^\tau \int_0^t \|AT(t-s) f(s)\|_{X} \ud s\ud t & = \int_0^\tau\int_s^\tau \|AT(t-s) f(s)\|_{X} \ud t \ud s \\
& = \int_0^\tau \int_0^{\tau -s} \|AT(t) f(s)\|_{X} \ud t \ud s \\
& \leq C \int_0^\tau  \|f(s)\|_{X} \ud s<\infty
\end{align*} for $C$ specified in \eqref{KP for tau}. Since $\tau>0$ is arbitrary here, we get the desired claim.

For (iii), Lemma \ref{regularity of u} shows that $U f$ is in $W^{1,1}(0,\tau; X)$ and $(U f)'(t) + AU f(t) = f(t)$ for a.e. $t\in (0,\tau)$. The closed graph theorem applied to the operator $AU$ gives the estimate \eqref{m.r est}, therefore $A$ has $L^1$-m.r.\,on $(0,\tau)$.  
\end{proof}

\begin{remark}\label{rem on density} Recall that for the generator $-A$ of an analytic semigroup $T$ on $X$, $T(t)X\subset D(A)$ for all $t>0$ and  for all $x\in D(A)$ the function $\R_+ \ni t \mapsto AT(t)x = T(t)Ax \in X$ is in $L^1_{loc}([0,\infty);X)$.

However,  by the Kalton-Portal characterisation of $L^1$-m.r. (on a finite interval) and the closed graph theorem, if $A$ does not have $L^1$-m.r., then there exists $x\in X$ such that the function $AT(\cdot)x$ is not Bochner integrable on $[0,t]$ for any $t >0$, more precisely,  $\int_0^t \|AT(s)x\|_X \ud s = \infty$ for all $t> 0$. 
Therefore, for some vectors $x\notin D(A)$ and all $t > 0$ the symbol $\int_0^tAT(t-s)f(s) \ud s$ for $f(s) = x$ may not have any meaning (as a Bochner integral). On the other hand, by Lemma \ref{lem on density}, for any $x\in X$ if $f(s) = x$ $(s>0)$, then  $A\int_0^t  T(t-s)f(s) \ud s = A\int_0^t T(s)x \ud s$ is well defined (cf. Lemma \ref{lem on hom est}(iii)).

\end{remark}

Combining Lemma \ref{lem on hom est}(i) with Remark \ref{Dore results}(b), if $A$ has $L^p$-m.r. for some $p\in [1,\infty]$, then for any $f\in L^p_{loc}([0,\infty); X)$ the inequality \eqref{hom est in X} holds for a.e. $t>0$.
 The following result, which can also be considered as an extension of Dore \cite[Theorem 7.1]{Do00} and Hyt\"onen \cite[Theorem 1.4]{Hy05}, in particular, shows that it holds for every $f\in L^1_{loc}([0,\infty); X)$. 
Below, for a weight function $v$ and a measurable subset $B$ of $\R_+$ we set $v(B): = \int_B v(t) \ud t$.
\begin{proposition}\label{weak L1} Let $p\in [1,\infty]$ and let $A$ be a linear operator on a Banach space $X$ with $L^p$-m.r. on a finite interval.

 Then, for every Muckenhoupt weight $v\in A_1(\R)$ and every $\tau>0$, the operator $AU$ is continuous operator from $L^1_v(0,\tau ;X)$ to $L^{1,\infty}_v(0,\tau; X)$, i.e. for every $f\in L^1_{loc}([0,\infty), v\ud t; X)$, $U f(t) \in D(A)$   for a.e. $t>0$ and  there exists a constant $C_\tau$ independent on $f$ such that for every $\la > 0$
\[
v(\{ t\in (0,\tau): \|AU f(t)\|_X \geq \la \}) \leq C_\tau \lambda^{-1} \|f\|_{L^1_v(0,\tau; X)}.
\]
\end{proposition}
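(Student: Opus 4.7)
The plan is to realize $AU$ as a vector-valued Calder\'on--Zygmund operator with operator-valued kernel on $(0, 2\tau)$ and then apply the weighted vector-valued CZ theory for $A_1$-weights.

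First, by Remark \ref{Dore results}, the hypothesis forces $-A$ to generate a bounded analytic semigroup $T$, so the standard analyticity estimates
\[
\|T(t)\|_{\cL(X)} + t\|AT(t)\|_{\cL(X)} + t^2 \|A^2 T(t)\|_{\cL(X)} \leq M_\tau
\]
hold for $t \in (0, 2\tau]$. I would introduce the causal $\cL(X)$-valued kernel $K(t,s) := AT(t-s)\mathbf{1}_{0 < s < t}$. The first estimate gives the size bound $\|K(t,s)\|_{\cL(X)} \leq M_\tau/(t-s)$; the second, combined with $\partial_t K(t,s) = -A^2 T(t-s)$ and the mean value theorem, yields the H\"ormander regularity
\[
\int_{|t - t_0| > 2|s - s_0|} \|K(t, s) - K(t, s_0)\|_{\cL(X)} \, dt \leq C_\tau
\]
(and the symmetric condition on the first variable), so that $K$ is a bona fide operator-valued CZ kernel on $(0, 2\tau)$.

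Next, the $L^p$-m.r.\ hypothesis together with Remark \ref{Dore results} guarantees that $AU$ is bounded on $L^p(0, 2\tau; X)$. If $p \in (1, \infty)$, this directly supplies the $L^{p_0}$-boundedness needed to enter the weighted CZ framework; in the case $p = 1$, unweighted strong $(1,1)$ already holds; in the case $p = \infty$, one bootstraps to an interior $L^{p_0}$-bound via duality and the kernel bounds of the previous step. Finally, I would invoke the weighted Calder\'on--Zygmund theorem for operator-valued kernels (in the spirit of Benedek--Calder\'on--Panzone and Rubio de Francia): an operator with CZ kernel satisfying H\"ormander and bounded on some unweighted $L^{p_0}$, $p_0 \in [1, \infty)$, maps $L^1_v(I; X)$ continuously into $L^{1,\infty}_v(I; X)$ for every $v \in A_1$, with constant depending on the $A_1$-constant of $v$, the kernel constants, and the $L^{p_0}$-norm. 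Restricting to $(0,\tau) \subset (0, 2\tau)$ yields the claimed inequality, and the fact that $Uf(t) \in D(A)$ a.e.\ follows from Lemma \ref{regularity of u}(ii) combined with the Bochner integrability of $s \mapsto AT(t-s)f(s)$ guaranteed by the kernel size bound.

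\textbf{Main obstacle.} The principal technical step is the verification of the H\"ormander condition with constants uniform over $(0, 2\tau)$ and, in the extremal cases $p \in \{1, \infty\}$, the bootstrapping to an unweighted $L^{p_0}$-bound that fits within the classical weighted CZ extrapolation framework. Once these are in place, everything else is standard weighted vector-valued harmonic analysis.
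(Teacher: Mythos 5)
Your overall strategy -- realize $AU$ as a singular integral with operator-valued convolution kernel $AT(t-s)$, verify the size and H\"ormander conditions from analyticity, and feed the $L^p$-boundedness supplied by Dore's results into the weighted Benedek--Calder\'on--Panzone theorem for $A_1$ weights -- is essentially the paper's proof. The paper differs only in bookkeeping: instead of localizing to $(0,2\tau)$ it first rescales ($A\mapsto A+\lambda$) so that the semigroup has negative growth bound, which makes $AU$ globally bounded on $L^p(\R_+;X)$ and lets the kernel be treated as a genuine convolution kernel on all of $\R$; your $p=\infty$ ``bootstrap via duality'' is also unnecessary, since the cited weighted CZ theorem already accepts an unweighted $L^{p_0}$ bound with $p_0=\infty$.

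There is, however, a genuine gap in your final step. You claim that $Uf(t)\in D(A)$ for a.e.\ $t$ follows from the Bochner integrability of $s\mapsto AT(t-s)f(s)$, ``guaranteed by the kernel size bound.'' It is not: the size bound only gives $\|AT(t-s)f(s)\|_X\le M_\tau\|f(s)\|_X/(t-s)$, and $\int_0^t \|f(s)\|_X (t-s)^{-1}\,\ud s$ diverges for generic $f\in L^1_{loc}$ -- this is precisely why $AU$ is a singular integral rather than an absolutely convergent one. Remark \ref{rem on density} makes the obstruction concrete: when $A$ fails $L^1$-m.r.\ there exist $x\in X$ with $\int_0^t\|AT(s)x\|_X\,\ud s=\infty$ for every $t>0$, so for $f\equiv x$ the function $s\mapsto AT(t-s)f(s)$ is never Bochner integrable, yet the proposition still asserts $Uf(t)\in D(A)$ a.e. (Lemma \ref{regularity of u}(ii) cannot rescue this either; its hypothesis is exactly the membership $u(t)\in D(A)$ you are trying to prove.) The paper closes this gap by a separate argument that you would need to supply: approximate $f\chi_{[0,N]}$ in $L^1_v$ by bounded compactly supported $f_n$ (for which $Uf_n(t)\in D(A)$ is known), apply the already-established weak $(1,1)$ estimate to the differences $AUf_{n}-AUf_{m}$ to extract a subsequence along which $AUf_{N_k}(t)$ converges a.e., note that $Uf_n(t)\to Uf(t)$ in $X$ for every $t$, and invoke the closedness of $A$ to conclude $Uf(t)\in D(A)$ with $AUf(t)=\lim_k AUf_{N_k}(t)$ a.e.; the weak-type inequality for general $f$ then follows by passing to the limit. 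Without this approximation-and-closedness step the statement ``$Uf(t)\in D(A)$ for a.e.\ $t$'' is unproved for general $f\in L^1_{loc}([0,\infty),v\,\ud t;X)$.
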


\begin{proof} By Dore \cite[Corollary 4.4]{Do00}, $-A$ is the generator of an analytic semigroup $T$ on $X$. 
Suppose first, that the exponential growth bound of $T$ is negative. Then, by Dore \cite[Theorem 5.2]{Do00}, the operator $AU$ is bounded on $L^p(\R_+;X)$. 

Naturally identifying $L^p(\R_+;X)$ with a subspace of $L^p(\R;X)$, $AU$ can be consider as an operator $\cS$ on $L^p(\R;X)$, i.e. $\cS f: = e AU (rf)$, $f\in  L^p(\R;X)$, where $e$ stands for the extension by $0\in X$ from $\R_+$ onto $\R$ and $r$ for the restriction from $\R$ to $\R_+$.  
Of course, $\cS$ is bounded on $L^p(\R;X)$ and for every $f: \R\rightarrow X$ which is bounded with the compact support $\supp f$ and for every $t\notin \supp f$ we have 
\[
\cS f(t) = \int_\R \cK(t-s) f(s) \ud s,
\]
where $\cK(t):= A T(t)$ if $t>0$ and $\cK(t):=0 \in \cL(X)$ otherwise.
Since $T$ is a bounded, analytic semigroup on $X$, $\cK$ satisfies the Calder\'{o}n-Zygmund conditions. Therefore, by weighted variant of the result  by Benedek, Calder\'on, Panzone \cite{BeCaPa62}, see  \cite[Chapter V, Theorem 3.4]{GCRu85}, $\cS$ satisfies the weak $(1,1)$ estimates on $L^p(\R;X)\cap L^1_v(\R;X)$. In particular, there exists a constant $C$ such that for every $f\in L^p(\R_+;X)\cap  L^1_v(\R_+;X)$ and all $\la>0$
\begin{equation}\label{wL1}
v\left(\{t>0 : \|A U f(t)\|_X > \la\}\right) \leq C \lambda^{-1} \|f\|_{L^1_v(\R_+;X)}.
\end{equation}
First, we apply it to show that for all $f\in L^1_{loc}([0,\infty), v\ud t;X)$, $U f(t) \in D(A)$ for a.e. $t>0$. 
Suppose by contradiction that there exists $N\in \N$ such that 
$B:= \{ t\in [0,N]:  U f(t)\notin D(A) \}$ has the positive Lebesgue's measure. 
Note that $U (f\chi_{[0,N]}) = U f$ on $[0,N]$. 
 Let $f_n$, $n\in \N$, be a sequence of functions with compact supports from $L^\infty(\R_+;X)\cap  L^1_v(\R_+;X)$ which approximate $f \chi_{[0,N]}$ in $L^1_v(\R_+;X)$. By \eqref{wL1}, for every $n, m \in \N$ and $\lambda>0$ we have that
 \[
v\left(\{t>0 : \|A U f_n(t) - A U f_m(t)\|_X > \la\}\right) \leq C \lambda^{-1} \|f_n - f_m\|_{L^1_v(\R_+;X)}.
 \]
It gives an increasing sequence $(N_k)_{k\in \N}\subset \N$ such that for all $k\in \N$:
\[
v\left(B_k\right) \geq v([0,N]) - 2^{-k-1}v(B),
\]
where
\[
B_k := \{ t\in [0,N] : \|A U f_{N_{k+1}}(t) - A U f_{N_k}(t)\|_X \leq  2^{-k} \}.
\] In particular, for every $t\in 
\bigcap_{k\in \N} B_k$: 
\[
\sum_{k\in \N} \|A U f_{N_{k+1}}(t) - A U f_{N_k}(t)\|_X \leq 1.
\] Consequently, the sequence $A U f_{N_k}(t)$, $k\in \N$,  is convergent as $k\rightarrow \infty$.  
 Since $L^1_v\subset L^1_{loc}$, the sequence $U f_n(t)$, $n \in \N$,  is convergent to $U f(t)$ for every $t>0$ as $n\rightarrow \infty$. Therefore, the closedness of $A$ shows that $U f(t) \in D(A)$ for all $t\in \bigcap_{k\in \N} B_k$. But $v(\bigcap_{k\in \N} B_k) \geq v([0,N]) - v(B)/2$, so we get a contradiction. 
Therefore, for all $f\in L^1_{loc}([0,\infty), v\ud t;X)$ the function $A U f$ is well-defined and, in addition, there exists a sequence of bounded, compactly supported functions $f_n$, $n\in \N$, such that $f_n$ converges to $f$ in  $L^1_{loc}([0,\infty), v\ud t;X)$, and  $A U f_n$ converges to $A U f$ a.e. on $\R_+$. 
In particular, for each $\tau>0$ and $\epsilon>0$ we get
\[
v\left(\{t\in (0,\tau) : \|A U f_n(t) - A U f(t)\|_X > \epsilon\}\right) \rightarrow 0 \textrm{ as } n \rightarrow \infty.
\]
Combining it with \eqref{wL1} for $f_n\chi_{[0,N]}$, $n\in \N$, we easily get 
\[
v\left(\{t\in (0,\tau) : \|A U f(t)\|_X > \la\}\right) \leq  C \lambda^{-1} \|f\|_{L^1_v(0,\tau;X)}.
\]
Additionally, it shows that in the case when $f\in L^1_v(\R_+; X)$ and $(f_n)$ is convergent to $f$ in $L^1_v(\R_+; X)$, then \eqref{wL1} holds for $f$ too.

In general case, i.e. when $T$ does not have the negative exponential growth bound, the above argument works for $A+\la$ with $\la>0$ large enough, which generates the semigroup $(e^{-\la t}T(t))_{t > 0}$. Since $f\in L^1_{loc}([0,\infty), v\ud t;X)$ if and only if $e^{-\la ( \cdot)} f\in L^1_{loc}([0,\infty), v \ud t;X)$, and $e^{t\la} U_{A+\la} (e^{- \la (\cdot)} f)(t) = e^{-\la t} U_A f(t)$ for all $t>0$, the proof is complete.
\end{proof}

\begin{remark}\label{D-Y-S}  Note that, in the conclusion of Proposition \ref{weak L1}, we do not claim that for $f\in L^1_{loc}([0,\infty);X)$ the corresponding mild solution $U f$ is strongly differentiable almost everywhere on $\R_+$.  

If, in addition to the assumptions stated there, we know that $A$ is densely defined, then for every $f\in L^1_{loc}([0,\infty);X)$, $U f$ has strong right-side derivative a.e. on $\R_+$. 
Indeed, it follows directly from the infinitesimal characterization of densely defined generators of semigroups, which gives that $U f(t)\in D(A)$ implies  
$
\frac{1}{h}(T(h)U f(t) - U f(t)) \rightarrow - AU f(t)$ as $h\rightarrow 0\plus$.
Moreover, for $h, t>0$,
\[ 
\frac{1}{h}\bigl(U f(t+h) - U f(t)\bigr) = \frac{1}{h}\bigl( T(h)U f(t) - U f(t)\bigr) + \frac{1}{h} \int_{t}^{t+h} T(t+h-s) f(s) \ud s. 
\]
By the strong continuity of $T$ at $0$, for each Lebesgue's point $t$ of $f$ we get 
$(U f)'_+(t) = - AU f(t) + f(t)$. 
We do not know if the {\it full} derivative of $U f$ exists a.e. on $\R_+$.  

\end{remark}

\subsection{The characterization of weighted $L^1$-m.r.}

The following result provides a weighted counterpart  of the Kalton-Portal characterization of $L^1$-m.r. on $\R_+$; see \cite[Theorem 3.6]{KaPo08}.

\begin{theorem}\label{weighted L1 mr}
Let $-A$ be the generator of a bounded analytic semigroup $T$ on $X$. Let $v$ be a locally integrable weight function on $(0,\infty)$ and such that $L^1_v \subset L^1_{loc}$. 
Then, the following assertions are equivalent. 
\begin{itemize}
\item [(i)] The operator $A$ has $L^1_v$-m.r. on $\R_+$, that is,  for every $f\in L^1_v(\R_+; X)$ there exists a function $u\in W^{1,1}_{loc} ([0,\infty);X)$ such that $u(0) = 0$, $u(t)\in D(A)$ and $u'(t) + Au(t) = f(t)$ for a.e. $t>0$, and 
\[
\|u'\|_{L^1_v(\R_+;X)}+ \|Au\|_{L^1_v(\R_+;X)} \leq C \|f\|_{L^1_v(\R_+;X)} 
\]
for some constant $C$ independent on $f$.
\item [(ii)] There exists a constant $C$ such that for every Lebesgue's point $s\in (0,\infty)$ of $v$ and every $x\in X$ we have that 
\begin{equation}\label{L1v mr cond}
\int_0^\infty \|AT(t)x\|_X v(t+s) \ud t \leq C \|x\|_X v(s).
\end{equation}
\end{itemize} 
\end{theorem}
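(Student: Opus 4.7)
The plan is to prove the two implications separately. The direction (ii)\,$\Rightarrow$\,(i) rests on Tonelli's theorem combined with Hille's theorem, applied to the representation $u = Uf$; the direction (i)\,$\Rightarrow$\,(ii) is the Kalton-Portal style argument of testing against a Lebesgue-approximation of a point mass concentrated at $s$ and invoking Fatou's lemma.

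For (ii)\,$\Rightarrow$\,(i), given $f \in L^1_v(\R_+;X)$ I would set $u := Uf$ and first estimate, using Tonelli and the substitution $\tau = t - s$,
\[
\int_0^\infty v(t)\int_0^t \|AT(t-s)f(s)\|_X\, ds\, dt = \int_0^\infty \int_0^\infty \|AT(\tau) f(s)\|_X v(\tau + s)\, d\tau\, ds.
\]
Since $v \in L^1_{loc}$, a.e.\ $s$ is a Lebesgue point of $v$, so applying (ii) with $x := f(s)$ (the case $f(s)=0$ being trivial) bounds the inner integral by $C\|f(s)\|_X v(s)$; integrating in $s$ yields that the double integral is $\leq C \|f\|_{L^1_v(\R_+;X)}$. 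Hence for a.e.\ $t > 0$ the map $s \mapsto AT(t-s)f(s)$ is Bochner integrable on $(0,t)$, so Hille's theorem delivers $u(t) \in D(A)$ with $Au(t) = \int_0^t AT(t-s)f(s)\,ds$ and $\|Au\|_{L^1_v} \leq C\|f\|_{L^1_v}$. Because $Au \in L^1_v \subset L^1_{loc}$, Lemma \ref{regularity of u}(ii) then yields $u \in W^{1,1}_{loc}([0,\infty);X)$ and $u' = f - Au$ a.e., giving $\|u'\|_{L^1_v} \leq (1+C)\|f\|_{L^1_v}$; uniqueness of $u$ is immediate from the uniqueness of mild solutions.

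For (i)\,$\Rightarrow$\,(ii), fix $x \in X$ and a Lebesgue point $s \in (0,\infty)$ of $v$. Test with
\[
f_\epsilon(t) := \epsilon^{-1}\chi_{(s,\, s+\epsilon)}(t)\, x, \qquad \epsilon \in (0, 1),
\]
which lies in $L^1_v(\R_+;X)$ (using $v \in L^1_{loc}$) with $\|f_\epsilon\|_{L^1_v} = \|x\|_X \cdot \epsilon^{-1}\int_s^{s+\epsilon} v(r)\, dr \to \|x\|_X v(s)$ as $\epsilon \to 0$. By (i) applied to $f_\epsilon$ and mild solution uniqueness, the corresponding strong solution coincides with $u_\epsilon := U f_\epsilon$ and $\|A u_\epsilon\|_{L^1_v} \leq C\|f_\epsilon\|_{L^1_v}$. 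For $t > s+\epsilon$, the analytic-semigroup bound $\|AT(r)\|_{\cL(X)} \lesssim 1/r$ together with Hille's theorem give
\[
A u_\epsilon(t) = \epsilon^{-1}\int_s^{s+\epsilon} AT(t-r) x\, dr \longrightarrow AT(t-s) x \quad \text{as } \epsilon \to 0,
\]
by continuity of $r \mapsto AT(t-r)x$ on $(0,t)$. Fatou's lemma followed by the substitution $\tau = t - s$ then yields
\[
\int_0^\infty \|AT(\tau) x\|_X v(\tau+s)\, d\tau \leq \liminf_{\epsilon \to 0} \|A u_\epsilon\|_{L^1_v(\R_+;X)} \leq C \|x\|_X v(s),
\]
which is precisely (ii).

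The main obstacle is the clean identification of $A u_\epsilon(t)$ and $A Uf(t)$ as Bochner integrals via Hille's theorem; both are handled by the analyticity estimate $\|AT(t)\|_{\cL(X)} \lesssim 1/t$ together with the local integrability of $v$. The Lebesgue-point hypothesis on $s$ is essential in the second direction, as without it the test-function argument would collapse into a vacuous statement on null sets.
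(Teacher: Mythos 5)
Your proposal is correct and follows essentially the same route as the paper: the direction (ii)\,$\Rightarrow$\,(i) is verbatim the paper's Tonelli--Hille argument followed by Lemma \ref{regularity of u}(ii), and (i)\,$\Rightarrow$\,(ii) is the same ``approximate point mass at a Lebesgue point of $v$'' test. The only (harmless) deviation is your choice of test function $\epsilon^{-1}\chi_{(s,s+\epsilon)}x$, which forces you through Hille's theorem and Fatou's lemma, whereas the paper uses the semigroup-smoothed datum $\epsilon^{-1}T(\cdot-s)x\,\chi_{[s,s+\epsilon]}$ precisely so that the strong solution $u_{\epsilon,s}(t)=\min(\tfrac{t-s}{\epsilon},1)T(t-s)x$ and its derivative can be written down explicitly and the lower bound $\int_{s+\epsilon}^\infty\|AT(t-s)x\|_Xv(t)\,\ud t\leq\|Au_{\epsilon,s}\|_{L^1_v}$ read off directly.
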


A direct consequence of Theorem \ref{weighted L1 mr} is the following extrapolation result; cf. Proposition \ref{weak L1}. 

\begin{corollary}\label{L1 for decr}  {\rm(i)} The $L^1$-m.r. implies $L^1_v$-m.r. for any non-increasing weight $v$ on $(0,\infty)$. 

{\rm(ii)} Suppose that $-A$ is the generator of an analytic semigroup $T$ on a Banach space $X$. Let $v$ be a non-decreasing weight on $(0, \infty)$ with $\lim_{t\rightarrow 0\plus} v(t) = 0$. Then, if \eqref{L1v mr cond} holds, then $Ax = 0$ for all $x\in D(A)$.  
\end{corollary}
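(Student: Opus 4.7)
The plan is to deduce both statements directly from the weighted Kalton--Portal characterization in Theorem \ref{weighted L1 mr}, by combining it with the monotonicity of $v$ in each direction.

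For (i), the assumption of unweighted $L^1$-m.r.\ forces, by Remark \ref{Dore results}, that $-A$ generates a bounded analytic semigroup, so Theorem \ref{weighted L1 mr} applies with the constant weight. Since every point is a Lebesgue point of $v\equiv 1$, one obtains a constant $C$ with $\int_0^\infty \|AT(t)x\|_X\,\ud t \leq C\|x\|_X$ for every $x\in X$. Now, for any non-increasing $v$ and all $s,t>0$ we have $v(t+s)\leq v(s)$, so multiplying the above inequality by $v(s)$ yields \eqref{L1v mr cond} at every $s>0$. A second application of Theorem \ref{weighted L1 mr} then delivers the $L^1_v$-m.r.\ of $A$.

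For (ii), I fix $x\in D(A)$ and use analyticity to write $AT(t)x=T(t)Ax$. Then \eqref{L1v mr cond} applied to this $x$ at a Lebesgue point $s$ of $v$ reads
\[
\int_0^\infty \|T(t)Ax\|_X\, v(t+s)\,\ud t \leq C v(s)\|x\|_X.
\]
The idea is to let $s$ decrease to $0$ along a sequence $(s_n)$ of Lebesgue points, which exist since almost every $s$ is one. The right-hand side tends to $0$ by the hypothesis $\lim_{s\to 0\plus}v(s)=0$; on the left, monotonicity of $v$ gives $v(t+s_n)\downarrow v(t+)$ pointwise in $t>0$, and the integrand at $s=s_1$ is integrable, so monotone convergence (for decreasing sequences) produces $\int_0^\infty \|T(t)Ax\|_X\, v(t+)\,\ud t = 0$. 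Since $v(t+)\geq v(t)>0$ for a.e.\ $t>0$ (as $v$ is a weight), $T(t)Ax=0$ for a.e.\ $t>0$, hence for every $t>0$ by the strong continuity of $T$ on $(0,\infty)$. Invoking the cancellation property in the definition of a semigroup from Remark \ref{Dore results} ($T(t)y=0$ for all $t>0\Rightarrow y=0$), we conclude $Ax=0$.

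No genuine obstacle arises in either direction: the heart of the argument is Theorem \ref{weighted L1 mr}, and what remains is the one-line comparison via monotonicity of $v$, together with a routine passage to the limit in (ii). The only points meriting care are the automatic positivity a.e.\ of $v(t+)$ (immediate since $v$ is a weight) and the existence of a sequence of Lebesgue points tending to $0\plus$ (immediate since a.e.\ point is one).
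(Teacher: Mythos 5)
Your proposal is correct and follows essentially the same route as the paper: both parts are read off from the characterization condition \eqref{L1v mr cond} of Theorem \ref{weighted L1 mr} combined with the monotonicity of $v$, with (ii) obtained by letting $s\to 0\plus$. The only cosmetic differences are that the paper bounds $v(t+s)\geq v(t)$ directly rather than passing to $v(t+)$ by monotone convergence, and concludes $Ax=0$ via the resolvent and Laplace-transform uniqueness instead of the semigroup's cancellation property — both variants are valid.
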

\begin{proof} The first assertion 
(i) follows easily via the characterization condition of Theorem \ref{weighted L1 mr}. For (ii), by the standard rescaling argument one can assume that $T$ exponentially decreases. By \eqref{L1v mr cond} and the monotone convergence theorem we get
\[
\int_0^{\infty} \|AT(t)x\|_X v(t) \ud t \leq 0.
\]
The assumption on $v$ implies for every $\tau>0$  
\[
0= \int_0^{\infty} \|AT(t)x\|_X v(t) \ud t \geq v(\tau)  \int_{\tau}^{\infty} \|AT(t)x\|_X \ud t.
\]
Hence, for all $x \in X$ and $t > 0$ we get $AT(t)x = 0$. In particular, for $x\in D(A)$, $R(\lambda, A) Ax = 0$, or equivalently, $R(\lambda, A)x =  \frac{1}{\lambda} x$, for all $\lambda > \omega (T)$, where $\omega(T)$ stands for the exponential growth bound of $T$. Therefore, by the uniqueness theorem for the Laplace transform (see, e.g. \cite[Theorem 1.7.3, p.51]{ABHN01}), $T(t)x = x$ for every $t>0$ and $x\in D(A)$. It completes the proof of the corollary.
\end{proof}

\begin{proof}[Proof of Theorem $\ref{weighted L1 mr}$]
For (i)$\Rightarrow$(ii), let $s>0$ be a Lebesgue point of $v$. For any $\epsilon\in (0,1)$ we set 
\[
u_{\epsilon,s}(t) := \left\{\begin{array}{ll} 
0, &  t\in [0, s],\\
\min(\frac{t-s}{\epsilon},1) T(t-s)x, & t>s.
\end{array} \right.
\]
Note that $u_\epsilon(t) \in D(A)$ for all $t\geq 0$ and  $u_\epsilon \in W^{1,1}_{loc} ([0,\infty);X)$ with 
\[
u_{\epsilon,s}'(t) = \left\{\begin{array}{lll}
0, & t\in [0,s],\\
- \min(\frac{t-s}{\epsilon},1) AT(t-s) x  + \frac{1}{\epsilon}T(t-s)x, & t\in (s,s+\epsilon],\\
-AT(t-s)x, & t>s+\epsilon.
\end{array}\right.
\]
Therefore, $u_{\epsilon,s}$ is a solution of $(CP)_{A,f,0}$ with 
$f_{\epsilon,s}(t):= \frac{1}{\epsilon} T(t-s)x \chi_{[s,s+\epsilon]}(t)$, $t>0$. 
Note that 
\begin{equation}\label{bound on 01}
\int_{\R_+} \|f_\epsilon(t)\|_X v(t) \ud t = \sup_{t\in (0,1]}\|T(t)\|_{\cL(X)} \|x\|_X \frac{1}{\epsilon} \int_{s}^{s+\epsilon} v(t) \ud t.
\end{equation}
On the other side we get
\begin{align*}
\int_{\R_+} \|Au_\epsilon (t)\|_X v(t) \ud t  & \geq \int_{s+\epsilon}^\infty  \|AT(t-s) x\|_X v(t) \ud t = \\
& =  \int_{\epsilon}^\infty  \|AT(t) x\|_X v(t+s) \ud t.
\end{align*}
Therefore, letting with $\epsilon \rightarrow 0\plus$ we get \eqref{L1v mr cond}. It completes the proof of this implication.

For (ii)$\Rightarrow$(i), let $f\in L^1_v(\R_+; X) \subset L^1_{loc}([0,\infty);X)$. 
Note that the function $(0, \infty)^2 \ni (t,s) \mapsto \chi_{\{s<t\}}(t,s) \|AT(t-s) f(s)\|_X$ is measurable, therefore combining Fubini's theorem with \eqref{L1v mr cond}, we easily get
\begin{align*}
\int_{\R_+} \int_0^t\|AT(t-s)f(s)\|_X \ud s v(t) \ud t  & = 
\int_{\R_+} \int_s^\infty \|AT(t-s)f(s)\|_X v(t) \ud t \ud s\\
& = \int_{\R_+} \int_{\R_+} \|AT(t)f(s)\|_X v(t+s) \ud t \ud s \\
& \leq C  \int_{\R_+} \|f(s)\|_X v(s) \ud s  
\end{align*}
In particular, the function $(0,t)\ni s\mapsto AT(t-s)f(s)\in X$ is Bochner integrable for a.e. $t>0$. Consequently, by Hille's theorem 
$ U f(t) = \int_0^t T(t-s)f(s) \ud s \in D(A)$, and Bochner's theorem gives 
\[
\int_{\R_+} \left\|A  U f(t) \right\|_X v(t) \ud t \leq C \int_{\R_+} \|f(t)\|_X v(t) \ud t. 
\]
The remaining part of the statement (i) is a standard consequence of our assumption that $L^1_v\subset L^1_{loc}$, the above estimate and Lemma \ref{regularity of u}(ii). 
\end{proof}

\begin{remark} Note that in \eqref{bound on 01} we use only the boundedness of $T$ on $[0,1]$, but we apply the maximal regularity estimate in (i) for a family of functions supported on $[s,s+\epsilon]$, where $s$ ranges over a.a. points of $\R_+$. Such {\it uniform} estimate requires the boundedness of $T$ on $\R_+$; see Remark \ref{Dore results}(a). 
\end{remark}

We conclude this subsection with a complementary result to Proposition \ref{weak L1} and Corollary \ref{L1 for decr}, which completes also \cite[Proposition 17.2.36]{HNVW24}. More precisely, the statement (1) of \cite[Proposition 17.2.36]{HNVW24} says that for $p\in (1,\infty)$ (resp. $p=\infty$), the $L^p_v$-m.r.\,for the power weights $v(t)=t^\mu$, $\mu \in (- 1, p-1)$ (resp. $\mu \in (0,1)$) is equivalent to the (unweighted) $L^p$-m.r.  The case $p=1$ has been left open therein.  Hereby, with the help of Theorem \ref{weighted L1 mr}, we provide the affirmative answer to it. In fact, our result identifies a more general class of weights $v$ for which (weighted) $L_v^1$-m.r. is equivalent to (unweighted) $L^1$-m.r. More importantly, this result should be read in the context of a negative example presented in Theorem \ref{thm main}(iii). Roughly, it says that, in the absence of the $L^1$-m.r. estimates, the weighted $L^1_v$-m.r. estimates do not hold for a large class of weights $v$ as well. 
We start with an auxiliary result.
 
\begin{lemma}\label{log(v) convex}
 Let $v\colon  (0,\infty) \to (0,\infty)$ be such that $\log\circ v$ is a convex function. Then,  for every $t > 0$ the function $\frac{v(t+\cdot)}{v(\cdot)}$ is increasing.
\end{lemma}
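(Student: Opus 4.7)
The plan is to reduce the claim to a standard monotonicity property of convex functions and then extract it from a short convex-combination argument. Let $\varphi := \log \circ v$, which is convex on $(0,\infty)$ by hypothesis. Fixing $t>0$, the inequality $\frac{v(t+s_1)}{v(s_1)} \leq \frac{v(t+s_2)}{v(s_2)}$ for $0 < s_1 < s_2$ is equivalent, after taking logarithms, to
\[
\varphi(s_1+t) + \varphi(s_2) \;\leq\; \varphi(s_1) + \varphi(s_2+t).
\]
So the whole proof reduces to establishing this ``slope monotonicity'' inequality for an arbitrary convex function $\varphi$ on $(0,\infty)$.

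To obtain it, I would write the two ``middle'' points as convex combinations of the two ``outer'' points $s_1$ and $s_2+t$. Setting $\alpha := \frac{s_2 - s_1}{s_2 - s_1 + t}\in (0,1)$, a direct calculation gives
\[
s_1 + t = \alpha\, s_1 + (1-\alpha)(s_2+t),\qquad s_2 = (1-\alpha)\, s_1 + \alpha\,(s_2+t).
\]
Applying convexity of $\varphi$ to each of these convex combinations yields
\[
\varphi(s_1+t) \leq \alpha\,\varphi(s_1) + (1-\alpha)\,\varphi(s_2+t),
\]
\[
\varphi(s_2) \leq (1-\alpha)\,\varphi(s_1) + \alpha\,\varphi(s_2+t).
\]
Adding these two inequalities, the coefficients on the right collapse to $1$ on each of $\varphi(s_1)$ and $\varphi(s_2+t)$, giving exactly the desired inequality $\varphi(s_1+t)+\varphi(s_2)\leq \varphi(s_1)+\varphi(s_2+t)$.

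The argument is essentially routine; there is no real obstacle, and the only mildly delicate step is noticing the symmetric choice of the convex combinations so that the weights add to $1$ after summation. Alternatively, one could simply invoke the three-chord (slope) lemma for convex functions, which directly gives $\frac{\varphi(s_1+t)-\varphi(s_1)}{t} \leq \frac{\varphi(s_2+t)-\varphi(s_2)}{t}$, and the conclusion follows upon exponentiating. I would present the convex-combination version since it is self-contained and yields the inequality in one line after adding.
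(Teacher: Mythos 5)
Your proof is correct and is essentially identical to the paper's argument: the paper also reduces the claim to $\log v(r+t)+\log v(s)\leq \log v(r)+\log v(s+t)$ and establishes it by writing $r+t$ and $s$ as the same pair of convex combinations of $r$ and $s+t$ (with weights $\frac{s-r}{s-r+t}$ and $\frac{t}{s-r+t}$), adding, and exponentiating. No differences worth noting.
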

\begin{proof}
Let $0 < r < s$ and $t > 0$. Since $\log\circ v$ is convex, we get 
\begin{align*}
\log(v(r+t)) + \log(v(s))&  \leq \frac{s-r}{y-s+t}\log(v(r)) + \frac{t}{s-r+t}\log(v(s+t)) \\ &+ \frac{t}{s-r+t}\log(v(r)) + \frac{s-r}{s-r+t}\log(v(s+t))\\ & = \log(v(r)) + \log(v(s+t)).
\end{align*}
Therefore, by the monotonicity of an exponential function we get
\[
v(r+t)\cdot v(s) \leq v(r)\cdot v(s+t).
\]
and this yields
\[
\frac{v(t+r)}{v(r)} \leq \frac{v(t+s)}{v(s)}.
\]
This finishes the proof of the lemma.
\end{proof}

\begin{proposition}\label{L1 extrapol var 2}
Let $v$ be a non-increasing function on $(0,\infty)$ such that $\log\circ v$ is convex and for some  $\epsilon > 0$ and for a.e. $t > 0$ 
\begin{equation}\label{cond on v}
\lim_{s \to \infty} \frac{v(t+s)}{v(s)} \geq \epsilon.
\end{equation} 
Let $-A$ be a generator of a bounded analytic semigroup on $X$.
 Then, $A$ has $L^1_v$-m.r. on $\R_{+}$ if and only if it has $L^1$-m.r. on $\R_{+}$. In particular, the statement holds for $v(t) = t^{-\theta}$ with $\theta \in (0,\infty)$.
\end{proposition}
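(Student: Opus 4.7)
The plan is to work directly with the weighted Kalton--Portal-type characterization provided by Theorem \ref{weighted L1 mr}, together with the monotonicity property established in Lemma \ref{log(v) convex}. The forward implication ($L^1$-m.r.\ $\Rightarrow$ $L^1_v$-m.r.) is already contained in Corollary \ref{L1 for decr}(i), since $v$ is assumed non-increasing, so the only substantive work is in the reverse direction.

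For the converse, I would start by invoking Theorem \ref{weighted L1 mr}: the $L^1_v$-m.r.\ on $\R_+$ is equivalent to the existence of a constant $C$ such that
\[
\int_0^\infty \|AT(t)x\|_X\, v(t+s)\,\ud t \leq C \|x\|_X\, v(s)
\]
for every Lebesgue point $s$ of $v$ and every $x \in X$. Dividing both sides by $v(s) > 0$ yields
\[
\int_0^\infty \|AT(t)x\|_X\,\frac{v(t+s)}{v(s)}\,\ud t \leq C\|x\|_X.
\]
By Lemma \ref{log(v) convex}, the convexity of $\log\circ v$ implies that for each fixed $t > 0$ the function $s \mapsto v(t+s)/v(s)$ is non-decreasing, so I can let $s \to \infty$ along the set of Lebesgue points of $v$ (which has full measure, as $v$ is locally integrable) and apply the monotone convergence theorem to pass the limit inside the integral. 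Using assumption \eqref{cond on v}, the pointwise limit is at least $\epsilon > 0$ for a.e.\ $t$, and hence
\[
\epsilon \int_0^\infty \|AT(t)x\|_X\,\ud t \leq C\|x\|_X.
\]
The resulting estimate is precisely the unweighted Kalton--Portal condition (Theorem \ref{weighted L1 mr} applied with $v\equiv 1$), so $A$ has $L^1$-m.r.\ on $\R_+$.

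For the final assertion, I would simply verify that $v(t) = t^{-\theta}$ with $\theta > 0$ satisfies all hypotheses: it is non-increasing, $\log v(t) = -\theta \log t$ has second derivative $\theta/t^2 > 0$ and so is convex, and $v(t+s)/v(s) = \bigl(s/(s+t)\bigr)^{\theta} \to 1$ as $s\to \infty$ for every $t>0$, so \eqref{cond on v} holds with $\epsilon = 1$ (or any $\epsilon \in (0,1)$).

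I do not anticipate a serious obstacle here; the only mild subtlety is to justify taking the limit $s\to \infty$ through Lebesgue points of $v$ rather than arbitrary $s$, but this is automatic once one observes that the admissible set of $s$ has full measure and that the integrand is monotone in $s$ by Lemma \ref{log(v) convex}, which makes the monotone convergence step unconditional.
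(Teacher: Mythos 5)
Your proposal is correct and follows essentially the same route as the paper: forward direction via Corollary \ref{L1 for decr}(i), and for the converse, the weighted Kalton--Portal condition of Theorem \ref{weighted L1 mr} divided by $v(s)$, the monotonicity from Lemma \ref{log(v) convex}, and a limit passage as $s\to\infty$ (the paper uses Fatou's lemma where you use monotone convergence, but by the established monotonicity these are interchangeable here). Your extra care about restricting to Lebesgue points of $v$ is a harmless refinement of the paper's argument.
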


\begin{proof}
Since $v$ is non-increasing, by Corollary \ref{L1 for decr}, the $L^1$-m.r.\,on $\R_+$ of $A$ gives its $L^1_v$-m.r.\,on $\R_+$. 
Suppose that $A$ has  $L^1_v$-m.r.\,on $\R_+$. By the condition \eqref{L1v mr cond}, there exist a constant $C>0$ such that for all $x \in X$ and $s > 0$
\[
\int_0^\infty \|AT(t)x\|_X \frac{v(t+s)}{v(s)} \ud t \leq C \|x\|_X.
\]
By the Lemma \ref{log(v) convex} we get that for all $t>0$, $\lim_{s \to \infty} \frac{v(t+s)}{v(s)} \in (0, \infty]$ exists, hence the function $\R_+\ni t\mapsto \lim_{s \to \infty} \frac{v(t+s)}{v(s)}$ is measurable and Fatou's lemma gives
\[
\int_0^\infty \|AT(t)x\|_X \ud t \leq \frac{1}{\epsilon} \int_0^\infty \|AT(t)x\|_X \lim_{s \to \infty}\frac{v(t+s)}{v(s)} \ud t \leq \frac{C}{\epsilon} \|x\|_X.
\]
Therefore, by the Kalton-Portal characterization, $A$ has $L^1$-m.r.\,on $\R_+$. 
\end{proof}

\begin{remark} 
(a) In the Proposition \ref{L1 extrapol var 2} the assumption that $-A$ generates a bounded analytic semigroup on $X$ is not necessary in the case $\theta \in (0,1)$, since it follows from $L^1_v$-m.r. of $A$, as it was noted in the proof of \cite[Proposition 17.2.36]{HNVW24}.

It is worth mentioning that in Theorem \ref{thm main}(vi) below, we show that the condition \eqref{cond on v} is, in a sense, sharp for establishing the equivalence between 
(unweighted) $L^1$-maximal regularity and (weighted) $L^1_v$-maximal regularity.

(b) In Theorem \ref{thm main}(vi) below, we show that the condition \eqref{cond on v} is, in a sense, sharp  for establishing the equivalence between (unweighted) $L^1$-maximal regularity and (weighted) $L^1_v$-maximal regularity. 
\end{remark}

For the further references we conclude this subsection with the characterization of the $L^\infty$-m.r.\,on $\R_+$ due to Kalton and Portal \cite{KaPo08}. 

\begin{lemma}[{\cite[Theorem 3.7]{KaPo08}}]\label{charact}
Let $-A$ be the generator of a bounded analytic $C_0$-semigroup $T$ on a Banach space $X$. 
Then, $A$ has $L^\infty$-m.r. on $\R_+$ if and only if  there exists a constant $C>0$ such that for every $x\in X$ 
\begin{equation}\label{L infty cond}
\|x\|_X \leq C \sup_{t>0} \|tA T(t) x\|_X + \limsup_{t\rightarrow \infty}\|T(t)x\|_{X}. 
\end{equation}
\end{lemma}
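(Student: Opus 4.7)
I will prove both implications of Lemma~\ref{charact}. The sufficiency $(\Leftarrow)$ is accessible via a direct computation combining analyticity with the pointwise condition, while the necessity $(\Rightarrow)$ requires a delicate test-function argument for the $L^\infty$-m.r.\,estimate.

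\emph{Sufficiency.} Assume \eqref{L infty cond} and let $f\in L^\infty(\R_+;X)$ with $\|f\|_\infty\le 1$. First, for each $t>0$, I would verify $Uf(t)\in D(A)$ either by a density argument (approximating $f$ by smooth data and using closedness of $A$, as in the proof of Proposition~\ref{weak L1}) or by the splitting $Uf(t)=T(t/2)h+\int_{t/2}^t T(t-s)f(s)\,\ud s$, where the first summand lies in $D(A^\infty)$ and the second is handled by Lemma~\ref{lem on density} applied piecewise. Setting $y:=AUf(t)$ and using the analyticity of $T$, the representation
\[
sAT(s)y \;=\; sA^2T(s)Uf(t) \;=\; s\int_0^t A^2T(s+t-r)f(r)\,\ud r,
\]
combined with $\|A^2T(u)\|_{\cL(X)}\le Cu^{-2}$ and the substitution $u=s+t-r$, gives $\|sAT(s)y\|\le C\,\tfrac{t}{s+t}\,\|f\|_\infty\le C\|f\|_\infty$ uniformly in $s$ and $t$. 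Similarly, $\|T(s)y\|=\|AT(s)Uf(t)\|\le C\|f\|_\infty\log(1+t/s)\to 0$ as $s\to\infty$ for each fixed $t$. Applying \eqref{L infty cond} to $y$ yields the uniform bound $\|AUf(t)\|\le C'\|f\|_\infty$, establishing $L^\infty$-m.r.

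\emph{Necessity.} Assume $A$ has $L^\infty$-m.r.\,on $\R_+$. By Remark~\ref{Dore results}, $T$ is a bounded analytic $C_0$-semigroup. Fix $x\in X$ and $\tau>0$. Starting from the identity $x-T(\tau)x=\int_0^\tau AT(s)x\,\ud s$ (valid on $D(A)$ and extended by density), the strategy is to construct a test function $f_{x,\tau}\in L^\infty(\R_+;X)$ satisfying $\|f_{x,\tau}\|_\infty\le C\sup_{t>0}\|tAT(t)x\|$ and $AUf_{x,\tau}(\tau)=x-T(\tau)x$ up to a controlled error. A natural candidate exploits the commutation identity $T(t-s)AT(s)=AT(t)$: truncating $s\mapsto AT(s)x$ away from $s=0$ (where its norm is of order $1/s$) and pairing with a suitable bounded cutoff produces, after integration against $T$, a test function whose $L^\infty$-norm is controlled by $\sup_t\|tAT(t)x\|$ via the factor $s$ compensating the singularity. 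Applying the $L^\infty$-m.r.\,estimate then gives $\|x-T(\tau)x\|\le C\sup_t\|tAT(t)x\|$ for every $\tau>0$; writing $\|x\|\le\|x-T(\tau)x\|+\|T(\tau)x\|$ and passing to the $\limsup$ as $\tau\to\infty$ produces \eqref{L infty cond}, the $\limsup$ term accommodating the possibility that $T(\tau)x$ does not strongly converge.

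The main obstacle lies in the necessity direction: constructing a test function $f_{x,\tau}$ whose $L^\infty$-norm is genuinely controlled by $\sup_t\|tAT(t)x\|$ (rather than by $\|x\|$, which would yield only a trivial estimate) requires a careful truncation of the non-bounded profile $s\mapsto AT(s)x$, and the resulting truncation errors must be absorbed by exploiting analyticity. Moreover, the appearance of the $\limsup$ rather than a limit is essential in spaces containing $c_0$, such as $C_0(\R^n)$, where $T(\tau)x$ need not converge strongly; this is precisely the feature that makes the Kalton--Portal criterion sharp in the context of Baillon's theorem mentioned in the introduction.
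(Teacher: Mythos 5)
The paper does not actually prove this lemma: it is imported verbatim from \cite[Theorem 3.7]{KaPo08}, so your attempt has to stand on its own. Judged that way, both directions have genuine gaps.

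\textbf{Sufficiency.} Your a priori computation is correct and is essentially the same estimate the authors run in the proof of Theorem \ref{thm main 2} (cf.\ \eqref{charact oo} and \eqref{oo est}): with $y=AUf(t)$ one gets $\sup_s\|sAT(s)y\|\lesssim\|f\|_\infty$ and $\|T(s)y\|\lesssim\log(1+t/s)\to0$, so \eqref{L infty cond} yields $\|AUf(t)\|\lesssim\|f\|_\infty$ \emph{provided} $Uf(t)\in D(A)$. That proviso is the real content of the sufficiency direction, and neither of your two suggested verifications closes it. Simple functions are not norm-dense in $L^\infty(\R_+;X)$ for infinite-dimensional $X$ (and $L^\infty$-m.r.\ of an unbounded $A$ forces $X\supset c_0$ by Baillon's theorem, so $X$ is never reflexive here); if instead you approximate $f$ only in $L^1_{loc}$ with a uniform $L^\infty$ bound, your estimate applied to $f_n-f_m$ controls $\|AU(f_n-f_m)(t)\|$ only by $\|f_n-f_m\|_\infty$, which does not tend to zero, and bounded $AUf_n(t)$ with $Uf_n(t)\to Uf(t)$ does not give $Uf(t)\in D(A)$ without weak compactness. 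The splitting $Uf(t)=T(t/2)Uf(t/2)+\int_{t/2}^tT(t-s)f(s)\,\ud s$ merely relocates the problem to the second summand, to which Lemma \ref{lem on density} applies only for step data. What your bound genuinely shows is membership of $Uf(t)$ in the Favard class $\{z:\sup_{h>0}h^{-1}\|(T(h)-I)z\|<\infty\}$, which is strictly larger than $D(A)$ in exactly this non-reflexive setting. One can also check that applying \eqref{L infty cond} to the differences $AT(\delta)Uf(t)-AT(\delta')Uf(t)$ produces only a bound of order $\frac{\sqrt{\delta}-\sqrt{\delta'}}{\sqrt{\delta}+\sqrt{\delta'}}\,\|f\|_\infty$, which does not vanish as $\delta,\delta'\to0$; so the domain membership cannot be extracted from these manipulations and is where the discrete transference machinery of \cite{KaPo08} (the operators $\bK_{T(t)}$ of \eqref{def of K}) actually does its work.

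\textbf{Necessity.} Here you describe the desired properties of the test function $f_{x,\tau}$ but never construct it, and, as you concede yourself, this construction is the heart of the matter. The obvious candidates fail: $f\equiv x$ gives exactly $AUf(\tau)=x-T(\tau)x$ via Lemma \ref{lem on density} but only the trivial bound $\|f\|_\infty=\|x\|$, while truncating the profile $s\mapsto AT(s)x$ at level $\epsilon$ yields $\|f\|_\infty\sim\epsilon^{-1}\sup_t\|tAT(t)x\|$ and the identity $T(\tau-s)AT(s)x=AT(\tau)x$ collapses the integral to $(\tau-\epsilon)A^2T(\tau)x$ rather than to $x-T(\tau)x$; moreover $\int_\epsilon^\tau\|AT(u)x\|\,\ud u$ carries a factor $\log(\tau/\epsilon)$ relative to $\sup_t\|tAT(t)x\|$ that the maximal regularity estimate must beat. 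Your closing discussion of why the $\limsup$ term is unavoidable is correct and shows you understand what the statement says, but without an explicit $f_{x,\tau}$ the implication is not proved. In summary: the architecture is the natural one and the sufficiency estimate is sound, but the proof is incomplete at the two points that carry the actual difficulty.
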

We point out that it can be shown that the condition \eqref{L infty cond} is sufficient for $L^{\infty}$-m.r. on $\mathbb{R}_+$ 
even for non-densely defined generators of bounded analytic semigroups.
%}

\medskip

\subsection{The K-method}
Let $(X,Y)$ be an interpolation couple of Banach spaces and let $\Phi$ be a Banach function space.
Recall first the definition of the {\em $K$-functional}, that is, for every $x\in X+Y$ and $t>0$ put
\[
K(t,x):=K(t,x, X,Y):= \inf\left\{ \|a\|_X+t\|b\|_Y: a\in X, b\in Y, a+b=x \right\}.
\] 
With the help of the $K$-functional, we define the space
\[
K_\Phi(X,Y):= \big\{ x \in X+Y: \; [(0,\infty)\ni t \mapsto t^{-1} K(t,x) ]\in \Phi 
\big\},
\]
and equip it with the norm
\[
\|x\|_{K_\Phi}:=\|x\|_{K_\Phi(X,Y)}:=\left\| (\cdot)^{-1}K(\cdot, x) \right\|_\Phi\,, \quad \quad x\in {K_\Phi(X,Y)}.
\]
This definition of the functor $K_\Phi$ differs very slightly from the one in \cite[Section 3.3]{BrKr91}, where $\Phi$ runs through Banach function spaces containing the function $\min(1,\cdot)$, and $x\in K_\Phi(X,Y)$ if and only if $K(\cdot, x)\in \Phi$. The point is only the factor $\frac{1}{t}$ between the two definitions. 
For instance, by \cite[Lemma 2.2]{ChKr17}, $K_\Phi$ is an interpolation functor if and only if $\min(1,(\cdot)^{-1})\in \Phi$, in the other case, $K_\Phi(X,Y) = \{0\}$.

It is clear that for any $q\in [1,\infty)$ and $\theta\in (0,1)$, if $\Phi := L^q(\R_+, t^{q(1-\theta) - 1} \, \ud t)$, then $K_\Phi(X,Y)$ coincides with the classical interpolation space $(X,Y)_{\theta,q}$. For basic notation regarding interpolation theory and fundamental results, we refer the reader, e.g. to \cite{BrKr91, Lu09, Tr95}.

Below we consider mainly two special classes of Banach function space $\Phi$, which correspond to the $L^1$-m.r. and $L^\infty$-m.r., that is, weighted Lebesgue spaces $L^1_w$ and $L^\infty_w$, respectively. For instance, for $\Phi = L^\infty_w$ the  corresponding space $K_\Phi(X,Y)$ can be considered as a generalized Marcinkiewicz space. More precisely, note that for the couple $(L^1, L^\infty)$, the space $K_\Phi(L^1, L^\infty)$ is the Marcinkiewicz functional space $M_w$ given by the norm 
\[
\|f\|_{K_\Phi(L^1, L^\infty)}  \simeq \sup_{t>0} w(t) {f^{**}(t)}, 
\]
that is, $K(t,f) = tf^{**}(t)= \int_0^tf^*(s) \ud s$.  For $(L^p, W^k_p)$ we have 
\[
K(t,f)  \simeq \omega_{k} (f, t^{1/k})_{L^p}, 
\]
where $\omega_k (f,s)_{L^p} := \sup_{|h|\leq s}\|\delta^k_h f\|_{L^p}$.

The following result is complementary to our main result Theorems \ref{thm main} and \ref{thm main 2}, but may be of independent interest.  
Roughly, it says that interpolation parameters $L^1_w$ and $L^\infty_w$ for general weights $w$ are in fact equivalent to some parameters which are defined in terms of appropriately chosen concave functions. We start with the case of $L^\infty_w$, which is more transparent than that one of $L^1_w$. 

Recall that a function $\phi: \R_+\rightarrow \R_+$ is called \emph{quasi-concave}, if $\phi$ is non-decreasing and 
\[
\phi(t) \leq \max (1, t/s) \phi(s) \quad \textrm{ for all } s,t>0.
\]
Recall that for any quasi-concave function $\phi$ its least concave majorant $\tilde{\phi}$ satisfies $\phi\leq \tilde{\phi} \leq 2 \phi$; see e.g. \cite[Corollary 3.1.4]{BrKr91}.

\begin{proposition}\label{represent for K}  Let $(X,Y)$ be a pair of Banach spaces such that $Y$ is continuously embedded in $X$. Let $w$ be a weight function on $\R_+$. Then, the following assertions hold.
\begin{itemize}
\item [(i)] There exists a quasi-concave function $\phi$ on $\R_+$ such that $K_{L^\infty(w)}(X,Y) = K_{L^\infty(t/\phi(t))}(X,Y)$ with equal norms, i.e. for every $x\in  K_{L^\infty_w}(X,Y)$
\[
\|x\|_{ K_{L^\infty_w}(X,Y)} = \sup_{t>0} \frac{K(t,x,X,Y)}{\phi(t)}. 
\]
In particular, $\|\cdot\|_{ K_{L^\infty_w}(X,Y)}$ is equivalent to the norm  
$\sup_{t>0} \frac{K(t, \cdot)}{\hat\phi(t)}$, where $\hat \phi$ denotes the least concave majorant of $\phi$.  

\item [(ii)] 
There exists a concave function $\phi$ on $\R_+$ such that 
\[
K_{L^1_w}(X,Y) = K_{L^1(\R_+,\ud (-\phi'))}(X,Y)
\]
with equivalent norms, i.e. for every $x\in  K_{L^1_w}(X,Y)$
\[
\|x\|_{ K_{L^1_w}(X,Y)}  \simeq \int_0^\infty  K(t,x,X,Y) \ud (-\phi').
\]
Here, the integral is the Riemann-Stieltjes integral.
\end{itemize}

\end{proposition}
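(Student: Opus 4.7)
For part (i), writing $\psi(t):= t/w(t)$ reduces the norm to $\sup_{t>0} K(t,x)/\psi(t)$. My plan is to set
\[
\phi(t) := \inf_{s>0} \psi(s)\max(1, t/s),
\]
which is routinely quasi-concave: the map $t\mapsto \max(1,t/s)$ is non-decreasing in $t$, while $\max(1,t/s)/t$ is non-increasing, and these properties are inherited by a pointwise infimum (over the family indexed by $s$). Clearly $\phi \leq \psi$ by taking $s=t$, giving one norm inequality immediately. For the reverse, I would invoke the quasi-concavity of $K(\cdot, x)$ itself, which is equivalent to the well-known inequality
\[
K(t,x) \leq K(s,x)\max(1, t/s), \qquad s,t>0.
\]
Dividing by $\psi(s)\max(1, t/s)$, taking the supremum over $s$ on the right (i.e.\ converting the denominator into $\phi(t)$), and then the supremum over $t$ produces $\sup_t K(t,x)/\phi(t) \leq \sup_s K(s,x)/\psi(s)$, so the two norms are actually equal. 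The equivalence with the concave-majorant version is then immediate from the standard estimate $\phi \leq \hat\phi \leq 2\phi$.

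For part (ii), the natural candidate is
\[
\phi(s) := \int_0^\infty \min(t,s)\,\frac{w(t)}{t}\,\ud t = \int_0^s w(t)\,\ud t + s\int_s^\infty \frac{w(t)}{t}\,\ud t,
\]
which is concave as a positive superposition of the concave building blocks $s\mapsto \min(t,s)$, and whose distributional second derivative is $-w(s)/s$, so that $\ud(-\phi'(t)) = (w(t)/t)\,\ud t$. The strategy is then to exploit the integral representation of the $K$-functional: since $K(\cdot, x)$ is concave with $K(0,x)=0$ and $K(t,x)/t \to 0$ as $t\to\infty$ (the latter is automatic from $K(t,x) \leq \|x\|_X$), there exists a positive Borel measure $\mu_x$ on $(0,\infty)$ with
\[
K(t,x) = \int_0^\infty \min(t,s)\,\ud\mu_x(s), \qquad t>0.
\]
Applying Fubini separately to $\int_0^\infty K(t,x)\,w(t)/t\,\ud t$ and (after integration by parts in $t$) to $\int_0^\infty K(t,x)\,\ud(-\phi'(t))$ reduces each expression to $\int_0^\infty \phi(s)\,\ud\mu_x(s)$, up to explicit boundary contributions involving $\phi(0)$ and $\lim_{t\to\infty}t\phi'(t)$.

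The main technical difficulty is precisely in controlling these boundary terms in (ii). The contribution $\lim_{t\to\infty} t\phi'(t) = \lim_{t\to\infty}t\int_t^\infty w(r)/r\,\ud r$ vanishes whenever $\phi$ is finite at infinity, but $\phi(0) = \lim_{s\to 0^+} s\int_s^\infty w(t)/t\,\ud t$ may remain strictly positive when $w(t)/t$ is not integrable near $0$. The key step is to show that any such boundary contribution is dominated, up to a multiplicative constant depending only on $w$, by $\int_0^\infty \phi(s)\,\ud\mu_x(s)$ for every admissible $x$; this yields only the equivalence of norms claimed in the statement rather than strict equality. Part (i), in contrast, is essentially an elementary computation once the quasi-concave envelope $\phi$ is introduced in the correct form, the only mild sanity check being that $\phi>0$, which follows from $\psi>0$ combined with the construction.
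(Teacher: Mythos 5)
Part (i) of your proposal is correct and is essentially the paper's own argument: you use the same envelope $\phi(t)=\inf_{s>0}\max(1,t/s)\,s/w(s)$, the pointwise bound $\phi(t)\le t/w(t)$ gives one inequality, and the concavity estimate $K(t,x)\le\max(1,t/s)K(s,x)$ gives the other, yielding equality of the norms. (Both you and the paper tacitly restrict to the non-degenerate case $\min(1,t^{-1})\in L^\infty_w$; otherwise the space is $\{0\}$ and there is nothing to prove.)

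For part (ii), your candidate $\phi(s)=\int_0^\infty\min(t,s)\,\frac{w(t)}{t}\,\ud t$ is exactly the function the paper arrives at (the paper calls it $\psi$ and formally passes to its least concave majorant, but $\psi$ is already concave, being a superposition of the concave kernels $\min(t,\cdot)$ against a positive measure, so this step changes nothing). However, as written your proof is incomplete: you explicitly defer the ``main technical difficulty'' of dominating the boundary contributions produced by integration by parts, and you never carry this out. That gap is genuine but entirely self-inflicted. Since $-\phi'(s)=-\int_s^\infty\frac{w(t)}{t}\,\ud t$ is locally absolutely continuous on $(0,\infty)$ (local integrability of $w(t)/t$ away from $0$ and $\infty$ follows from the standing assumption $\min(1,t^{-1})\in L^1_w$, without which $K_{L^1_w}(X,Y)=\{0\}$), the Lebesgue--Stieltjes measure $\ud(-\phi')$ on $(0,\infty)$ is literally $\frac{w(t)}{t}\,\ud t$, so $\int_0^\infty K(t,x)\,\ud(-\phi')=\int_0^\infty t^{-1}K(t,x)\,w(t)\,\ud t=\|x\|_{K_{L^1_w}}$ with \emph{equality} --- no Fubini, no integration by parts, no boundary terms. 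Even along your intended route via $K(t,x)=\int\min(t,s)\,\ud\mu_x(s)$, both boundary terms in fact vanish: $\lim_{s\to0^+}\phi(s)=0$ by dominated convergence (for $s\le1$ the integrand is dominated by $\min(1,t^{-1})w(t)\in L^1$), and the correct contribution at infinity is $t\cdot\lim_{s\to\infty}\phi'(s)=t\cdot\lim_{s\to\infty}\int_s^\infty w(r)/r\,\ud r=0$, not the quantity $\lim_{t\to\infty}t\phi'(t)$ you single out. The paper's proof takes the dual route instead --- representing $K(\cdot,x)$ by a positive Borel measure $\nu_x$, applying Tonelli to obtain $\int\phi\,\ud\nu_x$, and converting back through the concave representation of $\phi$ --- which likewise avoids integration by parts altogether.
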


\begin{proof} (i)  We assume that $\min(1, (\cdot)^{-1})\in L^\infty_w$, since in the other case $K_{L^\infty_w}(X,Y) = \{0\}$  and the proof is trivial. 

Let $x\in K_{L^\infty_w}(X,Y)$ and $x\neq 0$, i.e. 
for all $s>0$ we have that 
\[
0< K(s,x) \leq \frac{s}{w(s)} \|x\|_{K_{L^\infty_w}}
\]
Since $K(\cdot, x)$ is concave we further get 
\[
K(t,x) = \inf_{s>0} \max (1, t/s) K(s, x) \leq \inf_{s>0} \max (1, t/s) \frac{s}{w(s)} \|x\|_{K_{L^\infty_w}}. 
\] 
Set 
\[
\phi(t) := \inf_{s>0} \max (1, t/s) \frac{s}{w(s)}, \quad\, t>0.
\]
Note that $\phi$ is quasi-concave and for $\phi_*(t):= \frac{t}{\phi(t)}$ for all $t>0$:
\[
\|x\|_{K_{L^\infty_{\phi_*} } }:= \sup_{t>0} \frac{1}{t}K(t,x) \phi_*(t) \leq  \|x\|_{K_{L^\infty_w}}. 
\]
Conversely, since $\phi(t) \leq t/w(t)$,  for any $x \in X$ and $t>0$ we get
\[
\frac{1}{t}K(t,x) w(t) \leq \frac{1}{t}K(t,x) \phi_*(t).   
\]
It shows that for $x\in K_{L^\infty_{\phi_*}}(X,Y)$, $\|x\|_{K_{L^\infty_w}}\leq \|x\|_{K_{L^\infty_{\phi_*}}}$.
Therefore, the proof of (i) is complete. 

For (ii), we can assume that $\min(1, (\cdot)^{-1})\in L^1_w$, since in the other case $K_{L^1_w}(X,Y) = \{0\}$  and the proof is trivial. 
Let $\ud \mu := \frac{w(t)}{t}\ud t$.
 Recall that any concave function $\phi$ has the Riemann-Stieltjes integral representation 
\[
\phi(t) = - \int_{\R_+} \min(s,t) \ud \phi'(s), \qquad t>0.
\] 

In particular, for any $x\in X$ there exists a Borel measure $\nu_x$ on $\R_+$ such that 
\[
K(t,x) = \int_{\R_+} \min(s,t) \ud \nu_x(s), \qquad t>0.
\]
 Therefore, if $x\in K_{L^1_w}(X,Y)$, then by Tonelli's theorem we get that
\begin{align*}
\|x\|_{L^1_w} & = \int_{\R_+}  \int_{\R_+} \min(s,t)  \ud \nu_x(s) \ud \mu (t)\\
& = \int_{\R_+} \int_{\R_+} \min(s,t)  \ud \mu(t) \ud \nu_x (s). \\
\end{align*}
Note that the function $\psi(s):= \int_{\R_+} \min(s,t)  \ud \mu(t)$, $s>0$, is quasi-concave and finite for all $s>0$. Let $\phi$ be the least concave majorant of $\psi$. In particular, if $\nu_\phi$ stands for the measure on $\R_+$ generated by $-\phi'$, then we get
\begin{align*}
\|x\|_{L^1_w} & \simeq \int_{\R_+} \int_{\R_+} \min(s,t)  \ud \nu_\phi(t) \ud \nu_x (s) = \int_{\R_+}  K(t,x) \ud \nu_\phi (t).
\end{align*}
 It completes the proof. 
\end{proof}
\begin{remark} Note that the concave function $\phi$ in (ii) of Proposition \ref{represent for K} is the least concave majorant of the function $\phi_\mu(\cdot, 1)$, where $\phi_\mu(\cdot, \cdot)$ is a {\it fundamental function of the functor} $K_{\Phi}$ with $\Phi:=L^1(\R_+,\ud \mu)$; 
see, e.g. \cite[p. 145]{BrKr91} for the corresponding definition. 

Indeed, recall that the function $\phi_\mu$ is defined by the condition $K_{\Phi} (t\R,s\R) = \phi_\mu(t,s) \R$ for all $s,t>0$, that is, for all $r\in \R$ we have that $\|r\|_{K_{\Phi} (t\R,s\R) } = \phi_\mu(t,s) |r|$, where $|r|$ stands for the modulus of $r$. 
Since for all $r\in \R$ and $h>0$,  $K(h, r, t\R, \R) = \min(t,h) |r|$, we obtain that 
\[
\|r\|_{K_{\Phi} (t\R, \R) } = |r| \int_{\R_+} \frac{1}{h}\min(t,h) \ud \mu(h).
\]
Therefore, it yields $\phi_\mu(t,1) = \int_{\R_+} {\frac{1}{s}} \min(t,s) \ud \mu(s)$, $t>0$, as we claimed. 
\end{remark}

\subsection{The Hardy operator}
Let $P$ denote the {\em Hardy operator} and $Q$ be its (formal) adjoint, that is, the integral operators given by 
\[
Pf(t):=\frac{1}{t}\int_0^t f (s)\,\ud s, \quad\quad  Qf(t):= \int_t^\infty f (s)\,\frac{\ud s}{s}
\]
for all $f\in\cM$ such that the respective integrals exist for all $t\in (0,\infty )$. The maximal domain of $P$, $D_{max}(P)$, is hence the space $L^1_{loc}: = L^1_{loc}([0,\infty))$, while the maximal domain of $Q$, $D_{max}(Q)$, is the set of all measurable functions $f\in\cM$ such that $f\chi_{(\tau ,\infty )} \in L^1 ( \R_+, t^{-1}\, \ud t)$ for every $\tau >0$.

Note that if $\Phi$ is a Banach function space such that the Hardy operator $P$ is bounded on $\Phi$, then $\Phi \subseteq L_{loc}^1$, and $\chi_{(0,1)}\in \Phi$ if and only if $\min(1,\frac{1}{(\cdot)})\in \Phi$. According to \cite[Lemma 2.2]{ChKr17}, in this case,  if $\chi_{(0,1)}\notin \Phi$, then $K_\Phi(X,Y) =\{0\}$.

The following characterization of the boundedness of the Hardy operator $P$ and its adjoint $Q$ on weighted $L^1$ and $L^\infty$ spaces is essentially stated in \cite{Mu72b}; see also references therein. 

Since, in \cite[Theorems 1 and 2]{Mu72b}, the weight functions are finite almost everywhere, the characterization conditions provided there, according to the convention established after \cite[Theorem 1]{Mu72b}, can be reformulated as follows: for any weight function $w \colon (0,\infty) \to [0, \infty]$, the operator $P$ (resp. $Q$) is {\it bounded} on $L^1_w$ if and only if
\begin{equation}\label{Mu con P}
\sup_{t>0} \left[\esssup_{0<r\leq t} \frac{1}{w(r)} \int_t^\infty \frac{w(s)}{s} \ud s \right] < \infty 
\end{equation}
(resp. 
\begin{equation}\label{Mu con Q}
\sup_{t>0} \left[\esssup_{r\geq t} \frac{1}{rw(r)} \int_0^t w(s) \ud s \right]< \infty).
\end{equation}
(Similar statement in the case of $L^\infty$ estimates can be extracted from \cite[Theorem 3]{Mu72b}). 
Since we are working with weight functions with values in $(0,\infty)$ and we are rather interested in the mapping properties of $P$ and $Q$, 
 i.e. $P L^1_w, QL^1_w  \subset L^1_w$, the expression 
$ \esssup_{0<r\leq t} \frac{1}{w(r)}$ (resp. $\esssup_{r\geq t} \frac{1}{rw(r)}$) can be replaced with $ \frac{1}{w(t)}$ (resp. $\frac{1}{tw(t)}$). It can be read from the proof provided in \cite{Mu72b}. For the convenience of the reader we provide a direct proof here. 

\begin{lemma}[{\cite[Theorems 1 and 2]{Mu72b}}]\label{P ineq eqiv} Let $w$ be a weight function on $(0, \infty)$. Then, the following statements are true:
\begin{itemize}
\item [(i)] The operator $P$ $($resp. $Q$$)$ is bounded on $L^1_w$ if and only if
\begin{equation}\label{P on l1}
[w]_{P, L^1}:=\es_{t>0} \frac{1}{w(t)} \int_t^\infty\frac{w(s)}{s} \ud s <\infty 
\end{equation}
$\Big($resp. 
\begin{equation}\label{Q on l1}
[w]_{Q, L^1} := \es_{t>0} \frac{1}{tw(t)}\int_0^t w(s) \ud s <\infty\Big).
\end{equation}

\item [(ii)] The operator $P$ $($resp. $Q$$)$ is bounded on $L^\infty_w$ if and only if 
\begin{equation}\label{P on l infty}
[w]_{P,L^{\infty}} := \es_{t>0} \frac{w(t)}{t} \int_0^t \frac{1}{w(s)} \ud s < \infty
\end{equation}
$\Big($resp. 
\begin{equation} \label{Q on l infty}
[w]_{Q,L^{\infty}} := \es_{t>0}  w(t) \int_t^\infty \frac{1}{sw(s)} \ud s < \infty\Big). 
\end{equation}
\end{itemize}
\end{lemma}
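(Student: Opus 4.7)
The plan is to treat each equivalence by Fubini--Tonelli for the sufficient direction and a test-function computation for necessity. For the sufficiency of \eqref{P on l1}, I would swap the order of integration in
\[
\int_0^\infty |Pf(t)|\,w(t)\,\ud t \leq \int_0^\infty \int_0^t |f(s)|\,\ud s\,\frac{w(t)}{t}\,\ud t = \int_0^\infty |f(s)|\Bigl(\int_s^\infty\frac{w(t)}{t}\,\ud t\Bigr)\ud s
\]
and invoke \eqref{P on l1}; the same Tonelli exchange handles $Q$ on $L^1_w$ via \eqref{Q on l1}. For (ii), the factorization $|f(s)|=(|f(s)|w(s))\cdot w(s)^{-1}$ yields the pointwise bounds $|Pf(t)|\leq \|f\|_{L^\infty_w}\cdot \tfrac{1}{t}\int_0^t\tfrac{\ud s}{w(s)}$ and $|Qf(t)|\leq \|f\|_{L^\infty_w}\int_t^\infty\tfrac{\ud s}{sw(s)}$, and multiplying by $w(t)$ gives the operator-norm bounds from \eqref{P on l infty} and \eqref{Q on l infty}.

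For the necessity in (i), I would fix a Lebesgue point $a>0$ of the (locally integrable) weight $w$ and test with the indicator $f_h:=\chi_{(a,a+h)}$ for $h>0$. In the $P$ case, the pointwise estimate $Pf_h(t)\geq \tfrac{h}{t}\chi_{(a+h,\infty)}(t)$ together with the assumed boundedness of $P$ produces
\[
h\int_{a+h}^\infty\frac{w(t)}{t}\,\ud t\leq \|P\|\int_a^{a+h}w(s)\,\ud s;
\]
dividing by $h$ and letting $h\to 0\plus$ yields $\int_a^\infty\tfrac{w(t)}{t}\ud t\leq \|P\|\,w(a)$. For $Q$, the analogous lower bound $Qf_h(t)\geq \log(1+h/a)$ on $(0,a)$, after division by $h$ and the limit $h\to 0\plus$ (using $\log(1+h/a)/h\to 1/a$), gives $\tfrac{1}{a}\int_0^a w(t)\ud t\leq \|Q\|\,w(a)$, which is \eqref{Q on l1}. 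For the necessity in (ii), the natural test is the truncation $f_n:=\chi_{(0,n)}/w$, which satisfies $\|f_n\|_{L^\infty_w}\leq 1$; the uniform estimate $\|Pf_n\|_{L^\infty_w}\leq \|P\|$ combined with monotone convergence as $n\to\infty$ produces \eqref{P on l infty}, and the parallel argument for $Qf_n$ yields \eqref{Q on l infty}.

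The only genuinely delicate point is the passage from the integrated test-function inequality to the pointwise (essential-supremum) bound on $w$ appearing in $[w]_{P,L^1}$ and $[w]_{Q,L^1}$; this rests on the Lebesgue differentiation theorem applied to $w$, whose local integrability on $\R_+$ can be forced from the boundedness hypothesis by testing against compactly supported simple functions. Beyond this, the remaining book-keeping consists of checking measurability of the auxiliary functions so that Tonelli applies, which is immediate once $w$ is measurable on $\R_+$.
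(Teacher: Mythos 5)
Your proposal is correct, and on the sufficiency side it is essentially the paper's argument in a leaner form: the paper also reduces everything to the Tonelli exchange, but routes it through the K\"othe dual $\Phi'=L^\infty(w^{-1})$ and the Lorentz--Luxemburg theorem $(\Phi')'=\Phi$, whereas you test directly against $w$ itself and read off $\|Pf\|_{L^1_w}\leq [w]_{P,L^1}\|f\|_{L^1_w}$; the duality detour buys nothing here. For part (ii) the paper does not repeat the computation at all: it observes that boundedness of $P$ (resp.\ $Q$) on $L^1_w$ is equivalent to boundedness of $Q$ (resp.\ $P$) on $L^\infty(w^{-1})$ and deduces (ii) from (i), while your pointwise bound $|Pf(t)|\leq \|f\|_{L^\infty_w}\,t^{-1}\int_0^t w(s)^{-1}\ud s$ is an equally quick direct route. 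The genuine divergence is in the necessity direction: the paper explicitly omits it, citing Muckenhoupt's Theorems 1 and 2 and noting that the converse is never used later, whereas you supply a self-contained argument via the indicators $\chi_{(a,a+h)}$, division by $h$, and the Lebesgue differentiation theorem (for (i)), and the truncated test functions $\chi_{(0,n)}/w$ (for (ii)). That argument is sound; the only points needing a little extra care are the ones you already flag, namely that one must first extract local integrability of $w$ (for (i)) and of $1/w$ (for (ii)) from the boundedness hypothesis --- e.g.\ by testing with $\min(1,1/w)\chi_{(a,a+h)}$, respectively $\chi_{(0,n)}\min(m,1/w)$, before the clean indicators become admissible --- which is precisely the bookkeeping that led the authors to defer to Muckenhoupt rather than write it out.
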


\begin{proof} (i) For a weight function $w$ let $\Phi :=  L^1_w$. 
Note that the K\"othe dual $\Phi'$ of $\Phi$ ($\Phi$ is considered as a Banach function space over $(\R_+, \ud t)$) is $L^\infty(w^{-1})$. 

First, note that the condition $[w]_{P,L^1}<\infty$, implies that $\Phi \subset L^1_{loc} =  D_{max}(P) $. Indeed, let $f \in \Phi$, then for each $\tau>0$, since by our assumption $w(s)>0$ for all $s>\tau$, we can write
\begin{align*}
\int_0^{\tau} |f(t)| \ud t & \leq [w]_{P,L^1}  \int_0^\tau  \left(\int_t^\infty\frac{w(s)}{s} \ud s\right)^{-1} |f(t)| w(t) \ud t\\
& \leq [w]_{P,L^1}  \left(\int_\tau^\infty\frac{w(s)}{s} \ud s\right)^{-1} \int_0^\tau   |f(t)| w(t) \ud t < \infty.
\end{align*} 

Now, for all $g\in \Phi'$ with $\|g\|_{\Phi'} = 1 $, i.e. $|g(s)| \leq w(s)$  for a.e. $s>0$, and for every $f\in \Phi$, since $(0,\infty)^2\ni (t,s) \mapsto \frac{|f(s)g(t)|}{t}\chi_{(0,t)(s)}$ is measurable, by Tonelli's theorem, we get
\begin{align*}
\int_0^\infty | Pf(t) g(t) | \ud t & \leq  \int_{0}^\infty \int_{0}^\infty \frac{|f(s)|}{t} \chi_{(0,t)}(s)  \ud s |g(t)|  \ud t\\
& = \int_{0}^\infty \int_{s}^\infty \frac{|g(t)|}{t} \ud t |f(s)|  \ud s\\
& \leq  \int_{0}^\infty \int_{s}^\infty \frac{w(t)}{t} \ud t |f(s)|  \ud s\\
& \leq  [w]_{P, L^1} \int_{0}^\infty |f(s)| w(s) \ud s.
\end{align*}  
By the Lorentz-Luxemburg theorem, $(\Phi ')' = \Phi$, therefore, $\|Pf\|_{\Phi}\leq [w]_{P,L^1} \|f\|_\Phi$ for all $f\in \Phi$.

To prove the statement for $Q$, first note that $[w]_{Q,L^1}$ yields $\Phi \subset D_{max}(Q)$. Indeed, for $\tau>0$, since $0< w(s)$ for all $s \in (0,\tau)$, we obtain that 
\begin{align*}
\int_\tau^\infty |f(t)| \frac{\ud t}{t} &  \leq [w]_{Q,L^1} \int_\tau^\infty 
\left(\int_0^t w(s) \ud s\right)^{-1} |f(t)| w(t) \ud t < \infty.
\end{align*}

Similarly as above, for any $g\in \Phi '$ with $\|g\|_{\Phi} \leq 1$ and any $f\in \Phi$, by Tonelli's theorem,  we get

 \begin{align*}
\int_0^\infty | Qf(t) g(t) | \ud t & = \int_{0}^\infty \int_{0}^\infty \frac{|f(s)|}{s} \chi_{(t, \infty)}(s)  \ud s |g(t)|  \ud t\\
& = \int_{0}^\infty \frac{1}{s} \int_{s}^\infty |g(t)| \ud t |f(s)|  \ud s\\
& \leq  \int_{0}^\infty \frac{1}{s} \int_{s}^\infty w(t) \ud t |f(s)|  \ud s\\
& \leq  [w]_{Q, L^1} \int_{0}^\infty |f(s)| w(s) \ud s.
\end{align*}  
Again, the Lorentz-Luxemburg theorem gives the desired boundedness of $Q$.

For the converse implication, recall that by \cite[Theorem 1 and 2]{Mu72b}, the boundedness of $P$ and $Q$ on $L^1_w$ implies respectively the conditions \eqref{Mu con P} and \eqref{Mu con Q} (even for any measurable function $w$ on $(0,\infty)$ with values in $[0,\infty]$). Since those conditions imply obviously the corresponding ones stated in (i), and since  in the sequel we do not use this converse implication in our proofs, its direct proof is omitted here. The proof of (i) is complete. 

 For the proof of (ii) note that the boundedness of $P$ (respectively, $Q$) on $L^1_w$ is equivalent to the boundedness of $Q$ (respectively, $P$) on $L^\infty(w^{-1})$. Therefore, (ii) follows form (i). It completes the proof.
\end{proof}

The following consequence of Lemma \ref{P ineq eqiv}  provides a simple (in its form) condition that characterizes weight functions for which the sum operator $P+Q$ (called the Calder\'on operator) is bounded on weighted $L^1$ (or $L^\infty$) spaces. We formulate in terms of these conditions our main results stated in Section \ref{sect DaP-G}.

\begin{lemma}\label{P+Q ineq} Let $w$ be a weight function on $(0,\infty)$. 
Then the following assertions hold. 
\begin{itemize}
\item [(i)] The operators $P$ and $Q$ are bounded on $L^1_w$ if and only if 
\begin{equation}\label{P+Q on L1}
[w]_{P+Q,L^1}:= \es_{t> 0} \frac{1}{w(t)}\int_0^\infty \frac{w(s)}{t+s} \ud s < \infty. 
\end{equation}
\item [(ii)] The operators $P$ and $Q$ are bounded on $L^\infty_w$ if and only if  $P$ and $Q$ are bounded on $L^1_{w^{-1}}$ if and only if 
\begin{equation}\label{P+Q on Linfty}
[w]_{P+Q,L^\infty}:= \es_{t> 0} {w(t)}\int_0^\infty \frac{1}{t+s}\frac{1}{w(s)} \ud s < \infty.  
\end{equation}
\end{itemize}
\end{lemma}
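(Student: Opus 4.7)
The strategy is to reduce everything to the two one-sided estimates in Lemma \ref{P ineq eqiv} by splitting the integral $\int_0^\infty \frac{w(s)}{t+s}\,\ud s$ at $s=t$ and observing that on each half the kernel $\frac{1}{t+s}$ is comparable to the simpler kernel that appears in $[w]_{P,L^1}$ or $[w]_{Q,L^1}$. Concretely, for every $s,t>0$ one has the two-sided bound
\[
\tfrac{1}{2}\bigl(\tfrac{1}{t}\chi_{(0,t)}(s)+\tfrac{1}{s}\chi_{[t,\infty)}(s)\bigr)\le \tfrac{1}{t+s}\le \tfrac{1}{t}\chi_{(0,t)}(s)+\tfrac{1}{s}\chi_{[t,\infty)}(s),
\]
simply because $\max(t,s)\le t+s\le 2\max(t,s)$.

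For (i), I would integrate this pointwise inequality against $w(s)$ and divide by $w(t)$ to obtain
\[
\tfrac{1}{2}\bigl([w]_{Q,L^1}(t)+[w]_{P,L^1}(t)\bigr)\le \tfrac{1}{w(t)}\!\int_0^\infty\!\!\tfrac{w(s)}{t+s}\,\ud s\le [w]_{Q,L^1}(t)+[w]_{P,L^1}(t),
\]
where $[w]_{P,L^1}(t):=\tfrac{1}{w(t)}\int_t^\infty\tfrac{w(s)}{s}\,\ud s$ and $[w]_{Q,L^1}(t):=\tfrac{1}{tw(t)}\int_0^t w(s)\,\ud s$ are the pointwise quantities whose essential suprema are $[w]_{P,L^1}$ and $[w]_{Q,L^1}$ respectively. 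Taking essential suprema over $t>0$ then yields
\[
\tfrac{1}{2}\bigl([w]_{P,L^1}+[w]_{Q,L^1}\bigr)\le [w]_{P+Q,L^1}\le [w]_{P,L^1}+[w]_{Q,L^1},
\]
and the claim follows from Lemma \ref{P ineq eqiv}(i).

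For (ii), the first equivalence is a duality observation already used at the end of the proof of Lemma \ref{P ineq eqiv}: $P$ is bounded on $L^\infty_w$ if and only if its formal adjoint $Q$ is bounded on the K\"othe dual $L^1_{w^{-1}}$, and analogously for $Q$ on $L^\infty_w$ versus $P$ on $L^1_{w^{-1}}$. With that in hand, the second equivalence is obtained by applying the already proved part (i) to the weight $w^{-1}$, which turns the condition $[w^{-1}]_{P+Q,L^1}<\infty$ into exactly $[w]_{P+Q,L^\infty}<\infty$.

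The only delicate point I anticipate is technical rather than conceptual: keeping control of the essential suprema when passing from pointwise to integrated inequalities, and making sure that the argument does not implicitly assume $w$ is finite or bounded away from zero on any fixed interval. This is handled by the same kind of reasoning used in the proof of Lemma \ref{P ineq eqiv}, namely by working directly with the given definitions of $[w]_{P,L^1}$ and $[w]_{Q,L^1}$ (in which the essential supremum is taken over all $t>0$ after division by $w(t)$).
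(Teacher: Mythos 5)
Your proposal is correct and follows essentially the same route as the paper: both split the Stieltjes integral at $s=t$, use $\max(t,s)\le t+s\le 2\max(t,s)$ to compare the kernel with those of $P$ and $Q$, and then invoke Lemma \ref{P ineq eqiv}(i); your treatment of (ii) via the K\"othe duality $L^\infty_w\leftrightarrow L^1_{w^{-1}}$ and an application of (i) to $w^{-1}$ matches what the paper means by ``the same argument.''
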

\begin{proof}
Note that 
\begin{align*}
\frac{1}{w(t)}\int_0^\infty \frac{1}{t+s} w(s) \ud s & = 
\frac{1}{w(t)} \int_0^{t} \frac{1}{t+s} w(s) \ud s + \frac{1}{w(t)} \int_{t}^\infty \frac{1}{t+s} w(s) \ud s \\
\nonumber &\leq  \frac{1}{tw(t)} \int_0^{t} w(s) \ud s + \frac{1}{w(t)} \int_{t}^\infty \frac{w(s)}{s} \ud s\\
\nonumber & \leq [w]_{P,L^1} + [w]_{Q,L^1}
\end{align*}
On the other hand, since for every $t>0$, $\inf_{s\geq t} \frac{s}{s+t} = \inf_{0<s\leq t} \frac{t}{t+s} = \frac{1}{2}$, we easily get 
\begin{align*}
\frac{1}{w(t)}\int_0^\infty \frac{1}{t+s} w(s) \ud s   & = 
\frac{1}{w(t)} \int_0^{t} \frac{1}{t+s} w(s) \ud s + \frac{1}{w(t)} \int_{t}^\infty \frac{1}{t+s} w(s) \ud s \\
& \geq \frac{1}{2} \frac{1}{tw(t)} \int_0^{t} w(s) \ud s + \frac{1}{2}\frac{1}{w(t)} \int_{t}^\infty \frac{w(s)}{s}  \ud s \\
\end{align*}

Therefore, $2 [w]_{P+Q, L^1} \geq [w]_{P,L^1} + [w]_{Q,L^1}$ Lemma \ref{P ineq eqiv}(i) completes the proof of (i). 
The proof of (ii) follows exactly the same argument. 
\end{proof}

\begin{remark} (a) Recall that the function $\mathcal{S} w(t) = \int_0^\infty \frac{w(s)}{t+s} \ud s$, $t>0$, is called the {\emph{Stieltjes transform}} of the function $w$. Therefore, the statement (i) of the above result can be rephrased as follows: the Calder\'on operator $P+Q$ is bounded on $L^1_w$ if and only if Stieltjes' transform of $w$ is up to a constant less than $w$ on $\R_+$. It is readily seen that for non-negative $f$, $\cS f \leq  Pf+Qf\leq 2  \cS f$ a.e. on $\R_+$. 
\end{remark}

The following corollary shows that for the power weights $w_\mu(t): = t^{-\mu}$ $(t>0)$ with $\mu \in (0, 1)$, the quantity $[w]_{P+Q,L^1}$ can be computed exactly. 

\begin{corollary}\label{P+Q for powers} Let $w_\mu(t): = t^{-\mu}$ $(t>0)$ with $\mu \in (0, 1)$.  Then, $[w]_{P+Q,L^1} = \frac{\pi}{\sin(\pi\mu)}$ and, consequently, $P+Q$ is bounded on $L^1_{w_\mu}$ and on $L^\infty(w_\mu^{-1})$.
\begin{proof} Let $B$ stand for the beta function. Then, the statement follows from the following
\begin{align*}
t^{\mu} \int_0^{\infty} \frac{1}{s^{\mu}(t+s)} ds & = \int_0^{\infty}\frac{s^{-\mu}}{1 + s} ds = B(1-\mu,\mu) = \frac{\pi}{\sin(\pi\mu)}
\end{align*}
and Lemma \ref{P+Q ineq}.
\end{proof}
\end{corollary}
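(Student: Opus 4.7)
The plan is to reduce the computation of $[w_\mu]_{P+Q,L^1}$ to a standard Beta-integral evaluation and then invoke Lemma \ref{P+Q ineq}(i) (together with the reciprocal observation in (ii)) to get the boundedness conclusions for free. Since the definition gives
\[
[w_\mu]_{P+Q,L^1} = \es_{t>0}\, \frac{1}{w_\mu(t)} \int_0^\infty \frac{w_\mu(s)}{t+s}\, ds
= \es_{t>0}\, t^{\mu} \int_0^\infty \frac{s^{-\mu}}{t+s}\, ds,
\]
the first task is to recognise that the inner integral is, up to rescaling, independent of $t$.

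First, I would perform the substitution $s = t u$ (valid because $t>0$ and $\mu \in (0,1)$, which guarantees integrability at both endpoints: $s^{-\mu}$ is integrable near $0$ since $\mu<1$, and $s^{-\mu}/(t+s) \sim s^{-\mu-1}$ at infinity since $\mu>0$). This yields
\[
t^{\mu} \int_0^\infty \frac{s^{-\mu}}{t+s}\, ds = \int_0^\infty \frac{u^{-\mu}}{1+u}\, du,
\]
which is in particular independent of $t$, so the essential supremum over $t$ is simply the value of this integral.

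Next, I would identify the resulting integral as a standard representation of the Beta function, namely
\[
\int_0^\infty \frac{u^{-\mu}}{1+u}\, du = B(1-\mu, \mu) = \Gamma(1-\mu)\Gamma(\mu),
\]
and then apply Euler's reflection formula $\Gamma(1-\mu)\Gamma(\mu) = \pi/\sin(\pi\mu)$, valid for $\mu \in (0,1)$, to obtain the claimed equality $[w_\mu]_{P+Q,L^1} = \pi/\sin(\pi\mu)$. Since this quantity is finite for every $\mu\in(0,1)$, Lemma \ref{P+Q ineq}(i) immediately yields the boundedness of $P+Q$ on $L^1_{w_\mu}$. For the corresponding assertion on $L^\infty(w_\mu^{-1})$, I would appeal to the equivalence recorded in Lemma \ref{P+Q ineq}(ii): the boundedness on $L^\infty_{w}$ is equivalent to the boundedness on $L^1_{w^{-1}}$, and with $w := w_\mu^{-1}$ we have $w^{-1} = w_\mu$, so the $L^\infty(w_\mu^{-1})$ claim reduces to the $L^1_{w_\mu}$ claim just established.

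There is essentially no genuine obstacle here; the only point to be careful about is the range condition $\mu \in (0,1)$, which is needed both for the convergence of the Beta integral and for the applicability of the reflection formula (and it is indeed consistent with the hypothesis of the corollary).
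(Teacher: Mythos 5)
Your proposal is correct and follows essentially the same route as the paper: the rescaling $s=tu$ reduces the supremum to the Beta integral $B(1-\mu,\mu)=\pi/\sin(\pi\mu)$, after which Lemma \ref{P+Q ineq} gives both boundedness claims. You merely spell out the substitution, the convergence conditions, and the reduction of the $L^\infty(w_\mu^{-1})$ case via Lemma \ref{P+Q ineq}(ii) more explicitly than the paper does.
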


We conclude this subsection with an extension of Proposition \ref{represent for K}(ii). Roughly, we show that under an additional assumption on the parameter $\Phi = L^1_w$ the corresponding interpolation functor $K_\Phi$ admits a further  equivalent description, which simplifies that given in Proposition \ref{represent for K}(ii). We start with necessary preparatory results on the $J$-method and refer the reader to \cite{BeLo76} or \cite{BrKr91}  for the background.

Recall that if $\rho$ is a~quasi-concave function and $\xo=(X_0, X_1)$ is a Banach couple, then
$J_{\rho, 1}(\xo)$ consists of all $x\in X_0 + X_1$ which can be represented by
\begin{align*}
x = \int_0^\infty u(t) \frac{dt}{t} \quad \text{convergence in \, $X_0 + X_1$}\,,
\end{align*}
where $u\colon (0, \infty) \to X_0 + X_1$ is strongly measurable with values in $X_0\cap X_1$ and
\[
\int_0^\infty \frac{J(t, u(t): \xo)}{\rho(t)} \frac{dt}{t} <\infty\,.
\]

Let $J_{\rho, 1}(\xo)$ be equipped with the norm
\[
\|x\|_{\rho, 1} = \inf \int_0^\infty \frac{J(t, u(t); \xo)}{\rho(t)} \frac{dt}{t}\,,
\]
where the infimum is taken over all $u$ such that $x= \int_0^\infty u(t) \frac{dt}{t}$ with the mentioned properties. Then, $J_{\rho, 1}$ is an exact interpolation functor. 
Here, $J\colon (0, \infty) \times (X_0 \cap X_1) \to [0, \infty)$ is the so called the $J$-functional defined by
\[
J(t, x; \xo):= \max(\|x\|_{X_0}, \|x\|_{X_1}), \quad\, t>0,\,\, x \in X_0 \cap X_1\,.
\]

In what follows we will use the well-known and easily verified maximal property which states that for any exact 
interpolation functor $F$ with characteristic function $\varphi$ the following continuous inclusion holds 
for any Banach couple $\xo$ with norm less than or equal to $1$
\[
J_{\rho, 1}(\xo) \hookrightarrow F(\xo)\,,
\]
where $\rho(t):= \varphi_{*}(1, t)$ and $\varphi_{*}(s, t):= 1/\varphi(1/s, 1/t)$ for all $s, t>0$.

Recall also that the {\it lower} and \emph{upper dilation indices} of a~quasi-concave function $\phi \colon \R_+ \to \R_+$ 
are given by
\[
\alpha_{\phi} := \lim_{t\to 0\plus} \frac{\ln s_{\phi}(t)}{\ln
t}, \quad\, \beta_{\phi} := \lim_{t \to \infty}\frac{\ln s_{\phi}(t)}{\ln t},
\]
respectively, where $s_{\phi}(t) := \sup_{r>0}\frac{\phi(rt)}{\phi(r)}$ for all $t>0$. Clearly, $0 \leq \alpha_\phi \leq 
\beta_\phi \leq 1$. A~quasi-concave function $\varphi$ is said to be {\it quasi-power} if  $0<\alpha_{\phi} \leq \beta_{\phi} <1$.

\medskip

To proceed, we require the following auxiliary result, which characterizes the nontriviality of the dilation indices of quasi-concave functions 
through integral conditions. While the fact that the nontriviality of the indices implies integral estimates is well known, we include a proof 
for the sake of completeness.

\begin{lemma}\label{indices}
For any quasi-concave function $\phi\colon \R_+\rightarrow~\R_+$ the following statements hold: 
\begin{itemize}
\item[{\rm(i)}] The estimate $\int_0^t \frac{\phi(s)}{s}\,\ud s \lesssim \phi(t)$ for all $t>0$ is equivalent to $\alpha_{\phi}>0$.
\item[{\rm(ii)}] The estimate $\int_t^\infty \frac{\phi(s)}{s^2}\,\ud s \lesssim \frac{\phi(t)}{t}$ for all $t>0$ is equivalent to 
$\beta_{\phi}<1$.
\end{itemize}
\end{lemma}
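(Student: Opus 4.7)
The plan is to reduce both equivalences to standard Gronwall-type arguments applied to auxiliary monotone functions, handling the easy (forward) directions by exploiting the definition of the indices and the harder (converse) directions by bootstrapping from integral bounds to pointwise multiplicative bounds on $\phi$.

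For the forward implication in (i), I would fix any $\alpha \in (0, \alpha_\phi)$, use the definition of $\alpha_\phi$ to extract $C>0$ with $s_\phi(t) \leq C t^{\alpha}$ for all $t \in (0,1)$, and translate this into the pointwise bound $\phi(s) \leq C (s/t)^{\alpha}\,\phi(t)$ for all $0<s\leq t$. A direct integration then gives $\int_0^t \phi(s)/s\,\ud s \leq (C/\alpha)\,\phi(t)$. For the converse, set $F(t):=\int_0^t \phi(s)/s\,\ud s$; since quasi-concavity of $\phi$ forces $\phi(s)/s$ to be non-increasing in $s$, the crude bound $F(t)\geq \phi(t)$ combined with the hypothesis $F(t)\leq K\phi(t)$ shows that $F$ and $\phi$ are equivalent. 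Then $F'(t)=\phi(t)/t \geq F(t)/(Kt)$, so $(\log F)'(t)\geq 1/(Kt)$, yielding $F(t)/F(s) \geq (t/s)^{1/K}$ for $s\leq t$. Re-expressing this in terms of $\phi$ and rearranging via the substitution $t=r$, $s=\sigma r$ with $\sigma\in (0,1)$, I obtain $s_\phi(\sigma) \lesssim \sigma^{1/K}$, which forces $\alpha_\phi\geq 1/K>0$.

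For (ii), I would run the mirror-image argument. The forward direction proceeds by choosing $\beta\in(\beta_\phi,1)$, using $s_\phi(t)\leq C t^{\beta}$ for $t>1$ to produce the pointwise comparison $\phi(s) \leq C(s/t)^{\beta}\phi(t)$ for $s\geq t$, and integrating to obtain $\int_t^\infty \phi(s)/s^2\,\ud s \leq [C/(1-\beta)]\,\phi(t)/t$. For the converse, set $G(t):=\int_t^\infty \phi(s)/s^2\,\ud s$ and note that $\phi(s)\geq \phi(t)$ on $[t,2t]$ gives $G(t)\geq \phi(t)/(2t)$, so by the hypothesis $G(t)$ is equivalent to $\phi(t)/t$. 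Since $-G'(t)=\phi(t)/t^2 \geq G(t)/(Kt)$, integration of $(\log G)'\leq -1/(Kt)$ yields $G(t)/G(s)\leq (s/t)^{1/K}$ for $s\leq t$; passing back to $\phi$ via the equivalence $G\simeq \phi/(\cdot)$ produces $\phi(\tau r)/\phi(r)\lesssim \tau^{1-1/K}$ for $\tau\geq 1$, hence $\beta_\phi \leq 1-1/K<1$.

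The main obstacle is purely bookkeeping: tracking the direction of the inequalities and the correct scaling when translating between bounds on the dilation function $s_\phi(t)$ (which concerns $\phi$ at scales $rt$ versus $r$) and bounds on ratios $\phi(s)/\phi(t)$. Once the Gronwall step is carried out, the conclusion about $\alpha_\phi$ (resp.\ $\beta_\phi$) is immediate from $\log s_\phi(\sigma)/\log\sigma$ as $\sigma\to 0^+$ (resp.\ as $\sigma\to\infty$). No deep ingredients beyond quasi-concavity and the definition of the indices are required, and the argument is self-contained modulo the submultiplicativity of $s_\phi$, which justifies taking the limits that define $\alpha_\phi$ and $\beta_\phi$.
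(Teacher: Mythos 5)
Your argument is correct, and in its harder halves it follows a genuinely different route from the paper. The forward implications (nontrivial index $\Rightarrow$ integral bound) are essentially identical in both proofs: extract a power bound on $s_\phi$ near $0$ (resp.\ at $\infty$) and integrate. But for the converse in (i) the paper argues by contradiction: if $\alpha_\phi=0$ then $s_\phi(t)=1$ for all $t\in(0,1)$, which produces points $r_n$ with $\phi(r_n/n)\geq\tfrac12\phi(r_n)$ and hence a $\tfrac12\log n$ lower bound for $\sup_t \phi(t)^{-1}\int_0^t\phi(s)s^{-1}\,\ud s$, contradicting the hypothesis. You instead run a direct Gronwall-type argument on $F(t)=\int_0^t\phi(s)s^{-1}\,\ud s$: the two-sided comparison $\phi\leq F\leq K\phi$ (the lower bound using that $\phi(s)/s$ is non-increasing, which is exactly quasi-concavity, and which also gives continuity of $\phi$, so $F$ is $C^1$) turns $F'=\phi(t)/t$ into $(\log F)'\geq 1/(Kt)$ and yields the quantitative conclusion $s_\phi(\sigma)\leq K\sigma^{1/K}$, hence $\alpha_\phi\geq 1/K$. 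This is a real gain over the paper's qualitative contradiction: you get an explicit lower bound on the index in terms of the constant in the integral inequality. For (ii) the paper reduces to (i) via the involution $\psi(t)=t\phi(1/t)$ and the identity $s_\psi(t)=ts_\phi(1/t)$, whereas you repeat the mirror-image Gronwall argument on $G(t)=\int_t^\infty\phi(s)s^{-2}\,\ud s$; both are fine, the paper's reduction is shorter while yours again produces the explicit bound $\beta_\phi\leq 1-1/K$. The only points worth making explicit in a write-up are the continuity of quasi-concave functions (or a remark that $F$, $G$ are locally absolutely continuous so the differential inequality holds a.e., which suffices for the integrated form) and the fact that the limits defining $\alpha_\phi$, $\beta_\phi$ exist by submultiplicativity of $s_\phi$, which you already note.
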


\begin{proof}
(i)  Suppose, for the sake of contradiction, that 
\[
\int_0^t \frac{\phi(s)}{s}\,\ud s \lesssim \phi(t), \quad t>0\,,
\]
and $\alpha_{\phi} = 0$. Since
\[
\alpha_{\phi} = \sup_{0<t<1} \frac{\ln s_\phi(t)}{\ln t}\,,
\]
it follows that for all $t\in (0, 1)$, one has $s_\phi(t)=1$. Consequently, for each positive integer $n$, 
there exists a~$r_n > 0$ such that  
\[
\frac{\phi(r_n/n)}{\phi(r_n)} \geq \frac{1}{2}\,.
\]  
This leads to the estimate  
\[
\sup_{t>0} \frac{1}{\phi(t)} \int_0^t \frac{\phi(s)}{s}\,\ud s   
\geq \frac{\phi(r_n/n)}{\phi(r_n)} \int_{r_n/n}^{r_n} \frac{\phi(s)}{s\,\phi(r_n/n)}\,\ud s
\geq \frac{1}{2} \log n
\]
for each $n\in \mathbb{N}$, which is a contradiction. Therefore, $\alpha_\phi>0$.

For the reverse implication, assume that  
$\alpha_{\phi} > 0$. By the definition, for any $\varepsilon \in (0, \alpha_{\phi})$ there exists a constant $K = K(\varepsilon) > 0$ such that  
\[
s_{\phi}(t) \leq K t^{\alpha_{\phi} - \varepsilon}, \quad\,  t \in (0,1]\,.
\]  
This implies that
\[
C_{\phi} := \int_0^1 \frac{s_{\phi}(r)}{r}\,\ud r 
\leq \frac{K}{\alpha_{\phi}- \varepsilon} < \infty\,.
\]  
Hence,  
\[
\int_0^t \frac{\phi(s)}{s}\,\ud s = \bigg(\int_0^1 \frac{\phi(tr)}{r\,\phi(t)}\,\ud r \bigg) \phi(t) \leq C_{\phi} \phi(t), \quad t > 0.
\]  
This completes the proof of (i).

\smallskip

(ii) Let $\psi(t) := t \phi(1/t)$ for all $t > 0$. A change of variable shows that the estimate  
$\int_t^\infty \frac{\phi(s)}{s^2} \, \ud s \lesssim \frac{\phi(t)}{t}$ for all $t>0$ is equivalent to the estimate  
$\int_0^{1/t} \frac{\psi(s)}{s}\,\ud s \lesssim \psi(1/t)$ for all $t > 0$, and consequently to  
\[
\int_0^t \frac{\psi(s)}{s}\,\ud s  \lesssim \psi(t), \quad\, t > 0\,.
\]  
Clearly, $\psi$ is a quasi-concave function. Thus, by (i), this estimate is equivalent to $\alpha_{\psi} > 0$. 
Since $s_{\psi}(t) = t s_{\phi}(1/t)$  for all $t > 0$, it follows that  $\alpha_{\psi} = 1 - \beta_{\phi}$. This completes the proof.  
\end{proof}

\begin{proposition} \label{KJ}
Let $\Phi:=L^1_w$ be a parameter of the $K$-method such that operators $P$ and $Q$ are bounded on $\Phi$.  Let  $\Psi:= L^1_v$, where $v(t):= \varphi(1, 1/t)$ for all $t>0$ and 
$\varphi$ is the characteristic function of the functor $K_\Phi$.

Then, the following statements are true{\rm:}
\begin{itemize}
\item [{\rm(i)}] 
For any Banach couple $\xo = (X_0, X_1)$,
\[
K_\Phi(\xo) \simeq K_\Psi(\xo)\,,
\]
up to equivalence of norms depending only on $[w]_{P, L_1}$ and $[w]_{Q, L_1}$

\item[{\rm(ii)}] The operators $P$ and $Q$ are bounded on $L^1_v$.
\item[{\rm(iii)}] The function $\phi = \varphi(\cdot, 1)$ is quasi-power.
\end{itemize}
\end{proposition}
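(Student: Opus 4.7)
The plan is to first derive the explicit identity $v(t) = Pw(t) + Qw(t)$, then to deduce (i) by a Fubini computation that reduces everything to $L^1_w$-norms, and finally to obtain (ii) and (iii) as essentially immediate consequences of this identity together with Lemma~\ref{P ineq eqiv} and Lemma~\ref{indices}.

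For the explicit form of $v$, I compute the characteristic function of $K_\Phi$ with $\Phi = L^1_w$. Using $K(u,r,s\R,t\R) = \min(s,ut)|r|$ in the one-dimensional couple, one obtains
\[
\varphi(s,t) = \int_0^\infty \min(s/u, t)\, w(u)\, \ud u,
\]
and splitting the integral at $u = s/t$ gives $\varphi(1, 1/t) = Pw(t) + Qw(t)$, while the positive $1$-homogeneity of $\varphi$ yields $\phi(t) := \varphi(t,1) = t\,v(t)$.

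The heart of the argument is (i). For any $x \in K_\Phi(\xo) + K_\Psi(\xo)$ set $F(t) := K(t,x;\xo)/t$; applying Fubini's theorem to each of the two summands in $\int_0^\infty F(t) v(t)\,\ud t$ yields
\[
\|x\|_{K_\Psi(\xo)} = \|PF\|_{L^1_w} + \|QF\|_{L^1_w}.
\]
Since $K(\cdot,x)/(\cdot)$ is non-increasing, $F \leq PF$ pointwise, so $\|x\|_{K_\Phi(\xo)} = \|F\|_{L^1_w} \leq \|x\|_{K_\Psi(\xo)}$; the reverse estimate is immediate from the hypothesized boundedness of $P$ and $Q$ on $L^1_w$, with resulting constants depending only on $[w]_{P,L^1}$ and $[w]_{Q,L^1}$.

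For (ii), Lemma~\ref{P ineq eqiv} translates the hypotheses into the pointwise estimates $Pw \leq [w]_{Q,L^1} w$ and $Qw \leq [w]_{P,L^1} w$. Applying $P$ and $Q$ term by term to $v = Pw + Qw$ and using their positivity gives
\[
Pv \leq ([w]_{P,L^1} + [w]_{Q,L^1})\, v, \qquad Qv \leq ([w]_{P,L^1} + [w]_{Q,L^1})\, v,
\]
which by Lemma~\ref{P ineq eqiv} applied to the weight $v$ means $Q$ and $P$ are bounded on $L^1_v$. Finally, (iii) is a direct translation via Lemma~\ref{indices}: the function $\phi(t) = tv(t)$ is concave on $(0,\infty)$ (being an integral of the concave functions $t \mapsto \min(t/u,1)$ against the positive measure $w\,\ud u$), hence quasi-concave, and via the identity $\phi(t) = tv(t)$ the integral criteria for $\alpha_\phi>0$ and $\beta_\phi<1$ become $Pv \lesssim v$ and $Qv \lesssim v$, both already established in (ii). The only step requiring any genuine insight is the Fubini identity behind (i); the remaining manipulations of $P$ and $Q$ are entirely mechanical.
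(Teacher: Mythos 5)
Your proof is correct, and parts (ii) and (iii) follow essentially the same path as the paper (the explicit identity $v=Pw+Qw$, the pointwise bounds coming from the finiteness of $[w]_{P,L^1}$ and $[w]_{Q,L^1}$, and Lemma \ref{indices} applied to $\phi(t)=tv(t)$). The genuine difference is in part (i). The paper proves $K_\Phi(\xo)\hookrightarrow K_\Psi(\xo)$ exactly as you do, via $v\leq([w]_{P,L^1}+[w]_{Q,L^1})w$, but for the reverse inclusion it invokes the $J$-method: with $\rho(t)=1/\varphi(1,1/t)$ it uses the maximal property $J_{\rho,1}(\xo)\hookrightarrow K_\Phi(\xo)$ and then the fundamental lemma of interpolation theory to produce a representation $x=\int_0^\infty u(t)\,\frac{\ud t}{t}$ with $J(t,u(t);\xo)\leq\gamma K(t,x;\xo)$, giving $K_\Psi(\xo)\hookrightarrow J_{\rho,1}(\xo)\hookrightarrow K_\Phi(\xo)$ with a universal constant $\gamma<4$. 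You instead observe that Tonelli applied to $v=Pw+Qw$ gives the exact identity $\|x\|_{K_\Psi}=\|PF\|_{L^1_w}+\|QF\|_{L^1_w}$ for $F(t)=t^{-1}K(t,x;\xo)$, and since $F$ is non-increasing (by concavity of $K(\cdot,x)$), $F\leq PF$ pointwise, yielding $\|x\|_{K_\Phi}\leq\|x\|_{K_\Psi}$ with constant $1$. Your route is more elementary (no $J$-functional, no fundamental lemma), gives a sharper constant in one direction, and as a by-product lets you read off (ii) directly from the formula for $v$ rather than by specializing (i) to the couple $(t\R,\R)$ as the paper does. The trade-off is that the paper's $J$-method detour also records the embedding $K_\Psi(\xo)\hookrightarrow J_{\rho,1}(\xo)$, which is of independent interest; your argument does not produce that. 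One small point worth making explicit if you write this up: the monotonicity $F\leq PF$ requires $K(\cdot,x;\xo)$ concave with nonnegative limit at $0^+$, which holds for every $x\in X_0+X_1$, so the identity and the inequality are valid in $[0,\infty]$ and the two set inclusions follow simultaneously with the norm estimates.
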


\begin{proof} (i) Note that $\varphi(1, 1/t)= \int_0^\infty \min\big(1, \frac{s}{t}\big) \frac{w(s)}{s}\,\ud s$ for all $t>0$. Thus, by Lemma \ref{P+Q ineq}, the boundedness of $P$ and  $Q$ on $L^1_w$ yields that for almost all $t>0$ one has
\[
v(t) = \varphi(1, 1/t) = \frac{1}{t}\,\int_0^t w(s)\,\ud s + \int_t^\infty \frac{w(s)}{s}\,\ud s \leq C w(t)
\]
with $C:= [w]_{P, L^1} + [w]_{Q, L^1}$. Consequently, $
K_\Phi(\xo) \hookrightarrow K_\Psi(\xo)$
with the norm of the inclusion map less than or equal to $C$.

For the inverse inclusion, let $\rho(t):= 1/\varphi(1, 1/t)$ for all $t>0$. 
Then, based on the aforementioned maximal property, we have $
J_{\rho, 1}(\xo) \hookrightarrow K_\Phi(\xo)\,.$ Thus, it is enough to show that 
\[
K_\Psi(\xo) \hookrightarrow J_{\rho, 1}(\xo)
\]
with the norm of the inclusion map independent of $\xo$. To see this fix $x \in K_\Psi(\xo)$. Note that from the quasi-concavity of  
$\varphi(1, \cdot)$, we have $\min(1, 1/t)\varphi(1, 1) \leq \varphi(1, 1/t)$ for all $t>0$, and hence  
\[
L^1(v(t)/t) \hookrightarrow L^1\big(\min(t^{-1}, t^{-2}))\,.
\]
This implies $\min(1, 1/t)K(t, x; \xo) \to 0$ as $t\to 0$ or as $t\to \infty$. Thus, applying the fundamental lemma of interpolation theory, 
we conclude the existence of a representation $x = \int_0^\infty u(t)\frac{dt}{t}$ (convergence in $X_0 + X_1$) with 
$u\colon (0, \infty) \to X_0 + X_1$ strongly measurable with values in $X_0 \cap X_1$, such that
\[
J(t, u(t); \xo) \leq \gamma K(t, x; \xo), \quad\, t>0\,,
\]  
where $\gamma$ is a universal constant; see \cite[Lemma 3.3.2]{BeLo76} with $\gamma <4$. 
In consequence, $x\in J_{\rho, 1}(\xo)$ with $|x|_{\rho, 1} \leq \gamma |x|_{K_\Psi(\xo)}$, and this completes the proof of(i).

(ii) Taking 
$\xo := (t\mathbb{R}, \mathbb{R})$ in (i), we conclude that 
\[
\int_0^\infty s^{-1}\min(t,s) \varphi(1, 1/s)\,ds \leq C \varphi(t, 1), \quad\, t>0\,. 
\]
Hence, by  Lemma \ref{P+Q ineq}(i),  
\begin{equation}\label{ineq for v}
\int_0^t v(s)\ud s \leq C t\,v(t), \quad\, \int_t^\infty \frac{v(s)}{s}\,\ud s \leq C \,v(t), \quad\, t>0
\end{equation} 
and this proves the statement (ii).

(iii) Clearly, the inequalities \eqref{ineq for v} are equivalent to 
\[
\int_0^t \frac{\phi(s)}{s} \ud s  \leq C \phi(t), \quad\, \int_t^\infty \frac{\phi(s)}{s^2} \ud s \leq C \frac{\phi(t)}{t}, \quad\, t>0\,.
\]
Therefore, Lemma \ref{indices} completes the proof.
\end{proof}

\subsection{The representation of interpolation spaces $K_\Phi(\cX,D_\cA)$}

The next preparatory result, Proposition \ref{Komastu rep}, is well-known for the parameters $\Phi$ corresponding to the classical interpolation functors $(\,\cdot\,)_{\theta, q}$ with $\theta \in (0,1)$ and $q\in [1,\infty]$; see e.g. \cite[Chapter 6.2]{Ha06}. For a more general class of $\Phi$ it can be extracted as a special case of \cite[Theorem 3.1]{ChKr16}; see also \cite[Remarks 3.2 and 3.3]{ChKr16}. For the reader's convenience, we give a direct, self-contained, alternative proof based on Lemma \ref{lem on density}.

\begin{proposition}\label{Komastu rep} Let $-\cA$ be a generator of a bounded analytic semigroup $\cT$ on a Banach space $\cX$. Let $\Phi$ be a Banach function space over $\R_+$ such that $\min(1, \frac{1}{(\cdot)}) \in \Phi$ and  the Hardy operator $P$ is bounded on $\Phi$. 

Then, 
\[
K_\Phi(\cX, D_\cA) = \left \{ x\in \cX:   \left[ (0,\infty)\ni t\mapsto \|\cA \cT(t)x \|_{\cX} \right]\in \Phi \right\}
\]
and 
\begin{equation}\label{equi norm}
\|x\|_{K_\Phi(\cX,D_\cA)} \simeq \|x\|_\cX + \| \cA \cT(\cdot) x \|_{\Phi(\cX)},   \qquad x\in K_\Phi(\cX,D_\cA).
\end{equation}

In particular, if, in addition,  $\cA$ is invertible, i.e. $0\in \rho(\cA)$, then 
\[
\|x\|_{K_\Phi(\cX,D_\cA)} \simeq  \| \cA \cT(\cdot) x  \|_{\Phi(\cX)} , \qquad x\in K_\Phi(\cX,D_\cA).
\]

Moreover, the part of $\cA$ in $K_\Phi(\cX,D_\cA)$ is again the generator of a bounded analytic semigroup $T$ on $K_\Phi(\cX,D_\cA)$, which is the restriction of $\cT$ to $K_\Phi(\cX,D_\cA)$. 
\end{proposition}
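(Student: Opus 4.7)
The approach rests on the identity $x - \cT(t)x = \cA\int_0^t \cT(s)x\,\ud s$ from Lemma~\ref{lem on density}, which (under the local integrability of $\|\cA\cT(\cdot)x\|_\cX$, ensured by $\Phi \hookrightarrow L^1_{loc}$) upgrades via Hille's theorem to $\|x - \cT(t)x\|_\cX \leq \int_0^t \|\cA\cT(s)x\|_\cX\,\ud s$. For the inclusion $\|x\|_\cX + \|\cA\cT(\cdot)x\|_{\Phi(\cX)} \lesssim \|x\|_{K_\Phi}$, I would take any decomposition $x = a + b$ with $b \in D_\cA$ and exploit $\|\cA\cT(t)\|_{\cL(\cX)} \leq M/t$ together with $\cA\cT(t)b = \cT(t)\cA b$ to obtain $t\|\cA\cT(t)x\|_\cX \leq M(\|a\|_\cX + t\|b\|_{D_\cA})$; taking the infimum over decompositions yields $t\|\cA\cT(t)x\|_\cX \leq M K(t,x)$ and hence $\|\cA\cT(\cdot)x\|_{\Phi(\cX)} \leq M\|x\|_{K_\Phi}$. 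For the $\|x\|_\cX$-term, I observe that $\|b\|_{D_\cA} \geq \|b\|_\cX$ forces $K(t,x) \geq \|x\|_\cX$ for every $t \geq 1$, so $\|x\|_\cX \chi_{[1,\infty)}(t)/t \leq t^{-1}K(t,x)$ pointwise; since $\chi_{[1,\infty)}/t \leq \min(1,1/t) \in \Phi$ and has positive $\Phi$-norm, this gives $\|x\|_\cX \lesssim \|x\|_{K_\Phi}$.

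For the reverse inclusion I decompose $x = (x - \cT(t)x) + \cT(t)x$ with $\cT(t)x \in D_\cA$, obtaining for $t \in (0,1)$
\[
t^{-1}K(t,x) \leq P(\|\cA\cT(\cdot)x\|_\cX)(t) + \|\cT(t)x\|_\cX + \|\cA\cT(t)x\|_\cX,
\]
while for $t \geq 1$ the trivial bound $K(t,x) \leq \|x\|_\cX$ yields $t^{-1}K(t,x) \leq \|x\|_\cX/t$. Taking $\|\cdot\|_\Phi$, invoking the boundedness of $P$ on $\Phi$ together with $\chi_{(0,1)}, \chi_{[1,\infty)}/t \leq \min(1,1/t) \in \Phi$, I conclude $\|x\|_{K_\Phi} \lesssim \|x\|_\cX + \|\cA\cT(\cdot)x\|_{\Phi(\cX)}$. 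In the invertible case, the additional estimate $\|\cT(t)x\|_\cX \leq \|\cA^{-1}\|_{\cL(\cX)}\|\cA\cT(t)x\|_\cX$ (valid since $\cT(t)x \in D(\cA)$) combines with Lemma~\ref{lem on density} to produce
\[
\|x\|_\cX \leq \|\cA^{-1}\|_{\cL(\cX)}\|\cA\cT(t)x\|_\cX + tP(\|\cA\cT(\cdot)x\|_\cX)(t), \quad t>0;
\]
restricting to $t \in (0,1)$ (so that $tP(f)(t) \leq P(f)(t)$), multiplying by $\chi_{(0,1)}(t)$ and applying $\|\cdot\|_\Phi$ produces $\|x\|_\cX \|\chi_{(0,1)}\|_\Phi \lesssim \|\cA\cT(\cdot)x\|_{\Phi(\cX)}$, absorbing the $\|x\|_\cX$-contribution into the right-hand side of \eqref{equi norm}.

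For the last assertion, the commutation $\cT(t)\cA = \cA\cT(t)$ on $D(\cA)$ forces $\cT(t)$ to act boundedly on $D_\cA$ with the same constant $M$ as on $\cX$; the functorial property of $K_\Phi$ then gives $\cT(t)|_{K_\Phi} \in \cL(K_\Phi)$ with $\sup_{t>0} \|\cT(t)\|_{\cL(K_\Phi)} \leq M$, and the semigroup law is inherited. The analogous interpolation applied to the resolvents $(\lambda + \cA)^{-1}$ (bounded from $\cX$ to $\cX$ and from $D_\cA$ to $D_\cA$) shows that they restrict to operators on $K_\Phi$; uniqueness of the Laplace transform identifies these restrictions as the resolvents of the part of $-\cA$, and the sectorial resolvent estimates carry over by interpolation, which yields a bounded analytic semigroup structure. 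The main technical point I expect to be delicate is precisely the split at $t=1$ in the reverse inclusion: the assumption that only $P$ (not $Q$) is bounded on $\Phi$ is exactly what handles the short-time integral average, while the long-time behaviour must be controlled by the crude bound $K(t,x) \leq \|x\|_\cX$ paired with $\chi_{[1,\infty)}/t \in \Phi$, and the invertibility improvement relies on turning this short-time estimate into a pointwise bound on $\|x\|_\cX$ itself.
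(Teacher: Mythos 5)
Your proposal is correct and follows essentially the same route as the paper's proof: the same two-sided Komatsu-type estimate obtained from the decomposition $x=(x-\cT(t)x)+\cT(t)x$ via Lemma \ref{lem on density} and Hille's theorem, the same elementary bounds $\min(1,t^{-1})\|x\|_\cX\le t^{-1}K(t,x)\le t^{-1}\|x\|_\cX$ handling the split between small and large $t$, and the same use of $\|\cT(t)x\|_\cX\le\|\cA^{-1}\|_{\cL(\cX)}\|\cA\cT(t)x\|_\cX$ in the invertible case. The only deviation is the final assertion, where you interpolate the semigroup and resolvent operators through the functor $K_\Phi$ rather than invoking, as the paper does, the equivalent norm \eqref{equi norm} together with the standard resolvent characterization of generators of analytic semigroups; both arguments are routine and correct.
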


\begin{proof}
First note that for every $x\in \cX$ and $t>0$ 
\begin{equation}\label{Komatsu}
\|\cA \cT(t) x\|_{\cX} \lesssim \frac{1}{t}K(t,x)\leq \frac{1}{t}\int_0^t \|\cA \cT(s) x\|_{\cX} \ud s + \|\cA \cT(t)x\|_{\cX} + \|\cT(t) x\|_{\cX}.
\end{equation}
In fact, for the first inequality note that for each $x = y+z \in \cX$ with $y\in \cX$ and $z\in D_\cA$ we have that 
\[
t\|\cA \cT(t) x\|_{\cX} \leq \sup_{s>0}\|s\cA \cT(s)\|_{\cL(\cX)} \|y\|_\cX + \sup_{s>0}\|\cT(s)\|_{\cL(\cX)}\, t \|\cA z\|_\cX. 
\]
For the second one, by Lemma \ref{lem on density}, for every $t>0$ and $x \in \cX$, $x - \cT(t) x = \cA \int_0^t \cT(s) x \ud s\in \cX$ and $\cT(t) x \in D_\cA$.  

Moreover, it is easily seen that for all $t>0$ and $x\in \cX$
\[
\min(1, {t^{-1}})\|x\|_\cX \leq t^{-1}K(t,x) \leq t^{-1} \|x\|_{\cX}.
\]

Combining this estimate with \eqref{Komatsu}, the uniform boundedness of $(\cT(t))_{t\geq 0}$ on $\cX$ and $P$ on $\Phi$, we get \eqref{equi norm}.  

Furthermore, if $\cA$ is invertible, then $\|\cT(t)x\|_\cX \leq \|\cA^{-1}\|_{\cL(\cX)} \|\cA \cT(t)x\|_{\cX}$ for all $t>0$. By \eqref{Komatsu}, it proves the second claim. The last one is a direct consequence of \eqref{equi norm} and the well-known characterization of generators of analytic semigroups; see e.g. \cite[Theorem 3.7.11]{ABHN01}.  
\end{proof}

Note that the expression 
\begin{equation}\label{homoge part}
[x]_{\Phi, \cA} :=  \| \cA \cT(\cdot) x  \|_{\Phi(\cX)}, \qquad x\in K_\Phi(\cX,D_\cA),
\end{equation}
is a seminorm on $K_\Phi(\cX,D_\cA)$, and in the case when $\cA$ is injective, it gives a norm, which is equivalent to $\|\cdot\|_{K_\Phi(\cX,D_\cA)}$ if  $0\in \rho(\cA)$.  We refer to $[\cdot]_{\Phi, \cA}$ as the {\it homogenous part} of the norm $\|\cdot\|_{K_\Phi(\cX,D_\cA)}$.

\subsection{Equivalent representation of homogenous part of interpolation norms }\label{subs H}

Note that \cite[Theorems 3.1 and 3.4]{ChKr16} provides further equivalent representations of the interpolation spaces $K_\Phi(\cX,D_\cA)$ and its norms.  
A close analysis of the proof of those results in fact shows that the homogeneous part \eqref{homoge part} admits equivalent expressions, which we discuss in this subsection.  

More precisely, if $\psi\neq 0$ is a holomorphic function on a sector $S_\omega:=\{z\in \C: z\neq 0, |\arg z|<\omega\}$, with $\omega>\omega_\cA$ (the sectoriality angle of $\cA$) such that one of the following conditions holds:
\begin{itemize}
\item [(A)] $\psi$, $(\cdot)^{-1} \psi\in \cE(S_\omega)$, $\lim_{z\rightarrow 0}z^{-1} \psi(z) \neq 0$, $\sup_{z\in S_\omega, s\geq 1} \left|\frac{\psi(sz)}{s\psi(z)} \right|<\infty$, and 
$P$ is bounded on $\Phi$ (\cite[Theorem 3.1]{ChKr16}), 
\item [(B)] $\psi$, $(\cdot)^{-1} \psi\in \cE(S_\omega)$, and $P$ and $Q$ are bounded on $\Phi$ (\cite[Theorem 3.4]{ChKr16}),
\end{itemize}
then 
\[
K_\Phi(\cX,D_\cA) = \{ x\in \cX: \left[(0,\infty)\ni t\mapsto t^{-1}\|\psi(t\cA)x\|_\cX \right]\in \Phi\}
\]
\begin{equation}\label{equiv}
[x]_{\Phi, \cA} :=  \| \cA \cT(\cdot) x  \|_{\Phi(\cX)} \simeq \|(\cdot)^{-1} \psi(\cdot \cA)x\|_{\Phi(\cX)}\,, \qquad x\in \cX.
\end{equation} 
Here, $\cE(S_\omega)$ denotes the subalgebra of the space $H^\infty(S_\omega)$ of bounded holomorphic functions on $S_\omega$, which is given by $\cE(S_\omega): = H^\infty_0(S_\omega) \oplus \langle(1+z)^{-1}\rangle \oplus \langle 1 \rangle$, where 
\[
H^\infty_0(S_\omega): =\{f\in H^\infty(S_\omega):\exists C, \alpha>0 \textrm{ s.t. } |f(z)|\leq C\min(|z|^\alpha, |z|^{-\alpha}) (z\in S_\omega)\}.
\]

For instance, each of the following functions  satisfies the above condition (A):
\begin{equation}\label{psi funct}
\psi_1(z)=e^{-z} - 1,\qquad \psi_2(z) = z(1+z)^{-1},\qquad \psi_3(z)= z(1+z)^{-2}.
\end{equation}

Below, we provide an extension of this statement for a special class of $\Phi$; see Proposition \ref{Haase equi} below.  We start with some preliminaries. For any Banach function space $\Phi$ over $(\R_+,\ud t)$ (not necessarily such that $P$ is bounded on it) and for any $\psi\in \cE(S_\omega)$ we put
\begin{equation}\label{cX spaces}
\cX_{\Phi, \psi}:= \cX_{\Phi, \psi, \cA}:= \{ x\in \cX: [x]_{\Phi, \psi, \cA}<\infty\},
\end{equation}
where 
\[
[x]_{\Phi,\psi}: =  [x]_{\Phi,\psi,\cA} := \|(\cdot)^{-1}\psi(\cdot \cA)x \|_{\Phi(\cX)} 
\]
Note that $[\cdot]_{\Phi, \cA}$ in \eqref{homoge part}  corresponds to $[\cdot]_{\Phi, \psi, \cA}$ with $\psi(z): = ze^{-z}$.

In the case of injective sectorial operators $\cA$ on Hilbert spaces $\cX$ and $\Phi = L^p(\R_+,t^{\mu} \ud t)$ with $p\in [1,\infty]$ and $\mu\in \R$, the corresponding spaces $\cX_{\Phi, \psi, \cA}$ for $\psi \in H^\infty_0(S_\omega)$ with some additional decaying properties at $0$ and $\infty$, have been developed and studied by McIntosh and his co-workers; see e.g. \cite{ADM96}. A systematized account of related results, rephrased already in a general setting of Banach spaces $\cX$, was provided by Haase in \cite[Section 6.4]{Ha06}, where the {\it homogeneous spaces} may be, in general, larger than $\cX_{\Phi, \psi, \cA}$ defined above. 

In particular, when $\cA$ is injective, an analysis of the proof of \cite[Theorem 6.4.2, p.150]{Ha06} shows that for $\Phi = L^p(t^\mu)$ ($\mu\in \R$, $p\in [1,\infty]$), for any $\psi$, $\gamma \in \cE(S_\omega)$ with  $z^{-\theta}\psi$, $z^{-\theta}\gamma\in H^\infty_0(S_\omega)$, where $\theta = 1-\frac{\mu+1}{p}$ if $p\in [1,\infty)$ and $\theta :=1-\mu$ if $p=\infty$, the semi-norms $[\cdot]_{\Phi, \psi}$ and $[\cdot]_{\Phi, \phi}$ are equivalent {\it on $\cX$}. It is not clear whether the general case (that is, when $\cA$ is not injective) can be reduced to the injective one by some direct argument. For this reason, below we provide a modification of the argument applied in the proof of \cite[Theorem 6.4.2]{Ha06}.

The main point of the extension of \cite[Theorem 6.4.2]{Ha06}, see   Proposition \ref{Haase equi}(i),  is to show that in the case of arbitrary sectorial operators $\cA$, and $p$, $\mu$, $\theta$ and $\Phi$ as above, for some $\psi, \gamma \in \cE(S_\omega)\setminus H^\infty_0(S_\omega)$ with $z^{-\theta} \psi$, $z^{-\theta} \gamma \in H^\infty_0(S_\omega)$  the equivalence of semi-norms $[\cdot]_{\Phi, \psi}$ and $[\cdot]_{\Phi, \gamma}$ still holds {\it on $\cX$}.

Moreover, the statement (iii) of Proposition \ref{Haase equi} shows that we should not expect such equivalence of the corresponding semi-norms if we only know $z^{-\theta} \psi$, $z^{-\theta} \gamma \in \cE(S_\omega)$ instead of $z^{-\theta} \psi$, $z^{-\theta} \gamma \in H^\infty_0(S_\omega)$. Specifically, it shows that the one of the main ingredients of the proof, \cite[Theorem 5.2.2b)]{Ha06}, is sharp, and also gives an limitation on the assumptions of \cite[Theorem 6.4.2]{Ha06}.

\begin{proposition}\label{Haase equi} Let $\cA$ be a sectorial operator on a Banach space $\cX$.
\begin{itemize} 
\item [(i)]  Let $\Phi = L^p(\R_+, t^\mu \ud t)$ for $p\in [1,\infty]$ and  $\mu \in \R$. 
Let $\psi$, $\gamma \in \cE(S_\omega)\setminus \{0\}$ $($with $\omega>\omega_\cA)$ such that $(\cdot)^{-\theta} \psi$, $(\cdot)^{-\theta} \gamma \in H^\infty_0(S_\omega)$ with  $ \theta :=  1-\frac{\mu+1}{p}$ if $p\in [1,\infty)$ or $\theta := 1-\mu$ if $p=\infty$. 
If for each $\xi \in \{\psi,\gamma\}$ we have $\xi \in H_0^\infty(S_\omega)$ or 
\begin{equation}\label{McI cond}
\cX_{\Phi, \xi}\, \subset \overline{D(\cA)}\quad  \textrm{and} \quad \xi(\cA)\overline{D(\cA)}\,\subset \cR(\cA), 
\end{equation}
then, the semi-norms $[\cdot]_{\Phi, \psi}$ and $[\cdot]_{\Phi, \gamma}$ are equivalent on $\cX$.

\item [(ii)] If, in addition,  $-\cA$ generates a bounded holomorphic semigroup $\cT$ on $\cX$, then for any $\theta \in (0,1)$, if $\Phi = L^p(\R_+, t^{p(1-\theta) - 1} \ud t)$ with $p\in [1,\infty)$ or $\Phi = L^\infty(t^{1-\theta})$, then  all semi-norms $[\cdot]_{\Phi, \psi_j}$, $j=1,2,3$, for $\psi_j$ as in  \eqref{psi funct}, are equivalent to the semi-norm \eqref{homoge part} and $K_\Phi(\cX,D_\cA) = \cX_{\Phi, \psi_j}$ (as the sets).
In the case $\Phi = L^\infty(t)$ (i.e. $\theta = 0$), $\cX_{\Phi, \psi_2} = \cX_{\Phi, \psi} = \cX$.

\item [(iii)] If $-\cA$ generates a bounded holomorphic semigroup $\cT$ on $\cX$ and $\cA y \neq 0$ for some $y\in D(\cA)$,  then the conclusion of (ii) false for $\theta = 0$ and $p\in [1,\infty)$.

More precisely, for any $p\in [1,\infty)$, if $\Phi = L^p(\R_+, t^{p- 1} \ud t)$, then the semi-norm $[\cdot]_{\Phi, \psi_2}$ is not equivalent to any of the following, mutually equivalent semi-norms: $[\cdot]_{\Phi, \psi_j}$ $(j=1,3)$ and $[\cdot]_{\Phi, \cA}$ $($see \eqref{psi funct} and \eqref{homoge part}$)$.
\end{itemize}

\end{proposition}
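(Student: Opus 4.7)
The plan is to handle all three parts via the standard McIntosh--Calderón functional calculus, augmented by a careful use of the auxiliary condition \eqref{McI cond} to cope with the non-$H^\infty_0$ cases.

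For part (i), I would first dispose of the subcase $\psi,\gamma\in H^\infty_0(S_\omega)$ by the familiar Mellin-convolution representation: pick an auxiliary $\eta\in H^\infty_0(S_\omega)$ normalized so that $\int_0^\infty \eta(t)\psi(t)\,\frac{\ud t}{t}=1$ and obtain $\gamma(s\cA)x=\int_0^\infty k_0(s/t)\,\psi(t\cA)x\,\frac{\ud t}{t}$ for a scalar Mellin kernel $k_0$. A Young/Hardy-type estimate then yields $[\gamma(\cdot\cA)x]_{\Phi}\lesssim [\psi(\cdot\cA)x]_\Phi$ on $\Phi = L^p(t^\mu)$; the growth/decay assumption $(\cdot)^{-\theta}\xi\in H^\infty_0(S_\omega)$ for $\xi\in\{\psi,\gamma\}$ guarantees the kernel has the correct scaling to match the weight, turning the bound into a convolution estimate of Mellin type. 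For $\xi\in \cE(S_\omega)\setminus H^\infty_0(S_\omega)$ satisfying \eqref{McI cond}, I would decompose $\xi=c+b(1+z)^{-1}+\xi_0$ with $\xi_0\in H^\infty_0(S_\omega)$, observe that $(\cdot)^{-\theta}\xi\in H^\infty_0(S_\omega)$ forces compatibility relations between $c$ and $b$, and then use $\cX_{\Phi,\xi}\subset\overline{D(\cA)}$ together with $\xi(\cA)\overline{D(\cA)}\subset\cR(\cA)$ to factor $\xi(t\cA)x=\cA z_{t,x}$ for $x\in\cX_{\Phi,\xi}$. This effectively converts $\xi$ to $z\cdot(\xi(z)/z)$ and brings us back to the $H^\infty_0$ analysis. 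Mixed cases where only one of $\psi,\gamma$ lies in $H^\infty_0$ are treated symmetrically via an intermediate reference function $\eta\in H^\infty_0$.

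For part (ii), one verifies that $(\cdot)^{-\theta}\psi_j\in H^\infty_0(S_\omega)$ for every $\theta\in(0,1)$ and every $j\in\{1,2,3\}$, together with the reference function $\psi_0(z):=ze^{-z}$: at $z\to 0$ each vanishes like $z^{1-\theta}$, and at $z\to\infty$ one obtains $O(z^{-\theta})$ (with even faster decay for $\psi_0,\psi_3$). Part (i) then delivers the mutual equivalence of all four seminorms, and Proposition \ref{Komastu rep} identifies their common finiteness set with $K_\Phi(\cX,D_\cA)$. In the endpoint case $\theta=0$, $\Phi=L^\infty(t)$, the explicit formula $[x]_{\Phi,\psi_2}=\sup_{t>0}\|t\cA(1+t\cA)^{-1}x\|_\cX$ is finite on all of $\cX$ by sectoriality of $\cA$, yielding $\cX_{\Phi,\psi_2}=\cX$; the analogous statement for $\psi_1$ uses boundedness of the semigroup.

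For part (iii) I would exhibit a vector on which $[\cdot]_{\Phi,\psi_2}$ and the other seminorms behave differently. Take $\theta=0$, $\Phi=L^p(t^{p-1})$ with $p<\infty$, and choose $y\in D(\cA)\cap\overline{\cR(\cA)}$ with $\cA y\neq 0$, which exists via the splitting $\cX=\ker\cA\oplus\overline{\cR(\cA)}$ for bounded analytic semigroups. Then $\psi_2(t\cA)y=y-(1+t\cA)^{-1}y\to y$ in norm as $t\to\infty$, so
\[
[y]_{\Phi,\psi_2}^p = \int_0^\infty \|\psi_2(t\cA)y\|_\cX^p\,\frac{\ud t}{t}=+\infty.
\]
For the remaining three seminorms I would combine the decomposition $\psi_1=-1+(1+z)^{-1}+(e^{-z}-(1+z)^{-1})$, with the last summand in $H^\infty_0(S_\omega)$, together with \eqref{McI cond} to deduce equivalence of $[\cdot]_{\Phi,\psi_1}$ with $[\cdot]_{\Phi,\psi_3}$ and $[\cdot]_{\Phi,\cA}$, then verify finiteness on the chosen $y$ by exploiting the semigroup decay on $\overline{\cR(\cA)}$.

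The principal technical obstacle lies in part (i) when both $\psi$ and $\gamma$ sit in $\cE(S_\omega)\setminus H^\infty_0(S_\omega)$: Haase's original argument in \cite[Theorem~6.4.2]{Ha06} uses injectivity of $\cA$ via fractional powers, and the extension to arbitrary sectorial $\cA$ satisfying only \eqref{McI cond} requires bypassing the injectivity by carefully restricting to the subspace $\cX_{\Phi,\xi}$ on which the ``factoring through $\cA$'' argument remains available. A secondary subtlety in (iii) is the verification that $[\cdot]_{\Phi,\psi_1}$ really belongs to the ``good'' equivalence class at $\theta=0$, since $\psi_1$ just fails the direct hypothesis $(\cdot)^{0}\psi_1\in H^\infty_0(S_\omega)$; the argument must exploit \eqref{McI cond} rather than the reproducing-formula reduction, and one must select $y$ so that $(1+t\cA)^{-1}y\to 0$ rather than converging to a nonzero fixed point in $\ker\cA$.
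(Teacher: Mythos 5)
Your overall architecture follows the paper's: a Calder\'on/Mellin reproducing-formula estimate for part (i), reduction of (ii) to (i), and a test vector in $\cR(\cA)$ for (iii). However, there are genuine gaps at the two places where the real work happens. In (i), your treatment of a function $\xi\in\cE(S_\omega)\setminus H^\infty_0(S_\omega)$ via ``factor $\xi(t\cA)x=\cA z_{t,x}$ \ldots\ this effectively converts $\xi$ to $z\cdot(\xi(z)/z)$ and brings us back to the $H^\infty_0$ analysis'' does not work as a reduction: $\xi(z)/z$ is in general \emph{not} in $H^\infty_0(S_\omega)$ (e.g.\ for $\psi_1(z)=e^{-z}-1$ one has $\psi_1(z)/z\to -1$ as $z\to 0$), so no admissible replacement of $\xi$ is produced. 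The actual role of \eqref{McI cond} is different: the convolution estimate is first proved with a \emph{truncated} Calder\'on integral $\phi_{a,b}(\cA)=\int_a^b\phi(s\cA)\,\frac{\ud s}{s}$ applied to $\xi(t\cA)x$, and \eqref{McI cond} is used to place $\xi(t\cA)x$ in $\overline{D(\cA)\cap\cR(\cA)}$ (via $\tau(\tau+\cA)^{-1}\xi(t\cA)x\to\xi(t\cA)x$), which is exactly the hypothesis of McIntosh's approximation theorem needed to pass to the limit $a\to 0$, $b\to\infty$. Without that step the reproducing formula is simply unavailable for non-injective $\cA$. Relatedly, in (ii) you must actually verify \eqref{McI cond} for $\psi_1,\psi_2$ (neither is in $H^\infty_0$, since both tend to a nonzero constant at infinity in the sector); this is easy ($\inf_{z\in D(\cA)}\|x-z\|_\cX\le\|\psi_j(t\cA)x\|_\cX$ and $\psi_1(t\cA)=-\cA\int_0^t\cT(s)\,\ud s$), but it cannot be skipped.

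In (iii) the divergence mechanism you use for $[\cdot]_{\Phi,\psi_2}$ is the correct one (it is the paper's computation after the substitution $t\mapsto 1/t$), but your test vector is not adequate. First, the splitting $\cX=\ker\cA\oplus\overline{\cR(\cA)}$ is not available on a general Banach space (it fails already for $-\Delta$ on $L^1(\R^n)$), so existence of your $y$ needs a different argument. Second, and more seriously, $y\in D(\cA)\cap\overline{\cR(\cA)}$ gives no decay rate: one only has $\|\cA\cT(t)y\|_\cX=O(t^{-1})$, so $\int_1^\infty\|\cA\cT(t)y\|_\cX^p\,t^{p-1}\ud t$ behaves like $\int_1^\infty t^{-1}\ud t$ and may diverge; then $[y]_{\Phi,\cA}=\infty$ as well and no counterexample is obtained. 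You must regularize: the paper takes $0\neq x\in D(\cA)\cap\cR(\cA)$ of the form $x=\lambda(\lambda+\cA)^{-1}\cA y$ and then $x_0=\cA^n(\lambda_0+\cA)^{-n}(1+\lambda_0\cA)^{-n}x\neq 0$, so that $\|\cA\cT(t)x_0\|_\cX\lesssim t^{-(n+1)}$ at infinity, making $[x_0]_{\Phi,\psi}<\infty$ while $\cA(s+\cA)^{-1}x_0\to x_0\neq 0$ as $s\to 0\plus$ still forces $[x_0]_{\Phi,\psi_2}=\infty$. With these repairs your outline becomes essentially the paper's proof.
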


\begin{proof}  (i) We provide only main supplementary observation which should be made in the proof of \cite[Theorem 6.4.2]{Ha06}. Following \cite[Lemma 6.2.5]{Ha06}, set 
\[
\alpha(x) := z^n(1+z)^{-2n},\quad \beta(z) := c^{-1}  \alpha(z) \overline{\gamma(\bar{z})}, \quad \phi(z):= \alpha(z) \beta(z)  \gamma(z),  \quad\,  z\in S_\omega,
\] 
where $|\theta|+ 1 < n\in \N$ and $c$ stands for $\int_0^\infty |\alpha (t) \gamma(t)|^2 \frac{\ud t}{t}$, which is finite because $\alpha \gamma \in H^{\infty}_0(S_\omega)$. 
Obviously, $\beta, \phi\in H^{\infty}_0(S_\omega)$. 
To allow $\psi$ or $\gamma$ be in $\cE(S_\omega)\setminus  H^\infty_0(S_\omega)$ but satisfy \eqref{McI cond}, the point is to use the full power of the McIntosh's approximation result (see, e.g. \cite[Theorem 5.2.6 or Lemma 5.2.3e)]{Ha06}), not only its {\it intermediate} form \cite[Proposition 5.2.4c)]{Ha06} that was applied in the proof of \cite[Theorem 6.4.2]{Ha06} and which is sufficient in the case $\psi, \gamma \in H^\infty_0(S_\omega)$. 
More precisely, arguing as in the proof of \cite[Theorem 6.4.2]{Ha06}, we derive to the estimate: for all $0<a<b<\infty$
\begin{align}\label{ab ineq}
\nonumber \|\phi_{a,b} (\cA)  t^{-\theta} & \psi(t\cA) x \|_\cX \\
& \leq c_{\beta}c_{\psi, \alpha} \left(\int_0^\infty \|\widetilde{\psi}(t\cA) \widetilde{\alpha}(s\cA)\|_{\cL(\cX)} \|s^{-\theta} \gamma(s\cA) x\|^p_\cX \frac{\ud s}{s} \right)^{1/p}
\end{align} (with modification when $p=\infty$), 
where
\[
\phi_{a,b} (\cA) := \int_a^b \phi(s\cA) \frac{\ud s}{s}, \quad \widetilde{\psi}(z):= z^{-\theta} \psi(z),\quad \widetilde{\alpha}(z):= z^{\theta} \alpha(z)\quad (z\in S_\omega)
\] and 
\[
c_{\beta}: = \sup_{s>0} \|\beta(s\cA)\|_{\cL(\cX)},   \quad c_{\psi, \alpha}: = \sup_{t>0} \left(\int_0^\infty \|\widetilde{\psi}(t\cA) \widetilde{\alpha}(s\cA)\|_{\cL(\cX)} \frac{\ud s}{s}\right)^{1/p'}.
\]

Since $\beta  \in H^\infty_0(S_\omega)$, $c_\beta$ is finite; see, e.g. \cite[Theorem 5.2.2a)]{Ha06}. 
Since $\widetilde\psi$ and $\widetilde\alpha$ are in $H^\infty_0(S_\omega)$, by \cite[Theorem 5.2.2b)]{Ha06}, $c_{\psi, \alpha}$ is also finite. Taking the $L^p(\R_+, \ud t/t)$ norm of the right-hand side of \eqref{ab ineq}, by Fubini's theorem and, again, \cite[Theorem 5.2.2b)]{Ha06}, yields the quantity less than $c_\beta c_{\psi, \alpha} c_{\alpha, \psi} [x]_{\Phi, \gamma, \cA}$ with $c_{\alpha, \psi}<\infty$. 
Therefore, with the help of Lebesgue's domination theorem, it suffices to show that for all $x\in \cX_{\Phi, \gamma}$ and $t>0$ we have:
\[
\phi_{a,b} (\cA) \psi(t\cA) x \rightarrow \psi(t\cA) x \qquad \textrm{as } a\rightarrow 0\plus, \, \, b\rightarrow \infty.   
\]
If we know that $\psi\in  H^\infty_0(S_\omega)$, then the conclusion follows from \cite[Proposition 5.2.4c)]{Ha06} directly. In the case of $\psi$ satisfying \eqref{McI cond}, by McIntosh's approximation result, see \cite[Theorem 5.2.6]{Ha06}, it is sufficient that for all $x \in \cX_{\Phi, \gamma}$, $\psi(t\cA)x\in \overline{D(\cA)\cap \cR(\cA)}$.
Recall that for every $y\in \overline{D(\cA)}$, $\tau(\tau+\cA)^{-1}y \rightarrow y$ as $\tau\rightarrow \infty$.
Since $\psi\in \cE(S_\omega)$, $\overline{D(\cA)}$ is invariant with respect to $\psi(t\cA)$ and by our assumption for every $x\in \overline{D(\cA)}$ we have $\psi(t\cA)x \in \cR(\cA)$, Hence 
\[
D(\cA)\cap \cR(\cA) \ni \psi(t\cA)\tau(\tau+\cA)^{-1}x = \tau(\tau+\cA)^{-1} \psi(t\cA)x \rightarrow \psi(t\cA)x
\]
for all $x\in \overline{D(\cA)}$ as $\tau \rightarrow \infty$. Thus, we get 
$[\cdot]_{\Phi, \psi}\leq c_\beta c_{\psi, \alpha}c_{\alpha, \psi} [\cdot]_{\Phi, \gamma}$ and $\cX_{\Phi, \gamma}\subset \cX_{\Phi, \psi}$. 
 Interchanging $\gamma$ with $\psi$ in the above reasoning completes the proof of (i).

For (ii), note that all functions $\psi_j$ ($j=1,2,3$) satisfy the conditions stated in (A) above. Moreover, the Hardy operator $P$ is bounded on $\Phi$; see Lemma \ref{P ineq eqiv} (in fact, $Q$ too). Therefore, (ii) follows from the result stated on the beginning of this section.   
However, it is more instructive to give direct arguments for some equivalences based on the statement (i). First note that $\psi_3, (\cdot)^{-\theta} \psi_3 \in H^\infty_0(S_\omega)$.   
Further, the proof of $\cX_{\Phi, \psi_j}\subset \overline{D(\cA)}$ for $j=1,2$, does not use the boundedness of $P$ on $\Phi$ and follows directly from the estimates:  for all $x\in \cX$ and $t>0$
\[
\inf_{z\in D(\cA)} \|x-z\|_\cX \leq \|x - e^{-t\cA}x\|_\cX = \|\psi_1(t\cA)x\|_\cX,
\]
\[
\inf_{z\in D(\cA)} \|x-z\|_\cX \leq \|x - (1+t\cA)^{-1}x\|_\cX = \|\psi_2(t\cA)x\|_\cX.
\] Indeed, since $\Phi\subset L^1_{loc}$, the function $\R_+\ni t\mapsto \frac{1}{t}$ is not in $\Phi$; see also Remark \ref{Haase' proof} below. 
Moreover, for $\psi(z) := ze^{z}$, as in the proof of Proposition \ref{Komastu rep}, we can write for all $x\in \cX$ and $t>0$:
\[
\inf_{z\in D(\cA)} \|x-z\|_\cX \leq \|x - e^{-t\cA}x\|_\cX = \Big\|\cA \int_0^t \cT(s)x \ud s \Big\|_\cX  \leq    \int_0^t \Big\|\frac{1}{s}\psi(s\cA)x\Big\|_\cX \ud s.
\] 
Therefore, the boundedness of $P$ on $\Phi$ ensures that $x\in  \overline{D(\cA)}$ if $x\in \cX_{\Phi, \psi}$. 
Furthermore, for all $j = 1,2$ we have that $\psi_j(\cA) \cX \subset \cR(\cA)$. Indeed, for $j=1$, by Lemma \ref{lem on density}, $\psi_1(t\cA) = -\cA\int_0^t\cT(s) \ud s$, $t>0$, and the case $j=2$ is obvious. Note also that  $\psi(\cA) \cX = \cA\cT(1) \cX = \cT(1/2)\cA\cT(1/2) \cX \subset D(\cA) \cap \cR(\cA)$ and $K_\Phi(\cX, D_\cA) = \cX_{\Phi, \psi}$ (as the sets) by Proposition \ref{Komastu rep}. 
Thus, all functions $\psi_j$, $j=1,2,3$ and $\psi$ satisfy the assumption of (i), which implies the first statement of (ii). For the second one, if $\Phi = L^\infty(t)$, then for all $x\in \cX$ we have that 
$[x]_{\Phi, \cA} \leq \sup_{t>0}\|t\cA\cT(t)\|_{\cL(\cX)} \|x\|_\cX$ and 
$[x]_{\Phi, \psi_2} \leq \sup_{t>0}\|I - t(t+\cA)^{-1}\|_{\cL(\cX)} \|x\|_\cX$.
 It easily gives the desired claim.

For (iii), since $\psi$ and $\psi_j$, $j=1,3$, are in $H^\infty_0(\Sigma_\omega)$ and $\theta = 0$, by (i), their corresponding semi-norms are mutually equivalent. 
Suppose that there exists $y\in D(\cA)$ such that $\cA y \neq 0 \in \cX$. Let $ \tilde{y} := \la (\la + \cA)^{-1} y$ for some $\la >0$. Of course, $x:= \cA  \tilde{y} \in D(\cA) \cap \cR(\cA)$ and, since $\cA y \neq 0$,  $x = \la (\la +\cA)^{-1} \cA y \neq 0$ too. For some $n\in \N$ with $n>p$, where $p\in [1,\infty)$ is fixed,  and all $\la >0$ set 
\[
x_{\la}: = \cA^n (\la + \cA)^{-n} (1+\la \cA)^{-n} x
\]
Since $x\in  \cR(\cA)$, we have that $x_{\la} \rightarrow x $ as $\la \rightarrow 0^+$. Hence, for some $\la_0>0$ small enough $x_{\la_0} =: x_0 \neq 0$.

It is readily seen that $x_0 \in \cX_{\Phi, \psi}$, where $\Phi = L^p(\R_+, t^{p-1} \ud t)$, $p \in [1,\infty)$. Indeed, denoting $y_0:= (\la_0 + \cA)^{-n}(1+\la_0 \cA)^{-n} x_0$, i.e. $x_0 = \cA^n y_0$, we get 
\begin{align*}
[x_0]_{\Phi, \psi}^p  &: = \int_0^\infty \|\cA \cT (t)x_0\|_\cX^p\, t^{p-1} \ud t\\
& = \int_0^1 \|\cT(t) \cA x_0\|^p_\cX \, t^{p-1} \ud t + \int_1^\infty \|\cA^{n+1} \cT(t) y_0 \|_\cX \, t^{p-1} \ud t.   
\end{align*}
Since $\sup_{t>0} \| t^n \cA^{n} \cT\c(t)\|_{\cL(\cX)} <\infty $ and $n> p$, we have that $x_0 \in \cX_{\Phi,\psi}$. We show that $x\notin \cX_{\Phi, \psi_2}$. 
In fact, if $x_0 \in   \cX_{\Phi,\psi_2}$, then 
\[
[x_0]_{\Phi,\psi_2}^p = \int_0^\infty \|\cA (1+ t\cA)^{-1}x_0 \|_\cX^p\, t^{p-1} \ud t
= \int_0^\infty\|\cA(t+\cA)^{-1} x_0 \|_\cX^p  \, \frac{\ud t}{t} < \infty.
\]
In particular, 
\[
\liminf_{t\rightarrow 0\plus}\|\cA(t+\cA)^{-1} x_0\|_\cX = 0
\]

However, since for all $\tilde{x} \in \overline{\cR(\cA)}$ we have $\cA(t +\cA)^{-1}\tilde{x} \rightarrow \tilde{x} $ as $t\rightarrow 0\plus$, and since $x_0\in \cR(\cA)$, we get $x_0 = 0$, which contradicts our assumption. It completes the proof of (iii).  \end{proof}

\begin{remark}\label{Haase' proof}
(a) We first remark on the condition \eqref{McI cond} of Proposition \ref{Haase equi}(i). In general, for $\Phi\in L^1_{loc}$, if $\psi\in \cE(S_\omega)$ is such that $\phi : = \psi + 1$ is holomorphic at $0$ and decays at $\infty$ as $|z|^{-\epsilon}$ ($z\in S_\omega$) for some $\epsilon>0$,  then $\cX_{\Phi, \psi} \subset  \overline{D(\cA)}$. 
Indeed, then $\psi(tA) = I - \phi(t\cA)$ and $\phi(t\cA)\cX \subset  \overline{D(\cA)}$, since $\phi(t\cA)$ is represented by the absolutely convergent Cauchy integral $\frac{1}{2\pi i}\int_\Gamma \phi(z) R(z,\cA) \ud z$ with $\Gamma := \partial(S_\omega \cup B)$, where $B$ is a ball with the center at $0$. 

(b) Note also that if $\cA$ is densely defined and $z^{-1} \psi$, $z^{-1} \gamma \in \cE(S_\omega)$, then \eqref{McI cond} holds for any $\psi, \gamma \in \cE(S_\omega)$. Indeed, recall that $\psi(\cA) = \cA (z^{-1}\psi)(\cA)$ and similarly for $\gamma$; see e.g. \cite[Theorem 1.3.2]{Ha06}. Therefore, in such case,  Proposition \ref{Haase equi}(i) does not require the boundedness of Hardy's operator $P$ on $\Phi$ as the assumptions of \cite[Theorems 3.1 and 3.4]{ChKr16}; see (A) and (B) at the beginning of this subsection.

\end{remark}

We conclude this subsection with a noteworthy implication of Proposition \ref{Haase equi} to the Kalton-Portal characterization of the $L^1$-m.r. and $L^\infty$-m.r. on $\R_+$ \cite[Theorems 3.6 and 3.7]{KaPo08}; see also Theorems \ref{weighted L1 mr} and \ref{charact}. 
The presented below (equivalent) characterization conditions are expressed in {\it a priori} terms of a generator, which may be more useful in applications than those involving semigroup operators. 

\begin{proposition}\label{equiv for l1} Let $-A$ be the generator of a bounded analytic semigroup on a Banach space $X$. 
Let $0\neq \gamma \in H^\infty_0(S_\omega)$ for some $\omega \in (\omega_A, \frac{\pi}{2})$, where $\omega_A$ denotes the sectoriality angle of $A$.

Then, the following assertions are true:
\begin{itemize}
\item [(i)] The operator $A$ has $L^1$-m.r. on $\R_+$ if and only if there exists a constant $C$  such that for every $x\in X$ 
\begin{equation}\label{cond equiv}
\int_0^\infty \Big\|\frac{1}{t}\gamma(tA) x\Big\|_X \ud t \leq C \|x\|_X.
\end{equation}
 In particular, as $\gamma$ one can take $\gamma(z) : = z(1+z)^{-1-\epsilon}$ for any~$\epsilon>0$. 
 
Moreover, if there exists a constant $C$ such that for every $x\in X$ 
\[
\int_0^\infty \|A(1+tA)^{-1}x\|_X \ud t \leq C \|x\|_X,
\] then $A = 0\in \cL(X)$.

\item [(ii)] The operator $A$ has $L^\infty$-m.r. on $\R_+$ if and only if there exists a constant $C$ such that for every $x\in X$  

\[
\|x\|_X\leq C \sup_{t>0} \|\gamma(tA) x\|_X + \limsup_{t\rightarrow \infty}\|\cT(t)x\|_X. 
\]
\end{itemize}
\end{proposition}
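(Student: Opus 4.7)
The plan is to combine the Kalton--Portal characterizations with the seminorm equivalence of Proposition~\ref{Haase equi}(i). Set $\psi_*(z) := ze^{-z}$, which lies in $H^\infty_0(S_\omega)$ and satisfies $\psi_*(tA)x = tAT(t)x$ for all $t>0$ and $x \in X$, where $T$ denotes the semigroup generated by $-A$.

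For the main equivalence in~(i), I would first invoke Theorem~\ref{weighted L1 mr} with $v\equiv 1$ to reduce $L^1$-maximal regularity on $\R_+$ to the Kalton--Portal condition $\int_0^\infty \|AT(t)x\|_X\,\ud t \leq C\|x\|_X$. With the parameter $\Phi := L^1(\R_+,\ud t)$ (i.e.\ $\mu=0$, $p=1$, $\theta=0$ in Proposition~\ref{Haase equi}(i)) this reads $[x]_{L^1,\psi_*,A}\leq C\|x\|_X$. Since for $\theta = 0$ both $\psi_*$ and any $\gamma \in H^\infty_0(S_\omega)$ trivially satisfy $z^{-\theta}\xi = \xi \in H^\infty_0(S_\omega)$, Proposition~\ref{Haase equi}(i) yields the uniform equivalence
\[
[x]_{L^1,\psi_*,A} \simeq [x]_{L^1,\gamma,A} = \int_0^\infty \Big\|\frac{1}{t}\gamma(tA)x\Big\|_X\,\ud t\,,\qquad x \in X,
\]
which is precisely~\eqref{cond equiv}. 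The specific $\gamma(z) = z(1+z)^{-1-\epsilon}$ is admissible because $|\gamma(z)| \lesssim \min(|z|, |z|^{-\epsilon})$ on $S_\omega$, so indeed $\gamma \in H^\infty_0(S_\omega)$.

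For the ``moreover'' clause of~(i), the substitution $s = 1/t$ together with $A(1+tA)^{-1} = sA(s+A)^{-1}$ rewrites the hypothesis as $\int_0^\infty \|A(s+A)^{-1}x\|_X\,\frac{\ud s}{s} \leq C\|x\|_X$. The identity $A(s+A)^{-1} = I - s(s+A)^{-1}$ combined with sectoriality of $A$ (first for $x = Ay$ with $y\in D(A)$, then by density using the uniform bound $\|s(s+A)^{-1}\|_{\cL(X)} \leq M$) shows that $A(s+A)^{-1}x \to x$ as $s \to 0\plus$ for every $x \in \overline{\cR(A)}$. Hence any $x \in \overline{\cR(A)} \setminus \{0\}$ would make the integral diverge near $s = 0$, forcing $\overline{\cR(A)} = \{0\}$ and consequently $Ay = 0$ for all $y \in D(A)$. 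Analyticity of $T$ gives $T(t)X \subset D(A)$, so $\frac{\ud}{\ud t}T(t)x = -AT(t)x = 0$; writing $v_x := T(t)x$, the Laplace formula $(\lambda + A)^{-1}x = \int_0^\infty e^{-\lambda t}T(t)x\,\ud t = v_x/\lambda$ then yields $v_x = x$, hence $T \equiv I$ and $A = 0 \in \cL(X)$.

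For part~(ii), I would apply Lemma~\ref{charact} (extended to non-densely defined generators via the remark following it) to reformulate $L^\infty$-maximal regularity on $\R_+$ as $\|x\|_X \leq C\sup_{t>0}\|tAT(t)x\|_X + \limsup_{t\to\infty}\|T(t)x\|_X$, and then take $\Phi := L^\infty(\R_+, t\,\ud t)$ (so $\mu = 1$, $p = \infty$, $\theta = 0$). For this parameter $[x]_{\Phi,\psi_*,A} = \sup_{t>0}\|tAT(t)x\|_X$ and $[x]_{\Phi,\gamma,A} = \sup_{t>0}\|\gamma(tA)x\|_X$, so Proposition~\ref{Haase equi}(i) applied again with $\theta = 0$ yields the equivalence of these two quantities on $X$, completing the proof. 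The main technical hurdle is ensuring the correct application of Proposition~\ref{Haase equi}(i) precisely at the endpoint $\theta = 0$: the sharpness of Proposition~\ref{Haase equi}(iii) reveals that the naive choice $\gamma(z) = z(1+z)^{-1}$ lies outside $H^\infty_0(S_\omega)$ and so escapes the equivalence, which is exactly why the ``moreover'' clause collapses to $A = 0$ instead of producing a genuine characterization.
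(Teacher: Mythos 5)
Your proposal is correct and follows essentially the same route as the paper: the main equivalences in (i) and (ii) are obtained by combining the Kalton--Portal characterizations (via Theorem~\ref{weighted L1 mr} with $v\equiv 1$, resp.\ Lemma~\ref{charact}) with the seminorm equivalence of Proposition~\ref{Haase equi}(i) at $\theta=0$ for $\Phi=L^1$ and $\Phi=L^\infty(t)$, exactly as in the paper. Your treatment of the ``moreover'' clause is likewise the paper's argument in mirror form (the paper deduces $\liminf_{t\rightarrow 0\plus}\|A(t+A)^{-1}x\|_X=0$ from finiteness of the integral and contradicts $A(t+A)^{-1}x\rightarrow x$ on $\cR(A)$, while you note the integral must diverge), with only a cosmetic difference in how $\cR(A)=\{0\}$ is upgraded to $A=0\in\cL(X)$.
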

\begin{proof}  Let $\psi(z) := ze^{-z}$ and $\eta(z):= z(1+z)^{-1-\epsilon}$ for $z\in S_\omega$ with $\omega>\omega_A$. Obviously, $\psi, \eta, \gamma \in H^\infty_0(S_\omega)$. Therefore, taking $\Phi := L^1$ (resp. $\Phi := L^\infty(t)$), which corresponds to $\theta = 0$, by Proposition \ref{Haase equi}(i) and \cite[Theorems 3.6 and 3.7]{KaPo08}, we get the desired claim of (i) (resp. of (ii)).

The proof of the additional statement of (i) follows the idea of the proof of Proposition \ref{Haase equi}(iii) above. Indeed, following \eqref{cX spaces} and \eqref{psi funct},  note that $X_{ \Phi, \psi_2, A} = X$ with $\Phi = L^1$. For all $x\in X$ we have
\begin{align*}
\int_0^\infty\|A(1+tA)^{-1}x\|_X \ud t & = \int_0^\infty\|A(t+A)^{-1}x\|_X \frac{\ud t}{t} \\
& = \int_0^\infty \|x - t(t+A)^{-1}x\|_X \frac{\ud t}{t}<\infty. 
\end{align*}
In particular, for all $x\in X$ 
\[
\liminf_{t\rightarrow 0\plus}\|A(t+A)^{-1}x\|_\cX = 
\liminf_{t\rightarrow \infty}\|x - t(t+A)^{-1}x\|_\cX = 0.
\]
It shows that $\overline{D(A)} = X$ and, in particular, for any $x\in \cR(A) \subset X$, $\liminf_{t\rightarrow 0\plus}\|A(t+A)^{-1}x\|_X = 0$. But, for $x = Ay$ with $y \in D(A)$, we have 
\[
A(t+ A)^{-1} x = x + t y + t^2 (t+A)^{-1} y  \rightarrow x \textrm{ as } t\rightarrow 0\plus.
\]
Therefore $\cR(A) = \{0\}$, i.e. $A = 0$. The proof is complete. 
\end{proof}

We conclude with a result, see Theorem \ref{optimal for l1} below, which should be carefully confronted with Theorem \ref{thm main}.  We start with an auxiliary lemma. 

\begin{lemma}\label{equi norms}
Let $-\cA$ be the generator of a bound analytic semigroup on a Banach space $\cX$. 
Let $\theta \in \R$ and $\omega \in (\omega_\cA, \frac{\pi}{2})$.
 Let $\psi \in \cE(S_\omega)$ with $(\cdot)^{-\theta} \psi \in H^\infty_0(S_\omega)$ satisfy $\cX_{\Phi, \psi} \subset \overline{D(\cA)}$ and $\psi (\cA)\overline{D(\cA)} \subset \cR(\cA)$ or $\psi\in H^\infty_0(S_\omega)$.
Let $\Phi := L^1(\R_+, t^{-\theta} \ud t)$. 
Finally,  let $X$ stand for $\cX_{\Phi,\psi}$ equipped with  $\|x\|_X: = \|x\|_{\cX} + [x]_{\Phi,\psi}$. 

Then, the following statements are true.
\begin{itemize} 
\item [(i)] The space $(X, \|\cdot\|_X)$ is a Banach space and the part of $-\cA$ in $X$ is the generator of an analytic semigroup. 

\item [(ii)] For all $x\in \cX_{\Phi,\psi}$
\[
\int^\infty_0 [\cA \cT(t) x]_{\Phi,\psi} \ud t \lesssim [x]_{\Phi, \psi}. 
\] 

\end{itemize}
\end{lemma}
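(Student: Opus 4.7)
Completeness of $(X, \|\cdot\|_X)$ is a standard argument. If $(x_n)$ is Cauchy in $X$, then it is Cauchy in $\cX$ with limit $x \in \cX$, while $s \mapsto s^{-1}\psi(s\cA)x_n$ is Cauchy in $L^{1}((0,\infty), s^{-\theta}\,\ud s; \cX)$, converging to some $g$ there. Pointwise continuity of $\psi(s\cA)$ on $\cX$ forces $g(s) = s^{-1}\psi(s\cA)x$ a.e., so $x\in X$ and $x_n \to x$ in $X$. The semigroup and sectorial structure on $X$ follow from the observation that $\cT(t)$ and the resolvents $R(\la,\cA)$ commute with $\psi(s\cA)$ (all sit in the functional calculus of $\cA$); this immediately gives $\|\cT(t)\|_{\cL(X)} \le \|\cT(t)\|_{\cL(\cX)}$ and $\|R(\la,\cA)\|_{\cL(X)} \le \|R(\la,\cA)\|_{\cL(\cX)}$ for $\la \in \rho(\cA)$. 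Hence the part of $-\cA$ in $X$ inherits sectoriality of the same angle and generates an analytic semigroup on $X$, which by uniqueness of the Laplace transform must coincide with the restriction of $\cT$.

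\textbf{Plan for Part (ii).} The idea is to reduce to the canonical case $\psi(z) = ze^{-z}$ (that is, to the seminorm $[\cdot]_{\Phi,\cA}$ of Proposition \ref{Komastu rep}) and then carry out a direct Tonelli computation. Setting $\eta(z) := ze^{-z}$, one has $\eta \in H^{\infty}_0(S_\omega)$ and $(\cdot)^{-\theta}\eta(z) = z^{1-\theta}e^{-z} \in H^{\infty}_0(S_\omega)$ because $\theta \in (0,1)$. Proposition \ref{Haase equi}(i) applied to $\psi$ and $\eta$ therefore yields $[\cdot]_{\Phi,\psi} \simeq [\cdot]_{\Phi,\eta} = [\cdot]_{\Phi,\cA}$ on $\cX$, and it suffices to prove
\[
\int_0^\infty [\cA\cT(t)x]_{\Phi,\cA}\, \ud t \lesssim [x]_{\Phi,\cA}.
\]
Using analyticity of $\cT$ (so that $\cT(t)x \in D(\cA^\infty)$ for $t>0$) and the commutation identity $\cA\cT(s)\cdot\cA\cT(t) = \cA^{2}\cT(s+t)$, the left-hand side equals
\[
\int_0^\infty\!\!\int_0^\infty \|\cA^{2}\cT(s+t)x\|_\cX\, s^{-\theta}\, \ud s\, \ud t.
\]
Substituting $\tau = s+t$ in the inner integral and applying Tonelli (permitted by positivity) yields
\[
\int_0^\infty \|\cA^{2}\cT(\tau)x\|_\cX \int_0^\tau (\tau-t)^{-\theta}\,\ud t \, \ud \tau = \frac{1}{1-\theta}\int_0^\infty \|\cA^{2}\cT(\tau)x\|_\cX\, \tau^{1-\theta}\, \ud\tau.
\]
Finally, the split $\cA^{2}\cT(\tau) = \cA\cT(\tau/2)\cdot\cA\cT(\tau/2)$ combined with the sectorial bound $\|\cA\cT(\tau/2)\|_{\cL(\cX)} \le 2M/\tau$ (valid since $\cT$ is bounded analytic) gives $\|\cA^{2}\cT(\tau)x\|_\cX \le (2M/\tau)\|\cA\cT(\tau/2)x\|_\cX$, and the substitution $u=\tau/2$ bounds the last integral by a constant multiple of $\int_0^\infty \|\cA\cT(u)x\|_\cX u^{-\theta}\,\ud u = [x]_{\Phi,\cA}$, completing (ii).

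\textbf{Main obstacle.} The decisive step is the reduction via Proposition \ref{Haase equi}(i) from the abstract $\psi$ to the concrete $\eta(z) = ze^{-z}$; this is precisely where the hypothesis that either $\psi \in H^{\infty}_0(S_\omega)$ or the McIntosh-type condition \eqref{McI cond} holds is consumed, since it is what Proposition \ref{Haase equi}(i) requires to deliver the two-sided bound $[\cdot]_{\Phi,\psi} \simeq [\cdot]_{\Phi,\eta}$. Once this equivalence is secured, the remainder of (ii) is a routine Tonelli computation; the role of $\theta \in (0,1)$ (implicit in the choice $\Phi = L^{1}((0,\infty), t^{-\theta}\,\ud t)$) is only to ensure convergence of $\int_0^\tau(\tau-t)^{-\theta}\,\ud t$, and the product splitting of $\cA^{2}\cT(\tau)$ is what prevents the sectorial estimate from throwing away the integrable $\cA$-regularity of $x$.
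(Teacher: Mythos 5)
Your part (i) matches what the paper leaves to the reader, and your part (ii) is a correct and self-contained argument \emph{when it applies}: the identity $[\cA\cT(t)x]_{\Phi,\cA}=\int_0^\infty\|\cA^2\cT(s+t)x\|_\cX s^{-\theta}\,\ud s$, the Tonelli substitution giving $\frac{1}{1-\theta}\int_0^\infty\|\cA^2\cT(\tau)x\|_\cX\tau^{1-\theta}\,\ud\tau$, and the splitting $\cA^2\cT(\tau)=\cA\cT(\tau/2)\cdot\cA\cT(\tau/2)$ are all sound, and in fact reproduce the computation the paper carries out in the proof of Theorem \ref{thm main}(iv) for the weight $w(t)=t^{-\theta}$.

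The gap is the range of $\theta$. The lemma is stated for $\theta\in\R$, but your reduction to $\eta(z)=ze^{-z}$ via Proposition \ref{Haase equi}(i) requires $(\cdot)^{-\theta}\eta=z^{1-\theta}e^{-z}\in H^\infty_0(S_\omega)$, which fails for $\theta\ge 1$ (no decay of $|z|^{1-\theta}$ at the origin), and your integral $\int_0^\tau(\tau-t)^{-\theta}\,\ud t$ diverges there as well. Admissible instances with $\theta\ge 1$ exist (e.g.\ $\theta=1$, $\psi(z)=z^2(1+z)^{-4}$), so the argument does not prove the lemma as stated; you have in fact restricted yourself further to $\theta\in(0,1)$, which would even exclude the case $\theta=0$ that Theorem \ref{optimal for l1} actually uses (though your computation does go through verbatim for all $\theta<1$, including $\theta\le 0$). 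The paper's proof sidesteps this by never touching $ze^{-z}$ as the comparison function: it sets $\gamma:=\phi\psi$ for an arbitrary $\phi\in H^\infty_0(S_\omega)$, so that $(\cdot)^{-\theta}\gamma=\phi\cdot(\cdot)^{-\theta}\psi\in H^\infty_0(S_\omega)$ automatically for every $\theta\in\R$, and then replaces your explicit weighted Tonelli computation by the abstract square-function bound $\sup_{s>0}\int_{\R_+}\|\phi(s\cA)\eta(t\cA)\|_{\cL(\cX)}\frac{\ud t}{t}<\infty$ from \cite[Theorem 5.2.2b)]{Ha06}, factoring $s^{-1}\gamma(s\cA)t^{-1}\eta(t\cA)x=\phi(s\cA)t^{-1}\eta(t\cA)\cdot s^{-1}\psi(s\cA)x$. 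If you adopt that factorization, the rest of your structure (equivalence of seminorms via Proposition \ref{Haase equi}(i), then Fubini) survives and covers all $\theta\in\R$.
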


\begin{proof}
The statement (i) follows by standard arguments. We omit its proof. 

For (ii), let $\gamma = \phi \psi$ for some  $\phi \in H^\infty_0(S_\omega)$ and let $\eta(z): = ze^{-z}$ $(z\in \C)$. 
Of course, $\gamma \in H^\infty_0(S_\omega)$ and  $(\cdot)^{-\theta} \gamma \in H^\infty_0(S_\omega)$. 
By Proposition \ref{Haase equi}, the semi-norms $[\cdot]_{\Phi, \psi}$ and $[\cdot]_{\Phi,\gamma}$ are equivalent on $\cX$.

Moreover, since $\gamma(t\cA) = \phi(t\cA) \psi (t\cA)$ and $\phi, \eta \in H^\infty_0(S_\omega)$, by \cite[Theorem 5.2.2b)]{Ha06}, we get 
\[
\kappa:= \sup_{s>0} \int_{\R_+}\|\phi(s\cA) \eta(t\cA)\|_{\cL(\cX)} \frac{\ud t}{t}< \infty.
\]
Therefore, writing $\frac{1}{t}\eta(t\cA) = \cA \cT(t)$, we have that
\begin{align*}
\int_{\R_+} [{t}^{-1}\eta(t\cA) x]_{\Phi,\gamma} \ud t & = \int_{\R_+}\int_{\R_+} \|{s}^{-1}\gamma(s\cA) {t}^{-1}\eta(t\cA) x\|_\cX s^{-\theta} \ud s \ud t\\
& \leq  \int_{\R_+}\int_{\R_+} \|\phi(s\cA) {t}^{-1}\eta(t\cA)\|_{\cL(\cX)}  \|{s}^{-1} \psi (s\cA)x\|_\cX s^{-\theta} \ud s \ud t\\
& \leq \kappa \int_{\R_+} \|{s}^{-1} \psi (s\cA)x\|_\cX s^{-\theta} \ud s \\
& \simeq \, \kappa \,[x]_{\Phi,\psi}.
\end{align*}
By Proposition \ref{Haase equi}(i), $[\cdot]_{\Phi, \psi}\lesssim [\cdot]_{\Phi,\gamma}$ on $X_{\Phi,\psi} = X_{\Phi,\gamma}$. It finishes the proof of~(ii). 
 
\end{proof}

\begin{theorem}\label{optimal for l1} 
Let $-\cA$ be the generator of a bounded, analytic semigroup on a Banach space $\cX$. 
Let  $0 \neq \psi \in H^\infty_0(S_\omega)$, where $\omega \in (\omega_\cA,\frac{\pi}{2})$.

 Let 
\[
X := \cX_{1,\cA}: = \Big\{ x \in \cX: \int_0^\infty \|{t^{-1}}\psi(t\cA) x \|_\cX \ud t < \infty \Big\}
\]
be equipped with the norm 
\[
\|x\|_X: = \|x\|_{\cX} + \int_0^\infty \|{t^{-1}}\psi(t\cA) x \|_\cX \ud t.
\]
  Then, the part of $\cA$ in $X$ has $L^1$-m.r. on $\R_+$. 
  
  Moreover, $D(\cA)\cap \cR(\cA)\subset X$. In particular, if $D(\cA)$ and $\cR(\cA)$ are dense in $\cX$, then $\cX_{1,\cA}$ is. 
  
\end{theorem}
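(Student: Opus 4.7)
The plan is to identify $X$ with $\cX_{\Phi, \psi}$ in the sense of Lemma \ref{equi norms} taken with $\Phi := L^1(\R_+, \ud t)$ and $\theta = 0$, and then to verify the Kalton--Portal characterization of $L^1$-maximal regularity on $\R_+$ (Theorem \ref{weighted L1 mr} with $v \equiv 1$). Lemma \ref{equi norms}(i) immediately yields that $(X, \|\cdot\|_X)$ is a Banach space and that the part $A$ of $\cA$ in $X$ generates an analytic semigroup $T$, which is the restriction of $\cT$ to $X$. To upgrade analyticity to the \emph{bounded} analyticity required for Theorem \ref{weighted L1 mr}, I will exploit the commutativity $\psi(s\cA)\cT(t) = \cT(t)\psi(s\cA)$, which gives $[\cT(t) x]_{\Phi, \psi} \leq \|\cT(t)\|_{\cL(\cX)} [x]_{\Phi, \psi}$ and hence $\|T(t)\|_{\cL(X)} \leq \sup_{s>0}\|\cT(s)\|_{\cL(\cX)} < \infty$.

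For the Kalton--Portal inequality $\int_0^\infty \|AT(t) x\|_X \, \ud t \lesssim \|x\|_X$, I will split the $X$-norm into its two components. The $\cX$-part of the integral equals $\int_0^\infty \|\cA \cT(t) x\|_\cX \, \ud t = [x]_{\Phi, \eta}$ with $\eta(z) := z e^{-z} \in H_0^\infty(S_\omega)$; since $\eta, \psi \in H_0^\infty(S_\omega)$ and $\theta = 0$, Proposition \ref{Haase equi}(i) gives the equivalence $[x]_{\Phi, \eta} \simeq [x]_{\Phi, \psi} \leq \|x\|_X$ on $\cX$. The homogeneous part $\int_0^\infty [\cA \cT(t) x]_{\Phi, \psi} \, \ud t \lesssim [x]_{\Phi, \psi}$ is exactly the content of Lemma \ref{equi norms}(ii). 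Adding the two estimates yields the Kalton--Portal bound and, via Theorem \ref{weighted L1 mr}, the desired $L^1$-maximal regularity of $A$ on $\R_+$.

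For the inclusion $D(\cA) \cap \cR(\cA) \subset X$, I will again invoke Proposition \ref{Haase equi}(i) in order to test membership through a more convenient auxiliary symbol $\gamma(z) := z^2 (1+z)^{-3} \in H_0^\infty(S_\omega)$. For $x \in D(\cA) \cap \cR(\cA)$, write $x = \cA y$ with $y \in D(\cA)$. Two algebraic rewrites of $\gamma(t\cA) x$ then yield the two needed decay estimates: using $x \in D(\cA)$ one has $\gamma(t\cA) x = t \cdot t\cA(1+t\cA)^{-3} \cdot \cA x$, giving $\|\gamma(t\cA) x\|_\cX \lesssim t\|\cA x\|_\cX$; using $x = \cA y$ one has $\gamma(t\cA) x = t^{-1}(t\cA(1+t\cA)^{-1})^3 y$, giving $\|\gamma(t\cA) x\|_\cX \lesssim t^{-1}\|y\|_\cX$. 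Taking the first bound on $(0,1]$ and the second on $[1,\infty)$ shows $\int_0^\infty t^{-1}\|\gamma(t\cA) x\|_\cX \, \ud t < \infty$, i.e., $x \in \cX_{\Phi, \gamma} = \cX_{\Phi, \psi} = X$. For the final density statement, assuming $\overline{D(\cA)} = \overline{\cR(\cA)} = \cX$, I will approximate any $x \in \cX$ by $J_\lambda(I - J_\mu) x = \lambda \cA(\lambda + \cA)^{-1}(\mu + \cA)^{-1} x \in D(\cA) \cap \cR(\cA)$, where $J_\lambda := \lambda(\lambda+\cA)^{-1}$: letting $\lambda \to \infty$ gives strong convergence $J_\lambda \to I$ by density of $D(\cA)$, while $\mu \to 0^+$ gives $J_\mu \to 0$ strongly, checked first on $\cR(\cA)$ via the identity $J_\mu \cA y = \mu y - \mu J_\mu y$ and extended by the uniform bound $\sup_\mu \|J_\mu\|_{\cL(\cX)} < \infty$.

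The main obstacle I anticipate is ensuring that the norm equivalence in Proposition \ref{Haase equi}(i) applies to \emph{all} of $\cX$, and not merely to a dense subspace, which is what makes the Kalton--Portal estimate valid for every $x \in X$ without any a~priori regularity restriction. This is precisely why it is essential that each symbol used ($\eta$, $\gamma$, and $\psi$ itself) lies in $H_0^\infty(S_\omega)$, so that Proposition \ref{Haase equi}(i) applies under its $H_0^\infty$ alternative and the more delicate condition \eqref{McI cond} can be bypassed entirely.
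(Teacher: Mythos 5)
Your proposal is correct and follows essentially the same route as the paper's proof: Lemma \ref{equi norms}(i) for the Banach-space and semigroup structure, the Kalton--Portal condition verified by splitting $\|AT(t)x\|_X$ into its $\cX$-part (handled via the equivalence $[\cdot]_{\Phi,\eta}\simeq[\cdot]_{\Phi,\psi}$ for $\eta(z)=ze^{-z}$) and its homogeneous part (handled by Lemma \ref{equi norms}(ii)), and a $\min(1,t^{-1})$-type decay estimate for the inclusion $D(\cA)\cap\cR(\cA)\subset X$. The only departures are cosmetic: you route the inclusion through the auxiliary symbol $\gamma(z)=z^2(1+z)^{-3}$ where the paper estimates $\|\cA\cT(t)x\|_\cX\lesssim\min(1,t^{-2})$ directly, and you give a self-contained density argument where the paper cites Pr\"uss--Simonett.
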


\begin{proof} (i) By Lemma \ref{equi norms}(i) we already know that $X$ is a Banach space and the part of $-\cA$ in $X$ is the generator of a bounded, analytic semigroup $T$ on $X$. Therefore, it suffices to show that $A$ satisfies 
\eqref{cond equiv} of Proposition \ref{equiv for l1}. 

Since $\eta(z):= ze^{-z}$ $(z\in \C)$ is in $H^\infty_0(S_\omega)$, by \cite[Theorem 6.4.2]{Ha06}, the semi-norms $[\cdot]_{\Phi, \eta}$ and $[\cdot]_{\Phi, \psi}$ are equivalent. Here  $\Phi := L^1$ (i.e. $\theta = 0$). Therefore, by Lemma \ref{equi norms}(ii),  for all $x\in X$
\begin{align*}
\int_0^\infty \|AT(t)x\|_X \ud t & = \int_0^\infty \|AT(t)x\|_\cX \ud t  + \int_0^\infty [AT(t)x]_{\Phi, \psi} \ud t\\
& \lesssim   [x]_{\Phi,\eta} + [x]_{\Phi,\psi} \lesssim \|x\|_X. 
\end{align*}

For the last statement note that for $x\in D(\cA)$ such that $x = \cA y$ for some $y\in D(\cA)$, we have for all $t>0$ that 
\[
\|\cA\cT(t)x\|_\cX \leq \min \left(\|\cT(t)\cA x\|_\cX, \|\cA^2 \cT(t) y\|_\cX \right) \lesssim \min(1, t^{-2}).
\]
Therefore, 
 $D(\cA)\cap \cR(\cA)\subset \cX_{\Phi, \eta}=\cX_{\Phi,\psi} = X$. Finally, recall that the density of $D(A)$ and $\cR(A)$ implies the density of $D(\cA)\cap \cR(\cA)$; see, e.g. \cite[Theorem 3.1.2(iv), p. 91]{PrSi16}. It completes the proof.  
\end{proof}

\begin{remark}\label{rem on X1}
(a) One can show that $\cX_{1,\cA} \subset J_{0,1}(\cX, D_\cA)$. Since $J_{0,1}(\cX, D_\cA)$ coincides with the closure of $D(\cA)$ in $\cX$ (cf. \cite[Theorem 3.4.2]{BeLo76}), in general, this inclusion is proper. 
Furthermore, the following example shows that, in general, $(\cX, D_\cA)_{\theta, 1} \nsubseteq  \cX_{1,\cA}$ for all $\theta \in (0,1)$.

Let $\cX  := L^1(\R)$ and $\cA := - \Delta$ be the negative of the distributional realization of the Laplace operator on $\cX$. In this case, the space $\cX_{1,\cA}$  is contained in the subspace $\{h \in L^1(\R) : \cF{h}(0) = 0\}$ of $L^1(\R)$, where $\cF h$ denotes the Fourier transform of $h$. 
Indeed, suppose that $h \in L^1(\R)$ is such that $\cF{h}(0) \neq 0$. Since $\cF{h}$ is continuous at zero, there exists $\delta > 0$  such that $|\cF{h}(t)| \geq \frac{1}{2} |\cF{h}(0)|$ for all $t \in (0,\delta)$. Hence, the integral $\int_0^{\infty} \frac{|\cF{h}(t)|}{t}\, \ud t$ diverges. We will show that this implies $\int_0^{\infty} \|\cA \cT(t)h\|_{\cX} \ud t = \infty$, where $\cT$ stands for a semigroup generated by $-\cA$. Note that 
\[
\cF(\cA \cT(t) h) (\xi) = |\xi|^2 e^{-t|\xi|^2} \cF h(\xi), \qquad \xi \in \R.
\]
Therefore, we get
\begin{align*}
\int_0^{\infty} \|\cA \cT (t)h\|_{\cX} \ud t & \geq \int_0^{\infty} \sup_{\xi \in \R} |\xi|^2 e^{-t|\xi|^2} |\cF{h}(\xi)|  \ud t\\
& \geq \int_0^{\infty} \frac{1}{t} \left|\cF{h}\left(\frac{1}{\sqrt{t}}\right)\right|  \ud t\\
& \approx \int_0^{\infty} \frac{|\cF{h}(t)|}{t}  \ud t.
\end{align*}
It proves our claim.

(b) As regarding the additional statement of Theorem \ref{optimal for l1}, recall that for densely defined, injective sectorial operators $\cA$ on a reflexive Banach space $\cX$, $D(A) \cap \cR(\cA)$ is dense in $\cX$; see, e.g. \cite[Theorem 3.1.2(iii), p.91]{PrSi16}.
\end{remark}

\subsection{The density of the domain in interpolation spaces}\label{subsect denisty}

Our next result clarifies the density of the domain of the part $A$ of $\cA$ in $K_\Phi(\cX, D_\cA)$ for some parameters $\Phi$.
  
\begin{lemma}\label{dense dom} Let $\cA$, $\cT$ and $\Phi$ be as specified in the previous lemma.

Then, the part $A$ of $\cA$ in $K_\Phi(\cX,D_\cA)$ is again the generator of a bounded analytic semigroup $T$ on $K_\Phi(\cX,D_\cA)$, which is the restriction of $\cT$ to $K_\Phi(\cX,D_\cA)$. Moreover, the following assertions hold.
\begin{itemize}
\item [(i)] If, in addition,  $\Phi$ has absolutely continuous norm, then the domain of $A$ is dense in $K_\Phi(\cX, D_\cA)$, i.e. the semigroup $T$ generated by $A$ is strongly continuous at $0$. 

In particular, it applies to $\Phi = L^q(\R_+, t^{q(1-\theta)-1}\ud t)$ with $\theta\in (0,1)$ and $q\in [1,\infty)$, which corresponds to the real interpolation space $K_\Phi(\cX,D_\cA) = (\cX,D_\cA)_{\theta, q}$.\\

\item [(ii)] If, in addition, $\cA$ is unbounded on $\cX$ and $\Phi = L^\infty(t^{-\theta})$ with $\theta \in (0,1)$, then its part $A$ in $K_\Phi(\cX, D_\cA) = (\cX, D_\cA)_{\theta, \infty}$  
is not densely defined. 
\end{itemize}
\end{lemma}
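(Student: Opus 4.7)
For (i), I will show that $\cT(t)x \in D(A)$ for every $t > 0$ and $x \in X := K_\Phi(\cX, D_\cA)$, and that $\cT(t)x \to x$ in $X$ as $t \to 0^+$. Membership in $D(A)$: analyticity of $\cT$ gives $\cT(t)x \in D(\cA)$; to verify $\cA\cT(t)x \in X$, Proposition~\ref{Komastu rep} reduces the task to showing $\|\cA\cT(\cdot)[\cA\cT(t)x]\|_\cX \in \Phi$, which follows from the identity $\cA\cT(s)\cdot\cA\cT(t)x = \cA\cT(t)\cdot\cA\cT(s)x$ (valid since $\cT(s)x \in D(\cA)$) and the bound $\|\cA\cT(s)[\cA\cT(t)x]\|_\cX \leq (M/t)\|\cA\cT(s)x\|_\cX$, whose right-hand side lies in $\Phi$ by $x \in X$. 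For the convergence, I use the equivalent norm $\|\cdot\|_X \simeq \|\cdot\|_\cX + \|\cA\cT(\cdot)\cdot\|_{\Phi(\cX)}$. Boundedness of $P$ on $\Phi$ gives $\Phi \hookrightarrow L^1_{loc}$ and $\chi_{(0,1)}/t \notin \Phi$; together with $t^{-1}K(t,x) \in \Phi$ and monotonicity of $K(\cdot, x)$ this forces $K(t, x) \to 0$ as $t \to 0^+$, hence $X \hookrightarrow \overline{D(\cA)}^\cX$ and $\cT(t)x \to x$ in $\cX$. For the seminorm, $\cA\cT(\cdot + t)x - \cA\cT(\cdot)x \to 0$ pointwise in $\cX$ (since $\cA\cT(s)x \in D(\cA) \subset \overline{D(\cA)}^\cX$ and $\cT$ is strongly continuous there), with uniform domination $(M+1)\|\cA\cT(\cdot)x\|_\cX \in \Phi$; absolute continuity of the $\Phi$-norm delivers convergence via dominated convergence.

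For (ii), since $D(A) \subset D_\cA$ and $D_\cA$ embeds continuously into $X := (\cX, D_\cA)_{\theta, \infty}$, it suffices to show $D_\cA$ is not dense in $X$. The estimate $t^{-\theta}K(t, y) \leq t^{1-\theta}\|y\|_{D_\cA} \to 0$ as $t \to 0^+$ for $y \in D_\cA$ gives $\overline{D_\cA}^X \subseteq (\cX, D_\cA)^0_{\theta, \infty} := \{x \in X : \lim_{t \to 0^+} t^{-\theta}K(t, x) = 0\}$, so it remains to construct $x \in X$ with $\limsup_{t \to 0^+} t^{-\theta}K(t, x) > 0$. Using unboundedness of $\cA$, choose $y_n \in D(\cA)$ with $\|y_n\|_\cX = 1$ and $\lambda_n := \|\cA y_n\|_\cX$ growing geometrically ($\lambda_{n+1} \geq N\lambda_n$ for $N$ large depending on $\theta$), selected to be extremal for the $K$-functional at scale $t_n := 1/\lambda_n$ so that $K(t_n, y_n) \gtrsim 1$. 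The interpolation inequality yields $\|y_n\|_X \lesssim \lambda_n^\theta$. Forming $x := \sum_n c_n y_n$ with suitably chosen coefficients $c_n$, the reverse-triangle inequality combined with geometric-gap suppression of cross-scale contributions gives $K(t_n, x) \gtrsim c_n$, so that $t_n^{-\theta}K(t_n, x) \gtrsim c_n \lambda_n^\theta$ stays bounded below along a subsequence for an appropriate choice of $c_n$.

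The main obstacle in (ii) is the combined control of $\|x\|_X$ and the lower bound on $K(t_n, x)$: individual terms $\lambda_n^{-\theta} y_n$ have $X$-norm of order $1$ and contribute $\gtrsim 1$ to $t_n^{-\theta}K(t_n, \cdot)$, but their series is not summable in the Banach space $X$. The construction must exploit a lacunary or near-orthogonality structure---for instance, selecting $y_n$ via spectral projections onto disjoint parts of the spectrum of $\cA$ or via disjoint-support realizations in a concrete model---so that $\|x\|_X$ behaves like the supremum rather than the sum of individual norms, while the $K$-functional at each scale $t_n$ retains the dominant $n$-th contribution. An alternative approach, bypassing explicit sums, is to take $x := \cA^{-\theta}v$ for $v \in \cX \setminus \overline{\cR(\cA)}^\cX$ using fractional powers of $\cA$, for which $t^{1-\theta}\|\cA\cT(t)x\|_\cX = t^{1-\theta}\|\cA^{1-\theta}\cT(t)v\|_\cX$ is bounded yet has strictly positive $\limsup$ at $0$.
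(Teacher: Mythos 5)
Your argument for part (i) is correct and runs parallel to the paper's: you use the semigroup approximants $\cT(t)x$ where the paper uses the resolvent approximants $n(n+\cA)^{-1}x$, but both proofs rest on the same two pillars, namely that $P$ bounded on $\Phi$ forces $X\subset \overline{D(\cA)}^{\cX}$ (so the approximants converge in $\cX$), and that absolute continuity of the norm of $\Phi$ upgrades the pointwise domination $\|\cA\cT(s+t)x-\cA\cT(s)x\|_\cX\leq (M+1)\|\cA\cT(s)x\|_\cX$ to convergence of the homogeneous seminorm. The verification that $\cT(t)x\in D(A)$ via $\|\cA\cT(s)\cA\cT(t)x\|_\cX\leq (M/t)\|\cA\cT(s)x\|_\cX$ is sound. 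This part needs no changes.

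Part (ii), however, has a genuine gap. The paper disposes of it in two lines by citing the known fact that for a Banach couple $(\cY,\cZ)$ with $\cZ$ continuously but not closedly embedded in $\cY$, the space $\cZ$ is never dense in $(\cY,\cZ)_{\theta,\infty}$ (non-regularity of the $(\theta,\infty)$-method, cf. \cite[Theorem 3.4.2]{BeLo76}). You instead attempt to build an explicit $x\in X$ with $\limsup_{t\to 0^+}t^{-\theta}K(t,x)>0$, and the construction is not completed: as you yourself observe, the natural candidates $c_ny_n=\lambda_n^{-\theta}y_n$ each have $X$-norm of order $1$, so the series $\sum_n c_ny_n$ does not converge in $X$, and the "lacunary/near-orthogonality structure" that would make $\|x\|_X$ behave like a supremum is precisely the missing ingredient — in a general Banach space $\cX$ with a general sectorial $\cA$ there are no spectral projections or disjoint supports to exploit. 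The fallback via $x=\cA^{-\theta}v$ is also unjustified: $\cA$ is not assumed injective, so fractional powers need not exist, and even when they do the claim that $t^{1-\theta}\|\cA^{1-\theta}\cT(t)v\|_\cX$ has strictly positive $\limsup$ at $0$ for $v\notin\overline{\cR(\cA)}$ is asserted without proof. The correct and economical route is the one the paper takes: reduce to the abstract interpolation-theoretic statement that $\overline{\cZ}^{(\cY,\cZ)_{\theta,\infty}}$ equals the "little" space $\{x: t^{-\theta}K(t,x)\to 0 \text{ as } t\to 0^+\}$ and that this is a proper subspace whenever $\cZ$ is not closed in $\cY$ — which here is guaranteed by the unboundedness of $\cA$ (if $D_\cA$ were closed in $\cX$ the closed graph theorem would make $\cA$ bounded).
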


\begin{proof} Let $X: = K_\Phi(\cX, D_\cA)$. 
Since $\rho(\cA)\subset \rho(A)$ with $R(\lambda, A) = R(\lambda, \cA)_{|X}$ $(\lambda \in  \rho(\cA))$, by the representation of the space $K_\Phi(\cX, D_\cA)$ stated in Proposition \ref{Komastu rep}, $-A$ is the generator of the semigroup $T(t): = \cT(t)_{|X}$ ($t\geq 0)$ on $X$.\\

Further note that our assumption on the boundedness of $P$ on $\Phi$ implies that $\Phi\subset L^1_{loc}$. It gives that $K_\Phi(\cX,D_\cA)  =  K_\Phi(\cY,D_\cA)$, where $\cY :=  \overline{D(\cA)}^{\cX}$. Indeed, if there exists $x\in \cX \setminus \cY$, then $\frac{1}{t} K(t, x, \cX,D_\cA) \geq \frac{1}{t}\textrm{dist}(x,\cY)$ for $t\in (0,1)$. Moreover, for each $x\in \cY$, $K(t,x, \cX ,D_\cA) = K(t,x, \cY ,D_\cA)$, $t\geq 0$. Consequently, $X\subset \cY$.  

We show that for every $x\in X$, the vectors $x_n:=n(n+\cA)^{-1}x\in D(\cA)$, $n\in \N$, are in the domain of $A$ and approximate $x$ in the norm of $X$.
For the first statement, just note that $\cA x_n = nx -n^2(n+\cA)^{-1}x \in X$. For the second one, note that for all $t>0$ and $n\in \N$:
\[
\|\cA\cT(t)(x_n - x)\|_\cX \leq \sup_{n\in \N} \|n(n+\cA)^{-1} - I\|_{\cL(\cX)}  \|\cA\cT(t) x\|_\cX.  
\]
Hence, since $x\in X \subset \cY$,  $x_n - x = -\cA (n+\cA)^{-1} x$ converges to $0$ in $\cX$ as $n\rightarrow \infty$, by \eqref{equi norms} and the absolute continuity of the norm of $\Phi$, $x_n$ approximate $x$ in the norm of $X$. It completes the proof of (i).

For (ii), recall that for any Banach couple $(\cY, \mathcal Z)$ such that $\mathcal Z$ is continuously embedded in $\cY$, but not closed in $\cY$, the real interpolation spaces 
$(\cY, \mathcal Z)_{\theta, \infty}$ are not {\it regular}, that is,  $\mathcal Z\subset (\cY, \mathcal Z)_{\theta, \infty}$ is not a dense subset of  $(\cY, \mathcal{Z})_{\theta, \infty}$ ($\theta \in (0,1)$), cf. for instance  \cite[Theorem 3.4.2(c)]{BeLo76}. 

Specifying it to our context, the set $D(\cA)$ (hence also $D(A)$) is not dense in $(\cX, D_\cA)_{\theta, \infty}$.
\end{proof}

\section{Extension of Da Prato - Grisvard theorem}\label{sect DaP-G}

The main results of this section, Theorems \ref{thm main} and \ref{thm main 2}, identify those Banach function spaces $\Phi$ over $\R_+$ (in other words, the parameters of the $K$-method) such that, for any generator $-\cA$ of a bounded analytic semigroup on a Banach space $\cX$, the part $A$ of $\cA$ in the corresponding interpolation space $X = K_\Phi(\cX, D_\cA)$ satisfies the $L^1$- or $L^\infty$-m.r. type of estimates, including weighted ones. Even for $\Phi$ corresponding to the classical real interpolation spaces $(\cX, D_\cA)_{\theta, r}$ ($\theta \in (0,1)$, $r=1, \infty$), our results extend, in several ways, Da Prato-Grisvard type results from the literature. In particular, our assumptions on the force term $f$ in $(CP)_{A,f,x}$ to derive the homogeneous estimates \eqref{hom est1} or \eqref{hom est oo} or \eqref{E mr est} should be compared with those in \cite[Proposition 2.14]{DaHiMuTo}; see also Section \ref{sec app} for further connections to related results in the literature. Moreover, in \ref{thm main}(iii) we provide a natural example of a non-invertible generator of a bounded, analytic semigroup on a real interpolation space, which does not have $L^1$-m.r. on $\R_+$. 

\subsection{An extension of the Da Prato-Grisvard theorem for $L^1$-m.r.}

Recall that $[\cdot]_{\Phi, \cA}$ stands for the homogenous part of the norm of the interpolation space $K_\Phi(\cX, D_\cA)$; see \eqref{homoge part}.

\begin{theorem}\label{thm main}
Let $-\cA$ be the generator of a bounded analytic semigroup $\cT$ on a Banach space $\cX$.  Let $w$ be a weight function on $(0,\infty)$ such 
that $[w]_{P+Q, L^1} < \infty$ $($see, \eqref{P+Q on L1}$)$. 

Denote by $A$ the part of $\cA$ in $X:= K_\Phi(\cX, D_\cA )$,  where $\Phi := L^1(\R_+, w \ud t)$.

Then, $-A$ is the generator of a bounded analytic $C_0$-semigroup $T$ on $X$ and the following assertions hold.

\begin{itemize}

\item [{\rm(i)}]  For any $f\in L^1_{loc}([0,\infty); X)$ and $x\in X$ the function 
\begin{equation}\label{u funct}
u (t):= T(t)x +  \int_0^tT(t-s) f(s) \ud s,  \qquad t\geq 0,
\end{equation}
is a unique strong solution to $(CP)_{A,f,x}$.\\ 
 
\item [{\rm(ii)}]  The operator $A$ on $X$ has $L^1$-m.r.\,\,on finite intervals. Moreover, for any $\tau\in (0,\infty)$ there exists a constant $C_\tau>0$   such that for any $f \in L^1_{loc}([0,\infty); X)$ and $x\in X$ the corresponding strong solution $u$ to $(CP)_{A,f,x}$ satisfies 
\begin{equation}\label{inhom l1}
\| u'\|_{L^1(0,\tau;X)} + \| A u  \|_{L^1(0,\tau;X)} + \| u\|_{{BUC}([0,\tau]; X)} \leq C_\tau \left(  \| f \|_{L^1(0,\tau;X)} + \|x\|_X\right). 
\end{equation}

 Furthermore, the following homogenous estimates hold on $\R_+${\rm:}
there exists a constant $C>0$ such that for every $f\in L^1_{loc}([0,\infty); X)$ with $[f(\cdot)]_{\Phi, \cA} \in L^1(\R_+)$ and every $x\in X$, the  strong solution $u$ to $(CP)_{A,f,x}$ satisfies 
\begin{align}\label{hom est1}
\nonumber \|\,[u']_{\Phi, \cA}\|_{L^1(\R_+)} + \|\, [Au]_{\Phi, \cA} \|_{L^1(\R_+)} & + \|[u]_{\Phi, \cA}\|_{{BUC}(\overline{\R}_+) }\\
& \leq C \left( \|\, [f]_{\Phi,\cA}\|_{L^1(\R_+)} + [x]_{\Phi,\cA} \right), 
\end{align} 
where $[\cdot]_{\Phi, \cA}$ stands for the homogeneous part of the norm $\|\cdot\|_X$ of $X$.

\item [{\rm(iii)}] 
In general, $A$ does not have $L^1$-m.r.\,on $\R_+$. More precisely,
 the distributional  realization of the negative Laplacian $- \Delta$ on the classical Besov spaces $X = B^{2\theta}_{1,1}(\R^n) = (L^1(\R^n), W^{2,1}(\R^n))_{\theta,1}$, $\theta \in (0,1)$, does not have $L^1$-m.r.\,on $\R_+$. 

\item [{\rm(iv)}] If, in addition, $0\in \rho(\cA)$, 
then $A$ on $X$ has $L^1$-m.r.\,on $\R_+$ and, furthermore, the solution operator $U$ defined in \eqref{op U} is bounded on $L^1(\R_+; X)$.

 Consequently, for every $f\in L^1(\R_+;X)$ and $x\in X$ the corresponding solution $u$ satisfies \eqref{hom est1} with the norm $\|\cdot\|_X$ instead of the seminorm $[\cdot]_{\Phi, \cA}$ and, in addition, 
\[
 \| u \|_{L^1(\R_+;X)}\lesssim \|f \|_{L^1(\R_+;X)} + \|x\|_X. 
\]

\item [{\rm(v)}] The above statements $(ii)$ and $(iv)$ $($respectively, $(iii))$ is true if $($unweighted$)$ $L^1$-m.r. is replaced by 
$($weighted$)$ $L^1_v$-m.r. for every non-increasing weight $v$ $($respectively, for every $v$
 which satisfies the assumptions of Proposition $\ref{L1 extrapol var 2})$.

\item [{\rm(vi)}] If $v(t): = e^{-t}$, $t>0$, then $A$ on $X$ has $L^1_v$-m.r. on $\R_+$. 
\end{itemize}
\end{theorem}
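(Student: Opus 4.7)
The plan is to reduce assertion (vi) to assertion (iv) by a rescaling argument. First, I would replace $\cA$ by $\tilde \cA := \cA + I$ and $A$ by $\tilde A := A + I$. Since $-\cA$ generates a bounded analytic semigroup on $\cX$, the operator $-\tilde \cA$ generates the bounded analytic semigroup $\tilde \cT(t) := e^{-t}\cT(t)$, $t\geq 0$, on $\cX$, with the additional property $0\in \rho(\tilde\cA)$. Moreover, $D_{\tilde\cA}=D_\cA$ with equivalent graph norms, so $K_\Phi(\cX, D_{\tilde \cA})$ coincides with $X$ as a Banach space (with an equivalent norm), and the part of $\tilde\cA$ in this space is exactly $\tilde A$. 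Consequently, assertion~(iv) applies to $\tilde\cA$ and yields that $\tilde A$ has $L^1$-m.r.\,on $\R_+$, and that the solution operator $U_{\tilde A}$ is bounded on $L^1(\R_+;X)$.

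Next, I would exploit the exponential change of unknown $\tilde u(t):=e^{-t}u(t)$, $g(t):=e^{-t}f(t)$. A direct computation shows that $u\in W^{1,1}_{loc}([0,\infty);X)$ is a strong solution to $(CP)_{A,f,0}$ if and only if $\tilde u$ is a strong solution to $(CP)_{\tilde A, g, 0}$, since
\[
u' + A u = e^t\bigl(\tilde u' + \tilde A \tilde u\bigr).
\]
The map $f\mapsto g$ is an isometric isomorphism of $L^1_v(\R_+;X)$ onto $L^1(\R_+;X)$ with $v(t)=e^{-t}$. Given $f\in L^1_v(\R_+;X)$, the $L^1$-m.r.\,of $\tilde A$ provides a strong solution $\tilde u$ with $\tilde u',\tilde A\tilde u \in L^1(\R_+;X)$ and the estimate $\|\tilde u'\|_{L^1(X)}+\|\tilde A\tilde u\|_{L^1(X)}\lesssim \|g\|_{L^1(X)}$. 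Because $0\in\rho(\tilde A)$, one also has $\|\tilde u\|_{L^1(X)}\le \|\tilde A^{-1}\|_{\cL(X)}\,\|\tilde A\tilde u\|_{L^1(X)}$, so $\tilde u\in L^1(\R_+;X)$.

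Finally, I would transfer the estimate back to $u$ via the pointwise bounds
\[
\|u'(t)\|_X\, e^{-t} \le \|\tilde u'(t)\|_X + \|\tilde u(t)\|_X, \qquad \|Au(t)\|_X\, e^{-t} \le \|\tilde A\tilde u(t)\|_X + \|\tilde u(t)\|_X,
\]
which, after integration over $\R_+$, combine with the preceding estimates to give
\[
\|u'\|_{L^1_v(X)} + \|Au\|_{L^1_v(X)} \lesssim \|f\|_{L^1_v(X)}.
\]
I do not foresee any genuine obstacle; the only bookkeeping point needing care is the identification of the interpolation space after the $+I$ shift and the observation that $\tilde A^{-1}\in\cL(X)$ is what promotes the $L^1$-control of $\tilde A\tilde u$ to $L^1$-control of $\tilde u$ itself, thereby closing the weighted estimate.
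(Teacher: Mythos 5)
Your proposal addresses only assertion (vi) of the theorem, taking the remaining parts --- in particular (iv) --- as already established; it is not a proof of the full statement. For (vi) itself your argument is correct, but it follows a genuinely different route from the paper. The paper proves (vi) by verifying the weighted Kalton--Portal condition \eqref{L1v mr cond} of Theorem \ref{weighted L1 mr} directly for $v(t)=e^{-t}$: since $v(t+s)/v(s)=e^{-t}$, that condition reduces to $\int_0^\infty \|AT(t)x\|_X\, e^{-t}\,\ud t\lesssim \|x\|_X$, whose $\cX$-component is handled by normalizing $w$ via Proposition \ref{KJ} so that $e^{-t}\lesssim w(t)$, and whose homogeneous component is exactly the estimate \eqref{l1 homo} obtained in the proof of (iv). You instead shift to $\tilde\cA=\cA+I$ (so that $0\in\rho(\tilde\cA)$ and $K_\Phi(\cX,D_{\tilde\cA})=X$ with equivalent norms --- the same bookkeeping the paper performs in its proof of (ii)), invoke (iv) for $\tilde A=A+I$, and transport solutions by $\tilde u=e^{-\cdot}u$, $g=e^{-\cdot}f$, using that $f\mapsto e^{-\cdot}f$ is an isometry of $L^1_v(\R_+;X)$ onto $L^1(\R_+;X)$; the pointwise transfer estimates and the final integration are all correct. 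Both arguments ultimately rest on the multiplicativity $e^{-(t+s)}=e^{-t}e^{-s}$, but yours is a cleaner reduction: it bypasses Theorem \ref{weighted L1 mr} and Proposition \ref{KJ} entirely and makes transparent why the exponential weight is special, at the cost of being tied to that particular weight, whereas the paper's computation sits inside the general weighted framework it develops in Section 2. A minor remark: the $L^1$-bound on $\tilde u$ itself can also be read off from the boundedness of the solution operator $U_{\tilde A}$ on $L^1(\R_+;X)$ asserted in (iv), so your appeal to $\tilde A^{-1}\in\cL(X)$ is a valid alternative but not strictly necessary.
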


\begin{proof}
Combining Lemma \ref{P+Q ineq}(i), Proposition \ref{Komastu rep} and Lemma \ref{dense dom}(i) we get that 
$-A$ is the generator of a bounded analytic $C_0$-semigroup $T$ on $X$ and  $T(t)$ is the restriction of $\cT(t)$ to $X$ ($t\geq0$).  

We begin with the proof of the statement (iv), which  we then apply, by the a rescaling argument, in the proofs of (i) and (ii).

For (iv), by \eqref{P on l1} and Lemma \ref{P+Q ineq}(i), in particular,  the Hardy operator $P$ is bounded on $\Phi$, and Proposition \ref{Komastu rep} shows that,  up to the equivalence of the norm, we have the following Komatsu type representation of $X:= K_\Phi(\cX, D_\cA )$:
$$
X = \left \{ x\in \cX:   \left[ (0,\infty)\ni t\mapsto \|\cA \cT(t) x \|_{\cX} \right]\in \Phi \right\}
$$
and 
\[
\|x\|_X \simeq [x]_{\Phi,\cA} :=  \int_0^\infty \|\cA \cT(s)  x \|_\cX w(s) \ud s, \qquad x\in X.
\]
Moreover, there exists a constant $C_\cA$ such that for all $t,s>0$ we have that
\[
\| \cA \cT(t/2) \cT(s) \|_{\cL(\cX)} = \|\cA \cT((t+s)/{2}) \cT({s}/{2})\|_{\cL(\cX)}  \leq  \frac{C_\cA}{t+s}. 
\] 

It gives that for all $x\in X$ 
\begin{align*}
\int_0^\infty [ A T(t) x]_{\Phi, \cA} \ud t & = \int_0^\infty \int_0^\infty \left\|\cA^2 \cT(s+t) x\right\|_\cX w(s) \ud s \ud t \\
& \leq C_\cA 
 \int_0^\infty \int_0^\infty \frac{1}{t+s}  \|\cA \cT({t}/{2}) x\|_\cX w(s) \ud s \ud t \\
 &  =  C_\cA  \int_0^\infty \left[\frac{1}{w(t/2)}\int_0^\infty \frac{1}{t+s} w(s) \ud s \right] \|\cA \cT(t/2) x\|_\cX w(t/2) \ud t\\
 & \leq 2 C_\cA [w]_{P+Q, L^1}\, [x]_{\Phi, \cA}.
\end{align*} For the further references we point out here that we do not make use of the invertibility of $\cA$ in the above estimate. 
The fact that $0\in \rho(\cA)$ yields $\|\cdot\|_X \simeq [\cdot]_{\Phi,\cA}$, and  the Kalton-Portal characterization, see Theorem \ref{weighted L1 mr}(with $v\equiv 1$), implies that $A$ has $L^1$-m.r. on $\R_+$. The boundedness of the corresponding solution operator $U = U_A$ follows from Dore's result \cite[Theorem 5.2]{Do00}; see also Remark \ref{Dore results}(c). 
It completes the proof of the point (iv). 

For (ii), note that for all $\lambda>0$, the operator $\cA+\lambda$ is the generator of a bounded analytic semigroup and $0\in \rho(\cA+\lambda)$. Moreover, 
since $\|x\|_{D_\cA} \simeq \|x\|_{D_{\cA+\lambda}}$ for $x\in D(\cA) = D(\cA+\lambda)$, we have that $X := K_\Phi(\cX,D_\cA) = K_\Phi(\cX,D_{\cA+\lambda})$ with the equivalence of the norms. By (iv) we know that $(\cA + \lambda)_{|X} = \cA_{|X} + \lambda$ has $L^1$-m.r. on $\R_+$, which easily gives that $A$ has $L^1$-m.r. on finite intervals; cf. the proof of Lemma \ref{lem on hom est}(ii). In particular, the condition \eqref{loc est} with $p=1$ holds; cf. Remark \ref{Dore results}(b). Therefore, since $D(A)$ is dense in $X$ (see Lemma \ref{dense dom}(i)), by Lemma \ref{regularity of u}(ii) and (iii), we get the statement (i).  The estimates \eqref{inhom l1} of (ii) follow easily.

 For the proof of the homogenous estimates \eqref{hom est1}, as has been stressed in the proof of (iv) above, for all $x \in X$ we have  
\begin{equation}\label{l1 homo}
\int_0^\infty [A T(t) x]_{\Phi,\cA} \ud t  \leq C [x]_{\Phi,\cA}
\end{equation}
 for a constant $C$ independent of $x$. 
Fix $f\in L^1_{loc}([0,\infty);X)$ with $[f]_{\Phi, \cA} \in L^1$. Since $A$ has $L^1$-m.r. on finite intervals, 
Lemma \ref{lem on hom est}(ii) shows that for a.e. $t>0$ the function $(0,t) \ni s\mapsto A T(t-s) f(s)\in X$ is  Bochner integrable. 
  For such $t$ one can find a sequence of simple functions $g_n: (0, t)\rightarrow X$, $n\in \N$, such that $g_n(s)\rightarrow A T(t-s) f(s)$ as $n\rightarrow \infty$ and 
\begin{equation*}
\|g_n(s)\|_X\leq 2 \|A T(t-s) f(s)\|_X \qquad  \textrm{for a.e. } s \in (0,t).
\end{equation*}
In particular, by the (scalar) Lebesgue's dominated convergence theorem,
\[
\int_0^t \|g_n(s) -  A T(t-s) f(s)\ud s\|_X \ud s \rightarrow 0  \qquad \textrm{ as } n\rightarrow \infty.
\]
In addition, combining again Lebesgue's dominated theorem with Hille's thoerem, $\|\int_0^t g_n(s) \ud s - A U f(t)\|_X\rightarrow 0$ as $n\rightarrow \infty$. 
 Since for every simple function $g_n$,  $\bigl[\int_0^t g_n(s) \ud s\bigr]_{\Phi, \cA} = \int_0^t [g(s)]_{\Phi, \cA} \ud s$,  for a.e. $t>0$ we get that  
\begin{align*}
[AU f(t)]_{\Phi, \cA} &\leq \bigl[ A \int_0^t T(t-s) f(s)\ud s - \int_0^t g_n(s) \ud s \bigr]_{\Phi, \cA} + \bigl[\int_0^t g_n(s) \ud s \bigr]_{\Phi, \cA}\\
& =  \bigl[ A \int_0^t T(t-s) f(s)\ud s - \int_0^t g_n(s) \ud s \bigr]_{\Phi, \cA} + \int_0^t [g_n(s)]_{\Phi, \cA} \ud s \\
& \leq \bigl[ A \int_0^t T(t-s) f(s)\ud s - \int_0^t g_n(s) \ud s \bigr]_{\Phi, \cA}\\
& \quad  + \int_0^t [g_n(s) -  A T(t-s) f(s)\ud s]_{\Phi, \cA} \ud s
  +  \int_0^t [A T(t-s) f(s)]_{\Phi, \cA}\ud s\\
\end{align*}
Therefore, letting $n\rightarrow \infty$, we obtain  
\begin{equation}\label{hom est2}
[AU f(t)]_{\Phi, \cA} \leq \int_0^t [AT(t-s) f(s)]_{\Phi, \cA} \ud s \qquad (\textrm{ a.e. } t>0).
\end{equation}
Now, combining the (scalar) Fubini theorem with \eqref{l1 homo} and \eqref{hom est2}, we get  
\begin{align*}
\int_{\R_+} [AU f(t)]_{\Phi, \cA} \ud t & \leq  \int_{\R_+} \int_0^t [A T(t-s) f(s)]_{\Phi, \cA} \ud s \ud t\\
& = \int_{\R_+} \int_s^\infty [AT(t-s) f(s)]_{\Phi, \cA} \ud t \ud s\\
& =  \int_{\R_+} \int_0^\infty [AT(t) f(s)]_{\Phi, \cA} \ud t \ud s\\
& \leq C  \int_{\R_+}  [f(s)]_{\Phi, \cA} \ud s.
\end{align*}
Finally, to show that $[U f]_{\Phi, \cA}$ is bounded, uniformly continuous on $\R_+$ with the bound on its sup norm as in \eqref{hom est1}, 
 first note that for all $t>s>0$ we have the integral representation formula: 
\[
U f(t) - U f(s) = \int_s^t (U f)'(r)\ud r,
\]
where the integral exists in $X$. Since $(U f)' = f - AU f$ is locally Bochner integrable on $[0,\infty)$, the same argument that we used to get \eqref{hom est2}, shows that for all $t>s>0$
\[
\big|\, [U f(t)]_{\Phi, \cA} - [U f(s)]_{\Phi, \cA} \,\big| \leq [ U f(t) - U f(s)]_{\Phi, \cA} \leq \int_s^t [(U f)'(r)]_{\Phi, \cA}\ud r.  
\]

Therefore, since $[(U f)']_{\Phi, \cA} \leq  [f]_{\Phi, \cA} + [AU f]_{\Phi, \cA}\in L^1$ and $U f (0) = 0$, we get that $U f$ is bounded and uniformly continuous with respect to the semi-norm $[\cdot]_{\Phi, \cA}$, with 
$\sup_{t>0} [ U f(t)]_{\Phi, \cA} \leq C  \|[f]_{\Phi, \cA}\|_{L^1(\R_+)}.$
It completes the proof of \eqref{hom est1} for the initial value $x=0$.

For a non-zero initial value $x\in X$, if $u$ is the corresponding strong solution to $(CP)_{A,f,x}$, then  $Au(t) = AT(t) x + AU f(t)$ for a.e. $t>0$. By \eqref{l1 homo} we get $[Au]_{\Phi, \cA} \in L^1$. Since $u' = f - Au$ a.e. on $\R_+$, the same argument to the above gives \eqref{hom est1}. 
It completes the proof of (ii).

For (iii), consider a function $\phi(\xi) := e^{\frac{-|\xi|^2}{2}}$ $(\xi \in \R^n)$.  Since $\phi$ is a Schwartz function,  $\phi \in B^{2\theta}_{1,1}(\R^n)$. To prove the lack of $L^1$-m.r. on $\R_+$ for the negative Laplacian on $B^{2\theta}_{1,1}(\R^n)$, by the Kalton-Portal characterization, it is sufficient to show that $\int_0^{\infty} \|\Delta e^{t\Delta}\phi\|_{B^{2\theta}_{1,1}(\R^n)} \ud t = \infty$.  First, note that for $\xi \in \R^n$ and $t \in (0,\infty)$ we have:
\[
(e^{t\Delta}\phi)(\xi) = \frac{1}{(t+1)^{\frac{n}{2}}} e^{\frac{-|\xi|^2}{2(t+1)}}.
\]
Since $e^{t\Delta}\phi$ is a radial function, we have $\Delta e^{t\Delta} \phi (\xi) = \frac{1}{r^{n-1}} \frac{\ud}{\ud r}\left((\cdot)^{n-1}(e^{t\Delta}\phi)'\right)$ with $r = |\xi|$. Therefore
\[
(\Delta e^{t\Delta}\phi)(\xi) = \frac{1}{(t+1)^{\frac{n}{2}+1}} e^{\frac{-r^2}{2(t+1)}} \left(1-\frac{r^2}{t+1}\right),  \qquad r = |\xi|.
\]
Finally, we get 
\begin{align*}
\int_0^{\infty} \|\Delta e^{t\Delta}\phi\|_{B^{2\theta}_{1,1}(\R^n)} \ud t & \geq \int_0^{\infty} \|\Delta e^{t\Delta}\phi \|_{L^1(\R^n)} \ud t\\
& \gtrsim \int_{1}^{\infty} \frac{1}{t^{\frac{n}{2}+1}} \int_{0}^{\infty} \left|r^{n-1} e^{\frac{-r^2}{2t}}\left(1-\frac{r^2}{t}\right)\right| \ud r \ud t \\
& = \int_{1}^{\infty} \frac{t^{\frac{n-1}{2}+\frac{1}{2}}}{t^{\frac{n}{2}+1}} \int_{0}^{\infty} |e^{-r^2}(1-r^2)| \ud r \ud t = \infty.
\end{align*}
It completes the proof of (iii).

(v) 
 It follows from Corollary \ref{L1 for decr}(i) and  Proposition \ref{L1 extrapol var 2}, respectively.
 
Finally, for (vi), by Theorem \ref{weighted L1 mr}, it is sufficient to show that there exists a constant $C$ such that for any $x\in X$ we have
\begin{equation}\label{mr on R+}
\int_0^{\infty} \|AT(t)x\|_\cX  e^{-t} \ud t + \int_0^{\infty}  [AT(t)x]_{\Phi,\cA} e^{-t} \ud t \leq C (\|x\|_\cX + [x]_{\Phi,\cA}).
\end{equation}

By Proposition \ref{KJ}, up to the change of the weight function, we can assume that $w(t) = \phi(1,1/t)$, $t>0$ with $\phi(1,\cdot)$ is quasi-concave. In particular, 
$\sup_{t>0} e^{-t}/w(t) \leq 1/\phi(1, 1)$.  It gives that 
\[
\int_0^{\infty} \|AT(t)x\|_\cX  e^{-t} \ud t \leq \frac{1}{\phi(1, 1)} \int_0^{\infty} \|AT(t)x\|_\cX  w(t) \ud t = \frac{1}{\phi(1, 1)} [x]_{\Phi, \cA}.
\]
For the second integral in \eqref{mr on R+}, we can apply \eqref{l1 homo} already proved in (iv). It completes the proof of (vi).
\end{proof}

\begin{remark}\label{rem on l1}
(a) By Corollary \ref{P+Q for powers}, the conclusions of Theorem \ref{thm main} hold, in particular, for all power weights $w(t):= t^{-\theta}$ with $\theta \in (0,1)$, that is, for the parts of $\cA$ in the classical interpolation spaces $(\cX,D_\cA)_{\theta, 1}$ with $\theta \in (0,1)$.

(b) Regarding the point (iii) of Theorem \ref{thm main}. By Kalton and Lancien \cite[Theorem 3.3]{KaLa00} (see also Fackler \cite{Fa16}) and Weis' characterization of the $L^q$-m.r. on $\R_+$ (for $q\in (1,\infty)$ and the underlying space with the $U\!M\!D$-property) for any $r\in (1,\infty)\setminus \{2\}$ on $L^r(0,1)=:\cX$ there exists a bounded, injective, sectorial operator $\cA$ with the sectorial angle $\omega_\cA$ equal to $0$ and the dense range, which does not have $L^q$-m.r.\,on $\R_+$ for any $q\in (1,\infty)$. Of course, $\cA$ has $L^q$-m.r.\,on any finite interval. By Dore \cite[Theorem 7.1]{Do00} (or \cite[Theorem 17.2.26, p.598]{HNVW24}), $\cA$ does not have $L^1$-m.r.\,on $\R_+$ (and $L^\infty$-m.r.\,on $\R_+$ as well). 

Since in this case  $D_\cA \simeq \cX$, so $X := (\cX,D_\cA)_{\theta, q} \simeq X$, the part $A$ of $\cA$ in $X$ is just $\cA$. Therefore, this somehow artificial example shows that the assumption that $0\in \rho(\cA)$ cannot be dropped in Theorem \ref{thm main}(iv) even in the case of $\cX$ being a reflexive Banach space and $\cA$ being a densely defined, injective generator of a bounded analytic semigroup, which has dense range. 

In the context of the example provided in (iii) of Theorem \ref{thm main}, since the range of the Laplacian on $L^1(\R^n)$ is a subset of the closed subspace $\{f\in L^1(\R^n): \int_{\R^n} f \ud t = 0\}\subset L^1(\R^n)$, its part in $B^{2\theta}_{1,1}(\R^n)$, $\theta \in (0,1)$, does not have dense range. 

Theorem \ref{thm main} should be read in connection to Theorem \ref{optimal for l1}.  Note that the function $\phi$ which we make use of in the proof of (iii) is not in the range of the Laplacian on $L^1(\R^n)$.

(c) Comparing with a recent result on homogeneous estimates \cite[Proposition 2.14, Theorem 2.20]{DaHiMuTo} (the case $q=1$), in Theorem \ref{thm main}(ii) we relax the assumptions on the force term $f$ in $(CP)_{A,f,x}$ showing that $f$ can be taken from $L^1_{loc} ([0,\infty);X)$ instead of $L^1(\R_+;X)$. 
As the following example shows, our condition  is strictly weaker (for $\theta > \frac{1}{2}$) than that in \cite[Proposition 2.14]{DaHiMuTo}, see also a comment below \cite[Theorem 2.20]{DaHiMuTo} therein. 

 More precisely, setting $\cA := - \Delta$, $\cX: = L^1(\R)$, $\Phi := L^1_{w}$ for $w(t) := t^{-\theta} (t > 0)$, $X: = K_{\Phi}(\cX,D_\cA) = (\cX,D_\cA)_{\theta,1} =  B^{2\theta}_{1,1}(\R)$, there exists $f \in L^1_{loc}(\R_{+}; X)$ such that $[f]_{\Phi, \cA} \in L^1(\R_{+})$, but $f \notin L^1(\R_{+}; X)$. Indeed, let 
\[
f(t) := \chi_{(1,\infty)}(t) \frac{1}{t^2} e^{-\frac{\xi^2}{2t^2}}, \qquad t>0.
\]
The function $f$ is not even in $L^1(\R_{+};L^1(\R)) \supsetneq L^1(\R_{+}; B^{2\theta}_{1,1}(\R))$, because
\[
\int_1^{\infty} \int_{\R} \frac{1}{t^2} e^{-\frac{\xi^2}{2t^2}} \ud \xi \ud t = \sqrt{2\pi} \int_1^{\infty} \frac{1}{t} \ud t = \infty.
\]
However, following calculations done in \cite[Example 17.4.2]{HNVW24} we get
\[
(\Delta e^{s\Delta} f)(\xi) = \frac{1}{t} \frac{1}{(s+t^2)^{\frac{3}{2}}} e^{-\frac{\xi^2}{2(s+t^2)}} (1 - \frac{\xi^2}{s+t^2}).
\]
It gives
\begin{align*}
\int_0^{\infty} \int_0^{\infty} \|\Delta e^{s\Delta} f\|_{L^1(\R)} \frac{1}{s^{\theta}} \ud s \ud t & \approx \int_1^{\infty} \frac{1}{t} \int_0^{\infty} \frac{1}{\sqrt{s}} \frac{1}{s+t^2} \ud s \ud t \\ &= \frac{\pi}{\sin(\pi \theta)} \int_1^{\infty} \frac{1}{t^{1+2\theta}} \ud t \\& = \frac{\pi}{\sin(\pi \theta) 2\theta}.
\end{align*}
Therefore, $[f]_{\Phi, \cA} \in L^1(\R)$. 
\end{remark} 

\subsection{An extension of the Da Prato-Grisvard theorem for $L^\infty$-m.r.}

In this subsection we provide a counterpart of Theorem \ref{thm main} for $L^\infty$-m.r.

\begin{theorem}\label{thm main 2}
 Let $-\cA$ be the generator of a bounded analytic semigroup $\cT$ on a Banach space $\cX$. 

Let $w$ be a weight function on $(0,\infty)$ such that $[w]_{P+Q, L^\infty} < \infty$ $(see, \eqref{P+Q on Linfty})$.

 Denote by $A$ the part of $\cA$ in $X:= K_\Phi(\cX, D_\cA )$,  where $\Phi := L^\infty(w)$.

Then, $-A$ is the generator of a bounded analytic semigroup $T$ on $X$ and the following assertions hold.

\begin{itemize}

\item [{\rm(i)}] For any $q\in (1,\infty)$, if $f\in L^q_{loc}([0,\infty); X)$ and $x\in (X, D_A)_{\frac{1}{q'},q}$, then the function 
\begin{equation}\label{u funct}
u (t):= T(t)x +  \int_0^tT(t-s) f(s) \ud s,  \qquad t>0,
\end{equation}
is a unique strong solution to $(CP)_{A,f,x}$.

\item [{\rm(ii)}] The operator $A$ on $X$ has $L^\infty$-m.r.\,\,on finite intervals. Consequently, for any $\tau\in (0,\infty)$ there exists 
a constant $C_\tau>0$ such that for any $f \in L^\infty_{loc}([0,\infty); X)$ and $x\in D(A)$ the corresponding strong solution $u$ to $(CP)_{A,f,x}$ satisfies 
\item [{\rm(ii)}] The operator $A$ on $X$ has $L^\infty$-m.r.\,\,on finite intervals. Thus, for any $\tau\in (0,\infty)$ there exists a constant $C_\tau>0$ such 
that for any $f \in L^\infty_{loc}([0,\infty); X)$ and $x\in D(A)$ the corresponding strong solution $u$ to $(CP)_{A,f,x}$ satisfies

\begin{align}\label{inhom loo}
\nonumber \| u'\|_{L^\infty(0,\tau; X)}  + \|A u\|_{L^\infty(0,\tau;X)} & + \| u\|_{Lip([0,\tau]; X)} \\ & \leq C_\tau \left( \| f \|_{L^\infty(0,\tau;X)} + \|Ax\|_X\right).
\end{align}

 Furthermore, the following homogenous estimates hold on $\R_+$: 
there exists a constant $C>0$ such that for every $f\in \bigcup_{q>1} L^q_{loc}([0,\infty); X)$ with $[f(\cdot)]_{\Phi, \cA} \in L^\infty(\R_+)$ and every $x\in D(A)$, the  strong solution $u$ to $(CP)_{A,f,x}$ satisfies 
\begin{equation}\label{hom est oo}
\|\,[u']_{\Phi, \cA}\|_{L^\infty(\R_+)} + \|\, [Au]_{\Phi, \cA} \|_{L^\infty(\R_+)}  \leq C \left( \|\, [f]_{\Phi,\cA}\|_{L^\infty(\R_+)} + [Ax]_{\Phi,\cA} \right) 
\end{equation} 
and for all $t,s>0$
\[
[u(t) - u(s)]_{\Phi, \cA}\leq C |t-s| \left( \|\, [f]_{\Phi,\cA}\|_{L^\infty(\R_+)} + [Ax]_{\Phi,\cA} \right),
\]
where $[\cdot]_{\Phi, \cA}$ stands for the homogeneous part of the norm $\|\cdot\|_X$ of $X$.

\item [{\rm(iii)}] If, in addition, $0\in \rho(\cA)$, 
then $A$ on $X$ has $L^\infty$-m.r.\,on $\R_+$ and, furthermore, the solution operator $U$ is bounded on $L^\infty(\R_+; X)$.

 Consequently, for every $f\in L^\infty(\R_+;X)$ and $x\in D(A)$ the corresponding strong solution $u$ satisfies \eqref{hom est oo} with the norm $\|\cdot\|_X$ instead of the seminorm $[\cdot]_{\Phi, \cA}$ and, in addition, 
\[
 \| u \|_{L^\infty(\R_+;X)}\lesssim \| f \|_{L^\infty(\R_+;X)} + \|x\|_X. 
\]

\item [{\rm(iv)}] The above statements $(ii)$ and $(iii)$ remain true if (unweighted) $L^\infty$-m.r. is replaced with (weighted) $L^\infty_v$-m.r. for every power weight $v(t):=t^\mu$ with  $\mu \in (0,1)$. 
\end{itemize}
\end{theorem}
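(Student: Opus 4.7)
My plan closely parallels the proof of Theorem~\ref{thm main}, replacing integral $L^1$-estimates by pointwise $L^\infty$-ones. By Lemma~\ref{P+Q ineq}(ii) the hypothesis $[w]_{P+Q,L^\infty}<\infty$ yields boundedness of both $P$ and $Q$ on $\Phi=L^\infty_w$, so Proposition~\ref{Komastu rep} provides the Komatsu-type representation $\|x\|_X\simeq \|x\|_\cX+\sup_{t>0}w(t)\|\cA\cT(t)x\|_\cX$ and identifies the part $A$ of $\cA$ in $X$ as the negative generator of a bounded analytic semigroup $T$ on $X$, with $T(t)$ being the restriction of $\cT(t)$. By Lemma~\ref{dense dom}(ii) $A$ is in general not densely defined, which is why (i) takes $x$ in the trace space (the natural space of initial data yielding an $L^q$-regular orbit) and (ii) restricts the initial data to $D(A)$.

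The heart of the proof is the invertible case (iii): when $0\in\rho(\cA)$ one has $\|x\|_X\simeq [x]_{\Phi,\cA}$ and $T$ decays exponentially, so I only need to verify the Kalton-Portal sufficient condition from Lemma~\ref{charact} in the form $\|x\|_X\leq C\sup_{t>0}\|tAT(t)x\|_X$. The starting point is the identity
\[
\cA\cT(r)x = \int_r^\infty \cA^2\cT(s)x\,ds, \qquad r>0,
\]
valid in $\cX$ by the exponential decay. Taking $\sigma=t=s/2$ in the trivial bound $t\,w(\sigma)\|\cA^2\cT(\sigma+t)x\|_\cX \lesssim \|tAT(t)x\|_X$ gives $\|\cA^2\cT(s)x\|_\cX \lesssim \sup_\tau\|\tau AT(\tau)x\|_X/(sw(s/2))$, and inserting this into the identity yields, after a change of variable,
\[
w(r)\|\cA\cT(r)x\|_\cX \lesssim \sup_{\tau>0}\|\tau AT(\tau)x\|_X \cdot w(r)\int_{r/2}^\infty \frac{du}{u\,w(u)}.
\]
Splitting $\int_{r/2}^\infty = \int_{r/2}^r + \int_r^\infty$ and using $1/u\leq 2/r$ on the first piece bounds the trailing factor by $2[w]_{P,L^\infty}+[w]_{Q,L^\infty}\lesssim [w]_{P+Q,L^\infty}$, completing the verification.

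Statement (ii) then follows by the rescaling $\cA\mapsto \cA+\lambda$ with $\lambda$ large enough that $0\in\rho(\cA+\lambda)$: (iii) applied to $A+\lambda$ yields $L^\infty$-m.r.\ on $\R_+$, whence $A$ has $L^\infty$-m.r.\ on finite intervals, and the inhomogeneous bound \eqref{inhom loo} follows from the closed graph theorem together with $\|AT(t)x\|_X=\|T(t)Ax\|_X\leq M\|Ax\|_X$ for $x\in D(A)$. The homogeneous estimate \eqref{hom est oo} is obtained by mimicking the simple-function approximation used in the proof of Theorem~\ref{thm main}(ii) to derive the pointwise inequality $[AUf(t)]_{\Phi,\cA}\leq \int_0^t [AT(t-s)f(s)]_{\Phi,\cA}\,ds$, and then invoking the bound $\sup_{t>0}[AT(t)y]_{\Phi,\cA}\lesssim [y]_{\Phi,\cA}$, which arises from exactly the same decomposition as above; the Lipschitz control of $u$ follows from $u(t)-u(s)=\int_s^t(f(r)-Au(r))\,dr$. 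Part (i) is handled by combining the $L^q$-m.r.\ on finite intervals, $q\in(1,\infty)$, with the classical trace-space characterization of admissible initial data, and (iv) follows from Corollary~\ref{P+Q for powers} (which certifies $[v]_{P+Q,L^\infty}<\infty$ for $v(t)=t^\mu$, $\mu\in(0,1)$) together with the analogous weighted variant of the Kalton-Portal condition.

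The main obstacle is the sharp computation behind (iii): the symmetric decomposition of $\cA^2\cT(s)$ at the midpoint $s/2$ and the asymmetric split of the integral at $u=r$ must be orchestrated so that the final bound collapses exactly onto the hypothesis $[w]_{P+Q,L^\infty}<\infty$; any other choice leaves one with an uncontrollable factor of the form $w(s)/w(s/2)$. A secondary technical issue is justifying the integral identity for $\cA\cT(r)x$ in the general case, which requires the negative exponential growth of $\cT$ and is therefore only available after the rescaling step.
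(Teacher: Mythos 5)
Your treatment of the generator statement, of (i), of (iii), and of the first half of (ii) matches the paper's proof: the identity $-\cA \cT(r)x=\int_r^\infty \cA^2\cT(s)x\,\ud s$, the midpoint factorization of $\cA^2\cT(s)$, and the split of the resulting integral against $[w]_{P,L^\infty}$ and $[w]_{Q,L^\infty}$ reproduce exactly the paper's derivation of \eqref{charact oo}, and the rescaling to $\cA+\lambda$ for the finite-interval statement and the closed-graph/trace-space handling of \eqref{inhom loo} and of (i) are the paper's arguments as well. (A minor remark: the integral identity already follows from $\|\cA\cT(N)\|_{\cL(\cX)}\lesssim N^{-1}\to 0$, so it does not actually require the rescaling step.)

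The genuine gap is in your route to the homogeneous estimate \eqref{hom est oo}. You propose to transplant the $L^1$ argument: first the pointwise bound $[AUf(t)]_{\Phi,\cA}\leq\int_0^t[AT(t-s)f(s)]_{\Phi,\cA}\,\ud s$, then ``the bound $\sup_{t>0}[AT(t)y]_{\Phi,\cA}\lesssim[y]_{\Phi,\cA}$''. That bound does not hold for general $y\in X$: what the decomposition of $\cA^2\cT(\tau+t)$ actually yields is $[AT(t)y]_{\Phi,\cA}\lesssim t^{-1}[y]_{\Phi,\cA}$, with the $t$-uniform alternative $[AT(t)y]_{\Phi,\cA}\lesssim[Ay]_{\Phi,\cA}$ available only for $y\in D(A)$ — and $f(s)\in D(A)$ is not assumed. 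Moreover, even granted some pointwise control of the integrand, the convolution inequality cannot produce a $t$-uniform estimate out of $\|[f]_{\Phi,\cA}\|_{L^\infty(\R_+)}$: with the bound $t^{-1}$ the integral diverges at $s=t$, and with any constant bound it grows linearly in $t$. The $L^1$ case succeeds precisely because Fubini converts the convolution into $\int_0^\infty[AT(t)f(s)]_{\Phi,\cA}\,\ud t$, and there is no analogous mechanism for $L^\infty$. The paper's proof circumvents this by applying the characterization \eqref{charact oo} to the vector $AUf(t)$ itself, reducing the task to bounding $\sup_{s>0}[sAT(s)AUf(t)]_{\Phi,\cA}$; the extra factor $s$ supplied by the Kalton--Portal functional turns the kernel into $s\,\|\cA^2\cT(s+t-r)\|_{\cL(\cX)}\lesssim s/(s+t-r)^2$, whose integral over $r\in(0,t)$ is bounded by an absolute constant. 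Some device of this kind is indispensable here. Separately, your justification of (iv) via Corollary \ref{P+Q for powers} conflates the spatial interpolation weight $w$ with the temporal weight $v$; the paper simply invokes the known equivalence of $L^\infty_{t^\mu}$-m.r.\ and $L^\infty$-m.r.\ for $\mu\in(0,1)$.
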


\begin{proof} The proof follows the lines of the proof of Theorem \ref{thm main}. We provide only main supplementary observations which should be made.

Combining Lemma \ref{P+Q ineq}(ii), Proposition \ref{Komastu rep} and Lemma \ref{dense dom}(ii) we get that $-A$ is the generator of a bounded analytic semigroup $T$ on $X$ and $T(t)$ is the restriction of $\cT(t)$ to $X$ ($t\geq0$).  If $\cA$ is not bounded operator, then the semigroup $T$ is not $C_0$-semigroup on $X$, that is, $D(A)$ is not dense in $X$.

For (iii), by Lemma \ref{P+Q ineq}(ii), the operator $P$ (and $Q$) is bounded on $\Phi$. Consequently,  by Proposition \ref{Komastu rep},  up to the equivalence of the norm,  the space $X:= K_\Phi(\cX, D_\cA)$ can be represented as follows: 
\[
X = \big\{ x\in \cX: \, \esssup_{t>0} w(t) \|\cA \cT(t) x\|_{\cX} < \infty \big\}
\]
and 
\[
\| x\|_X \simeq \|x\|_\cX + [x]_{\Phi, \cA} =: \|x\|_\cX +  \esssup_{t>0} w(t) \|\cA \cT(t) x\|_{\cX}, \qquad x\in \cX.
\]
 
Since $\frac{d}{\ud t} ( \cA \cT(t)) = \cA^2 \cT(t)$ ($t>0$) and  $\|\cA^2 \cT(s)\|_{\cL(\cX)} \lesssim s^{-2}$ $(s>0)$, for every $t>0$ and $x\in \cX$ we obtain that  
\begin{equation}\label{cond oo}
-\cA \cT(t) x = \int_t^\infty \cA^2 \cT(s) x \ud s = 2\int_{t/2}^\infty \cA^2 \cT(2s) x \ud s. 
\end{equation}

Therefore, combining \eqref{cond oo}  with \eqref{Q on l infty}, for all $t>0$ and $x\in X$:
\begin{align*}
w(t) \|A T(t) x\|_\cX & \leq 2 w(t) \int_{t/2}^\infty \|\cA^2 \cT(2s) x\|_{\cX} \ud s\\
& = 2w(t) \int_{t/2}^\infty \frac{\ud s}{s w(s)} \esssup_{s>t/2} w(s) s \|\cA^2 \cT(2s) x\|_{\cX}\\
& \leq 6 [w]_{P+Q,L^\infty} \esssup_{s>t/2} w(s) s \|\cA^2 \cT(2s) x\|_{\cX}. 
\end{align*}
Moreover, note that 
\begin{align*}
\esssup_{s>t/2} w(s) s \|\cA^2 \cT(2s) x\|_{\cX} & \leq \sup_{s>0} \esssup_{t>0} w(s) t \|\cA^2 \cT(s+t)x\|_{\cX} \\
& = \sup_{t>0} \|t \cA \cT(t) x\|_{X}
\end{align*}
Therefore, for every $x \in X$ we get 
\begin{equation}\label{charact oo}
[x]_{\Phi, \cA} \leq 6[w]_{P+Q,L^\infty} \sup_{t>0} [t A T(t) x]_{\Phi, \cA}.
\end{equation}

For further references, we point out that to get \eqref{charact oo} we are not making use of the assumption that $0\in \rho(A)$. Assuming it, we get that $[\cdot]_{\Phi, \cA}$ is equivalent to $\|\cdot\|_X$ on $X$, see Proposition \ref{Komastu rep}, so the Kalton-Portal characterization (see Lemma \ref{charact}), shows that $A$ has $L^\infty$-m.r.\,on $\R_+$. The boundedness of $U$ is a direct consequence of Dore \cite[Theorem 5.2]{Do00}; cf. Remark \ref{Dore results}. It completes the statement (iii).

Now we can proceed to the proofs of (i) and (ii). The same rescaling argument as that applied in the proof of Theorem \ref{thm main}(ii) shows that $A$ has $L^\infty$-m.r.\,on finite intervals. It gives the first assertion of (ii).
Furthermore, by Dore's extrapolation result, \cite[Theorem 7.1]{Do00}, for every $q\in (1,\infty)$, $A$ has $L^q$-m.r.\,on finite intervals. 

For (i), let $f\in \bigcup_{q>1} L^q_{loc}([0,\infty);X)$ and $x\in (X, D_A)_{\frac{1}{q'}, q}$. Since  the function $\R_+\ni t\mapsto T(t)x\in X$ is  bounded and analytic, and its derivative is in $L^1_{loc}([0,\infty);X)$,  $u$ given by \eqref{u funct} is a strong solution to $(CP)_{A,f,x}$. It completes the proof of (i).

For the second assertion of (ii), fix $f\in \bigcup_{q>1} L^q_{loc}([0,\infty);X)$ with $[f]_{\Phi, \cA}\in L^\infty(\R_+)$ and let $x = 0$. We already know that  $U f(t) \in D(A)$ for a.e. $t\in \R_+$; cf. Remark \ref{Dore results}\eqref{loc est}. For such  $t\in \R_+$, by \eqref{charact oo}, we have that
\[
[AU f(t)]_{\Phi, \cA} \leq 2 [w]_{P+Q, L^\infty} \sup_{s>0} [sAT(s) A U f(t)]_{\Phi, \cA}. 
\]
Set $C_\cA:=\sup_{t>0} \|t\cA\cT(t)\|_{\cL(\cX)}$. Then, 
\begin{align*}
\sup_{s>0} [sAT(s) A   &    U f(t)]_{\Phi, \cA}\\
& = \sup_{s>0} \esssup_{\tau>0} w(\tau) \Bigl\|\int_0^t s\cA^2\cT(s+t-r) \cA \cT(\tau) f(r) \ud r\Bigr\|_{\cX}\\
& \leq \sup_{s>0} C_\cA^2  \int_0^t  \frac{4s}{(s+t-r)^2} \esssup_{\tau>0} w(\tau) \|\cA \cT(\tau) f(r)\|_\cX \ud r \\
& = 4  C_\cA^2\; \esssup_{0<r<t}[f(r)]_{\Phi, \cA}. 
\end{align*}
Therefore, 
\begin{equation}\label{oo est}
\| [AU f]_{\Phi, \cA}\|_{L^\infty(\R_+)} \leq 24[w]_{P+Q,L^\infty} C^2_\cA 
\,\| [f]_{\Phi, \cA}\|_{L^\infty(\R_+)}.
\end{equation}

The similar argument to that of the proof of Theorem \ref{thm main}(ii) shows that $U f$ satisfies Lipschitz' condition on $\R_+$ with a constant less or equal to $\|\,[f - AU f]_{\Phi, \cA}\|_{L^\infty(\R_+)}$. 

For a non-zero initial value $x \in D(A)$, 
if $u$ denotes the corresponding strong solution to $(CP)_{A,f,x}$, then 
for a.e. $t>0$ we have
\begin{align*}
[Au(t)]_{\Phi, \cA}  & \leq \esssup_{s>0} w(s) \|\cA \cT(s) AT(t) x\|_\cX + [AU f(t)]_{\Phi, \cA}\\
&\leq \sup_{t>0} \|\cT(t)\|_{\cL(\cX)} [A x ]_{\Phi, \cA} + [AU f(t)]_{\Phi, \cA}.
\end{align*}
Thus, by \eqref{oo est},
\[
\|\,[u']_{\Phi, \cA}\|_{L^\infty(\R_+)} \lesssim [A x ]_{\Phi, \cA} + \|\,[f]_{\Phi, \cA}\|_{L^\infty(\R_+)}.
\]
 It readily gives the desired estimates. Therefore, the proof of (ii) is complete. Finally, the statement (iv) is now a direct consequence of \cite[Proposition 17.2.36]{HNVW24}.
\end{proof}

\begin{remark}
(a) By Corollary \ref{P+Q for powers}, the conclusions of Theorem \ref{thm main 2} hold for all power weights $w(t):= t^{1-\theta}$ with $\theta \in (0,1)$, that is, for the parts of $\cA$ in the classical interpolation spaces $(\cX,D_\cA)_{\theta, \infty}$ with $\theta \in (0,1)$.

(b)  The estimate \eqref{hom est oo} in which the seminorm $[\cdot]_{\Phi, \cA}$ is replaced with the norm $\|\cdot\|_{X} = \|\cdot\|_\cX + [\cdot]_{\Phi, \cA}$, in general, does not hold. More precisely, for any $p\in (1,\infty)\setminus \{2\}$ there is a bounded, injective operator $\cA$ on $L^p(0,1)=:\cX$ with dense range such that for all $\theta \in (0,1)$ there exists a function $f\in L^\infty(\R_+; \cX)$, $AU f\notin  L^\infty(\R_+; \cX)$, where $
U f(t) : = \int_0^tT(t-s)f(s) \ud s$  $(t\geq 0)$. See Remark \ref{rem on l1}(b). 

(c) We do not know whether for the negative Laplacian $\cA: = -\Delta$ on $\cX = C_0(\R^n)$ its part in $X:=(\cX,D_\cA)_{\theta, \infty}$, $\theta \in (0,1)$, has $L^\infty$-m.r. on the whole $\R_+$. 
\end{remark}

\subsection{Maximal regularity estimates via completion and extrapolation}
In this subsection we provide abstract results which in particular allow to {\it extrapolate} the $L^p$-m.r.\,estimates with respect to the homogeneous part $[\cdot]_{\Phi, \cA}$ of interpolation norm \eqref{equi norm}. Moreover, it throws a light on some relevant results from the literature; see Section \ref{sec app}. 
Roughly, the $L^p$-norms with respect to which we measure the time regularity of $[A u]_{\Phi, \cA}$ in (ii) of Theorems \ref{thm main} and \ref{thm main 2} can be replaced by the norm corresponding to a much larger class of Banach function spaces over $(\R_+, \ud t)$ than the Lebesgue spaces $L^p$. It provides a closer correspondence between the time regularity of $[A u]_{\Phi, \cA}$ and $[f]_{\Phi,\cA}$.
We make this statement more precise in the following results, relying on an extrapolation result \cite[Theorem 5.1]{ChKr18}; cf. also \cite[Theorem 5.1 and Corollary 5.1]{Kr23}.

For the definitions of the {\it weighted} variant $E_v$ of an arbitrary rearrangement-invariant Banach function space $E$ over $\R_+$ and its Boyd's indices $p_E$ and $q_E$, which are involved in the formulation of such extensions, we refer the reader to \cite[Section 4]{ChKr18}. We point out that Boyd's indices can be computed explicitly for many examples of concrete Banach function spaces; see e.g. [7, Chapter 4].  Moreover, a  weight $v$ on $(0,\infty)$ belongs to  the class $A^-_p(\R_+)$, $p\in (1,\infty)$, if and only if 
\begin{equation}\label{Ap- weight}
\sup_{0\leq a<b<c} \frac{1}{(c-a)^p} \int_b^c w\, \ud t \left( \int_a^b w^{1-p'} \ud t \right)^{p-1} <\infty
\end{equation}
(see  \cite[Definition 2.1(d)]{ChKr18}; cf. also \cite[Theorem 2.3]{ChKr18}). It is readily seen that  each class $A^-_p(\R_+)$, $p\in (1,\infty)$, contains all non-increasing, positive functions on $(0,\infty)$. Note that for $v\equiv 1$ on $(0,\infty)$, $E_v = E$ with equal norms.

We begin with an observation, which can be easily derived from Lemma \ref{lem on density},  and is used in formulating the following result. Namely, for any generator $-A$ of the semigroup $T$ on a Banach space $X$ and for any $X$-valued step function $f$, $Uf(t) := \int_0^t T(t-s)f(s) \ud s \in D(A)$ for all $t\geq 0$.

\begin{proposition}\label{hom ext p}
Let $p\in [1,\infty)$ and let $-A$ be the injective generator of a $C_0$-semigroup $T$ on a Banach space $(X, \|\cdot\|_X)$. Suppose that $\|\cdot\|_*$ is another, weaker norm on $X$ $($not necessarily complete$)$, such that for some constant $M$ and all $x\in X$ and $t>0$ we have $\|T(t)x\|_*\leq M\|x\|_*$. 

Suppose that $A$ satisfies the following $L^p$-m.r.~estimate on $\R_+$ with respect to $\|\cdot\|_*$: for any step function $f$ the mild solution $u:=Uf$ to $(CP)_{A,f,0}$ satisfies{\rm:}
\begin{equation}\label{estimate on test functions}
\bigl\|\,\|Au\|_*\bigr\|_{L^p(\R_+)}\leq C \bigl\|\,\|f\|_*\bigr\|_{L^p(\R_+)} 
\end{equation}
for some constant $C>0$ independent on $f$.

Then, the following statements hold{\rm:}

{\rm(a)} The canonical extension of $A$ on a Banach completion $\widetilde X$ of $(X,\|\cdot\|_*)$ has $L^p$-m.r.\,on $\R_+$. 

{\rm(b)} If, in addition, the operator $A$ has $L^p$-m.r. on finite intervals, then for any $f \in L^1_{loc}([0,\infty);X)$ the corresponding mild solution $u := Uf$ to $(CP)_{A,f,0}$ 
has the right-hand derivative $u'_{+}(t)$ in $X$ for a.e. $t>0$ and for a.e. $t>0$

\[
u(t) \in D(A) \quad \textrm{ and }\quad u'_+(t)+ Au(t) = f(t).
\]

Moreover, if $f \in \bigcup_{r>1}L_{loc}^r([0,\infty);X)$ $($or even $f \in L^1_{loc}([0,\infty);X)$ when $p=1)$, then $u$ is a strong solution to $(CP)_{A,f,0}$ and for every rearrangement invariant Banach function space $E$ with non-trivial Boyd's indices $p_E, q_E \in (1,\infty)$ and for every weight function $v\in A_{p_E}^-(\R_+)$ there exists a constant $C_{E, v}$ such that 
\begin{equation}\label{E mr est general}
\bigl\|\,\|u'\|_* \bigr\|_{E_v(\R_+)} + \bigl\|\,\|Au\|_* \bigr\|_{E_v(\R_+)} \leq C_{E,v} \bigl\|\,\,\|f\|_*\bigr\|_{E_v(\R_+)}.
\end{equation}
\end{proposition}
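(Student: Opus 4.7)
The plan is to treat (a) and (b) in turn, extending the bound \eqref{estimate on test functions} to the completion $\wX$ for (a) and then applying the extrapolation machinery of \cite[Theorem 5.1]{ChKr18} for the $E_v$-bound in (b).

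For (a), my first step would be to use the continuity of $(X, \|\cdot\|_X) \hookrightarrow (X, \|\cdot\|_*)$ and the uniform bound $\|T(t)\|_{\cL(X, \|\cdot\|_*)} \leq M$ to extend $T$ by density to a uniformly bounded semigroup $\wT$ on $\wX$; strong continuity on $\wX$ at $0$ follows from strong continuity on the dense subspace $X$ (via $\|T(t)x - x\|_* \lesssim \|T(t)x - x\|_X \to 0$). Its generator $-\wA$ extends $A$ on $D(A)$ by passing the Newton quotient from $\|\cdot\|_X$- to $\|\cdot\|_*$-convergence. The observation preceding the proposition gives $Uf(t) \in D(A) \subseteq D(\wA)$ for every step function $f \colon [0,\infty) \to X$ and every $t \geq 0$, so \eqref{estimate on test functions} becomes
\[
\|\wA \widetilde U f\|_{L^p(\R_+; \wX)} \leq C \|f\|_{L^p(\R_+; \wX)},
\]
where $\widetilde U$ denotes the solution operator attached to $\wT$. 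Since $X$-valued step functions are dense in $L^p(\R_+; \wX)$ and $\|\widetilde U(f_n - f)(t)\|_{\wX} \leq M \|f_n - f\|_{L^1([0,t]; \wX)}$, a standard closedness argument for $\wA$ extends the bound to all of $L^p(\R_+; \wX)$; Lemma \ref{regularity of u}(ii) then promotes the resulting mild solution to a strong one, giving $L^p$-m.r.\ of $\wA$ on $\R_+$.

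For the first part of (b), Proposition \ref{weak L1} yields $Uf(t) \in D(A)$ for a.e.\ $t > 0$ for every $f \in L^1_{loc}([0,\infty); X)$; since $-A$ generates a $C_0$-semigroup, $A$ is densely defined, and Remark \ref{D-Y-S} supplies the a.e.\ existence of $u'_+$ together with the identity $u'_+(t) + Au(t) = f(t)$. If $p > 1$ and $f \in L^r_{loc}$ for some $r > 1$, Dore's extrapolation result \cite[Theorem 7.1]{Do00}, already invoked in the proof of Theorem \ref{thm main 2}, gives $A$ the $L^r$-m.r.\ on every finite interval, so $u \in W^{1,r}_{loc}([0,\infty); X) \subseteq W^{1,1}_{loc}([0,\infty); X)$; if $p = 1$ and $f \in L^1_{loc}$, the same conclusion follows directly from the hypothesis. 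In either case $u$ is a strong solution.

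For the extrapolated estimate \eqref{E mr est general}, I would transfer the problem to $\wX$ via (a): the operator $\cS \colon f \mapsto \wA \widetilde U f$ is bounded on $L^p(\R_+; \wX)$, and its formal kernel $(t,s) \mapsto \wA \wT(t-s)\chi_{\{s<t\}}$ satisfies the Calder\'on-Zygmund estimates as in the proof of Proposition \ref{weak L1}, since $\wT$ inherits analyticity from the $L^p$-m.r.\ on $\R_+$ supplied by (a) (cf.\ Remark \ref{Dore results}). The vector-valued extrapolation result \cite[Theorem 5.1]{ChKr18} then upgrades $L^p$-boundedness of $\cS$ to boundedness on every $E_v(\R_+; \wX)$ with $E$ rearrangement invariant, $p_E, q_E \in (1,\infty)$, and $v \in A_{p_E}^-(\R_+)$. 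Identifying $\|\cdot\|_{\wX}$ with $\|\cdot\|_*$ on $X$ delivers the estimate for $\bigl\|\,\|Au\|_*\bigr\|_{E_v(\R_+)}$, and the a.e.\ identity $u' = f - Au$ produces the matching bound for $u'$. I expect the main delicate point to be verifying the singular-integral hypotheses of \cite[Theorem 5.1]{ChKr18} on $\wX$ in the absence of invertibility of $\wA$; this should be handled by the customary rescaling $\wA \rightsquigarrow \wA + \lambda$ with $\lambda > 0$ large, exactly as in the proof of Proposition \ref{weak L1}.
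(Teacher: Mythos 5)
Your proposal is correct and follows essentially the same route as the paper's proof: the same completion-and-density transfer of \eqref{estimate on test functions} to $\wX$ with a closedness/Fatou argument for $\wA$ in part (a), and the same combination of Proposition \ref{weak L1}, Remark \ref{D-Y-S}, Dore's extrapolation theorem, and \cite[Theorem 5.1]{ChKr18} applied to $\wA$ on $\wX$ in part (b). The only cosmetic difference is that you justify $D(A)\subset D(\wA)$ via Newton quotients where the paper uses the resolvent identity; both are fine.
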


\begin{proof} Let $(\wX, \|\cdot\|_{\wX} )$ be the standard Banach completion of $(X, \|\cdot\|_*)$, that is, $\wX$ is a space of  all equivalence classes $[\bfx]$ of Cauchy sequences $\bfx := (x_n)_{n\in\N} \in X^\N$ with respect to $\|\cdot\|_*$, and $\|[\bfx]\|_{\wX} : =\lim_{n\rightarrow \infty}\|x_n\|_*$. Recall that the map $J:X\ni x \mapsto [(x)_{n\in \N}] \in \wX$ is an isometric isomorphism of $(X, \|\cdot\|_*)$ onto a dense subspace of $\wX$. 
  
From the boundedness of $T$ on $(X, \|\cdot\|_*)$, it is easily seen that for all $t>0$ the formula  
\[
\wT (t)[(x_n)] := [(T(t)x_n)],  \qquad  [(x_n)] \in \wX, 
\]
defines a family of uniformly bounded operators $\wT (t)$, $t>0$,  on $\wX$, which satisfies the semigroup property. Since $\wT (t)Jx = J T(t)x$ for all $t>0$ and $x\in X$, the density $JX$ in $\wX$ gives that $\wT$ is a $C_0$-semigroup on $\wX$. We denote the negative generator of $\wT$ by $\wA$. Therefore, Lemma \ref{regularity of u} applies to $\wA$. For the simplicity, we set $\widetilde{U}$  instead of $U_{\wA}$ (resp. $U$ instead of $U_A$) to denote the solution operator associated with $\wA$ on $\wX$ (resp. $A$ on $X$). 

It is clear that the function $Jf$ is in $L^p(\R_+; \wX)$ and $JU f(t) = \widetilde{U} J f (t)$ for all $t>0$. Moreover, for any $x\in X$ and for $\lambda>0$ large enough, we have $J(\lambda + A)^{-1}x = (\lambda + \wA)^{-1} Jx$. Hence, for any $x\in D(A)$, taking $J$ on the both sides of the equation 
\[
\lambda(\lambda+ A)^{-1} x -x = (\lambda + A)^{-1}Ax
\] 
gives 
\[
\lambda(\lambda+ \wA)^{-1} Jx - Jx = (\lambda + \wA)^{-1} JAx,
\] 
which is equivalent to $Jx\in D(\wA)$ and $\wA J x = JAx$. Applying it to $x = U f(t) \in D(A)$, we get that $JU f(t) = \widetilde{U} Jf (t) \in D(\wA)$ and $\wA \widetilde{U} Jf(t) = J A U f(t)$ for a.e. $t>0$. 
Therefore, for any step function $f : \R_+ \to X$ we have
\[
\widetilde{U} Jf (t) \in D(\wA) \textrm{ for a.e. } t>0 \quad \textrm{ and }\quad \|\wA\widetilde{U} Jf \|_{L^p(\R_+; {\wX})}\leq C \| Jf \|_{L^p(\R_+; {\wX})}. 
\]
We show that it holds for all functions $\tilde{f} \in L^p(\R_+;\wX)$. Then, by Lemma \ref{regularity of u}(ii), $\wA$ on $\wX$ has $L^p$-m.r. on $\R_+$. Note that the set 
\[
\bigl\{ \tilde{f}\in L^p(\R_+;\wX): \widetilde{U} \tilde{f}(t)\in D(\wA) \textrm{ a.e. } t>0 \textrm{ and } \|\wA \widetilde{U} \tilde{f}\|_{L^p(\R_+, \wX)}\leq C\|\tilde{f}\|_{L^p(\R_+;\wX)}\bigr\}
\]
is closed in $L^p(\R_+;\wX)$.
Indeed, suppose that a function $g\in L^p(\R_+;\wX)$ can be approximated in the norm of $L^p(\R_+;\wX)$ by functions $g_n$ from this set. Then, $\wA \widetilde{U} g_n$, $n\in \N$, is a Cauchy sequence in $L^p(\R_+;\wX)$. This implies that $\wA \widetilde{U} g_n(t)$ converges, modulo taking a subsequence, almost everywhere on $\R_+$. In particular, by the closedness of $\wA$, we get that $\widetilde{U} g(t) \in D(\wA)$ and $\wA\widetilde{U} g(t) = \lim_{n\in \N} \wA \widetilde{U} g_n(t)$ for a.e. $t>0$. By Fatou's property of $L^p$, we get $\|\wA \widetilde{U} g\|_{L^p(\R_+;\wX)} \leq C \|g\|_{L^p(\R_+;\wX)}$. It shows the desired closedness. 
Since $JX$ is a dense subset of $\wX$, it completes the proof of (a).

The first assertion of (b) directly follows from Proposition \ref{weak L1} and the observation provided in Remark \ref{D-Y-S}. 
For the second one, first note that by Dore's extrapolation result \cite[Theorem 7.1]{Do00} (resp. combined with (a)), the operator $A$ on $X$  (resp. $\widetilde A$ on $\wX$) has $L^r$-m.r. on finite intervals (resp. on $\R_+$) for every $r\in \{p\}\cup (1,\infty)$. 
In particular, if $f \in \bigcup_{r>1}L_{loc}^r([0,\infty);X)$ (or even $f \in L^1_{loc}([0,\infty);X)$ when $p=1$), $Uf$ is a strong solution to $(CP)_{A,f,0}$. Consequently, $\widetilde U Jf = JU f$ is a strong solution to $(CP)_{\widetilde A, Jf,0}$ and, since the norm $\|\cdot\|_*$ is weaker than $\|\cdot\|_X$, $(\widetilde U Jf)' = J(U f)'$ and $J A U f = \wA \widetilde{U} Jf$. Therefore, applying \cite[Theorem 5.1]{ChKr18} (see also \cite[Theorem 4.3]{ChKr18}) to $\widetilde A$ on $\wX$, if (in addition) $Jf \in E_v(\R_+; \wX)$, i.e. $\|Jf\|_{\wX} = \|f\|_* \in E_v(\R_+)$, then $ \wA \widetilde{U} Jf$ and $(\widetilde U Jf)'$ are in $E_v(\R_+;\wX)$ and \eqref{E mr est general} holds. In the other case, that is, when $f \notin E_v(\R_+;\wX)$, $\|\,\|f\|_*\|_{E_v(\R_+)} = \infty$, the estimate \eqref{E mr est general} is obviously valid. Thus, the proof is complete. 
\end{proof}

It is not clear whether the direct counterpart of the above result holds for the case of $p=\infty$. We prove the variant for the $L^\infty$-m.r.\,estimates under slightly modified assumptions, relaying on Lemma \ref{charact}. The assumptions stated in the following result can be verified in the context we are interested in.
Recall that $S_\omega: = \{ z \in \C\setminus \{0\}: |\arg z|<\omega\}$; see Subsection \ref{subs H}.

\begin{proposition}\label{hom ext infty}
Let $-A$ be the injective generator of a $C_0$-semigroup $T$ on a Banach space $(X, \|\cdot\|_X)$, which admits a bounded holomorphic extension on a sector $S_\omega$ for some $\omega>0$. Suppose that $\|\cdot\|_*$ is another, weaker norm on $X$ $($not necessarily complete$)$, such that for some constant $M$ and all $x\in X$ and $z\in S_\omega$ we have $\|T(z)x\|_*\leq M\|x\|_*$. Let $\widetilde X$ denote the Banach completion of $(X,\|\cdot\|_*)$.

Suppose that there exists a constant $C>0$ such that for every $x \in X$:
\[
\|x\|_* \leq C \sup_{t>0} \|tAT(t)x\|_* + \limsup_{t\rightarrow \infty} \|T(t)x\|_*.
\]
Then, the canonical extension of $A$ on $\widetilde X$ has $L^{\infty}$-m.r. on $\R_+$.

In addition, if $A$ on $X$ has $L^\infty$-m.r.\,on finite intervals, then the statement $($b$)$~~of Proposition \ref{hom ext p} holds. 
\end{proposition}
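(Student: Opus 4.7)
The plan is to follow the structure of the proof of Proposition~\ref{hom ext p}, with Lemma~\ref{charact} playing the role of the Kalton--Portal $L^1$-characterization. First, I would construct the Banach completion $(\widetilde X, \|\cdot\|_{\widetilde X})$ of $(X, \|\cdot\|_*)$ together with the canonical isometric embedding $J\colon (X, \|\cdot\|_*) \hookrightarrow \widetilde X$ with dense range. Using that $T$ is a $C_0$-semigroup in the stronger norm $\|\cdot\|_X$ and that $T(z)$ is uniformly $\|\cdot\|_*$-bounded on the sector $S_\omega$, the family $\{T(z)\}_{z \in S_\omega}$ extends to a uniformly bounded, strongly continuous, and holomorphic family $\{\widetilde T(z)\}_{z \in S_\omega}$ on $\widetilde X$. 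Analyticity is transferred by a Vitali-type argument applied to the bounded $\widetilde X$-valued functions $z \mapsto \widetilde T(z) J x$ on $S_\omega$, which are pointwise weak-limits of holomorphic functions and uniformly bounded. Thus $\widetilde T$ is a bounded analytic $C_0$-semigroup on $\widetilde X$, with negative generator $\widetilde A$ satisfying $\widetilde A J = J A$ on $D(A)$ and consequently $\widetilde T(t) J x = J T(t) x$, $t \widetilde A \widetilde T(t) J x = J(t A T(t) x)$ for all $x \in X$, $t > 0$.

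Next, I would verify the Kalton--Portal condition \eqref{L infty cond} for $\widetilde A$ on $\widetilde X$. The isometric identities $\|\widetilde T(t) J x\|_{\widetilde X} = \|T(t) x\|_*$ and $\|t \widetilde A \widetilde T(t) J x\|_{\widetilde X} = \|t A T(t) x\|_*$ translate the standing hypothesis into
\[
\|J x\|_{\widetilde X} \leq C \sup_{t > 0}\|t \widetilde A \widetilde T(t) J x\|_{\widetilde X} + \limsup_{t \to \infty}\|\widetilde T(t) J x\|_{\widetilde X}, \qquad x \in X.
\]
Both functionals on the right are positively homogeneous, subadditive, and dominated by a constant multiple of $\|\cdot\|_{\widetilde X}$ (by sectoriality of $\widetilde A$ for the first, and by $\sup_{t > 0}\|\widetilde T(t)\|_{\cL(\widetilde X)} < \infty$ for the second), hence norm-continuous seminorms on $\widetilde X$. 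The density of $JX$ in $\widetilde X$ therefore propagates the estimate to every $\tilde x \in \widetilde X$, and Lemma~\ref{charact} (together with the parenthetical remark that the condition suffices even in the non-densely defined case) yields $L^\infty$-m.r.\,of $\widetilde A$ on $\R_+$.

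For the additional statement, once $A$ on $X$ is assumed to have $L^\infty$-m.r.\,on finite intervals, Dore's extrapolation \cite[Theorem~7.1]{Do00} furnishes $L^r$-m.r.\,of $A$ on finite intervals for every $r \in (1, \infty)$; the same extrapolation applied to $\widetilde A$, using the $L^\infty$-m.r.\,just established, gives $L^r$-m.r.\,of $\widetilde A$ on $\R_+$ for every $r \in (1, \infty)$. The remainder then reproduces verbatim the closing portion of the proof of Proposition~\ref{hom ext p}: for any $f \in \bigcup_{r > 1} L^r_{loc}([0, \infty); X)$ the function $U f$ is a strong solution of $(CP)_{A, f, 0}$ in $X$, and $J U f = \widetilde U J f$ is the corresponding strong solution of $(CP)_{\widetilde A, J f, 0}$ in $\widetilde X$; applying the extrapolation theorem \cite[Theorem~5.1]{ChKr18} to $\widetilde A$ delivers \eqref{E mr est general}.

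The step I expect to be the main obstacle is the transfer of holomorphy from $(X, \|\cdot\|_*)$ to the completion $\widetilde X$, since only pointwise $\|\cdot\|_*$-boundedness of $T(z)$ on $S_\omega$ is postulated; the Vitali-type upgrade to a strongly holomorphic $\widetilde X$-valued family requires a careful uniform-boundedness and local-compactness argument. Once analyticity of $\widetilde T$ is in hand, the density-continuity passage for the Kalton--Portal condition and the final extrapolation step are essentially routine adaptations of Proposition~\ref{hom ext p}.
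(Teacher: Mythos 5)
Your proposal is correct and follows essentially the same route as the paper: extend $T$ to a bounded holomorphic $C_0$-semigroup $\widetilde T$ on $\widetilde X$ via Vitali's theorem, transfer the hypothesis to the Kalton--Portal condition of Lemma \ref{charact} on the dense subspace $JX$, and propagate it to all of $\widetilde X$ by density (the paper carries out your ``continuity of the seminorms'' step as an explicit triangle-inequality estimate with the constants $\widetilde M$ and $M$, but the two arguments are the same). The treatment of the additional statement also matches the paper, which simply refers back to the argument of Proposition \ref{hom ext p}(b).
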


\begin{proof} As in the proof of Proposition \ref{hom ext p} one can show that for all $z\in S:=S_\omega$ the formula  
\[
\wT (z)[(x_n)] := [(T(z)x_n)], \quad  [(x_n)] \in \wX, 
\]
defines a family of uniformly bounded operators $\wT (z)$, $z\in S$,  on $\wX$, which satisfies the semigroup property. Since $JX$ is dense in $\wX$ and $\wT (z)Jx = J T(z)x$ for all $z\in S$ and $x\in X$, by Vitali's theorem \cite[Theorem A.5, p.458]{ABHN01}, for all $\bfx\in \wX$ the function $S\ni z \mapsto \wT(z)\bfx \in \wX$ is holomorphic. Therefore, by \cite[Proposition A.3]{ABHN01}, we get that $\wT$ is bounded holomorphic $C_0$-semigroup on $\wX$. We denote the negative generator of $\wT$ by $\wA$. 
In particular, $\widetilde M: =  \sup_{t>0}\|t\wA \wT(t)\|_{\cL(\wX)}<\infty$.
As in the proof of the case when $p<\infty$, for any $x\in D(A)$ and  $\lambda>0$ large enough, we have $Jx\in D(\wA)$ and $\wA J x = JAx$. Take any $\bfx \in \widetilde X$ and $x\in X$. Then,
\begin{align*}
\|\bfx\|_{\widetilde X} &\leq  \|\bfx - Jx\|_{\wX} + \|x\|_{*} \\
& \leq \|\bfx - Jx\|_{\widetilde X} + C \sup_{t>0}\|tAT(t)x\|_{*}+ \limsup_{t\rightarrow \infty}\|T(t)x\|_*\\
& = \|\bfx - Jx\|_{\widetilde X} + C \sup_{t>0}\|t\wA \wT (t)x\|_{\wX}+ \limsup_{t\rightarrow \infty}\|\wT(t) Jx\|_{\wX}\\
 & \leq \|\bfx - Jx\|_{\wX} + C\sup_{t>0}\left( \|t\wA \widetilde{T}(t)(\bfx-Jx)\|_{\wX} + \|t \wA\wT(t) x\|_{\wX}\right)\\
&\quad  + \limsup_{t\rightarrow\infty} \left(\|\wT(t)(Jx - \bfx)\|_{\wX} + \|\wT(t)\bfx\|_{\wX} \right) \\ 
& \leq (1+ C \widetilde{M} + M)\|\bfx-Jx\|_{\widetilde X} + C\sup_{t>0}\|t\widetilde A \widetilde{T}(t)\bfx\|_{\widetilde X} + \limsup_{t\rightarrow\infty} \|\wT(t)\bfx\|_{\wX}.
\end{align*}

Hence, since $JX$ is dense in $\wX$, by Lemma \ref{charact}, $\widetilde A$ has $L^{\infty}$-m.r. on $\R_+$. 
Finally, the proof of the additional statement follows the same arguments used in the proof of Proposition \ref{hom ext p}(b).
\end{proof}

\begin{remark}\label{force term}
The natural question arises whether the force term $f$ in Proposition \ref{hom ext p}(b) (in the case $p \neq 1$) can be taken from $L^1_{loc}([0,\infty);X)$ to obtain the corresponding conclusions on the homogeneous estimates therein.  
This question connects with Proposition \ref{weak L1} and Remark \ref{D-Y-S}. We do not know if the mild solution $Uf$ with $f \in L^1_{loc}([0,\infty);X) \setminus \bigcup_{r>1}L_{loc}^r([0,\infty);X)$ has the full strong derivative a.e. in $X$ even if $A$ is densely defined. In the setting of Theorem \ref{thm main 2}(ii), the operator $A$ is not densely defined; see Lemma \ref{dense dom}(ii). However, Proposition \ref{hom ext infty} remains applicable to the part of $A$ in the closure of its domain. 
\end{remark}

\begin{corollary}\label{hom ext 1-p} Let $-\cA$ be an injective generator of a bounded analytic semigroup on $\cX$. Let $p \in [1,\infty)$ and $\theta \in (0,1)$. Let $A$ denote the part of $\cA$ in $X := K_\Phi(\cX,D_{\cA})$  with $\Phi:= L^p(\R_+, t^{-p(1-\theta) -1}\ud t)$.    

Then, for every rearrangement invariant Banach function space $E$ with non-trivial Boyd's indices $p_E, q_E \in (1,\infty)$ and for every weight function $v\in A_{p_E}^-(\R_+)$ there exists a constant $C_{E, v}$ such that for any $f \in \bigcup_{r>1}L_{loc}^r([0,\infty);X)$ $($or even $f\in L^1_{loc}(\R_+;X)$ when $p=1)$, the corresponding strong solution $u$ to $(CP)_{A,f,0}$ satisfies
\begin{equation}\label{E mr est}
\|\,[u']_{\Phi, \cA}\|_{E_v(\R_+)} + \|\,[Au]_{\Phi, \cA}\|_{E_v(\R_+)} \leq C_{E,v} \|\,[f]_{\Phi, \cA}\|_{E_v(\R_+)}.
\end{equation}
\end{corollary}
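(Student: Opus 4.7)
The plan is to apply Proposition \ref{hom ext p} with the weaker norm $\|\cdot\|_{*} := [\cdot]_{\Phi, \cA}$ on $X$. The injectivity of $\cA$ together with the analyticity of $\cT$ ensures that $[\cdot]_{\Phi, \cA}$ is in fact a norm on $X$, not merely a seminorm: if $\cA\cT(s)x=0$ for a.e. $s>0$, then by analyticity $\cA\cT(s)x=0$ for all $s>0$, which together with injectivity of $\cA$ and $\cT(s)$ forces $x=0$. By Proposition \ref{Komastu rep}, $\|\cdot\|_X\simeq\|\cdot\|_{\cX}+[\cdot]_{\Phi,\cA}$, so $\|\cdot\|_*$ is indeed weaker than $\|\cdot\|_X$.

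Next, I would verify the structural hypotheses of Proposition \ref{hom ext p}. The fact that $-A$ is the injective generator of a $C_0$-semigroup $T$ on $X$ follows from Proposition \ref{Komastu rep} and Lemma \ref{dense dom}(i), since $\Phi=L^p(\R_+,\,t^{p(1-\theta)-1}\,\ud t)$ has absolutely continuous norm for $p<\infty$ and the Hardy operator $P$ is bounded on it. The uniform boundedness of $T$ on $(X,\|\cdot\|_*)$ follows from the commutation $\cT(t)\cA\cT(s)=\cA\cT(s)\cT(t)$ and $M:=\sup_{t>0}\|\cT(t)\|_{\cL(\cX)}<\infty$:
\[
[T(t)x]_{\Phi,\cA}=\|\cT(t)\cA\cT(\cdot)x\|_{\Phi(\cX)}\leq M\,[x]_{\Phi,\cA}.
\]

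The crux is to establish the key hypothesis \eqref{estimate on test functions}, namely that for step functions $f$ and $u=Uf$,
\[
\|[AUf]_{\Phi,\cA}\|_{L^p(\R_+)}\leq C\,\|[f]_{\Phi,\cA}\|_{L^p(\R_+)}.
\]
For $p=1$ this is precisely the homogeneous estimate \eqref{hom est1} from Theorem \ref{thm main}(ii): the pointwise bound $[AUf(t)]_{\Phi,\cA}\leq\int_0^t[AT(t-s)f(s)]_{\Phi,\cA}\,\ud s$ (obtained via Hille's theorem and sublinearity of the seminorm) combined with $\int_0^\infty[AT(t)x]_{\Phi,\cA}\,\ud t\leq C\,[x]_{\Phi,\cA}$ (which, as explicitly noted in the proof of Theorem \ref{thm main}(iv), does \emph{not} require $0\in\rho(\cA)$) yields the estimate by Fubini. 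For $p\in(1,\infty)$, the same pointwise bound applies, and the $L^p$-version follows by interpolating between the $L^1$-bound above and an $L^\infty$-bound obtained from a counterpart of Theorem \ref{thm main 2}(ii) adapted to the present weighted $L^p$-based parameter, or alternatively by a direct Calder\'on-Zygmund argument for the operator-valued convolution kernel $K(t):=AT(t)$ acting on $(X,\|\cdot\|_*)$. With \eqref{estimate on test functions} in hand, the Da Prato-Grisvard theorem (or its refinement in Theorem \ref{thm main}(ii) for $p=1$) provides the $L^p$-m.r.\,on finite intervals for $A$ with respect to $\|\cdot\|_X$, so Proposition \ref{hom ext p}(b) applies and delivers \eqref{E mr est} directly.

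The main obstacle is establishing \eqref{estimate on test functions} in the range $p\in(1,\infty)$. The $p=1$ case is immediate from Theorem \ref{thm main}(ii), but the estimate $\int_0^\infty[AT(t)x]_{\Phi,\cA}\,\ud t\leq C[x]_{\Phi,\cA}$ is an \emph{average} operator bound rather than a pointwise operator-norm bound, so the scalar Young convolution inequality does not apply directly. An operator-valued interpolation argument between the $L^1$-bound and a suitable $L^\infty$-bound, in the spirit of Theorem \ref{thm main 2}(ii), appears to be the cleanest way to close this gap.
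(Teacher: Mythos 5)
Your overall architecture coincides with the paper's: set $\|\cdot\|_* := [\cdot]_{\Phi,\cA}$ (a genuine norm by injectivity of $\cA$), check that $T$ is a uniformly bounded $C_0$-semigroup on $(X,\|\cdot\|_*)$, establish the key estimate \eqref{estimate on test functions}, note that $A$ has $L^p$-m.r.\ on finite intervals, and invoke Proposition \ref{hom ext p}(b). Your verification of the structural hypotheses is correct, and your treatment of the case $p=1$ via \eqref{l1 homo} and Fubini is exactly what the paper does.

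The gap is the step you yourself flag: \eqref{estimate on test functions} for $p\in(1,\infty)$. Neither of your proposed remedies is carried out, and neither is straightforward as stated. The interpolation route needs an $L^\infty$ endpoint of the form $\sup_t [AUf(t)]_{\Phi,\cA}\lesssim \sup_t[f(t)]_{\Phi,\cA}$ for the present $L^p$-based parameter $\Phi$, which is not what Theorem \ref{thm main 2}(ii) supplies (that result concerns $L^\infty_w$-parameters and its proof hinges on \eqref{charact oo}, i.e.\ on $[w]_{P+Q,L^\infty}<\infty$); a Calder\'on--Zygmund argument would in turn require working on a complete space and an operator-norm kernel bound in $\|\cdot\|_*$, neither of which is free. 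The paper closes this step directly: the classical Da Prato--Grisvard computation --- the pointwise bound $\|\cA\cT(s)\cA Uf(t)\|_\cX\le\int_0^t\|\cA^2\cT(s+t-r)f(r)\|_\cX\,\ud r$ combined with $\|\cA^2\cT(\sigma)\|_{\cL(\cX)}\lesssim\sigma^{-2}$ and the Hardy--Young inequalities --- already yields $\int_0^\infty[AUf(t)]_{\Phi,\cA}^p\,\ud t\lesssim\int_0^\infty[f(t)]_{\Phi,\cA}^p\,\ud t$ for every $p\in[1,\infty)$ in one stroke, with no interpolation; the paper simply cites the standard presentation of that proof and notes that the omitted case $p=1$ is \eqref{l1 homo}. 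Replacing your last paragraph by this observation completes the argument.
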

A similar result holds for $p=\infty$ as well. We leave the detailed formulation to the reader; cf. Remark \ref{force term}.

\begin{proof}
Since $\cA$ is injective, $\|\cdot\|_* := [\cdot]_{\Phi, \cA}$ is a weaker norm on $(X, \|\cdot\|_X)$. By Lemma \ref{dense dom}, the semigroup $T$ generated by the part $A$ of $\cA$ in $X$ is a $C_0$-semigroup on $X$. Moreover, for all $x\in X$ and $t>0$
\[
\|T(t)x\|_* := \left\|\,\|\cA\cT(\cdot)T(t)x\|_\cX \right\|_\Phi \leq  \|\cT(t)\|_{\cL(\cX)} \|x\|_*.
\] 
The proof of the classical Da Prato-Grisvard theorem, as presented, e.g. in [45, Theorem 3.5.8, p. 146], shows that the estimate \eqref{estimate on test functions}  holds for $p\in[1,\infty)$ (for the case of $p=1$, which is omitted therein, see, e.g. \eqref{l1 homo}). Since $A$ has $L^p$-m.r. on finite intervals, by Proposition \ref{hom ext p}(b), we obtain the desired claim.
\end{proof}

Our next result sheds light on the Banach completion of the space $X$ equipped with the homogeneous part of the corresponding interpolation norm $[\cdot]_{\Phi, \cA}$, which is involved in the formulation of the corollary above.
It also completes Proposition~\ref{Komastu rep} (see the statements (i) and (ii)) and we utilize it in the remaining part of this section (see Proposition \ref{XD}), as well as in Section 5 (see Proposition \ref{equivalent norm on besov}). All the completions considered in the statement (iii) below are standard Banach completions.

\begin{lemma}\label{hom isom}
 Let $-\cA$ be the generator of a bounded semigroup $\cT$ on a Banach space $\cX$. 
Let $\dot{D}_\cA$ denote the domain of $\cA$ equipped with the seminorm $\|x\|_{\dot{D}_\cA} :=\|\cA x\|_{\cX}$, $x\in D(\cA)$.  For all $x\in \cX$ and $t>0$ let
\[
K(t,x, \cX, \dot{D}_\cA) = \inf \{\|y\|_\cX + t\|z\|_{\dot{D}_{\cA}}: y\in \cX,\, z\in D(\cA),\, x=y+z \}.
\]
Then the following assertions hold.

\begin{itemize}
\item [(i)]  
For any $x\in \cX$ and $t>0$ 
\begin{equation}\label{equi for K}
C^{-1} \|\cT(t)x - x\|_{\cX} \leq K(t,x, \cX, \dot{D}_\cA) \leq \Big\|\frac{1}{t}\int_0^t (\cT(s)x -  x)\ud s\Big\|_\cX +   \|\cT(t) x -x\|_\cX,
\end{equation}
where $C: = \sup_{s\geq 0 }\|\cT(s)\|_{\cL(\cX)} + 1$.
In particular, 
\[
\omega_{\cA}(t, x) := \sup_{0<s\leq t} \|\cT(s)x - x\|_{\cX} \simeq K(t,x, \cX, \dot{D}_\cA), \qquad  t>0, \,x\in \cX.
\]
If, in addition, $\cA$ is injective, then  the pair $(\cX, \dot{D}_\cA)$ forms an interpolation couple of normed spaces.

\item [(ii)] Let $\Phi$ be a parameter of the $K$-method. Then 
\[
K_\Phi(\cX, \dot{D}_\cA):= \bigl\{ x \in \cX: (\cdot)^{-1}K(\cdot,x; \cX, \dot{D}_\cA) \in \Phi\bigr\} 
= K_\Phi(\cX, D_\cA)
\] 
$($as sets$)$. If, in addition, the Hardy operator $P$ is bounded on $\Phi$, then for all $x\in \cX$:
\[
\bigl\|\,(\cdot)^{-1}\cT(\cdot)x -x\|_\cX \bigr\|_\Phi \simeq \bigl\|\,(\cdot)^{-1}\omega_{\cA}(\cdot,x)\bigr\|_{\Phi}\simeq \bigl\|\, \|\cA \cT(\cdot)x\|_{\cX}\bigr\|_{\Phi}.
\]

\item [(iii)] Let, in addition, $\cA$ be injective and $\Phi$ be a Banach function space with an absolutely continuous norm.  
Suppose that there exists a Hausdorff topological vector spaces $Y$ and two injective linear maps $J:\cX \rightarrow Y$ and $\kappa : \widetilde{\dot D_\cA}\rightarrow Y$ such that for any $x\in D(\cA)$ we have $\kappa \bfx  = Jx$, where $\bfx  = [(x)_{n\in \N}]$. 

Let $J\cX$ and $\cZ := \kappa\, \widetilde{\dot D_\cA}$ be equipped with the image norms, that is, $\|Jx\|_{J\cX} := \|x\|_\cX$ for all $x\in \cX$ and $\|\kappa \bfx\|_{\cZ} := \bigl\| \bfx \bigr\|_{\widetilde{\dot D_\cA}}$ for all $\bfx \in \widetilde{\dot D_\cA}$.   

Suppose that $J\cX \cap \mathcal{Z} = JD(\cA)$. Then, the completion of $K_\Phi(\cX, \dot{D}_\cA)$ is topologically isomorphic to  $K_\Phi(J\cX^0,\mathcal{Z})$, where $\cX^0$ is the closure of $D(\cA)$ in $\cX$.
\end{itemize}

\end{lemma}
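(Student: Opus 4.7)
The plan is to exhibit the map $\Psi\colon K_\Phi(\cX, \dot{D}_\cA) \to K_\Phi(J\cX^0, \mathcal{Z})$, $\Psi(x):=Jx$, as an isometric embedding with dense image. Since $(J\cX^0, \mathcal{Z})$ is a genuine Banach couple in the Hausdorff space $Y$, $K_\Phi(J\cX^0, \mathcal{Z})$ is complete, and the universal property of completion will then identify it with the completion of $K_\Phi(\cX, \dot D_\cA)$.

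The core of the argument is the identity $K(t, x, \cX, \dot{D}_\cA) = K(t, Jx, J\cX^0, \mathcal{Z})$ for every $x \in \cX^0$ and $t>0$. The inequality ``$\geq$'' is immediate: any decomposition $x = y + z$ with $y \in \cX$, $z \in D(\cA)$ forces $y \in \cX^0$ (since $D(\cA) \subset \cX^0$ and $x \in \cX^0$), so $Jx = Jy + Jz$ is a valid decomposition in $(J\cX^0, \mathcal{Z})$ with matching cost thanks to $\|Jz\|_{\mathcal{Z}} = \|[(z)_{n\in\N}]\|_{\widetilde{\dot D_\cA}} = \|\cA z\|_\cX$. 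For ``$\leq$'', the crucial hypothesis $J\cX \cap \mathcal{Z} = JD(\cA)$ enters: starting from any $Jx = u + v$ with $u = Jy \in J\cX^0$ and $v = \kappa \mathbf{z} \in \mathcal{Z}$, linearity of $J$ rearranges this to $\kappa \mathbf{z} = J(x - y) \in J\cX \cap \mathcal{Z} = JD(\cA)$, whence $z := x - y \in D(\cA)$ and $x = y + z$ yields a decomposition in $(\cX, \dot D_\cA)$ of the same cost.

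To extend $\Psi$ to all of $K_\Phi(\cX, \dot D_\cA)$ I will show $K_\Phi(\cX, \dot D_\cA) \subset \cX^0$. Using part~(i), $K(t, x, \cX, \dot D_\cA) \simeq \omega_\cA(t, x)$, which is non-decreasing in $t$, so if $L := \lim_{t \to 0\plus}\omega_\cA(t, x) > 0$ then $(\cdot)^{-1}\omega_\cA(\cdot, x) \geq L/t$ near zero; absolute continuity of the norm of $\Phi$ together with the natural sharpness $(1/t)\chi_{(0, 1)} \notin \Phi$ (verified for all weighted parameters employed in the sequel, e.g.\ $L^p(\R_+, t^{-\theta p - 1}\ud t)$) then forces $L = 0$, so $x \in \cX^0$. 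For density of the image, note that $D(\cA) \subset K_\Phi(\cX, \dot D_\cA)$ is direct from $K(t, z, \cX, \dot D_\cA) \leq \min(\|z\|_\cX, t\|\cA z\|_\cX)$ combined with $\min(1, 1/t) \in \Phi$; thus $\Psi(K_\Phi(\cX, \dot D_\cA)) \supseteq JD(\cA) = J\cX^0 \cap \mathcal{Z}$, and since absolute continuity of the norm of $\Phi$ makes the $K$-functor $K_\Phi$ regular, the intersection $J\cX^0 \cap \mathcal{Z}$ is dense in $K_\Phi(J\cX^0, \mathcal{Z})$, finishing the argument.

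The principal obstacle is the inclusion $K_\Phi(\cX, \dot D_\cA) \subset \cX^0$ just indicated, which implicitly requires the sharpness condition $(1/t)\chi_{(0,1)} \notin \Phi$ beyond the stated absolute continuity; this is automatic for all concrete parameters used in the paper but should be made explicit in the general statement, or else the statement should be phrased directly for $K_\Phi(\cX^0, \dot D_\cA)$. A secondary technical point is the tacit continuity of $J$ and $\kappa$ into $Y$, which is needed to make $J\cX^0$ and $\mathcal{Z}$ into an interpolation couple inside $Y$, and which should be read into the setup.
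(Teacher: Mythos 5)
Your argument addresses only part (iii); parts (i) and (ii) are used as black boxes (you invoke (i) for $K(t,x,\cX,\dot D_\cA)\simeq\omega_\cA(t,x)$ and the regularity/density facts that rest on (ii)) but are nowhere proved, so as a proof of the full lemma the proposal is incomplete.

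For part (iii) itself, your route is essentially the paper's: the engine in both cases is the exact identity $K(t,x,\cX,\dot D_\cA)=K(t,Jx,J\cX^0,\mathcal Z)$ extracted from the hypothesis $J\cX\cap\mathcal Z=JD(\cA)$ (your ``$\leq$'' direction, rearranging $\kappa\mathbf z=J(x-y)$ and using injectivity of $J$ and $\kappa$ to match costs, is exactly the paper's computation), followed by a density argument and the abstract identification of completions. The one genuine difference is where the identity is established: the paper proves it only for $x\in D(\cA)$ and then relies on density of $D(\cA)$ in $K_\Phi(\cX,\dot D_\cA)$ before applying Lemma \ref{isomoprhism of completions}, whereas you prove it for all $x\in\cX^0$ and instead need the inclusion $K_\Phi(\cX,\dot D_\cA)\subset\cX^0$. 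Both variants hinge on the same unstated hypothesis that you correctly flag: absolute continuity of the norm of $\Phi$ alone does not exclude $t^{-1}\chi_{(0,1)}\in\Phi$, and without that exclusion neither $K_\Phi(\cX,\dot D_\cA)\subset\cX^0$ nor the density of $D(\cA)$ holds (the paper's own proof imports this tacitly by citing the proof of Lemma \ref{dense dom}, which uses the boundedness of $P$ on $\Phi$ — an assumption absent from (iii)). Your observation that continuity of $J$ and $\kappa$ into $Y$ must be read into the setup for $(J\cX^0,\mathcal Z)$ to be a Banach couple is likewise a fair point that applies equally to the paper's version. So: for (iii) the proposal is correct modulo these flagged (and genuinely needed) additional hypotheses, and is essentially the paper's argument; to be a complete proof of the statement you must still supply (i) and (ii).
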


\begin{proof} (i) 
Let $x\in \cX$. Take $y\in \cX$ and  $z\in D(\cA)$ with $x= y + z$. By Lemma \ref{lem on density} and Hille's theorem we get for all $t>0$ that
\[
z - \cT(t) z =  \cA \int^t_0 \cT(s) z \ud s = \int^t_0 \cT(s) \cA z \ud s.
\]
Therefore, for all $t>0$,
\begin{align}\label{equi for K l}
\|\cT(t) x - x\|_{\cX} & \leq \|\cT(t) - I\|_{\cL(\cX)}\|y\|_\cX + t \sup_{0<s\leq t} \|\cT(s)\|_{\cL(\cX)} \|\cA z\|_{\cX}\\
\nonumber & \leq \bigl(1+ \sup_{0<s\leq t} \|\cT(s)\|_{\cL(\cX)} \bigr) K(t,x, \cX, \dot{D}_\cA).
\end{align} It yields the left hand-side of \eqref{equi for K}. 
For the right hand-side, again by Lemma \ref{lem on density}, for any $x\in \cX$ and $t>0$, taking $y := \frac{1}{t}\int_0^t (x - \cT(s) x ) \ud s \in \cX$ and $z:= \frac{1}{t}\int_0^t \cT(s) x \ud s \in D(\cA)$, we easily get the desired estimate in \eqref{equi for K}. Further, since $K(\cdot, x, \cX, \dot D_{\cA})$ is non-decreasing on $\R_+$, \eqref{equi for K} gives the statement about $\omega_\cA(\cdot, \cdot)$.

The last statement of (i) follows from  Proposition \ref{Cauchy}(i) below, but the following direct argument also can be applied. Let $N(x): = K(1, x, \cX, \dot{D}_\cA)$, $x\in X$.
To show that $N$ is a norm on $\cX$ it suffices to deduce that for all $x\in \cX$, $N(x) = 0$ implies $x= 0$. Then, $\cX$ equipped with $N$ is a normed space such that $\cX$ and $\dot{D}_\cA$ are continuously embedded in it, hence  $(\cX, \dot{D}_\cA)$ is an interpolation couple. By \eqref{equi for K} we get for all $t>0$ that 
\begin{align}\label{equi for K l}
\|\cT(t) x - x\|_{\cX} \leq  (1+t)\bigl(1+ \sup_{0<s\leq t} \|\cT(s)\|_{\cL(\cX)} \bigr)N(x) = 0.
\end{align}
Therefore, $\cT(t)x = x$ for all $t>0$. Consequently, $x\in D(\cA)$ with $\cA x =0$, and, by the injectivity of $\cA$, we get $x=0$. It completes the proof of (i).

For (ii), note that for all $t>0$
\[
\Big\|\frac{1}{t}\int_0^t (\cT(s)x -  x)\ud s\Big\|_\cX\leq t P\bigl({(\cdot)^{-1}}\|\cT(\cdot)x- x\|_\cX \bigr) (t). 
\]
Therefore, (ii) follows directly from (i) and the presentation of the proof of Proposition \ref{Komastu rep}, which shows that for all $t>0$ and $x\in \cX$ we have
\begin{equation}
\widetilde{C}^{-1} \|\cA \cT(t)\|_{\cX} \leq  \frac{1}{t}   K(t,x, \cX, \dot{D}_{\cA}) \leq \frac{1}{t}\int_0^t \|\cA \cT(s) x\|_{\cX} \ud s + \|\cA \cT(t) x\|_{\cX}, 
\end{equation}
where $\widetilde{C}: = \max(\sup_{s>0} \|s\cA \cT(s)\|_{\cL(\cX)}, \sup_{s>0}\|\cT(s)\|_{\cL(\cX)})$; cf. Subsection \ref{subs H}. 
 (iii) First, from the assumption on $\Phi$, $D(\cA)$ is a dense subset of $K_\Phi(\cX, \dot D_\cA) = K_\Phi(\cX^0, \dot D_\cA)$ and $\mathcal{Z}\cap J\cX^0$ is a dense subset of $K_\Phi(J\cX^0,\mathcal{Z})$; see the proof of Proposition \ref{dense dom}.   
Since the set $\{ [(x)_{n\in \N}] : x\in D(\A) \}$ is a dense subset of $\widetilde{\dot D_\cA}$ and the isomorphism $\kappa$ between this completion and $\cZ$ is compatible with $J$, we get that $JD(\cA)$ is dense in $\mathcal{Z}$. Of course, $JD(\cA)$ is also dense in $J\cX^0$, and consequently $JD(\cA) \subset JK_\Phi(\cX, \dot D_\cA)$ is a dense subset of a Banach space $K_\Phi(J\cX, \mathcal{Z})$. 
Therefore, by Lemma \ref{isomoprhism of completions}, it is sufficient to show that 
there exists a constant $c>0$ such that for any $x\in D(\cA)$:
\[
{c}^{-1}\|x\|_{K_\Phi(\cX, \dot D_\cA)} \leq  \|J x\|_{K_\Phi(J\cX, \mathcal{Z})} \leq c \|x\|_{K_\Phi(\cX, \dot D_\cA)}.
\]
By our assumption  $J\cX \cap \mathcal{Z} = JD(\cA)$, one can show even more, namely, for any $x\in D(\cA)$ and all $t>0$:
\[
K(t, x,\cX, \dot{D}_\cA) =  K(t, x, J\cX, \mathcal{Z}).
\]
Indeed, for any $x\in D(\cA)$, if $x^0 \in \cX^0$ and $z\in \mathcal{Z}$ are such that $Jx = Jx^0 + z$, then, by our assumptions, there exists $x^1\in D(\cA)$ with $z = Jx^1$ and $\|z\|_{\mathcal{Z}} = \|x^1\|_{\dot D_{\cA}}$. It easily leads to the desired equality for the corresponding $K$-functionals, and completes the proof of (iii).
\end{proof}
\begin{remark} The point (iii) of Lemma \ref{hom isom} shows that our approach to studying global-in-time estimates for the strong solutions of $(CP)_{A,f,x}$, with respect to the homogeneous parts of interpolation norms, is an alternative to that proposed in \cite[Section 2]{DaHiMuTo}. 
We proceed to the completion after interpolation. In fact, in \cite{DaHiMuTo} the procedure is the opposite; in a sense, the normed space $Y$ considered therein can naturally be taken as a subspace of the completion of $D(\cA)$ in the norm $\|\cA \cdot\|_\cX$.
Following  the setting of Proposition \ref{hom ext p}, if the force term $f$ satisfies, e.g. the assumptions of \cite[Proposition 2.14]{DaHiMuTo}, this is, $f\in L^p(\R_+; X)$ with $[f]_{\Phi, \cA} \in L^p(\R_+)$ $(p\in [1,\infty])$, then $J f \in L^p(\R_+, \widetilde X)$. Hence, knowing that $\widetilde A$ on $\widetilde X$ has $L^p$-maximal regularity on $\R_+$, we get that $(U_{\widetilde A} Jf)'$ is in  $L^p(\R_+, \widetilde X)$. Since $u: = U_\cA f$ is a strong solution to $(CP)_{A,f,0}$ with $u, u' \in L^p_{loc}(\R_+; X)$, we get that $[u' (t)]_{\Phi, \cA} = \|J(U_A f)' (t)\|_{\wX} = \|(U_{\widetilde A} Jf)'(t)\|_{\widetilde X}$ for a.e. $t>0$, that is, $ [u']_{\Phi, \cA} \in L^p(\R_+)$, which is a desired conclusion therein. 
\end{remark}

In the proofs of the estimates in \eqref{E mr est}, we use the Banach completion as a tool. To make statement (a) of Proposition \ref{hom ext p} more adequate for application, it is useful to ensure that if a normed space $X$ is a subspace of a Hausdorff vector space $Y$ then there exists a completion of $X$ in $Y$. That is, 
a completion $\widetilde{X}$ such that $\widetilde{X} \subset Y$, where the inclusion map is continuous (denoted $\widetilde{X} \hookrightarrow Y$). If such a~completion of $X$ in $Y$ exists 
 we also say that $X$ admits a completion in $Y$.
 Following \cite{ArGa65}, a {\it completion} of $X$ is a Banach space $\widetilde{X}$ containing  an isometric copy of $X$ as a dense subspace and satisfying 
$\|x\|_{\widetilde{X}} = \|x\|_X$ for all $x \in X$. 
The problem of the existence of a completion of $X$ in $Y$ is resolved by the following result. 

\begin{theorem}[{\cite[Theorem 3.1]{ArGa65}}] \label{CauchyComp} 
If a normed space $X$ is a subspace of a Hausdorff vector space $Y$, then $X$ admits a completion in $Y$ if and only if the following conditions are satisfied{\rm:}
\begin{itemize}
\item[{\rm (i)}] $X \hookrightarrow Y$ and every Cauchy sequence in $X$ converges in $Y$.
\item[{\rm (ii)}] every Cauchy sequence in $X$ convergent to $0$ in $Y$ is also convergent to $0$ in~~$X$. 
\end{itemize}
\end{theorem}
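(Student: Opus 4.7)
The plan is to prove the two implications separately. The necessity direction is essentially an unpacking of definitions, whereas the sufficiency requires constructing the completion inside $Y$ from the abstract Banach completion.

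For the necessity, assuming $X$ admits a completion $\widetilde{X}$ in $Y$, condition (i) is immediate: any Cauchy sequence $(x_n) \subset X$ is Cauchy in $\widetilde{X}$ by the isometric embedding, hence converges in $\widetilde{X}$, and by the continuous inclusion $\widetilde{X} \hookrightarrow Y$ it converges in $Y$. For (ii), if $(x_n) \subset X$ is Cauchy and $x_n \to 0$ in $Y$, then by (i) there is $\tilde{x} \in \widetilde{X}$ with $x_n \to \tilde{x}$ in $\widetilde{X}$, and the continuous inclusion forces $x_n \to \tilde{x}$ in $Y$ as well. The Hausdorff property of $Y$ gives $\tilde{x} = 0$ in $\widetilde{X}$, and then $\|x_n\|_X = \|x_n\|_{\widetilde{X}} \to 0$ yields convergence to $0$ in $X$.

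For sufficiency, I would start from the abstract Banach completion $\widetilde{X}$ of $X$, realized as equivalence classes $[(x_n)]$ of Cauchy sequences with the quotient of the pointwise seminorm $\lim_n \|x_n\|_X$, and the canonical isometry $J \colon X \to \widetilde{X}$, $J(x) = [(x,x,\dots)]$. By (i), for every Cauchy sequence $(x_n)\subset X$ the limit $\lim_n x_n$ exists in $Y$. Since $X \hookrightarrow Y$, two equivalent Cauchy sequences (whose difference tends to $0$ in $X$) have the same $Y$-limit, so one obtains a well-defined linear map
\[
\kappa \colon \widetilde{X} \to Y, \qquad \kappa([(x_n)]) := \lim_{n\to\infty} x_n \text{ in } Y,
\]
extending the inclusion $X \hookrightarrow Y$ via $\kappa \circ J = \mathrm{id}_X$. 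Condition (ii) yields the injectivity of $\kappa$: if $\kappa([(x_n)]) = 0$, i.e.\ $x_n \to 0$ in $Y$, then $x_n \to 0$ in $X$, so $[(x_n)] = 0$ in $\widetilde{X}$.

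The main obstacle is the continuity of $\kappa$, since $Y$ carries only a topological vector space structure and no norm. Given a neighbourhood $U$ of $0$ in $Y$, choose neighbourhoods $V_1, V_2$ of $0$ with $V_1 + V_2 \subset U$, and, using the continuity of $X \hookrightarrow Y$, pick $\varepsilon > 0$ such that $\{x \in X : \|x\|_X < \varepsilon\} \subset V_1$. For $\xi \in \widetilde{X}$ with $\|\xi\|_{\widetilde{X}} < \varepsilon$ and any Cauchy representative $(x_n)$, one has $\|x_n\|_X < \varepsilon$ eventually (so $x_n \in V_1$) and, by the definition of $\kappa$, $\kappa(\xi) - x_n \in V_2$ eventually; therefore $\kappa(\xi) = x_n + (\kappa(\xi) - x_n) \in V_1 + V_2 \subset U$ for some $n$. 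This gives continuity at $0$, and hence, by linearity, everywhere. Identifying $\widetilde{X}$ with $\kappa(\widetilde{X}) \subset Y$ equipped with the transported norm then realises the standard completion of $X$ as a subspace of $Y$ containing $X$ densely and isometrically, completing the proof.
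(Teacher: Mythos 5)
Your proof is correct and follows essentially the same route as the paper, which (in Lemma \ref{AC}) realizes the completion as the set $X^{ac}\subset Y$ of $Y$-limits of $X$-Cauchy sequences; your map $\kappa$ from the abstract completion is just the extrinsic packaging of that same construction, with condition (i) giving the map into $Y$ and condition (ii) giving injectivity and the isometric transport of the norm. The only cosmetic difference is that the paper equips $X^{ac}$ with the norm $\inf_{(x_n)}\sup_n\|x_n\|_X$ and notes it is already a Banach space under (i) alone, whereas you invoke (ii) before transporting the norm; under (ii) the two norms coincide.
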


Note that Theorem \ref{CauchyComp} is stated in \cite{ArGa65} without a proof. As we require certain details, we outline the essential steps of the proof briefly in the following lemma leaving the remaining details to the reader. Let $X$ be a normed space and $Y=(Y, \tau)$ a Hausdorff vector space such that $X\hookrightarrow Y$. 
 The \textit{Cauchy completion} $X^{ac, Y}$ (abbreviated as $X^{ac}$) is defined as the space of all $x \in Y$ for which there exists a Cauchy sequence $(x_n)_{n \in \mathbb{N}}$ in $X$ that converges to $x$ in $Y$. It should be noted that $X^{ac}$  is, in general, not a completion of $X$ in the classical sense. It is straightforward to verify that $X^{ac}$ is a normed space, equipped with the norm
\[
\|x\|_{X^{ac}} := \inf_{(x_n)} \sup_{n \geq 1} \|x_n\|_{X},
\]
where the infimum is taken over all sequences $(x_n)$ satisfying the aforementioned conditions. 

\begin{lemma}  \label{AC}
Let $X$ be a normed space and $Y = (Y, \tau)$ a Hausdorff vector space. If $X \hookrightarrow Y$ and every Cauchy sequence in $X$ converges in $Y$, then $X \hookrightarrow X^{ac} \hookrightarrow Y$, and the inclusion map ${\rm{id}} \colon X \to X^{ac}$ has norm at most $1$. Moreover, $X^{ac}$ is a Banach space. If, in addition, every Cauchy sequence in $X$ that converges to $0$ in $Y$ also converges to $0$ in $X$, then $X^{ac}$ is the completion of $X$ in $Y$.
\end{lemma}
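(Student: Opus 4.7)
The plan is to identify $X^{ac}$ with a quotient of the abstract Banach completion $\widetilde X$ of $X$, from which completeness and all embedding properties follow from standard quotient arguments. Define $\Phi\colon\widetilde X\to Y$ by $\Phi\big([(x_n)]\big):=\lim_n x_n$, computed in $Y$; this is well-defined since by hypothesis every Cauchy sequence in $X$ converges in $Y$, and two Cauchy sequences in the same equivalence class differ by a null sequence in $X$ which, by continuity of $X\hookrightarrow Y$, is also null in $Y$. The map $\Phi$ is clearly linear and contractive, and its image equals $X^{ac}$ by the very definition of the Cauchy completion. A direct unfolding of the infima, together with the observation that tail sequences of any Cauchy representative are again representatives with $\sup_n\|x_n\|_X$ arbitrarily close to $\lim_n\|x_n\|_X$, shows that the quotient norm on $\widetilde X/\ker\Phi$ coincides with the prescribed norm $\|\cdot\|_{X^{ac}}$. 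Thus $X^{ac}\cong\widetilde X/\ker\Phi$ isometrically, and completeness of $X^{ac}$ is automatic.

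For the continuous embeddings, the estimate $\|x\|_{X^{ac}}\leq\|x\|_X$ for $x\in X$ is immediate from the constant representing sequence, giving $X\hookrightarrow X^{ac}$ of norm at most one. To see $X^{ac}\hookrightarrow Y$ continuously, I would fix a neighbourhood $U$ of $0$ in $Y$, use that $Y$ is Hausdorff to extract a closed neighbourhood $V\subset U$, and then apply continuity of $X\hookrightarrow Y$ to obtain $\delta_0>0$ with $\{x\in X:\|x\|_X<\delta_0\}\subset V$. Any $y\in X^{ac}$ with $\|y\|_{X^{ac}}<\delta_0/2$ then admits a representing Cauchy sequence contained in $V$, whose $Y$-limit lies in $\overline V=V\subset U$.

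For the final assertion, the additional hypothesis (ii) of Theorem \ref{CauchyComp} is precisely $\ker\Phi=\{0\}$, so that $\Phi$ becomes an isometric linear isomorphism $\widetilde X\to X^{ac}$. It remains to upgrade $\|x\|_{X^{ac}}\leq\|x\|_X$ to equality on $X$: given any Cauchy sequence $(x_n)$ in $X$ with $x_n\to x$ in $Y$, the sequence $(x_n-x)$ is Cauchy in $X$ and converges to $0$ in $Y$, hence to $0$ in $X$ by the added hypothesis, forcing $\|x\|_X=\lim_n\|x_n\|_X\leq\sup_n\|x_n\|_X$; passing to the infimum over representing sequences yields the reverse inequality. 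Hence $X\hookrightarrow X^{ac}$ is isometric with dense image (by construction), and $X^{ac}$ is the desired completion of $X$ sitting inside $Y$.

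The only delicate point I anticipate is the verification that the prescribed infimum norm on $X^{ac}$ actually agrees with the quotient norm transported from $\widetilde X$; once this identification is established, everything else is routine manipulation of Cauchy sequences combined with the Hausdorff topological vector space structure of $Y$.
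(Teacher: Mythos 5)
The paper does not actually prove Lemma \ref{AC}: it is introduced with the remark that the essential steps are only outlined, ``leaving the remaining details to the reader'', and no proof environment follows. So there is no argument of the authors' to compare against; your proposal supplies the missing proof, and it is correct. The quotient identification $X^{ac}\cong\widetilde X/\ker\Phi$ is a clean way to get completeness without running the usual diagonal argument on Cauchy sequences of Cauchy sequences, and the norm comparison (passing to tails to replace $\sup_n\|x_n\|_X$ by something arbitrarily close to $\lim_n\|x_n\|_X$) is exactly the right observation. Two points deserve slightly more care in the write-up. First, ``$\Phi$ is contractive'' is not meaningful since $Y$ is only a topological vector space; what you need is that $\Phi$ is \emph{continuous}, which follows from the same closed-neighbourhood argument you use for $X^{ac}\hookrightarrow Y$ (choose a closed $V\subset U$ and $\delta$ with the $X$-ball of radius $\delta$ inside $V$; then classes of $\widetilde X$-norm less than $\delta$ map into $V$). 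Continuity of $\Phi$ is what makes $\ker\Phi$ closed (as $\{0\}$ is closed in the Hausdorff space $Y$), and closedness of $\ker\Phi$ is needed both for the quotient seminorm to be a norm and for $\widetilde X/\ker\Phi$ to be Banach; alternatively, it can be read off from the fact that the transported functional on $X^{ac}$ separates points, which again is your closed-neighbourhood argument. Second, the density of $X$ in $X^{ac}$, which you dismiss as ``by construction'', should be justified: for a representing Cauchy sequence $(x_n)$ of $y$, the tail $(x_n-x_m)_{n\ge m}$ represents $y-x_m$, so $\|y-x_m\|_{X^{ac}}\le\sup_{n\ge m}\|x_n-x_m\|_X\to 0$; equivalently, it is the image of the dense subspace $JX\subset\widetilde X$ under the continuous surjection onto the quotient. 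With these two details spelled out, the proof is complete.
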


We also require the following result. For clarity, all Cauchy completions below are considered in $Y$. Additionally, we use the symbol '$\overset{1}{\hookrightarrow}$' to indicate that the inclusion map has norm less than or equal to $1$.

\begin{lemma} \label{sumAC}
Let $X_0$ and $X_1$ be normed spaces, and let $Y = (Y, \tau)$ be a Hausdorff vector space such that $X_j \hookrightarrow Y$ and every Cauchy sequence in $X_j$ converges in $Y$ for $j = 0, 1$. Then, the pair $(X_0^{ac}, X_1^{ac})$ forms a Banach couple such that $X_0^{ac} + X_1^{ac} = (X_0 + X_1)^{ac}$ with equal norms.
\end{lemma}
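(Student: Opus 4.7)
The plan is to establish the two continuous inclusions $X_0^{ac}+X_1^{ac}\hookrightarrow (X_0+X_1)^{ac}$ and $(X_0+X_1)^{ac}\hookrightarrow X_0^{ac}+X_1^{ac}$, each of norm at most $1$. First I would observe that $(X_0^{ac},X_1^{ac})$ is indeed a Banach couple: by Lemma~\ref{AC} each $X_j^{ac}$ is a Banach space continuously embedded into the Hausdorff space $Y$. As a preliminary step I also need to verify that $(X_0+X_1)^{ac}$ is well defined in the sense of Lemma~\ref{AC}; the inclusion $X_0+X_1\hookrightarrow Y$ is immediate, and the fact that every $X_0+X_1$-Cauchy sequence converges in $Y$ follows from the same telescoping argument that will be the heart of the main proof (passing to a fast subsequence, splitting each increment into $X_0$- and $X_1$-components of almost optimal norm, and summing the two absolutely convergent series separately using the hypothesis on $X_0$ and~$X_1$).

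For the easier inclusion $X_0^{ac}+X_1^{ac}\hookrightarrow (X_0+X_1)^{ac}$, fix $y\in X_0^{ac}+X_1^{ac}$, a decomposition $y=y_0+y_1$ with $y_j\in X_j^{ac}$, and $\varepsilon>0$. For each $j$ pick an $X_j$-Cauchy sequence $(y_n^{(j)})$ converging to $y_j$ in $Y$ with $\sup_n\|y_n^{(j)}\|_{X_j}\leq \|y_j\|_{X_j^{ac}}+\varepsilon$. Then $(y_n^{(0)}+y_n^{(1)})$ is $X_0+X_1$-Cauchy and, because addition is continuous on $Y$, converges to $y$ in $Y$; hence $y\in (X_0+X_1)^{ac}$ with $\|y\|_{(X_0+X_1)^{ac}}\leq \|y_0\|_{X_0^{ac}}+\|y_1\|_{X_1^{ac}}+2\varepsilon$. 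Letting $\varepsilon\to 0$ and taking the infimum over admissible decompositions gives $\|y\|_{(X_0+X_1)^{ac}}\leq \|y\|_{X_0^{ac}+X_1^{ac}}$.

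The main obstacle is the reverse inclusion, because a generic $X_0+X_1$-Cauchy sequence $(x_n)$ cannot be split coordinate-wise into two Cauchy sequences in $X_0$ and $X_1$. I would overcome this by telescoping. Given $y\in (X_0+X_1)^{ac}$ and $\varepsilon>0$, choose an $X_0+X_1$-Cauchy sequence $(x_n)$ converging to $y$ in $Y$ with $\sup_n\|x_n\|_{X_0+X_1}\leq \|y\|_{(X_0+X_1)^{ac}}+\varepsilon$, and pass to a subsequence (still denoted $(x_n)$) with $\|x_{n+1}-x_n\|_{X_0+X_1}\leq \varepsilon\,2^{-n}$. Decompose $x_1=a_0+b_0$ with $\|a_0\|_{X_0}+\|b_0\|_{X_1}\leq \|x_1\|_{X_0+X_1}+\varepsilon$ and, for $n\geq 1$, $x_{n+1}-x_n=a_n+b_n$ with $\|a_n\|_{X_0}+\|b_n\|_{X_1}\leq 2\varepsilon\,2^{-n}$. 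The partial sums $s_N^{(0)}:=\sum_{n=0}^N a_n$ (respectively $s_N^{(1)}:=\sum_{n=0}^N b_n$) are $X_0$-Cauchy (respectively $X_1$-Cauchy), hence converge in $Y$ by hypothesis to elements whose classes lie in $X_0^{ac}$ and $X_1^{ac}$; call them $y_0$ and $y_1$. Since $s_N^{(0)}+s_N^{(1)}=x_{N+1}\to y$ in $Y$ and $Y$ is Hausdorff, $y=y_0+y_1$. A direct estimate gives $\|y_0\|_{X_0^{ac}}+\|y_1\|_{X_1^{ac}}\leq \sum_{n\geq 0}\|a_n\|_{X_0}+\sum_{n\geq 0}\|b_n\|_{X_1}\leq \|y\|_{(X_0+X_1)^{ac}}+\varepsilon+2\varepsilon$, and letting $\varepsilon\to 0$ yields $\|y\|_{X_0^{ac}+X_1^{ac}}\leq \|y\|_{(X_0+X_1)^{ac}}$, completing the proof.
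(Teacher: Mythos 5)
Your proof is correct, but it takes a genuinely different route from the paper's. The paper argues abstractly: it first proves a monotonicity principle for the Cauchy completion, namely that $Z_0\overset{1}{\hookrightarrow}Z_1$ implies $Z_0^{ac}\overset{1}{\hookrightarrow}Z_1^{ac}$ (using that $Z_1^{ac}$ is complete, so a $Z_0$-Cauchy sequence representing $x$ converges in $Z_1^{ac}$ to the same limit), and that $(\cdot)^{ac}$ is the identity on Banach spaces. The hard inclusion $(X_0+X_1)^{ac}\overset{1}{\hookrightarrow}X_0^{ac}+X_1^{ac}$ then falls out of $X_0+X_1\overset{1}{\hookrightarrow}X_0^{ac}+X_1^{ac}$ by applying $(\cdot)^{ac}$ to both sides, and the easy one from $X_j\overset{1}{\hookrightarrow}X_0+X_1$ likewise; no element of $X_0+X_1$ is ever decomposed. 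Your argument instead does the decomposition by hand: the telescoping of a fast Cauchy sequence into absolutely summable $X_0$- and $X_1$-pieces is the classical ``series'' proof. What the paper's route buys is brevity and the reusable monotonicity lemma; what yours buys is self-containedness — in particular your hard direction uses only the raw hypothesis that Cauchy sequences in $X_0$ and $X_1$ converge in $Y$, and as a by-product it verifies explicitly that every Cauchy sequence in $X_0+X_1$ converges in $Y$, a point the paper leaves implicit. The only blemish is trivial bookkeeping: with your choices the final bound is $\|y\|_{(X_0+X_1)^{ac}}+4\varepsilon$ rather than $+3\varepsilon$ (since $\|x_1\|_{X_0+X_1}\leq\|y\|_{(X_0+X_1)^{ac}}+\varepsilon$ already costs an $\varepsilon$), which of course changes nothing as $\varepsilon\to 0$.
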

\begin{proof} 
From Lemma \ref{AC}, it follows that $X_j^{ac}$ are Banach spaces and $X_j^{ac} \hookrightarrow Y$ ($j=0, 1$). 
Thus, $(X_0^{ac}, X_1^{ac})$ forms a Banach couple. To show the required isometric formula, we start with an auxiliary observation. Suppose that $(Z_0, Z_1)$ is a couple of normed spaces which satisfies the same conditions as $(X_0,X_1)$ and, in addition, $Z_0\overset{1}{\hookrightarrow} Z_1$.     
We claim that 
\begin{equation}\label{inclusion ac}
Z_0^{ac} \overset{1}{\hookrightarrow} Z_1^{ac}\,.
\end{equation}
Clearly, we have $
Z_0 \overset{1}{\hookrightarrow} Z_1 \overset{1}{\hookrightarrow} Z_1^{ac}\,$. For a given $x \in Z_0^{ac}$, choose an arbitrary Cauchy sequence $(x_n)$ in $Z_0$ such that $x_n \to x$ in $Y$. 
By the completeness of $Z_1^{ac}$, it follows that $x_n \to y$ in $Z_1^{ac}$ and so $x_n \to y$ in $Y$. Hence $y = x$ and 
\[
\|x\|_{Z_1^{ac}} = \lim_{n \to \infty} \|x_n\|_{Z_1^{ac}} \leq \sup_{n \geq 1} \|x_n\|_{Z_0}\,.
\]
This estimate, combined with the fact that $(x_n)$ is an arbitrary sequence in $Z_0$ with the mentioned properties, implies that 
$x \in Z_1^{ac}$ with 
\[
\|x\|_{Z_1^{ac}} \leq \|x\|_{Z_0^{ac}}\,,
\]
and so the claim follows.

 To establish $X_0^{ac} + X_1^{ac} = (X_0 + X_1)^{ac}$ (equal norms) 
  observe that $X_j \overset{1}{\hookrightarrow} X_j^{ac}$ ($j=0,1$) yields
\begin{equation}\label{inclusion}
X_0 + X_1 \overset{1}{\hookrightarrow} X_0^{ac} + X_1^{ac}\,.
\end{equation}
Since $X := X_0^{ac} + X_1^{ac}$ is a Banach space, its Cauchy completion in $Y$ coincides isometrically with $X$. 
Combining this with \eqref{inclusion}, we obtain 
\[
(X_0 + X_1)^{ac} \overset{1}{\hookrightarrow} X_0^{ac} + X_1^{ac}\,.
\]
Now, observe that $X_0 + X_1$ is a normed space and $X_j \overset{1}{\hookrightarrow} X_0 + X_1$ ($j=0,1$). Then, by applying the first part of the proof to $(Z_0, Z_1) = (X_j, X_0 + X_1)$, $j=0,1$,  
we obtain $X_j^{ac} \overset{1}{\hookrightarrow} (X_0 + X_1)^{ac}$. Consequently, this yields the reverse inclusion 
\[
X_0^{ac} + X_1^{ac} \overset{1}{\hookrightarrow} (X_0 + X_1)^{ac}\,,
\]
which completes the proof. 
\end{proof}

As an application, we obtain the following corollary.

\begin{proposition} \label{K-space}
Let $\xo = (X_0, X_1)$ be a normed couple and let $Y = (Y, \tau)$ be a sequentially complete Hausdorff vector space such that 
$X_j \hookrightarrow Y$ for $j = 0, 1$. If $(X_0 + X_1)^{ac}$  is a completion of $X_0+X_1$ in $Y$, then for 
any parameter $\Phi$ of the $K$-method, the following statements hold{\rm:}
\begin{itemize} 
\item[{\rm(i)}] The normed space $K_\Phi(\xo)$ admits a completion in $Y$. More precisely, the space $K_\Phi(\xo)^{ac} \subset Y$ is a completion of $K_\Phi(\xo)$. 
\item[{\rm(ii)}] The inclusion map from ${K_\Phi(\xo)}^{ac}$ into $K_{\Phi}(X_0^{ac}, X_1^{ac})$ is an isometry. 
If, in addition, 
\[
 K_\Phi(\xo) \text{ is dense in } K_\Phi(X_0^{ac}, X_1^{ac}),
\]
then 
\begin{equation}\label{tilde vs functor}
K_\Phi(\xo)^{ac} = K_\Phi(X_0^{ac}, X_1^{ac})
\end{equation} 
with equal norms. In particular, if $K_\Phi$ is a regular functor and $X_0^{ac}\cap X_1^{ac} = (X_0\cap X_1)^{ac}$, then \eqref{tilde vs functor} is true.
\end{itemize} 
\end{proposition}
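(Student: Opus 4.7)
The plan is to prove (i) by verifying the two conditions of Theorem \ref{CauchyComp} applied to the inclusion $K_\Phi(\xo)\hookrightarrow Y$, and then to deduce (ii) from an isometric identity of $K$-functionals combined with density arguments.

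For (i), the chain $K_\Phi(\xo)\hookrightarrow X_0+X_1\hookrightarrow (X_0+X_1)^{ac}\hookrightarrow Y$ directly yields condition (i) of Theorem \ref{CauchyComp}: every Cauchy sequence in $K_\Phi(\xo)$ converges in $Y$. The nontrivial step is condition (ii): given a Cauchy sequence $(x_n)\subset K_\Phi(\xo)$ with $x_n\to 0$ in $Y$, I must show $\|x_n\|_{K_\Phi(\xo)}\to 0$. Setting $f_n(t):=t^{-1}K(t,x_n,X_0,X_1)$, the sequence $(f_n)$ is Cauchy in $\Phi$ and hence converges to some $f\in\Phi$; since $\Phi$ embeds continuously into $L^0$, a subsequence $f_{n_k}\to f$ a.e.\ on $\R_+$. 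Simultaneously, $(x_n)$ is Cauchy in $X_0+X_1$ and converges to $0$ in $Y$, hence to $0$ in $(X_0+X_1)^{ac}$ by Hausdorffness of $Y$, which by the hypothesis that $(X_0+X_1)^{ac}$ is a completion of $X_0+X_1$ in $Y$ forces $\|x_n\|_{X_0+X_1}\to 0$. Combined with the elementary bound $K(t,x,X_0,X_1)\le\max(1,t)\|x\|_{X_0+X_1}$, this gives $f_n(t)\to 0$ pointwise for every $t>0$, so $f=0$ a.e., and thus $\|x_n\|_{K_\Phi(\xo)}=\|f_n\|_\Phi\to 0$. Lemma \ref{AC} then identifies $K_\Phi(\xo)^{ac}$ as a completion of $K_\Phi(\xo)$ in $Y$.

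For (ii), the central step is the identity
\[
K(t,x,X_0,X_1)=K(t,x,X_0^{ac},X_1^{ac}) \qquad\text{for every } x\in X_0+X_1,\ t>0.
\]
Since $K(t,\cdot,X_0,X_1)=\|\cdot\|_{X_0+tX_1}$ (and analogously for the completions), after rescaling $X_1$ it suffices to check that the two sum-norms agree on $X_0+X_1$. This follows from Lemma \ref{sumAC} (whose hypotheses are met, because Cauchy sequences in $X_j$ are Cauchy in $X_0+X_1$, hence converge in $(X_0+X_1)^{ac}\hookrightarrow Y$) together with the hypothesis that $(X_0+X_1)^{ac}$ is a completion of $X_0+X_1$ in $Y$. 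Consequently the natural inclusion $K_\Phi(\xo)\hookrightarrow K_\Phi(X_0^{ac},X_1^{ac})$ is isometric; since $K_\Phi(X_0^{ac},X_1^{ac})\hookrightarrow X_0^{ac}+X_1^{ac}\hookrightarrow Y$, the isometric extension to $K_\Phi(\xo)^{ac}\hookrightarrow K_\Phi(X_0^{ac},X_1^{ac})$ follows by uniqueness of limits in $Y$.

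Finally, under the density assumption, $K_\Phi(\xo)^{ac}$ is both closed and dense in $K_\Phi(X_0^{ac},X_1^{ac})$, forcing equality. For the regular-functor consequence, I would show that $(X_0\cap X_1)^{ac}\subset \overline{K_\Phi(\xo)}$ inside $K_\Phi(X_0^{ac},X_1^{ac})$: for $a\in(X_0\cap X_1)^{ac}$, a sequence $(a_n)\subset X_0\cap X_1$ that is Cauchy in both $X_0$ and $X_1$ with $a_n\to a$ in $Y$ satisfies the pointwise bound $K(t,a-a_n,X_0^{ac},X_1^{ac})\le\min(\|a-a_n\|_{X_0^{ac}},\,t\|a-a_n\|_{X_1^{ac}})$, and $\min(1,1/\cdot)\in\Phi$ (from the nontriviality of $K_\Phi$) gives $a_n\to a$ in $K_\Phi(X_0^{ac},X_1^{ac})$. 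Combined with $X_0^{ac}\cap X_1^{ac}=(X_0\cap X_1)^{ac}$ and the regularity of $K_\Phi$ (which makes $X_0^{ac}\cap X_1^{ac}$ dense in $K_\Phi(X_0^{ac},X_1^{ac})$), this produces the required density of $K_\Phi(\xo)$. The main obstacle is condition (ii) of Theorem \ref{CauchyComp} in part (i): one must combine the abstract completion machinery (convergence in $X_0+X_1$) with the Banach function space structure of $\Phi$ (subsequential a.e.\ convergence) to conclude that the $\Phi$-limit of $f_n$ is zero a.e.
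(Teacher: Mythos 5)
Your proposal is correct and follows essentially the same route as the paper's proof: part (i) is established by verifying condition (ii) of Theorem \ref{CauchyComp} through the Cauchyness of $t\mapsto K(t,x_n;\xo)$ in $\Phi$, an a.e.-convergent subsequence via $\Phi\hookrightarrow L^0$, and the pointwise vanishing of the $K$-functionals coming from $x_n\to 0$ in $X_0+X_1$; part (ii) rests on the same isometric identity $K(t,x;\xo)=K(t,x;X_0^{ac},X_1^{ac})$ obtained from Lemma \ref{sumAC} applied to $(X_0, tX_1)$, followed by the same density arguments. The only differences are expository: you spell out the intermediate passage through $(X_0+X_1)^{ac}$ and the explicit $K$-functional estimate behind the embedding of the intersection, which the paper states more tersely.
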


Note that the condition $X_0^{ac}\cap X_1^{ac} = (X_0\cap X_1)^{ac}$ in (ii) above holds, e.g. when  $X_0\hookrightarrow X_1$.

\begin{proof}
(i) Fix a parameter $\Phi$ of the $K$-method. Observe that 
\begin{equation}\label{incl for comp}
K_\Phi(X_0, X_1) \hookrightarrow X_0 + X_1 \hookrightarrow  (X_0 + X_1)^{ac} \hookrightarrow Y.
\end{equation}
Therefore, since $Y$ is sequentially complete, it suffices to verify that the condition (ii) of Theorem~\ref{CauchyComp} is satisfied.
Let $(x_n)$ be a Cauchy sequence in $K_\Phi(X_0, X_1)$ such that $x_n \to 0$ in $Y$. Clearly, for each $n, m \in \mathbb{N}$ 
and all $t > 0$, we have
\[
\big|K(t, x_n; \xo) - K(t, x_m; \xo)\big| \leq K(t, x_n - x_m; \xo)\,.
\]
Thus, $(K(\cdot, x_n; \xo))$ is a Cauchy sequence in the Banach space $\Phi$. Therefore, there exists $f \in \Phi$ such that
\[
\|K(\cdot, x_n; \xo) - f\|_\Phi \to 0\,.
\]
Since $\Phi \hookrightarrow L^0(\mathbb{R}_+, \ud t)$, it follows from the Riesz theorem that there exists a subsequence $(x_{n_k})$ of $(x_n)$ satisfying
\[
K(t, x_{n_k}; \xo) \to f(t) \quad \text{for a.e. $t > 0$}\,.
\]

By \eqref{incl for comp}, $x_n \to 0$ in $X_0 + X_1$ and, consequently, for every $t > 0$,
\[
K(t, x_n; \xo) \to 0 \quad \text{as $n \to \infty$}\,.
\]
Thus,  $f(t) = 0$ for a.e. $t > 0$, that is, $x_n \to 0$ in $K_\Phi(X_0, X_1)$ as desired. 

(ii) By Lemma \ref{sumAC}, for every $t >0$, we have 
\[
({X_0 + t X_1})^{ac} = X_0^{ac} + t X_1^{ac} 
\]
with equal norms, where $X_0 + t X_1$ denotes $X_0 + X_1$ equipped with the norm $K(t, \cdot\,; \vec{X})$. 
Hence, for any $x \in X_0 + X_1$, we have
\[
K(t, x; X_0, X_1) = \|x\|_{(X_0 + t X_1)^{ac}} = K(t, x; X_0^{ac}, X_1^{ac}), \quad\, t > 0\,.
\]

In particular, this implies that for all $x \in K_\Phi(X_0, X_1)$, we have
\[
\|x\|_{K_{\Phi}(X_0, X_1)} = \|x\|_{K_\Phi(X_0^{ac}, X_1^{ac})}\,.
\]
Since $K_\Phi(\xo) \hookrightarrow K_\Phi(X_0^{ac}, X_1^{ac})$ and $K_\Phi(\xo)$ is a dense subset of its completion, 
we obtain the desired claim, as well as the first part of the additional statement. For the second part, recall that the 
regularity of $K_\Phi$ implies that $X_0 \cap X_1$ and $X_0^{ac} \cap X_1^{ac}$ are dense in $K_\Phi(\xo)$ and 
$K_\Phi(X_0^{ac}, X_1^{ac})$, respectively. Moreover, the assumption that $X_0^{ac} \cap X_1^{ac} = (X_0 \cap X_1)^{ac}$ 
ensures that $X_0 \cap X_1$ is dense in $X_0^{ac} \cap X_1^{ac}$. Therefore, $X_0 \cap X_1$ is dense in 
$K_\Phi(X_0^{ac}, X_1^{ac})$, as required. This completes the proof.
\end{proof}

We apply the result above to the context considered in this paper and formulate the {\it a priori} conditions in terms of the 
injective operator $\cA$ on $\cX$, in particular, which guarantee that  the spaces $X = K_\Phi(\cX,D_\cA)$ from the formulation of 
Theorems \ref{thm main} and \ref{thm main 2} equipped with the homogeneous part of the corresponding interpolation norms admit completions in a given vector space.

In what follows if $\cA$ is an injective linear operator on a Banach space $\cX$, then $\dot{D}_\cA$  denotes the domain $D(\cA)$ of $\cA$ equipped with the 
norm $\|x\|_{\dot{D}_\cA}:= \|\cA x\|_{\cX}$. 
Recall that a linear operator $\cA$ on a topological vector space $Y$ is called {\it closable in $Y$} if admits a closed extension. One can show that it holds if and only if for every sequence $(x_n)$ in $D(\cA)$ such that $x_n\rightarrow 0$ and $\cA x_n \rightarrow y \in Y$ one has $y = 0$. 

\begin{proposition} \label{Cauchy}
Let $\cX$ be a Banach space and let $Y = (Y, \tau)$ be a sequentially complete Hausdorff vector space.  Let $\cA$ be an injective operator on $\cX$. Then the following statements hold{\rm:}
\begin{itemize}
\item[{\rm (i)}] If $\cA$ is closable in $\cX$, then the pair $(\cX, \dot{D}_\cA)$ is a couple of normed spaces.
\item[{\rm (ii)}] If $\cX \hookrightarrow Y$, $\dot{D}_\cA \hookrightarrow Y$ and $\cA$ is closable in $Y$, then $\dot D_\cA$ has a~completion~in~$Y$.
\item[{\rm (iii)}] If  
$\cX \hookrightarrow Y$, $\dot{D}_\cA \hookrightarrow Y$,  $\cA$ is closable in $Y$, and $\cX + \dot D_\cA$ admits a completion in $Y$, then for any parameter $\Phi$ of the $K$-method, 
the normed space $K_\Phi(\cX, \dot D_\cA)$ also admits a completion in $Y$.
\end{itemize}
\end{proposition}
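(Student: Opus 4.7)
The proof of Proposition \ref{Cauchy} breaks into three separate steps, each leveraging different pieces of the machinery already developed in this subsection.

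For (i), the plan is to endow the vector space $\cX$ with the weaker seminorm $N(x) := K(1, x; \cX, \dot D_\cA)$. Taking the decompositions $x = x + 0$ and (for $x \in D(\cA)$) $x = 0 + x$ in the defining infimum gives $N(x) \leq \|x\|_\cX$ for all $x \in \cX$ and $N(x) \leq \|\cA x\|_\cX$ for all $x \in D(\cA)$, so both inclusions $\cX \hookrightarrow (\cX, N)$ and $\dot D_\cA \hookrightarrow (\cX, N)$ are continuous. The crux is verifying that $N$ is in fact a norm: if $N(x) = 0$, one can choose $z_n \in D(\cA)$ with $x - z_n \to 0$ in $\cX$ and $\cA z_n \to 0$ in $\cX$, so closability of $\cA$ in $\cX$ places $x$ in the domain of the closure $\overline{\cA}$ with $\overline{\cA} x = 0$, and the injectivity of $\cA$ (promoted to its closure in this setting, e.g.\ via the identification of the abstract completion of $\dot D_\cA$ with a subspace of $\cX$ through the isometric extension of $\cA$) then forces $x = 0$. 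This last step is the main obstacle I anticipate.

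For (ii), I would check the two hypotheses of Theorem \ref{CauchyComp}. If $(x_n)$ is Cauchy in $\dot D_\cA$, then $(\cA x_n)$ is Cauchy, hence convergent, in $\cX$; the continuous embedding $\dot D_\cA \hookrightarrow Y$ transfers Cauchyness to $Y$, and sequential completeness of $Y$ gives the limit. If moreover $x_n \to 0$ in $Y$, then $\cA x_n \to z$ for some $z \in \cX$ and hence in $Y$, and closability of $\cA$ in $Y$ applied to $x_n \to 0$, $\cA x_n \to z$ in $Y$ yields $z = 0$, so $x_n \to 0$ in $\dot D_\cA$ as required. Both conditions of Theorem \ref{CauchyComp} then hold.

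For (iii), the plan is to invoke Proposition \ref{K-space} directly for the couple $(\cX, \dot D_\cA)$ inside $Y$. The hypotheses $\cX \hookrightarrow Y$ and $\dot D_\cA \hookrightarrow Y$ are given; closability of $\cA$ in $Y$ implies closability in $\cX$ (by continuity of $\cX \hookrightarrow Y$), so (i) supplies the normed-couple structure; and the existence of a completion of $\cX + \dot D_\cA$ in $Y$ is exactly the remaining input. Proposition \ref{K-space} then delivers $K_\Phi(\cX, \dot D_\cA)^{ac}$ as the sought completion of $K_\Phi(\cX, \dot D_\cA)$ inside $Y$.
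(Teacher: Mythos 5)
Your proposal follows essentially the same route as the paper's proof. Parts (ii) and (iii) coincide with the paper's argument: for (ii) the paper verifies the conditions of Theorem \ref{CauchyComp} exactly as you do (a Cauchy sequence $(x_n)$ in $\dot D_\cA$ gives $\cA x_n\to y$ in $\cX$, hence in $Y$, and closability of $\cA$ in $Y$ together with $x_n\to 0$ in $Y$ forces $y=0$), and (iii) is, as you say, a direct application of Proposition \ref{K-space} to the couple $(\cX,\dot D_\cA)$. For (i) the paper argues via the consistency-of-limits criterion of Brudnyi--Krugljak (two normed spaces embedded in a common linear space form a couple provided that whenever a sequence in $X_0\cap X_1$ converges in both spaces the limits agree), asserting that closability of $\cA$ makes this ``clear''; your direct verification that $N(x)=K(1,x;\cX,\dot D_\cA)$ is a norm is the same argument unwound, since $N(x)=0$ produces exactly such a doubly convergent sequence. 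The obstacle you honestly flag --- that this only yields $x\in D(\overline{\cA})$ with $\overline{\cA}x=0$, and injectivity of $\cA$ need not pass to its closure --- is a genuine subtlety, but it is equally present and equally unaddressed in the paper's one-line verification of the consistency condition, so it does not distinguish your argument from the paper's; in the semigroup setting of Lemma \ref{hom isom}(i) the paper circumvents it by a different device (showing $\cT(t)x=x$, hence $x\in D(\cA)$ itself), which is not available under the bare hypotheses of Proposition \ref{Cauchy}(i).
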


\begin{proof}
(i) It is easy to verify (cf.~\cite[Proposition 2.1.7, p.~94]{BrKr91} for the case of Banach spaces) that if $X_0$ and $X_1$ are normed spaces linearly embedded into a vector space 
$\mathcal{V}$, and the following consistency condition holds: whenever a sequence $(x_n)$ in $X_0 \cap X_1$ converges to $y_j$ in the normed space $X_j$ for $j = 0, 1$, 
we have $y_0 = y_1$, then $(X_0, X_1)$ forms a couple. 
Since $\cA$ is a closable operator, it is clear that the normed spaces $\dot{{D}}_\mathcal{A}$ and $\cX$ satisfy the consistency condition. Thus, it follows that $(\cX, \dot D_\cA)$ 
forms a couple of normed spaces.

For (ii), let $(x_n)$ be a Cauchy sequence in $\dot D_\cA$ such that $x_n \to 0$ in $Y$. Since $(\cA x_n)$ is a Cauchy
sequence in $\cX$, there exists $y \in \cX$ such that
\[
\|\cA x_n - y \|_{\cX} \to 0\,.
\]
Hence $\cA x_n \to y$ in $Y$ by $\cX \hookrightarrow Y$. Since $x_n \to 0$ in $Y$ and $\cA$ is closable in $Y$, it follows that $y = 0$. Therefore, we have
\[
\|x_n\|_{\dot D_\cA} = \|\cA x_n \|_{\cX} \to 0\,.
\]
Thus, the result follows from the Theorem \ref{CauchyComp}. Finally, the statement (iii) is a~~direct consequence of Proposition \ref{K-space}. The proof is complete.
\end{proof}

We conclude with a result that demonstrates that under certain conditions, the normed space $\cX + \dot D_\cA$ has a completion in a given space $Y$.

\begin{proposition} \label{XD}
Let $\cX$ be a Banach space, $(Y, \tau)$ a sequentially complete Hausdorff vector space, and $\cA$ an injective operator on $\cX$ such that $\cX \hookrightarrow Y$ 
and $\dot D_\cA \hookrightarrow Y$. Assume that $\cA$ is closable in $Y$ and generates a bounded semigroup $\cT$ on $\cX$ such that for every sequence $(x_n)\subset \cX$ such that $x_n\rightarrow 0$ in $Y$, $\cT(t) x_n \rightarrow 0$ in $Y$ for all $t\in (0,1]$. 
Then, the space $\cX + \dot D_\cA$ admits a completion in $Y$. 
In particular, the conclusion of Proposition $\ref{Cauchy}\,(iii)$ holds.
\end{proposition}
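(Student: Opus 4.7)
The plan is to verify the two criteria of Theorem \ref{CauchyComp} for the normed space $W := \cX + \dot D_\cA$, equipped with its sum norm $\|\cdot\|_W = K(1, \cdot\,; \cX, \dot D_\cA)$, viewed as a subspace of $Y$. The continuous inclusion $W \hookrightarrow Y$ is immediate from $\cX \hookrightarrow Y$ and $\dot D_\cA \hookrightarrow Y$, and condition (i) of Theorem \ref{CauchyComp} — that every $W$-Cauchy sequence converges in $Y$ — follows at once because continuous linear maps between topological vector spaces preserve Cauchy sequences and $Y$ is sequentially complete. The genuine content lies in verifying condition (ii), and this is where I expect the main obstacle to concentrate.

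To handle (ii), the strategy is to replace the rather abstract sum norm by an equivalent, dynamically meaningful quantity via Lemma \ref{hom isom}(i), which furnishes a constant $C_0 > 0$ with
\[
C_0^{-1}\, \omega_\cA(1, z) \;\leq\; K(1, z; \cX, \dot D_\cA) \;\leq\; 2\,\omega_\cA(1, z), \qquad z \in \cX,
\]
where $\omega_\cA(1, z) := \sup_{0<s\leq 1}\|\cT(s) z - z\|_\cX$. Given a $W$-Cauchy sequence $(z_n)$ in $W$ with $z_n \to 0$ in $Y$, I will introduce the functions $g_n(s) := \cT(s) z_n - z_n$ for $s \in (0, 1]$. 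The strong continuity of $\cT$ on $(0, \infty)$ and its uniform boundedness on $\cX$ place each $g_n$ in $C_b((0, 1]; \cX)$, and the left inequality above shows that $(g_n)$ is Cauchy in the sup-norm; in particular, by completeness of $\cX$, there exists $g(s) \in \cX$ with $g_n(s) \to g(s)$ for every $s \in (0, 1]$.

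The crux of the argument — the step that genuinely uses the standing hypothesis on $\cT$ — is to identify this pointwise limit with zero. Since $z_n \to 0$ in $Y$, the hypothesis that $\cT(s)$ maps $Y$-null sequences from $\cX$ to $Y$-null sequences forces $\cT(s) z_n \to 0$ in $Y$, hence $g_n(s) \to 0$ in $Y$. But at the same time $g_n(s) \to g(s)$ in $\cX \hookrightarrow Y$, so the Hausdorff property of $Y$ yields $g(s) = 0$ for each $s \in (0, 1]$. Because a sup-norm Cauchy sequence with pointwise limit zero must converge uniformly to zero, this gives $\omega_\cA(1, z_n) \to 0$, and therefore $\|z_n\|_W \leq 2\,\omega_\cA(1, z_n) \to 0$, which is exactly condition (ii). Finally, the closing assertion — that $K_\Phi(\cX, \dot D_\cA)$ also admits a completion in $Y$ — is a direct application of Proposition \ref{Cauchy}(iii), whose remaining hypothesis (closability of $\cA$ in $Y$) is part of the standing assumptions.
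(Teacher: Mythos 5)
Your proposal is correct and follows essentially the same route as the paper's proof: both reduce the problem to condition (ii) of Theorem \ref{CauchyComp}, replace the sum norm by the equivalent quantity $\sup_{0<s\leq 1}\|\cT(s)z-z\|_{\cX}$ via Lemma \ref{hom isom}, view $\cT(\cdot)z_n-z_n$ as a Cauchy sequence in $C_b((0,1];\cX)$, and identify its limit as zero by combining the hypothesis on $\cT$ with the Hausdorff property of $Y$. The only cosmetic difference is that the paper first notes explicitly that closability of $\cA$ in $Y$ forces closability in $\cX$, so that $(\cX,\dot D_\cA)$ is a genuine normed couple and $\cX+\dot D_\cA$ is a normed space; your argument tacitly assumes this.
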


\begin{proof}
Since $\cX \hookrightarrow Y$ and $\cA$ is closable in $Y$, $\cA$ is closable in $\cX$. Therefore, it follows from Proposition \ref{Cauchy}(i) that $(\cX, \dot D_\cA)$ 
forms a couple of normed spaces. This implies that $\cX + \dot\cD_\cA$ is a normed space, and we have
\[
\cX + \dot D_\cA \hookrightarrow Y\,.
\]
From Lemma \ref{hom isom}(ii), for every $x \in \cX + \dot D_\cA$, we have
\begin{equation}\label{repr of sum norm}
\|x\|_{\cX+\dot D_\cA} = K(1, x; \cX, \dot D_\cA) \simeq \sup_{0 < t \leq 1} \|\cT(t)x - x\|_{\cX}.
\end{equation}
Let $(x_n)$ be a Cauchy sequence in $\cX + \dot \cD_\cA$ such that $x_n \to 0$ in $Y$. From the above equivalence,  
it follows that $(\cT(\cdot) x_n - x_n)$ is a Cauchy sequence in the Banach space $C_b((0, 1]; \cX)$. Thus, there exists $g \in C_b((0, 1]; \cX)$ such that
\begin{equation}\label{conv 1}
\sup_{0<t \le 1} \|\cT(t) x_n - x_n - g(t)\|_{\cX} \to 0  \quad\, \text{as $n \to \infty$}\,.
\end{equation}
Therefore, by our assumptions, for all $t\in (0, 1]$ we get that $\cT(t) x_n - x_n - g(t)\rightarrow 0$ and  $\cT(t)x_n\rightarrow 0$ in $Y$. 
Consequently, $g(t) = 0$ for every $t \in (0, 1]$, and combining \eqref{repr of sum norm} with \eqref{conv 1} yields $\|x_n\|_{\cX + \dot D_\cA} \rightarrow 0$. 
Hence, by Theorem \ref{CauchyComp}, we get that $\cX+\dot D_\cA$ admits a completion in $Y$, which finishes the proof.
\end{proof}

Note that by combining Propositions \ref{XD} and \ref{Cauchy}, and Corollary \ref{K-space}, a conclusion similar to that of Lemma \ref{hom isom}(iii) holds, 
provided $Y$, $\cX$, $\cA$, and $\cT$ satisfy the assumptions of Proposition \ref{XD}. More precisely, for any $\Phi$ as in (iii) of Lemma \ref{hom isom}, the 
Cauchy completion $K_\Phi(\cX, \dot D_\cA)^{ac}$ of $K_\Phi(\cX, \dot D_\cA)$ coincides with $K_\Phi(\cX, (\dot{D}_\cA)^{ac}) \subset Y$.

We conclude with two observations regarding Theorems \ref{thm main} and \ref{thm main 2}. From the perspective of interpolation theory, Theorems 
\ref{thm main}, \ref{thm main 2}, and \ref{optimal for l1} can be considered as an extension of the Levy-type result describing those parameters $\Phi$ of the $K$-method 
for which the corresponding interpolation spaces $K_\Phi(\cX,\cY)$ contain an isomorphic copy of $\ell_1$ or $c_0$; see e.g. \cite[Theorems 4.6.25 and 4.6.28]{BrKr91}.

\medskip

\noindent 
In general, one cannot expect that the conclusions of Theorem \ref{thm main} hold for other parameters $\Phi$ of the $K$-method. In fact, for some parameters $\Phi$ (e.g.
those corresponding to the real interpolation method, i.e. $\Phi = L^q(\R_+, t^{\mu}\ud t)$ with $\mu\in (-1, q-1)$ and $q\in (1,\infty)$), the $U\!M\!D$ property of the 
underlying space $\cX$ is inherited by $K_\Phi(\cX, D_{\cA}) = (\cX, D_\cA)_{1-\frac{\mu+1}{q}, q}$; see Rubio de Francia \cite{RdeF86}.

\section{The interpolation approach}\label{interp mr}

In this section we provide a useful interpolation framework for the study of the maximal regularity estimates in other interpolation spaces than those considered 
in Theorems \ref{thm main} and \ref{thm main 2}. 
     
Let $(X_0, X_1)$ be a Banach couple and let $A_j$ be  a closed linear operator on $X_j$ ($j=0, 1$). Let $D_{A_j}$ for $j=0,1$ denote the domain $D(A_j)$ of $A_j$ 
equipped with the graph norm. 
Note that $(D_{A_0}, D_{A_1})$ forms a Banach couple. 
Suppose $A_0 x = A_1 x$ for all $x \in D(A_0) \cap D(A_1)$. We define 
\begin{equation}\label{def of A}
A x:= A_0 x_0 + A_1 x_1
\end{equation}
for all $x= x_0 + x_1$ with $x_j \in D(A_j)$ for $j=0, 1$. It is easy to see that $A$ is the unique linear mapping with domain 
$D(A) = D(A_0) + D(A_1) \subset X_0 + X_1$ such that  $A|_{D(A_j)} = A_j$  for $j=0,1$.  Given an  interpolation functor $F$,  let $A_{F}$ denote the part of $A$ 
in the corresponding interpolation space $F(X_0, X_1)$.  In the first result we identify the domain of the part $A_F$ of $A$ in $F(X_0,X_1)$ for an arbitrary 
interpolation functor $F$. 

\begin{proposition} \label{intDom}
Let $(X_0, X_1)$ be a Banach couple, and let $A_j$ be a closed operator on $X_j$ for $j = 0, 1$. Assume that $A_0 x = A_1 x$ 
for all $x \in D(A_0) \cap D(A_1)$, and suppose there exists $\lambda \in \rho(A_0) \cap \rho(A_1)$ such that $(\lambda - A_0)^{-1}$ and 
$(\lambda - A_1)^{-1}$ coincide on $X_0 \cap X_1$. ~Then, $\la \in \rho(A)$ and for any interpolation functor ${F}$ the part $A_{{F}}$ of $A$ in ${F}(X_0, X_1)$ 
is a closed operator and 
\[
{F}(D_{A_0}, D_{A_1}) = D_{A_{{F}}}
\]
up to equivalence of norms. 
\end{proposition}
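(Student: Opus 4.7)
The plan is to realize the resolvent $(\lambda-A)^{-1}$ as the interpolated version of the pair $\bigl(R(\lambda, A_0), R(\lambda, A_1)\bigr)$ and then to read off the identification of $D_{A_F}$ directly from the interpolation property of $F$. First, I would define a candidate resolvent $R\colon X_0 + X_1 \to X_0+X_1$ by
\[
R(x_0 + x_1) := R(\lambda, A_0)\, x_0 + R(\lambda, A_1)\, x_1, \qquad x_j \in X_j,
\]
where $R(\lambda, A_j) := (\lambda - A_j)^{-1}$. The coincidence of $R(\lambda, A_0)$ and $R(\lambda, A_1)$ on $X_0 \cap X_1$ is precisely what is needed to ensure that $R$ does not depend on the chosen decomposition $x = x_0+x_1$: if $x_0-y_0 = y_1-x_1 \in X_0 \cap X_1$, the two resolvents agree on this vector. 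A direct computation using $(\lambda - A_j) R(\lambda, A_j) = I_{X_j}$ and $R(\lambda, A_j)(\lambda - A_j) = I_{D(A_j)}$, combined with the consistency $A_0|_{D(A_0)\cap D(A_1)} = A_1|_{D(A_0) \cap D(A_1)}$ that makes $A$ itself well defined on $D(A_0)+D(A_1)$, then shows that $R$ is a two-sided inverse of $\lambda - A$. This gives $\lambda \in \rho(A)$ with $R(\lambda, A) = R$.

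Next, the key observation is that the identity $A_j R(\lambda, A_j) = \lambda R(\lambda, A_j) - I_{X_j}$ yields, for each $j$,
\[
\|R(\lambda, A_j)\, x\|_{D_{A_j}} \leq \bigl(1 + (1+|\lambda|)\,\|R(\lambda, A_j)\|_{\cL(X_j)}\bigr)\,\|x\|_{X_j},
\]
so $R(\lambda, A)$ defines a bounded morphism of Banach couples from $(X_0, X_1)$ to $(D_{A_0}, D_{A_1})$. The interpolation property of $F$ therefore produces a bounded operator
\[
R(\lambda, A)\colon F(X_0, X_1) \longrightarrow F(D_{A_0}, D_{A_1}).
\]
Symmetrically, both the inclusion $D_{A_j} \hookrightarrow X_j$ and the restriction $A|_{D_{A_j}} = A_j\colon D_{A_j} \to X_j$ have norm at most $1$, so by interpolation $A$ maps $F(D_{A_0}, D_{A_1})$ boundedly into $F(X_0, X_1)$.

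These two interpolation steps together deliver both halves of the desired identification. For $x \in F(D_{A_0}, D_{A_1})$, the second step gives $x \in D(A) \cap F(X_0,X_1)$ with $Ax \in F(X_0,X_1)$, hence $x \in D_{A_F}$ and $\|x\|_{D_{A_F}} \lesssim \|x\|_{F(D_{A_0},D_{A_1})}$. Conversely, if $x \in D_{A_F}$, then $(\lambda - A)x \in F(X_0,X_1)$ and the identity $x = R(\lambda, A)(\lambda - A)x$ combined with the first step places $x$ in $F(D_{A_0}, D_{A_1})$ with the reverse norm estimate. Closedness of $A_F$ is then automatic, since the restriction of $R(\lambda, A)$ to $F(X_0,X_1)$ furnishes a bounded inverse of $\lambda - A_F$, so $\lambda \in \rho(A_F)$. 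The main obstacle is the opening book-keeping step: verifying carefully that the two compatibility hypotheses suffice to make $R$ well defined on $X_0 + X_1$ and to render it a genuine inverse of $\lambda - A$ on the algebraic sum $D(A_0)+D(A_1)$; once this is in place, everything else reduces to a direct application of the interpolation property of $F$.
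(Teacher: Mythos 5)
Your proposal is correct and follows essentially the same route as the paper: both define the candidate resolvent $R(x_0+x_1):=(\lambda-A_0)^{-1}x_0+(\lambda-A_1)^{-1}x_1$, use the compatibility hypotheses to check it is well defined and a two-sided inverse of $\lambda-A$, and then interpolate the couple morphisms $R\colon (X_0,X_1)\to(D_{A_0},D_{A_1})$ and $\lambda-A$ (equivalently $A$ together with the inclusions) in the reverse direction to identify $D_{A_F}$ with $F(D_{A_0},D_{A_1})$ and conclude $\lambda\in\rho(A_F)$, hence closedness. The only cosmetic difference is that the paper verifies closedness by passing to the limit in $\lambda(\lambda-A)^{-1}x_n-(\lambda-A)^{-1}Ax_n=x_n$, whereas you invoke the standard fact that an operator with nonempty resolvent set is closed; these are the same argument.
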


\begin{proof}
Since $(\lambda - A_0)^{-1}$ and $(\lambda - A_1)^{-1}$ agree on $X_0 \cap X_1$, the map $S$ given by 
\[
S x :=  (\lambda - A_0)^{-1}x_0  + (\lambda - A_1)^{-1} x_1 
\]
for all $x = x_0 + x_1$ with $x_0 \in X_0$ and $x_1 \in X_1$ is a (well-defined) linear operator on $X_0+X_1$. 
Clearly, $S|_{X_j} = (\lambda - A_j)^{-1}$ for $j=0,1$ and $S(X_0 + X_1) = D(A_0) + D(A_1)$. 
In particular, $S$ is an admissible with respect to the couples  $(X_0, X_1)$ and $(D_{A_0}, D_{A_1})$.  In particular, it gives that $S$ is bounded on $X_0+X_1$ and its restriction $S_|$ to $F(X_0, X_1)$ is a bounded operator from $F(X_0, X_1)$ to  $F(D_{A_0}, D_{A_1})$. We show that $S_|$ is invertible. Indeed, since $(\lambda - A_0)x = (\lambda - A_1)x$ for all $x \in D(A_0) \cap D(A_1)$, $S$ is injective. It is readily seen that $S^{-1} = \lambda - A$, where $A$ is given by \eqref{def of A} above. It shows that $\lambda \in \rho(A)$ and 
\[
S^{-1}|_{D(A_j)} = \lambda - A_{j}, \quad\, j = 0, 1\,.
\]
Therefore,  the operator $S^{-1}$ is admissible with respect to the couples $(D_{A_0}, D_{A_1})$ and $(X_0, X_1)$. 
Further, for a given $y \in F(D_{A_0}, D_{A_1}) \subset D(A_0) + D(A_1)$, there exists $x \in X_0 + X_1$ such that $S x = y$. By the interpolation property of $F$, we conclude that $x = S^{-1}y \in F(X_0, X_1)$. It yields the desired claim on $S_|$. 
To show that $A_{F}$ is a closed operator on  $F(X_0, X_1)$ let $(x_n)$ be a sequence in $D(A_{F})$ 
with $x_n \to x$ in $F(X_0, X_1)$ and $A_{F} x_n = A x_n \to y$ in $F(X_0, X_1)$. Clearly, for each $n \geq 1$, we have
\[
\lambda (\lambda - A)^{-1} x_n - (\lambda - A)^{-1} A x_n  = x_n\,.
\]
Since $S_|=(\lambda - A)_|^{-1} \colon F(X_0, X_1) \to F(D_{A_0}, D_{A_1})$ is continuous, it follows that 
\[
\lambda (\lambda - A)^{-1} x - (\lambda - A)^{-1} y  = x\,.
\]
Hence, $x\in D(A)\cap F(X_0,X_1)$ and $(\lambda - A)^{-1} (\lambda x - y) = x$ yields $\lambda x - y = (\lambda - A) x$, so $A x = y$ as required. Further, combining $D_{A_j} \hookrightarrow X_j$ ($j = 0, 1$) with the interpolation property of $F$ we get  
\[
F(D_{A_0}, D_{A_1}) \hookrightarrow F(X_0, X_1)\,.
\] 
Note that $D(\cA_F) = S(F(X_0, X_1)) = F(D_{A_0}, D_{A_1})$, and for all $x \in F(D_{A_0}, D_{A_1})$, one has
\[
\gamma \|x\|_{F(D_{A_0}, D_{A_1})} \leq \|x - A x\|_{F(X_0, X_1)} \leq \|x\|_{F(X_0, X_1)} + \|A x\|_{F(X_0, X_1)}\,, 
\]
where $\gamma =  1/\|S_|\,\|_{\cL(F(X_0, X_1),F(D_{A_0}, D_{A_1}))}$. 
Combining this with the above inclusion, we conclude that
\[
F(D_{A_0}, D_{A_1}) = D_{A_{F}}
\]
up to equivalence of norms, and this completes the proof.
\end{proof}

\begin{theorem}\label{interp result} Let $(X_0, X_1)$ be a Banach couple.
Let $-A_0$ and $-A_1$ be the generators of the bounded holomorphic semigroups $T_0$ and $T_1$ on $X_0$ and $X_1$, respectively, which are consistent in the sense that  
$T_0(t)x = T_1(t)x$ for all $x\in X_0\cap X_1$ and $t>0$. Suppose that $A_0$ and $A_1$ are consistent on $D(A_0) \cap D(A_1)$ and have $L^p$-m.r. on $(0,\tau)$ for some $\tau\in (0,\infty]$ and $p\in [1,\infty)$. 

Then, for any interpolation functor $F$ such that, up to the equivalence of the norms,
\begin{equation}\label{spliting}
F(\ell^p(X_0), \ell^p(X_1)) = \ell^p(F(X_0, X_1)),
\end{equation}
the part of the operator $A_0 + A_1$ in $F(X_0, X_1)$ has $L^p$-m.r. on $(0,\tau)$. 
\end{theorem}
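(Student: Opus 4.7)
The plan is to interpolate the two solution operators $U_{A_0}$ and $U_{A_1}$ furnished by the $L^p$-maximal regularity of $A_0$ and $A_1$, and then identify the resulting interpolated space with $L^p(0,\tau; F(X_0, X_1))$ via an $L^p$-variant of the Fubini-type hypothesis \eqref{spliting}. The identification of the target domain is then provided by Proposition~\ref{intDom}.

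First, I would upgrade \eqref{spliting} to the continuous statement
\[
F\bigl(L^p(0,\tau; Y_0),\, L^p(0,\tau; Y_1)\bigr) \;=\; L^p\bigl(0,\tau;\, F(Y_0, Y_1)\bigr)
\]
(up to equivalence of norms) for each of the couples $(Y_0, Y_1) \in \{(X_0, X_1),\, (D_{A_0}, D_{A_1})\}$. For $\tau<\infty$ this is achieved by a dyadic discretization: on step functions subordinate to a partition $\{I_k\}_{k=1}^N$ of $(0,\tau)$ into intervals of equal length, the space $L^p(0,\tau; Y_j)$ is isometric (up to the weight $|I_k|^{1/p}$) to an $\ell^p$-sum of $N$ copies of $Y_j$, so the hypothesis \eqref{spliting} applies on this subspace. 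Norm-density of step functions in $L^p$ (valid since $p<\infty$) together with completeness of the interpolated spaces lets one pass to the limit. The case $\tau=\infty$ follows by exhausting $(0,\infty)$ with finite intervals.

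Next, I would combine the $L^p$-maximal regularity of each $A_j$ on $(0,\tau)$ with the consistency of $T_0$ and $T_1$ on $X_0 \cap X_1$. The latter implies $U_{A_0} f = U_{A_1} f$ for every simple $f$ with values in $X_0 \cap X_1$, and hence, by density and continuity, the operators $U_{A_0}$ and $U_{A_1}$ patch into a single linear operator $U$ on $L^p(0,\tau; X_0) + L^p(0,\tau; X_1)$ whose restriction to $L^p(0,\tau; X_j)$ is bounded into $L^p(0,\tau; D_{A_j})$ by the maximal regularity hypothesis. Applying the interpolation property of $F$ to $U$ with respect to the two couples $(L^p(0,\tau; X_j))_{j=0,1}$ and $(L^p(0,\tau; D_{A_j}))_{j=0,1}$, and then invoking the Fubini extension from the previous paragraph, yields a bounded operator
\[
U \colon L^p\bigl(0,\tau; F(X_0, X_1)\bigr) \;\longrightarrow\; L^p\bigl(0,\tau; F(D_{A_0}, D_{A_1})\bigr).
\]

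To conclude, I would apply Proposition~\ref{intDom}, whose hypotheses are met thanks to the consistency of the semigroups on $X_0 \cap X_1$ (which, via the Laplace transform representation of the resolvents of $-A_j$, delivers the required consistency of $(\lambda + A_0)^{-1}$ and $(\lambda + A_1)^{-1}$ on $X_0 \cap X_1$ for $\lambda$ with sufficiently large real part). This identifies $F(D_{A_0}, D_{A_1})$ with $D_{A_F}$ up to equivalence of norms, so $U$ is in fact bounded from $L^p(0,\tau; F(X_0, X_1))$ into $L^p(0,\tau; D_{A_F})$. That $u := Uf$ is the unique strong solution to $(CP)_{A_F, f, 0}$ follows from Lemma~\ref{regularity of u}(ii), applied pointwise in the sum $X_0 + X_1$ and transferred to the interpolation space via the consistency of $A_0, A_1$. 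I expect the main obstacle to be the Fubini extension in the first step: \eqref{spliting} is a purely discrete statement, and its transfer to continuous $L^p$-spaces requires a careful density/approximation argument that crucially exploits $p<\infty$ and the exactness of the functor $F$ on the relevant weighted $\ell^p$-structures.
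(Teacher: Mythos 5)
Your route is genuinely different from the paper's, and it contains a real gap. The paper never interpolates the solution operators on $L^p(0,\tau;\cdot)$: it reduces to $\tau=\infty$ by rescaling $A_j\mapsto A_j+\lambda$ and then invokes the Kalton--Portal discrete characterization of $L^p$-m.r.\ on $\R_+$ (\cite[Proposition 2.2]{KaPo08}), by which the $L^p$-m.r.\ of $A_j$ is equivalent to the uniform boundedness of the discrete operators $\bK_{T_j(t)}$, $t\in(0,1)$, of \eqref{def of K} on $\ell^p(X_j)$. These operators are admissible with respect to the couple $(\ell^p(X_0),\ell^p(X_1))$, so the interpolation property together with the hypothesis \eqref{spliting} --- used verbatim, in its discrete form --- yields their uniform boundedness on $\ell^p(F(X_0,X_1))$, and Kalton--Portal is applied once more in the converse direction. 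This is precisely why \eqref{spliting} is stated for sequence spaces: no continuous Fubini-type identity is ever needed.

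The gap in your plan is the step you yourself flag as the main obstacle. Your discretization/density argument only yields the embedding $L^p(0,\tau;F(X_0,X_1))\hookrightarrow F(L^p(0,\tau;X_0),L^p(0,\tau;X_1))$: step functions are handled by the (uniformly complemented) finite sections of \eqref{spliting}, they form a Cauchy sequence in the complete space $F(L^p(X_0),L^p(X_1))$, and the limit is identified in $L^p(0,\tau;X_0+X_1)$. But on the target side you need the \emph{reverse} embedding, to conclude that $A_F U f$ lies in $L^p(0,\tau;F(X_0,X_1))$ rather than merely in $F(L^p(D_{A_0}),L^p(D_{A_1}))$. That direction requires passing from uniform $\ell^p$-bounds on the averaged discretizations of $Uf$ to an $L^p(F(X_0,X_1))$-bound on $Uf$ itself, i.e.\ lower semicontinuity of the $F$-norm under convergence in $X_0+X_1$ (a Fatou/Gagliardo-completeness property of $F$). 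This fails for general interpolation functors (e.g.\ for regularized variants of the $(\cdot)_{\theta,\infty}$-method), and $F$ is not assumed exact either, as your sketch presumes; the same issue undermines the exhaustion argument for $\tau=\infty$, where in addition m.r.\ on finite intervals does not self-improve to m.r.\ on $\R_+$. A secondary, fixable point: \eqref{spliting} is hypothesized only for $(X_0,X_1)$, so you must transfer it to $(D_{A_0},D_{A_1})$ via the couple isomorphism $(\lambda-A_j)^{-1}$ of Proposition \ref{intDom} before interpolating into $L^p(0,\tau;D_{A_F})$. Under an additional Fatou hypothesis on $F$ your argument can be completed, but as written it does not prove the theorem in the stated generality.
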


Below, for any bounded linear operator $S$ on a Banach space $X$ and any sequence $\mathbf{x}=(x_n)_{n\in \N}$ in $X$ we set
\begin{equation}\label{def of K}
\bK_S \mathbf{x}:= (y_n)_{n\in\N}, \qquad y_n := \sum_{k=1}^n S^{n-k}(I-S) x_k,  \qquad n\in \N. 
\end{equation}

\begin{proof} 
Let  $X_0+X_1$ be equipped with the standard sum norm $\|\cdot\|_{X_0+X_1}$.
On $X_0+X_1$ we define the family of operators $(T(t))_{t>0}$  as follows: for $x = x_0+x_1$ with $x_0\in X_0$ and $x_1\in X_1$ set 
\[
T(t)x := T_0(t)x_0 + T_1(t)x_1. 
\]
By the consistency of semigroups $T_0$ and $T_1$, one can check that $T$ is a well-defined family of bounded operators on $X_0+X_1$, satisfying the semigroup property and having a bounded analytic extension to some sector around positive real axis (note that $T$ is a $C_0$-semigroup if $T_0$ and $T_1$ are). For $T$ to be a semigroup (according to the definition in \cite[p.~128]{ABHN01}; see Remark \ref{Dore results}), we need to check that for $x \in X$ if $T(t)x = 0$ for all $t>0$ then $x=0$. Recall that the resolvent of $-A_i$ is represented by the Laplace transform of $T_i$, that is, for all $\la$ large enough, say $\la>\omega>0$, we have 
\begin{equation}\label{resolv of Ai}
(\lambda + A_i)^{-1}x = \int_0^{\infty} e^{-\lambda t}T_i(t)x \ud t, \qquad  x \in X_i\,,
\end{equation}
with convergence in the norm of $X_i$ ($i=0, 1$).
Clearly, for all $\la >\omega$ the operators $(\lambda + A_0)^{-1}$ and $(\lambda + A_1)^{-1}$ coincide on $X_0 \cap X_1$. Since $A_0$ and $A_1$ are consistent on $D(A_0) \cap D(A_1)$, Proposition \ref{intDom} yields that $(\omega, \infty)\subset \rho(A)$, where $A$ is given by \eqref{def of A}.

Since $X_0$ and $X_1$ are continuously embedded in $X_0 + X_1$, by \eqref{resolv of Ai}, for all $\lambda > \omega$ and for all $x = x_0 + x_1 \in X_0 + X_1$, we get
\[
(\lambda + A)^{-1}x  = (\lambda +A_0)^{-1}x_0 + (\lambda +A_1)^{-1}x_1 = \int_0^\infty e^{-\lambda t} T(t) x \ud t,
\]
where the integral is convergent in the norm of $X_0 + X_1$.  According to \cite[Definition 3.2.5]{ABHN01},  $T$ is a semigroup on $X_0+X_1$ with the generator $-A$. 
Let for some $p\in [1,\infty)$, $A_i$ has $L^p$-m.r. on $(0,\tau)$. By the rescaling argument (see the proof of Lemma \ref{lem on hom est}(ii)), we can consider only the case $\tau = \infty$. 
By the Kalton-Portal result \cite[Proposition 2.2]{KaPo08}, 
the operators $\bK_{T_i(t)}$, $t\in (0,1)$, are uniformly bounded on $l^p(X_i)$  $(i=0,1)$.   

Let $F$ be an interpolation functor. Set $X_F := F(X_0,X_1)$ and let $T_F(t):=T(t)_{|X_F}$, $t\geq 0$. Of course, $T_F$ is a semigroup on $X_F$. Denote its generator by $-B$.  
We show that $B$ is equal to the part $A_F$ of $A$ in $X_F$. Indeed, since 
$X_F$ is continuously embedded in $X_0+X_1$, by the same argument as above, 
\[
(\lambda + B)^{-1} y = \int^\infty_0 e^{-\lambda t} T_F(t) y \ud t = \int^\infty_0 e^{-\lambda t} T(t) y \ud t = 
(\lambda + A)^{-1}y = x.
\] for each $x\in X_F$ and  all $\lambda$ large enough. In particular, it gives that for $x\in D(B)\subset X_F$ and $y\in X_F$ such that $x = (\lambda + B)^{-1}y$, we have $x\in D(A)$ and $Ax = y - \lambda x \in X_F$ and $B x = A x$. Hence, $D(B)\subset D(A_F)$ and $B\subset A_F$. 
Conversely,  if $x\in D(A_F)$, i.e. $x\in X_F \cap D(A)$ with $Ax \in X_F$, then $y:= \lambda x + A x \in X_F$. Therefore, $(\lambda + B)^{-1} y  = (\lambda + A)^{-1}y = x$.
Consequently, $x\in D(B)$ and $Bx = A_Fx$. It yields that $B = A_{F}$.

Since for each $t>0$ the operators $\bK_{T_0(t)}$ and $\bK_{T_1(t)}$ are consistent on $\ell^p(X_0)\cap \ell^p(X_1)$, $\bK_{T(t)}$ is an admissible operator on a Banach couple  $(\ell^p(X_0),\ell^p(X_1))$. Consequently, 
its restriction to $F(\ell^p(X_0),\ell^p(X_1))$, which, of course, coincides with the operator $\bK_{T_F(t)}$,  is a bounded operator on $F(\ell^p(X_0),\ell^p(X_1))$.
Since we suppose that $F$ \emph{splits} $\ell^p$, i.e. $ 
F(\ell^p(X_0),\ell^p(X_1)) = \ell^p(X_F)$ (equivalent norms), again by \cite[Proposition 2.2]{KaPo08}, $A_F$ has $L^p$-m.r. on $\R_+$ in $X_F$.    
\end{proof}

We conclude with a simple application of Proposition \ref{interp result}. Let $[\,\cdot\,]_{\theta}$, $\theta \in (0,1)$, stands for the functor of the complex method; see, e.g. \cite[Chapter 4]{BeLo76}. Recall that those functors split $\ell^p$ spaces, that is, \eqref{spliting} holds; see \cite[Theorem 5.1.2, p. 107]{BeLo76}.

\begin{corollary}\label{cor1} Let $\cA$ be a linear operator on a Banach space $\cX$ with the $L^p$-m.r. on $\R_+$ for some $p\in [1,\infty)$. 

Then,  for all interpolation functors $F$, which splits $\ell^p$, the part of $\cA$ in $F(\cX,D_\cA)$ has $L^p$-m.r. on $\R_+$. 
In particular, In particular, this is the case when 
\[
F\in \bigl\{ [\,\cdot\,]_{\theta},\, (\,\cdot\,)_{\theta,q}\, : \theta \in (0,1), q\in (1,\infty) \bigr\}. 
\]
\end{corollary}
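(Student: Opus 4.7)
The plan is to apply Theorem \ref{interp result} to the Banach couple $(X_0, X_1) := (\cX, D_\cA)$, taking $A_0 := \cA$ as the operator on $X_0$ and $A_1$ to be the part of $\cA$ in $D_\cA$. By Remark \ref{Dore results}(c), the operator $-\cA$ generates a bounded analytic semigroup $T$ on $\cX$, and since $T(t)$ commutes with $\cA$ on $D(\cA)$, the restrictions $T_1(t) := T(t)|_{D_\cA}$ will form a bounded analytic semigroup on $D_\cA$ whose generator is precisely $-A_1$. The consistency conditions required by Theorem \ref{interp result} are immediate: the two semigroups coincide on $X_0 \cap X_1 = D_\cA$, and $A_0$ and $A_1$ both act as $\cA$ on $D(A_0) \cap D(A_1) = D(\cA^2)$. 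Since $X_0 + X_1 = \cX$, the extended operator given by \eqref{def of A} reduces to $\cA$ itself, so the conclusion of Theorem \ref{interp result} will yield exactly the claimed $L^p$-m.r. of the part of $\cA$ in $F(\cX, D_\cA)$.

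The technical core of the argument will be to verify that $A_1$ has $L^p$-m.r. on $\R_+$ as an operator on $D_\cA$. The plan here is as follows: given $f \in L^p(\R_+; D_\cA)$, both $f$ and $\cA f$ lie in $L^p(\R_+; \cX)$, so applying the $L^p$-m.r. of $\cA$ on $\cX$ twice produces strong solutions $u := U_\cA f$ and $w := U_\cA(\cA f)$ with $u', \cA u, w', \cA w \in L^p(\R_+; \cX)$. Using Hille's theorem together with the commutation of $T(t)$ with $\cA$ on $D(\cA)$, one identifies $\cA u(t) = \int_0^t T(t-s)\cA f(s)\,\ud s = w(t)$ for all $t \geq 0$. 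Hence $\cA^2 u = \cA w \in L^p(\R_+; \cX)$, which, combined with $\cA u \in L^p(\R_+; \cX)$, is exactly the statement that $A_1 u = \cA u \in L^p(\R_+; D_\cA)$; from $u' = f - \cA u$ one then obtains $u' \in L^p(\R_+; D_\cA)$, giving the desired maximal regularity estimate.

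To finish, I will invoke Theorem \ref{interp result} to deduce $L^p$-m.r. for the part of $\cA$ in $F(\cX, D_\cA)$ whenever $F$ satisfies the splitting identity \eqref{spliting}. For the particular functors listed, the splitting is classical: for the complex functors $[\cdot]_\theta$ it is Calder\'on's theorem (see \cite[Theorem 5.1.2, p.\,107]{BeLo76}), while for the real functors $(\cdot)_{\theta, q}$ with $q \in (1, \infty)$ it follows from the explicit description of the $K$-functional on vector-valued $\ell^p$-spaces. I do not anticipate any serious obstacle; the only delicate point is the identification $\cA U_\cA f = U_\cA(\cA f)$ for $f \in L^p(\R_+; D_\cA)$, but this is handled directly by Hille's theorem and the closedness of $\cA$.
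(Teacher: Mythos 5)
Your proposal is correct and follows essentially the same route as the paper: both reduce the corollary to Theorem \ref{interp result} applied to the couple $(\cX, D_\cA)$ with $A_0=\cA$ and $A_1$ the part of $\cA$ in $D_\cA$. The only (inessential) difference is how the $L^p$-m.r.\ of $A_1$ on $D_\cA$ is verified: you compute $\cA U_\cA f = U_\cA(\cA f)$ directly via Hille's theorem, whereas the paper notes that $R(\lambda,\cA)$ is an isomorphism from $\cX$ onto $D_\cA$ intertwining the two operators, which yields the same transference in one line.
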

\begin{proof} Recall that for any $\lambda \in \rho(\cA)$, the resolvent operator $R(\lambda,A)$ is an isomorphism between $(\cX,\|\cdot\|_\cX)$ and $(D_\cA, \|\cdot\|_{D_\cA})$. It is straightforward to see, that the $L^p$-m.r. on $\R_+$ of the part of $\cA$ in $D_\cA$ follows from that one of $\cA$ in $\cX$. Therefore, taking $A_0 := \cA$ on $X_0 := \cX$ and $A_1$ as the part of $\cA$ on $X_1 := D_\cA$, the proof follows from Theorem \ref{interp result}. 
\end{proof}

Note that a similar consequence of Theorem \ref{interp result}, analogous to that of Corollary~\ref{cor1}, can be formulated in the context considered 
in Proposition \ref{hom ext p}, namely by taking $X_0$ as $(X, \|\cdot\|_X)$, $X_1$ as the completion $\wX$ of $(X, \|\cdot\|_*)$, and $(A_0, A_1) := (A, \wA)$, 
provided that $(X, \wX)$ forms a Banach couple. We do not elaborate further on this topic here.

We illustrate Corollary \ref{cor1} by applying it to the Stokes systems with Dirichlet boundary conditions. 
There is an extensive literature on the maximal regularity of the Stokes operator on  various domains $\Omega \subset \R^n$ under several boundary conditions. See the expository article by Hieber and Saal \cite{HiSa18} (which also includes results by Grubb and Solonnikov \cite{GrSo91}).

For Dirichlet boundary conditions, the most general result, to our knowledge, is provided in \cite[Corollary 2.2]{GeHeHiSa12}, which states that if the domain $\Omega$ has a {\it uniform} $C^3$-boundary and the Helmholtz decomposition exists for $L^q(\Omega)^n$, then the corresponding Stokes operator has $L^p$-m.r. on finite intervals for any $1 < p < \infty$ (see also \cite{GeKu15}). 
It appears that this result does not determine whether, for such general domains $\Omega$, the corresponding Stokes operator has $L^p$-m.r. on $\R_+$. However, this is the case for a large class of domains $\Omega$. For instance, it was proven in \cite[Theorem 7.6]{DeHiPr01}, that for $\Omega = \R^n_+,\R^n$, the corresponding Stokes operator admits a bounded $H^{\infty}$ calculus. Similarly, for bounded domains, exterior domains, or bent half-spaces (whose boundary is of class $C^3$), analogous results were proven in \cite{NoSa03}. Consequently, by the Dore-Venni theorem (see a variant in \cite[Theorem 4]{PrSo90}), the corresponding Stokes operator has $L^p$-m.r. on $\R_+$ for any $1 < p < \infty$. 

The description of the corresponding interpolation spaces between the solenoidal part $\cX:= L^q_\sigma(\Omega)$ of $L^q(\Omega)^n$ and the domain $D_\cA$ of the corresponding Stokes operator $\cA$ was given by Amann in \cite{Am00} (for so-called {\it standard} domains). Let $\Omega \subset \R^n$ denote any of the following domains: $\R^n$,  $\R^n_+$, exterior domain, bent half-space or bounded domain whose boundary is of class $C^3$.  

Recall that by \cite[Lemma 3.2 and Theorem 3.4]{Am00} for every $\theta \in (0,1)$ and $q\in (1,\infty)$
\[
[\cX, D_\cA]_\theta = [L^q(\Omega)^n, W^{2,q}_0(\Omega)^n]_{\theta} \cap L^q_{\sigma}(\Omega)  =: H^{2\theta,q}_{0,\sigma}(\Omega)
\]
\[
(\cX, D_\cA)_{\theta,r} = (L^q(\Omega)^n, W^{2,q}_0(\Omega)^n)_{\theta,r} \cap L^q_{\sigma}(\Omega)  =: B^{2\theta}_{q,r,0,\sigma}(\Omega) 
\]
For the exact description of those spaces (how it depends on the relation between parameters $p$ and $\theta$) we refer the reader to \cite[(2.19), p. 40]{Am00}
By means of Corollary \ref{cor1}, for any $p, q, r \in (1,\infty)$ and $s \in (0,2)$,  the Stokes operator on the resulting interpolation spaces $X:= H^{s,q}_{0,\sigma}(\Omega)$ and $X = B^{s}_{q,r,0,\sigma}(\Omega)$ has $L^p$-m.r. on $\R_+$.

\section{Illustrative examples}\label{sec app}

In this section, we illustrate selected results from previous sections by considering the negative Laplacian $\cA:=-\Delta$ on  $\cX:= L^p(\R^n)$ with $p\in [1,\infty)$ and $\cX = C_0(\R^n)$. Our main aim is to provide an intrinsic description of the homogeneous part $[\cdot]_{\Phi, \cA}$ of the norm of the real interpolation spaces $X = (\cX, D_\cA)_{\theta, q}$ with $\theta \in (0,1)$ and $q\in [1,\infty]$.  
In particular, it allows to identify the completion $\wX$ of $X$ with respect to  $[\cdot]_{\Phi, \cA}$  and 
show that Theorems \ref{thm main}, \ref{thm main 2}, Propositions \ref{hom ext p}, \ref{hom ext infty} and Corollary \ref{hom ext 1-p}, extend and unify relevant results from the literature; see Corollary \ref{final application}.

For $p\in [1,\infty]$, let $\cA = \cA_p$  denote the realization of the negative Laplacian $-\Delta$ on $\cX= \cX_p$, where $\cX_p:= L^p(\R^n)$ for $p\in [1,\infty)$ and $\cX_{\infty} := C_0(\R^n)$. Recall that $D_{\cA} = \{ f \in \cX_p ; \Delta f \in \cX_p\}$. Moreover,  for each $\theta \in (0,1)$, if $\Phi: = L^q(\R_+; t^{q(1-\theta) - 1} \ud t)$ for $q\in [1,\infty)$, or $\Phi: = L^\infty(t^{1-\theta})$, then 
\begin{equation}\label{X space}
X : = K_\Phi(\cX, D_\cA) = (\cX, D_\cA)_{\theta, q} = B^{2\theta}_{p,q}(\R^n),
\end{equation}
 see, e.g. \cite[Theorem 6.7.4, p.160]{BeLo76}. 
The representation of the (inhomogeneous) Besov spaces $X = B^{2\theta}_{p,q}(\R^n)$ stated in Proposition \ref{Komastu rep} is a description via {\it thermic extension}; see e.g. \cite[Section 2.6.4]{Tri92}. Since $\cA$ is injective, the corresponding homogeneous part 
\[
[\phi]_{\Phi, \cA}: = \bigl\| \Delta e^{\cdot\Delta}\phi \bigr\|_{\Phi(\cX)} , \qquad \phi \in  B^{2\theta}_{p,q}(\R^n),
\]
of the interpolation norm $\|\cdot\|_X$ is, in fact, the norm on $X$. 
We expect that the completion $\widetilde X$ of $(X, [\cdot]_{\Phi, \cA})$, which we discuss in Proposition \ref{hom ext p}, can be  identified with the {\it homogeneous} Besov space $\dot B^{2\theta}_{p,q}(\R^n)$. We do not know any references for it. Since it is crucial below, we provide a rigorous argument for it.

There are several definitions of homogeneous Besov spaces available in the literature and each of them offers advantages in specific situations. For the classical definition, we refer the reader to \cite{Tr83} (see also \cite{BeLo76}, \cite{Gr14}, \cite{Pe76}), while a more recent framework is introduced in \cite{BaChDa11}. In the classical approach, we deal with equivalence classes of distributions modulo polynomials, which presents some difficulties when it comes to solving nonlinear PDEs (for example, the lack of product rules). However, this approach guarantees the completeness of these spaces. In contrast, for the second definition, we only have completeness for a range of parameters, but the spaces are more suitable for applications. Let us state these definitions, along with a few properties and the relationships between them.

Let $(\phi_k)_{k \in \mathbb{Z}}$ be the Littlewood-Paley resolution of the identity on $\R^n \setminus \{0\}$, that is, $\phi$ is a smooth radial function such that $\phi \geq 0$, $\supp \phi \subset \{\xi : 2^{-1} \leq |\xi| \leq 2\}$, $ \phi_k : = \phi(2^{-k}\cdot)$ ($k\in \ZZ$) and $\sum_{k\in \ZZ} \phi_k(\xi) = 1$ for $\xi \neq 0$. Let $s\in \R$ and $p,q \in [1,\infty]$. The classical homogeneous Besov space $\dot{B}^s_{p,q,1}(\R^n)$ is defined as those tempered distributions $f\in \cS'(\R^n)$  modulo polynomials $\cP(\R^n)$ such that

\[
\|f\|_{\dot{B}^s_{p,q}} := \bigg( \sum_{k \in \mathbb{Z}} 2^{sjk} \|\cF^{-1} (\phi_k \cdot \cF f)\|^q_p \bigg)^{\frac{1}{q}} < \infty
\](with modification for $q=\infty$).
The more recent approach is presented in \cite{BaChDa11}. Let $\cS'_h(\R^n)$ denote the space of tempered distributions $f$ such that for any $\Theta \in C^{\infty}_0(\R^n)$ 
\[
\lim_{\lambda \to \infty} \|\cF^{-1} [\Theta (\lambda \cdot) \cF f]\|_{\infty} = 0. 
\]
The homogeneous Besov space $\dot{B}^s_{p,q,2}(\R^n)$ consists of those distributions $f$ in $\cS'_h(\R^n)$ such that $\|f\|_{\dot{B}^s_{p,q}} < \infty$. 
Let $\cS_\infty(\R^n)$ denote the space of all Schwartz functions which annihilate polynomials $\cP$. For $q=\infty$ we set:
 \[
\dot{b}^s_{p,1}(\R^n) : = \overline{\cS_\infty(\R^n)}^{\dot{B}^s_{p,\infty,1}}
\]
and
\[
\dot{b}^s_{p,2}(\R^n) : = \overline{\cS_\infty(\R^n)}^{\dot{B}^s_{p,\infty,2}}.
\]
We also refer the reader, for instance, to Bourdaud \cite{Bourd13} and Moussai \cite{Mous18} for a discussion on further {\it realizations} of homogeneous Besov spaces.
  
Below, we omit '$(\R^n)$' in the symbols of the Besov spaces and other spaces over $\R^n$. This should not result in any contradictions. 
Recall that,  the spaces $\dot{B}^s_{p,q,1}$ and $\dot{B}^s_{p,q,2}$ are well-defined (they do not depend on the choice of the sequence $(\phi_k)_{k \in \mathbb{Z}}$) and normed. The space $\dot{B}^s_{p,q,1}$ is complete for all $s \in \R$ (see \cite[Theorem 5.1.5, p. 240]{Tr83}), and the space $\dot{B}^s_{p,q,2}$ is complete, when $s < \frac{n}{p}$, or $s=\frac{n}{p}$ and $q=1$ (see \cite[Theorem 2.25, p.67]{BaChDa11}). 

To demonstrate that both classes, up to completion and isomorphism, contain the same objects, we employ the following standard set-theoretical argument. We skip the straightforward proof of this.

\begin{lemma}\label{isomoprhism of completions}
{\it Let $(B, \|\cdot\|_B)$ and  $(\dot B, \|\cdot\|_{\dot B})$ be normed spaces. Suppose that there exists a linear injection $j: B \rightarrow \dot B$ such that for some $c>0$ for all $b\in B$
\begin{equation}\label{J isom}
c^{-1} \|b\|_B\leq \|jb\|_{\dot B} \leq c \|b\|_B 
\end{equation}
and $jB$ is a dense subset of $\dot B$. Then the completions of $B$ and $\dot B$ are topologically isomorphic.}
\end{lemma}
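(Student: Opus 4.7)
The plan is to extend the map $j$ to a topological isomorphism between the completions by standard uniform-continuity arguments. First I would denote by $\widetilde B$ and $\widetilde{\dot B}$ the (abstract) completions of $B$ and $\dot B$ respectively, and view $B$ and $\dot B$ as isometrically embedded dense subsets of their completions. The upper bound in \eqref{J isom} says that $j \colon B \to \dot B \hookrightarrow \widetilde{\dot B}$ is Lipschitz, in particular uniformly continuous, so by the standard extension theorem for uniformly continuous maps on dense subsets of complete metric spaces, $j$ extends uniquely to a continuous linear map $\widetilde j \colon \widetilde B \to \widetilde{\dot B}$ satisfying $\|\widetilde j x\|_{\widetilde{\dot B}} \leq c \|x\|_{\widetilde B}$ for all $x \in \widetilde B$.

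Next I would show that the lower estimate passes to the completion. Given $x \in \widetilde B$, pick a Cauchy sequence $(b_n) \subset B$ with $b_n \to x$ in $\widetilde B$. By continuity of norms and of $\widetilde j$, combined with $\|b_n\|_B \leq c\|j b_n\|_{\dot B}$ from \eqref{J isom}, we obtain $\|x\|_{\widetilde B} \leq c \|\widetilde j x\|_{\widetilde{\dot B}}$. This two-sided estimate immediately gives that $\widetilde j$ is injective and has closed range.

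Finally I would verify surjectivity. Since $jB \subset \widetilde j(\widetilde B)$, the density of $jB$ in $\dot B$ together with the density of $\dot B$ in $\widetilde{\dot B}$ shows that $\widetilde j(\widetilde B)$ is dense in $\widetilde{\dot B}$. Combined with closedness of the range from the previous step, this yields $\widetilde j(\widetilde B) = \widetilde{\dot B}$. Hence $\widetilde j$ is a linear bijection satisfying the bilipschitz estimate
\[
c^{-1}\|x\|_{\widetilde B} \leq \|\widetilde j x\|_{\widetilde{\dot B}} \leq c\|x\|_{\widetilde B}, \qquad x \in \widetilde B,
\]
and is therefore a topological isomorphism of $\widetilde B$ onto $\widetilde{\dot B}$.

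There is no real obstacle here; the only subtlety is making sure the lower estimate survives taking limits, which is handled by simultaneously passing $b_n \to x$ in $\widetilde B$ and $jb_n \to \widetilde j x$ in $\widetilde{\dot B}$ and invoking continuity of the norms. The statement is a routine packaging of the extension theorem together with the closed-plus-dense-implies-equal principle in Banach spaces.
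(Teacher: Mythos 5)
Your proof is correct, and since the paper explicitly skips the proof of this lemma as straightforward, your argument — extending $j$ by uniform continuity to the completions, passing the two-sided estimate to the limit, and concluding surjectivity from closed range plus density of $jB$ — is precisely the standard argument the authors intend. No gaps.
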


Let $J : \cS' \to \cS' / \cP$ be the  quotient map, that is,  given by 
\begin{equation}\label{Definition of J}
Jf := f+ \cP,  \qquad f \in \cS',
\end{equation}
where $\cP$ is the collection of all polynomials on $\R^n$.

Note that $J$ is an isometry from $\dot{B}^s_{p,q,2}$ into $\dot{B}^s_{p,q,1}$.  
 For $q \neq \infty$, since $J\cS_\infty$ is dense in $\dot{B}^s_{p,q,1}$ and $\dot{B}^s_{p,q,1}$ is complete (\cite[Theorem 5.1.5, p.240]{Tr83}), applying Lemma \ref{isomoprhism of completions} we see that the completion of $\dot{B}^s_{p,q,2}$ is isometrically isomorphic to $\dot{B}^s_{p,q,1}$. Therefore, the spaces $\dot{B}^s_{p,q,1}$ and $\dot{B}^s_{p,q,2}$ are isomorphic whenever the former one is complete. Analogous statements hold for $\dot{b}^s_{p,2}$ and $\dot{b}^s_{p,1}$.
 From now on let
\[
\dot{B}^s_{p,q} := \dot{B}^s_{p,q,1} \simeq \widetilde{\dot{B}^s_{p,q,2}} , \qquad q\neq \infty,
\]
and
\[
\dot{b}^s_p := \dot{b}^s_{p,1} \simeq \widetilde{\dot{b}^s_{p,2}}.
\]

We apply the same argument to show that $\widetilde{X}$ and $\dot{B}^{2\theta}_{p,q}$ for $q \neq \infty$ are isomorphic, where $X$ is given by \eqref{X space}. Analogously, $\widetilde{X}$ and $\dot{b}^{2\theta}_p$ for $q=\infty$ are isomorphic. 
 Since 
$\cS_{\infty} \subset \cS \subset X$, by  Lemma \ref{isomoprhism of completions} above, it is sufficient to show that for all $f \in \cS_{\infty}$
\begin{equation}\label{equivalence of besov}
[f]_{\Phi,\cA_p} := \bigl\|\,\|\Delta e^{(\cdot)\Delta}f\|_{\cX_p}\bigr\|_{\Phi} \approx \|Jf\|_{\dot{B}^s_{p,q}} = \bigg( \sum_{k \in \mathbb{Z}} 2^{sjk} \|\cF^{-1} (\hat\phi_k \cdot \cF f)\|^q_{\cX_p} \bigg)^{\frac{1}{q}},
\end{equation} where $\Phi$, $\cA_p$, $\cX_p$ have the meaning specified above.
This is addressed in the subsequent result, Proposition \ref{equivalent norm on besov}.

We are now in a position to obtain \eqref{equivalence of besov}. The statement could be proved by combining the steps of the proof of \cite[Theorem 6.7.4]{BeLo76} and \cite[Theorem 3.5.3]{BuBe67}. The proof presented here is more direct.
 \begin{proposition} \label{equivalent norm on besov} Let $\cA_p$ be the realization of the negative Laplacian in $\cX_p$, where $p \in [1,\infty]$. Let $\dot D_{\cA_p}$ be the domain of $\cA_p$ equipped with the norm $\|\cA_p \cdot\|_{\cX}$. Let $J$ be as in \eqref{Definition of J}.
Then for every $q \in [1,\infty]$ and $\theta \in (0,1)$ there exists a constant $c>0$, such that for any $f \in \cS_{\infty}(\R^n)$ we have
\begin{equation}\label{equivalent norm besov}
c^{-1} [f]_{\Phi,\cA_p} \leq \|Jf\|_{\dot{B}^{2\theta}_{p,q}} \leq c [f]_{\Phi,\cA_p},
\end{equation}
where $\Phi = L^q(t^{q(1-\theta)-1})$ for $q \neq \infty$ and $\Phi = L^{\infty}(t^{1-\theta})$ for $q=\infty$.
More precisely,
\[
c^{-1} \sum_{k \in \mathbb{Z}} 2^{2qk\theta}\|\widecheck{\phi}_k \ast f\|^q_{\cX_p} \leq \int_0^{\infty} t^{q(1-\theta)-1} \|\Delta e^{-t\Delta}f\|^q_{\cX_p}  dt \leq c \sum_{k \in \mathbb{Z}} 2^{2qk\theta}\|\widecheck{\phi}_k \ast f\|^q_{\cX_p}
\]
in the case $q \neq \infty$ and 

\[
c^{-1} \sup_{k \in \mathbb{Z}} 2^{2k\theta} \|\widecheck{\phi}_k \ast f \|_{\cX_p} \leq \sup_{t>0} t^{1-\theta}  \|\Delta e^{-t\Delta} f\|_{\cX_p}  \leq c \sup_{k \in \mathbb{Z}} 2^{2k\theta}\|\widecheck{\phi}_k \ast f \|_{\cX_p}
\]
when $q =\infty$.

Moreover, the completion of $K_{\Phi}(\cX,\dot{D}_{\cA_{p}})$ is isomorphic to $\dot{B}^{2\theta}_{p,q}$ $($resp. $\dot{b}^{2\theta}_{p})$, when $q \neq \infty$ $($resp. $q = \infty)$.

\end{proposition}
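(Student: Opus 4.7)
The plan is to establish the two-sided estimate \eqref{equivalent norm besov} on $\cS_\infty$ via a comparison of the thermic description with the Littlewood--Paley decomposition $f=\sum_{k\in\ZZ}\widecheck\phi_k*f$, which converges absolutely in $\cS$ for every $f\in\cS_\infty$, and then to invoke Lemma \ref{isomoprhism of completions} to identify the completion of $K_\Phi(\cX,\dot D_{\cA_p})$ with $\dot B^{2\theta}_{p,q}$ (respectively, with $\dot b^{2\theta}_p$ when $q=\infty$). The principal technical ingredient is a pair of frequency-localized estimates of Bernstein type. Fix an auxiliary $\widetilde\phi\in C_c^\infty$ supported in $\{\tfrac13<|\xi|<3\}$ with $\widetilde\phi\equiv 1$ on $\supp\phi$. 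First I would prove, by scaling and Young's convolution inequality on $\cX_p$, that for any tempered distribution $g$ with $\supp\widehat g\subset\{2^{k-1}\le|\xi|\le 2^{k+1}\}$,
\begin{equation}\label{bernstein}
\|\Delta e^{t\Delta}g\|_{\cX_p}\le C\, 2^{2k}e^{-c\,t 2^{2k}}\|g\|_{\cX_p},\qquad t>0,
\end{equation}
with constants independent of $k$ and $t$; this is immediate because the convolution kernel of $\Delta e^{t\Delta}\widetilde\phi(2^{-k}D)$ has $L^1$-norm dominated by $C\,2^{2k}e^{-ct2^{2k}}$ after rescaling $\xi\mapsto 2^k\xi$.

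Next, for the direction $[f]_{\Phi,\cA_p}\lesssim\|Jf\|_{\dot B^{2\theta}_{p,q}}$, I apply \eqref{bernstein} to every summand in $\Delta e^{t\Delta}f=\sum_{k}\Delta e^{t\Delta}(\widecheck\phi_k*f)$, which yields
\[
t^{1-\theta}\|\Delta e^{t\Delta}f\|_{\cX_p}\le C\sum_{k\in\ZZ}(t\,2^{2k})^{1-\theta}e^{-ct 2^{2k}}\,\bigl(2^{2k\theta}\|\widecheck\phi_k*f\|_{\cX_p}\bigr).
\]
Setting $\tau=\log_2 t$ converts the right-hand side into a discrete convolution on $\R\times\ZZ$ against the kernel $u\mapsto u^{1-\theta}e^{-cu}$, which is integrable on $(0,\infty)$ with respect to $du/u$; the discrete Young's inequality then bounds the $L^q(dt/t)$-norm (or the sup-norm for $q=\infty$) of the left-hand side by the $\ell^q$-norm of $(2^{2k\theta}\|\widecheck\phi_k*f\|_{\cX_p})_{k\in\ZZ}$, as required.

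For the converse direction, I exploit the identity $\widecheck\phi_k*f=m_{k}(D)(\Delta e^{t_k\Delta}f)$, where $t_k:=2^{-2k}$ and $m_k(\xi):=-\widetilde\phi(\xi/2^k)\,|\xi|^{-2}e^{t_k|\xi|^2}$. Under the rescaling $\xi\mapsto 2^k\xi$ the function $m_k$ reduces to $-\widetilde\phi(\xi)|\xi|^{-2}e^{|\xi|^2}\cdot 2^{-2k}$, which belongs to a bounded set of Schwartz functions, so the kernel of $m_k(D)$ has $L^1$-norm $\lesssim 2^{-2k}$ uniformly in $k$. Consequently
\[
2^{2k\theta}\|\widecheck\phi_k*f\|_{\cX_p}\le C\, t_k^{1-\theta}\|\Delta e^{t_k\Delta}f\|_{\cX_p},
\]
and an averaging step using \eqref{bernstein} shows that $\|\Delta e^{t_k\Delta}f\|_{\cX_p}$ is dominated, up to a universal constant, by the mean of $\|\Delta e^{t\Delta}f\|_{\cX_p}$ over the dyadic band $t\in[2^{-2k-1},2^{-2k+1}]$; raising to the $q$-th power and summing in $k$ produces the desired estimate $\|Jf\|_{\dot B^{2\theta}_{p,q}}\lesssim [f]_{\Phi,\cA_p}$. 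The case $q=\infty$ is handled by obvious modifications.

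Having established \eqref{equivalent norm besov} on $\cS_\infty$, the map $J$ is a bi-Lipschitz injection of $(\cS_\infty,[\cdot]_{\Phi,\cA_p})$ onto $(J\cS_\infty,\|\cdot\|_{\dot B^{2\theta}_{p,q}})$, and $\cS_\infty\subset K_\Phi(\cX,\dot D_{\cA_p})$ is dense by a standard mollification-and-truncation argument combined with the fact that Schwartz functions with Fourier support in a compact set away from the origin form a dense subclass. Density of $J\cS_\infty$ in $\dot B^{2\theta}_{p,q}$ for $q\in[1,\infty)$ is classical (see \cite[Thm.~5.1.5]{Tr83}), and density of $J\cS_\infty$ in $\dot b^{2\theta}_p$ for $q=\infty$ holds by definition. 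Lemma \ref{isomoprhism of completions} then identifies the completions, finishing the proof. The main obstacle is Step~3: justifying the passage from a single sampled time $t_k$ to a dyadic averaging in $t$, and being careful with the inevitable logarithmic loss in the $k$-sum for $q=\infty$, which forces the switch to the space $\dot b^{2\theta}_p$ (a closed subspace of $\dot B^{2\theta}_{p,\infty}$) to recover an \emph{isomorphism}, not merely a continuous embedding, of completions.
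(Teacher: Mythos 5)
Your proof is essentially correct, but it is organized differently from the paper's in both directions of the equivalence. For the estimate $\|Jf\|_{\dot B^{2\theta}_{p,q}}\lesssim [f]_{\Phi,\cA_p}$, the paper uses the uniformly bounded multipliers $\phi_k(\xi)(e^{|2^{-k}\xi|^2}-1)^{-1}$ to dominate $\|\widecheck\phi_k*f\|_{\cX_p}$ by the semigroup difference $\|e^{-2^{-2k}\Delta}f-f\|_{\cX_p}$, and then passes through the modulus of continuity $\omega_{\cA_p}$ and the $K$-functional equivalence of Lemma \ref{hom isom}; it reserves your multiplier $\phi_k(\xi)|\xi|^{-2}e^{t_k|\xi|^2}$ (relating $\widecheck\phi_k*f$ directly to $\Delta e^{t_k\Delta}f$) for the case $q=\infty$ only. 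You use that second multiplier for all $q$, supplemented by a dyadic averaging in $t$; this avoids the $K$-functional machinery entirely, at the price of the averaging step. For the converse estimate $[f]_{\Phi,\cA_p}\lesssim\|Jf\|_{\dot B^{2\theta}_{p,q}}$, the paper works with two crude pointwise bounds on $\|\Delta e^{-t\Delta}f\|^q$ and splits the $t$-integral at $2^{-2k}$, whereas your Bernstein kernel $u\mapsto u^{1-\theta}e^{-cu}$ together with the discrete--continuous Young (Schur) inequality is the standard Littlewood--Paley argument and is, if anything, cleaner and easier to justify for $q>1$. The identification of completions via Lemma \ref{isomoprhism of completions} and density of $J\cS_\infty$ is the same in both. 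Two small inaccuracies to fix: the averaging step should be justified by the uniform boundedness of the semigroup (i.e.\ $\|\Delta e^{t_k\Delta}f\|_{\cX_p}\le M\|\Delta e^{t\Delta}f\|_{\cX_p}$ for $t\le t_k$), not by \eqref{bernstein}, which applies only to frequency-localized functions; and the appearance of $\dot b^{2\theta}_p$ for $q=\infty$ is not caused by any ``logarithmic loss'' in the estimates --- the two-sided norm equivalence holds for $q=\infty$ as well --- but by the fact that $\cS_\infty$ is not dense in $\dot B^{2\theta}_{p,\infty}$, so the completion of $(\cS_\infty,\|\cdot\|_{\dot B^{2\theta}_{p,\infty}})$ is its closure $\dot b^{2\theta}_p$. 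Also, in your definition of $m_k$ you should use $\phi(\xi/2^k)$ rather than $\widetilde\phi(\xi/2^k)$ to make the claimed identity $\widecheck\phi_k*f=m_k(D)(\Delta e^{t_k\Delta}f)$ exact (or insert the harmless extra factor $\phi_k(D)$); the resulting inequality is unaffected.
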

\begin{proof}
First, we consider the case $q \neq \infty$.
For the left inequality in \eqref{equivalent norm besov}, note that for all $f \in \cS_{\infty}$:
\[
\cF f (\xi)=  (e^{t|\xi|^2}-1)^{-1}\cF (e^{-t\Delta}-I)f(\xi), \qquad \xi \in \R^n,
\, t>0.
\]
Taking $t = 2^{-2k}, k \in \mathbb{Z}$, and using the fact that for any $p \in [1,\infty]$, functions 
\[
m_k(\xi) := \phi_k (\xi) (e^{|2^{-k} \xi|^2}-1)^{-1}, \qquad \xi \in \R^n,
\]
are symbols of $L^p$-Fourier multiplier operators, whose norms might be bounded by a constant independent of $k \in \mathbb{Z}$, we get
\[
\|\widecheck{\phi}_k \ast f\|_{\cX_p} \lesssim \|e^{-2^{-2k}\Delta}f - f\|_{\cX_p} \leq \omega_{\cA_{p}}(2^{-2k},f) \quad (k \in \mathbb{Z}).
\]
By Lemma \ref{hom isom}, $\bigl\|\,(\cdot)^{-1}\omega_{\cA_{p}}(\cdot,f)\bigr\|_{\Phi}\simeq \bigl\|\, \|\cA_{p}e^{(\cdot)\cA_{p}}f\|_{\cX_{p}}\bigr\|_{\Phi}$, hence we obtain:
\begin{align*}
\sum_{k \in \mathbb{Z}} 2^{2qk\theta} \| \widecheck{\phi}_k \ast f\|^q_{\cX_p} & \lesssim \sum_{k \in \mathbb{Z}} 2^{2qk\theta} \|e^{-2^{-2k}\Delta}f - f\|^q_{\cX_p} \\ & \lesssim \int_0^{\infty} t^{-q\theta - 1} \omega_{\cA_{p}}(t,f)^q dt  \\ & \lesssim \int_0^{\infty} t^{q(1-\theta)-1}\|\Delta e^{-t\Delta} f\|^q_{\cX_p} dt.
\end{align*}
For the second inequality, note that
\[
\|\Delta e^{-t\Delta}f\|^q_{\cX_p} \lesssim \sum_{k \in \mathbb{Z}} 2^{2kq} \|\widecheck{\phi}_k \ast f\|^q_{\cX_p},
\]
as in \cite[Theorem 6.7.4]{BeLo76}, and, by the sectoriality of $\cA_{p}$,
\[
\|\Delta e^{-t\Delta}f\|^q_{\cX_p} \lesssim \sum_{k \in \mathbb{Z}} \frac{1}{t^q} \|\widecheck{\phi}_k \ast f\|^q_{\cX_p}.
\]
Both inequalities are justified, since $\lim_{j \to \infty} \sum_{|k|<j}\widecheck{\phi}_k \ast f = f$ in the $\cS$ topology (see \cite[Proposition 2.11, p.270]{Sawa18}) and therefore also in the norm of $\cX_p$.
This yields
\begin{align*}
\int_0^{\infty} t^{q(1-\theta)-1} \|\Delta e^{-t\Delta}f\|^q_{\cX_p} dt & \lesssim \sum_{k \in \mathbb{Z}} \Big( \int_0^{2^{-2k}} t^{q(1-\theta)-1} 2^{2kq}  \|\widecheck{\phi}_k \ast f\|^q_{\cX_p} \\ & \quad+ \int_{2^{-2k}}^{\infty} t^{-q\theta - 1}  \|\widecheck{\phi}_k \ast f\|^q_{\cX_p} \Big)  \\ & \lesssim \sum_{k \in \mathbb{Z}} 2^{2kq\theta}  \|\widecheck{\phi}_k \ast f\|^q_{\cX_p}.
\end{align*}
For the case $q=\infty$ the proof might be performed analogously, but the left inequality in \eqref{equivalent norm besov} can be proved in a slightly easier manner by a suitable choice of a Fourier multiplier. Indeed, note that
\[
\cF f(\xi) = -t \frac{1}{t |\xi|^2} e^{-t|\xi|^2}\cF \Delta e^{-t\Delta} f(\xi), \qquad  \xi \in \R^n,\, f \in \cS_{\infty}, \, t>0.
\]
Taking $t = 2^{-2k}$ and using the fact that for any $p \in [1,\infty]$ functions 
\[
m_k(\xi) := \phi_k(\xi) \frac{1}{|2^{-k}\xi|^2} e^{|2^{-k}\xi|^2}, \qquad \xi \in \R^n,
\]
are symbols of $L^p$-Fourier multiplier operators, whose norm might be bounded by a constant independent of $k$, we get
\[
\|\widecheck{\phi}_k \ast f\|_{\cX_p} \lesssim 2^{-2k} \|\Delta e^{-2^{-2k}\Delta}f\|_{\cX_p}, \qquad k \in \mathbb{Z}.
\]
Hence,
\[
\sup_{k \in \mathbb{Z}} 2^{2k\theta} \| \widecheck{\phi}_k \ast f\|_{\cX_p} \lesssim \sup_{k \in \mathbb{Z}} 2^{-2k(1-\theta)} \|\Delta e^{-2^{-2k}\Delta} f\|_{\cX_p} \leq \sup_{t>0} t^{1-\theta} \|\Delta e^{-t\Delta} f\|_{\cX_p} .
\]
The completion of $K_{\Phi}(\cX,\dot{D}_{\cA})$ is the same as the completion of $\cS_{\infty}$, so the last statement follows from Lemma \ref{isomoprhism of completions}.
\end{proof}

For $1 < p < \infty$ we can express the homogeneous part $[\cdot]_{\Phi,\cA_p}$ of the norm of $K_{\Phi}(\cX, D_{\cA_p})$ in terms of the modulus of continuity. By $\dot{H}^2_p(\R^n)$
 we denote the homogeneous Sobolev space; see \cite[p.147]{BeLo76}.
\begin{proposition}\label{K for dom} Let $1 <p <\infty$ and let $\cA_p$, $\cX_p$ be as above. Then, the following statements hold: 
\begin{itemize}
\item [(i)] The norms $\|\cdot\|_{\dot{H}^2_p(\R^n)}$ and $\|\cdot\|_{\dot{D}_{\cA_p}}$ are equivalent on $ \dot D_{\cA_p}$.  
\item [(ii)] For any $\phi \in K_{\Phi}(\cX,\dot{D}_{\cA_p})$ we have
\[
K(t, \phi, \cX, \dot D_{\cA_p}) \simeq  
K(t, \phi, \cX, \dot H^2_p(\R^n)) \simeq \omega^2_p(\sqrt{t}, \phi):= \sup_{|h|\leq t}\|\delta_h^2 \phi\|_{L^p(\R^n)}, 
\] where $\delta^2 \phi : = \phi - 2\phi(\cdot -h) + \phi(\cdot - 2h)$ 
 and the equivalence constants are independent on $\phi$.
\end{itemize}
\end{proposition}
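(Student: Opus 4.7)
The plan is to prove (i) by a standard Calderón--Zygmund argument, derive the first equivalence in (ii) as a formal consequence, and then establish the characterization of the $K$-functional by the second-order modulus via two separate inequalities.

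For (i), the estimate $\|\Delta\phi\|_{L^p} \leq \sum_i\|\partial_i^2 \phi\|_{L^p} \lesssim \|\phi\|_{\dot H^2_p}$ is immediate. The reverse follows from the $L^p$-boundedness of the Riesz transforms $R_i$ for $1 < p < \infty$ and the Fourier-multiplier identity $\partial_i\partial_j = -R_i R_j\,\Delta$: applied to $\phi \in D(\cA_p)$ it yields $\|\partial_i\partial_j\phi\|_{L^p} \lesssim \|\Delta\phi\|_{L^p}$, so summing over $i,j$ gives $\|\phi\|_{\dot H^2_p} \lesssim \|\Delta\phi\|_{L^p} = \|\phi\|_{\dot D_{\cA_p}}$.

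The first equivalence in (ii) is now immediate from (i). For the bound $\omega_p^2(\sqrt{t},\phi) \lesssim K(t,\phi,\cX,\dot H^2_p)$ I would fix any admissible decomposition $\phi = f + g$ with $f \in L^p$, $g \in \dot D_{\cA_p}$. Trivially $\|\delta_h^2 f\|_{L^p} \leq 4\|f\|_{L^p}$, while for $g$, after approximation by smooth functions (valid in $W^{2,p}$ by standard mollification), the identity
\[
\delta_h^2 g(x) = \int_0^1\!\!\int_0^1 \langle D^2 g(x - (s+u)h)\,h,\,h\rangle\, du\, ds,
\]
obtained by twice integrating the second derivative of $t \mapsto g(x - th)$ and telescoping, yields $\|\delta_h^2 g\|_{L^p} \leq |h|^2 \|D^2 g\|_{L^p} \lesssim |h|^2 \|g\|_{\dot H^2_p}$ by (i). Taking the infimum over decompositions and the supremum over $|h| \leq \sqrt{t}$ concludes this half.

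The more delicate reverse bound $K(t,\phi,\cX,\dot H^2_p) \lesssim \omega_p^2(\sqrt{t},\phi)$ I would derive by passing through the semigroup modulus. Combining (i) with Lemma \ref{hom isom}(i) gives
\[
K(t,\phi,\cX,\dot H^2_p) \simeq K(t,\phi,\cX,\dot D_{\cA_p}) \simeq \sup_{0<s\leq t}\|e^{s\Delta}\phi - \phi\|_{L^p},
\]
and the evenness of the Gauss--Weierstrass kernel $p_s(y) = (4\pi s)^{-n/2} e^{-|y|^2/(4s)}$ produces the pointwise symmetrization
\[
e^{s\Delta}\phi(x) - \phi(x) = \tfrac{1}{2}\int_{\R^n} p_s(y)\bigl[\phi(x+y) + \phi(x-y) - 2\phi(x)\bigr]\, dy.
\]
Since $\|\phi(\cdot+y) + \phi(\cdot-y) - 2\phi(\cdot)\|_{L^p}$ agrees with $\|\delta_y^2\phi\|_{L^p}$ up to a translation, Minkowski's inequality reduces matters to $\int p_s(y)\omega_p^2(|y|,\phi)\, dy \lesssim \omega_p^2(\sqrt{s},\phi)$. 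This follows by splitting at $|y| = \sqrt{s}$ and using the elementary dilation bound $\omega_p^2(\lambda r,\phi) \leq 4\lambda^2\, \omega_p^2(r,\phi)$ for $\lambda \geq 1$ (a short telescoping argument that writes $\delta_{2h}^2$ as a sum of three $\delta_h^2$'s), combined with the finiteness of the Gaussian moments. The main technical nuance is the density step justifying the Taylor integral identity for $g \in \dot D_{\cA_p}$ through (i); the remaining computations are routine.
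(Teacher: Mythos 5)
Your proof is correct. Parts (i) and the first equivalence in (ii) coincide with the paper's argument: the reverse inequality in (i) is exactly the $L^p$-boundedness of the multipliers $\xi_i\xi_j/|\xi|^2$ (the second-order Riesz transforms), and the first equivalence in (ii) is then formal. Where you genuinely diverge is the second equivalence $K(t,\phi,\cX,\dot H^2_p(\R^n))\simeq\omega_p^2(\sqrt t,\phi)$: the paper disposes of it by citing the classical Johnen--Scherer theorem \cite[Theorem 1]{JoSc77}, whereas you prove it. Your bound $\omega_p^2(\sqrt t,\phi)\lesssim K(t,\phi)$ (splitting $\phi=f+g$ and using the double-integral Taylor identity for $\delta_h^2 g$, with the mollification step justified since $g\in D(\cA_p)=W^{2,p}$ for $1<p<\infty$) is the standard half of that theorem; note that your reading $\sup_{|h|\le\sqrt t}$ is the correct scaling, consistent with the intended meaning of the statement. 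For the reverse bound you take a different and more economical route than Johnen--Scherer: instead of constructing a near-optimal decomposition from Steklov-type averages of second differences, you pass through the semigroup modulus $\sup_{0<s\le t}\|e^{s\Delta}\phi-\phi\|_{L^p}$, which Lemma \ref{hom isom}(i) already identifies with $K(t,\phi,\cX,\dot D_{\cA_p})$, and then exploit the evenness of the Gaussian kernel to dominate $\|e^{s\Delta}\phi-\phi\|_{L^p}$ by $\tfrac12\int p_s(y)\,\omega_p^2(|y|,\phi)\,\ud y\lesssim\omega_p^2(\sqrt s,\phi)$. This buys a self-contained proof that reuses machinery already established in the paper; the price is that the symmetrization step is specific to an even (here Gaussian) kernel, while the cited real-variable argument works for general higher-order moduli. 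One cosmetic point: the subadditivity you invoke comes from $(I-\tau_{2h})^2=(I-\tau_h)^2(I+\tau_h)^2$, i.e. $\delta_{2h}^2\phi$ is a sum of translates of $\delta_h^2\phi$ with total coefficient mass $4$ (not ``three $\delta_h^2$'s''), but the resulting dilation bound $\omega_p^2(\lambda r,\phi)\lesssim\lambda^2\,\omega_p^2(r,\phi)$ for $\lambda\ge1$ is exactly what the Gaussian-moment estimate requires, so the argument stands.
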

\begin{proof}
(i) The inequality $\|\phi\|_{\dot{D}_{\cA_{p}}} \leq \|\phi\|_{\dot{H}^2_p(\R^n)}$ follows readily from the triangle inequality. The reverse inequality follows from the well-known fact that for any $i,j \in \{1,\ldots,n\}$ and $p \in (1,\infty)$ the functions $m(\xi) = \frac{\xi_i \xi_j}{|\xi|^2}$,  $\xi \in \R^n$, are $L^p$-Fourier multiplier symbols.

(ii) The equivalence $K(t, \cdot , \cX, \dot D_{\cA_{p}}) \simeq  
K(t, \cdot, \cX, \dot H^2_p(\R^n))$ follows easily from (i) and $K(t, \cdot, \cX, \dot H^2_p(\R^n)) \simeq \omega^2_p(\sqrt{t}, \cdot)$ is the standard result (see \cite[Theorem 1]{JoSc77})
\end{proof}

We present a consequence of Proposition \ref{XD}, which we find to be of considerable interest.

\begin{corollary}\label{homogeneous besov in sum of lebesgue}
Let $n \geq 3$ and $p \in (1,\frac{n}{2})$. Then, for any parameter $\Phi$ of the $K$-method, the space $K_{\Phi}(\cX_p,\dot{\cD}_{\cA_p})$ admits a completion in $L^p(\mathbb{R}^n) + L^r(\mathbb{R}^n)$, where $r = \frac{np}{n - 2p}$.
\end{corollary}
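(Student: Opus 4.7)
The plan is to verify the hypotheses of Proposition~\ref{XD} with $Y := L^p(\R^n) + L^r(\R^n)$, which is a Banach space (hence a sequentially complete Hausdorff topological vector space) continuously embedded in $\cS'(\R^n)$.

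The inclusion $\cX_p = L^p(\R^n) \hookrightarrow Y$ is immediate. For $\dot D_{\cA_p} \hookrightarrow Y$, I would establish the homogeneous Sobolev-type estimate $\|f\|_{L^r} \lesssim \|\Delta f\|_{L^p}$ for every $f \in D(\cA_p)$. Given such an $f$, set $g := -\Delta f \in L^p(\R^n)$ and $h := I_2 g$, where $I_2$ denotes the Riesz potential of order $2$. The Hardy--Littlewood--Sobolev inequality yields $I_2 \colon L^p(\R^n) \to L^r(\R^n)$ bounded for $1 < p < n/2$, so $\|h\|_{L^r} \lesssim \|\Delta f\|_{L^p}$. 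Since $-\Delta h = g = -\Delta f$ in $\cS'(\R^n)$, the difference $u := f - h$ is harmonic on $\R^n$ and lies in $L^p + L^r$. A mean-value argument combined with H\"older's inequality applied to each piece of a decomposition $u = u_1 + u_2$ with $u_1 \in L^p$, $u_2 \in L^r$ forces $u(x_0) = \lim_{R \to \infty} |B_R|^{-1} \int_{B_R(x_0)} u\,dx = 0$ for every $x_0 \in \R^n$ (the averages of $u_1$ and $u_2$ over $B_R(x_0)$ tend to $0$ since $p, r < \infty$). Hence $f = h \in L^r$ with the claimed estimate, and $\dot D_{\cA_p} \hookrightarrow L^r \hookrightarrow Y$.

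Closability of $\cA_p = -\Delta$ in $Y$ is routine: if $f_n \in D(\cA_p)$ with $f_n \to 0$ and $-\Delta f_n \to v$ in $Y$, both convergences take place in $\cS'(\R^n)$, and continuity of $-\Delta$ on $\cS'(\R^n)$ forces $v = 0$. For the semigroup condition, the heat semigroup $\cT$ is a contraction on every $L^q(\R^n)$ and the different realizations are pairwise consistent on intersections. Hence for any decomposition $x = f + g$ with $f \in L^p$, $g \in L^r$, the vector $\cT(t)f + \cT(t)g$ is independent of the decomposition, defines a contraction on $Y$, and agrees with $\cT(t)x$ whenever $x \in \cX_p$. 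Thus $\cT(t) x_n \to 0$ in $Y$ whenever $x_n \to 0$ in $Y$, uniformly for $t \in (0,1]$, so the semigroup hypothesis of Proposition~\ref{XD} holds.

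With all four hypotheses of Proposition~\ref{XD} verified, that proposition yields the desired completion of $K_\Phi(\cX_p, \dot D_{\cA_p})$ in $Y$ for every parameter $\Phi$ of the $K$-method. The main technical obstacle is the Liouville-type step showing a harmonic function in $L^p + L^r$ with $p, r \in (1, \infty)$ must vanish identically; the other ingredients amount to standard embeddings and bookkeeping with the heat semigroup.
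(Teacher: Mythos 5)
Your proposal is correct and follows the same route as the paper: both verify the hypotheses of Proposition \ref{XD} with $Y = L^p(\R^n) + L^r(\R^n)$ and conclude from that proposition. The only difference lies in how the key embedding $\dot D_{\cA_p} \hookrightarrow L^r(\R^n)$ is obtained — you derive it directly via the Riesz potential $I_2$, the Hardy--Littlewood--Sobolev inequality and a Liouville-type step for harmonic functions in $L^p + L^r$, whereas the paper invokes Proposition \ref{K for dom}(i) (equivalence of $\|\Delta f\|_{L^p}$ with the homogeneous Sobolev norm) together with the Sobolev inequality from Brezis, and refers to the proofs of Proposition \ref{intDom} and Theorem \ref{interp result} for the closability and semigroup conditions that you verify by hand.
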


Indeed, the condition $p \in (1,\frac{n}{2})$ implies, by Proposition \ref{K for dom}(i) and the Sobolev inequality \cite[Corollary 9.13, p.~283]{Br10} (cf. Proposition 9.9 therein), that  
\[
\dot{\mathcal{D}}_{\mathcal{A}_p} \hookrightarrow L^r(\mathbb{R}^n) \hookrightarrow L^p(\mathbb{R}^n) + L^r(\mathbb{R}^n).
\]

The closedness of $\cA_p$ in $L^p(\R^n)+L^r(\R^n)$ and the properties of $\cT_p$ stated as assumptions in Proposition \ref{XD} can be read from the proofs of Proposition \ref{intDom} and Theorem \ref{interp result}. Thus, the statement follows directly from Proposition \ref{XD}. 

As a consequence of Proposition \ref{equivalent norm on besov} and Corollary \ref{homogeneous besov in sum of lebesgue}, 
if $p \in (1,\frac{n}{2})$, then for each $\theta \in (0,1)$ and $q \in [1,\infty)$ the homogeneous Besov space $\dot{B}^{2\theta}_{p,q}$ is isomorphic to a subspace of $L^p(\R^n)+L^r(\R^n)$, where $r = \frac{np}{n-2p}$.

Note that for any $p\in (1,\infty)$, $q\in [1,\infty)$ and $\theta \in (0,1)$, the statement (iii) of Lemma \ref{hom isom} apply to $\cA: = \cA_p$, $\cX = \cX_p$ and $\Phi$ as specified on the beginning of this section. In particular,  the completion of $K_\Phi(\cX, \dot{D}_\cA)$ is isomorphic to  $K_\Phi(J\cX_p,\dot H^2_p(\R^n))$. 
It is sufficient to combine Lemma \ref{hom isom}(iii) with \cite[Theorem 3.4.2, p.47]{BeLo76} and \cite[Theorem 6.3.2, p.148]{BeLo76}. 

The $L^1$-m.r.\,of the negative Laplacian on the homogeneous Besov spaces $\dot{B}^s_{p,1,2}\subset\cS'_h$ is stated in \cite{DaMu12} (for any $s \in \R$ and $p \in [1,\infty]$). Since the corresponding {\it force terms space} $L^1(\R_+;  \dot{B}^s_{p,1,2})$ for the associated Cauchy problem is not complete when $s > \frac{n}{p}$, the statement of that result is somewhat inconsistent with underlying intentions (e.g. the values of resulting solutions do not necessary belong to $\cS'_h$). 

The following corollary provides an extension of that result for the negative {\it homogeneous} Laplacian on the {\it classical} homogeneous Besov spaces $\dot B^{s}_{p,q} = \dot B^{s}_{p,q,1}$ for $s\in \R$, $p\in [1,\infty]$ and $q\in [1,\infty)$.
These spaces are complete, but in general, they possess realizations in $\cS'_\infty$, and consequently the values of solutions corresponding to all admissible force terms do not belong to $\cS'$, that is, somewhat lose their physical interpretation.
Since  $\dot B^{s}_{p,1} \simeq \dot{B}^s_{p,1,2}$ for $s\leq \frac{n}{p}$ and $p\in [1,\infty]$, the corresponding conclusion of this corollary coincides with that from \cite{DaMu12}.

However, for the full range of parameters $p$, $q$ and $s$, our preparatory results presented above (Propositions \ref{equivalent norm on besov} and \ref{K for dom}) allow us to place such studies within the framework established in Sections \ref{sect DaP-G} and 4; see, e.g. Propositions \ref{hom ext p} and \ref{hom ext infty}, and  Corollary \ref{hom ext 1-p}. 

Let $\sigma \in \R$. We define the Bessel potential operator on $\cS' / \cP$ by the formula
\[
\dot{I}^{\sigma}(f + \cP) := J \cF^{-1}|\xi|^{\sigma}\cF f, \qquad f \in \cS'.
\]

This operator is also called the lifting operator due to the fact that it is an isomorphism of $\dot{B}^s_{p,q}$ onto $\dot{B}^{s-\sigma}_{p,q}$ for any $s \in \R$, $p,q \in [1,\infty]$ (see \cite[Theorem 1, Section 5.2.3, p.242]{Tr83}). The homogeneous counterpart $\dot{\Delta}$ of the distributional Laplacian is defined as $\dot{\Delta} := -\dot{I}^2$.

\begin{corollary}\label{final application}
Let $s \in \R$, $p\in[1,\infty]$ and $q \in [1,\infty]$. 

{\rm(i)} The part of the negative homogeneous Laplacian $-\dot{\Delta}$ in $\dot{B}^s_{p,q}$
 has $L^r$-m.r.\,on $\R_+$ for all $r \in (1,\infty) \cup \{q\}$.

{\rm(ii)} Let $\cA_p$ and $\cX_p$  have the meaning specified at the beginning of this section. Let $\mathbb{F}$ be the subspace of $L^1_{loc}([0,\infty); \cX_p)$ given by 
\[
 \mathbb{F}:=  \bigcup_{s>0 \atop r\in (1,\infty)\cup \{q\}} L^r_{loc}([0,\infty); B^{s}_{p,q})
\]
Then for every $f\in \mathbb{F}$ the corresponding mild solution $u$ of the Cauchy problem $($CP$_{\cA_p, f,0})$ belongs to $W^{1,1}_{loc}([0,\infty);\cX_p)$, $u(t) \in D(\cA_p)$ and 
\[
u'(t) - \Delta u(t) = f(t)\qquad \textrm{for a.e. } t>0.
\]
Moreover, for any $q \in [1,\infty]$ and  $\theta \in (0,1)$, if $\| f \|_{\dot B^{2\theta}_{p,q}} \in E_v(\R_+)$, for any $E$ and $v$ which satisfy the conditions specified in Proposition \ref{hom ext p}$($b$)$, then 
\[
\| u \|_{\dot B^{2\theta}_{p,q}}, \, \| u' \|_{\dot B^{2\theta}_{p,q}},\, \| \Delta u \|_{\dot B^{2\theta}_{p,q}}\in E_v(\R_+).
\]
In particular, the last statement holds for $E = L^\alpha$ with $\alpha\in (1,\infty)$ and any non-increasing weight function $v$ on $(0,\infty)$. 
\end{corollary}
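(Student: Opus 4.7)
The plan is to combine Proposition \ref{equivalent norm on besov} (which identifies the completion $\widetilde{X}$ of $\bigl(K_\Phi(\cX_p,D_{\cA_p}),[\cdot]_{\Phi,\cA_p}\bigr)$ with the homogeneous Besov space $\dot B^{2\theta}_{p,q}$, or with $\dot b^{2\theta}_p$ when $q=\infty$) with the extension results Propositions \ref{hom ext p} and \ref{hom ext infty} and the homogeneous maximal-regularity estimates of Theorems \ref{thm main}, \ref{thm main 2} and Corollary \ref{hom ext 1-p}. Since the lifting $\dot I^\sigma\colon\dot B^s_{p,q}\to\dot B^{s-\sigma}_{p,q}$ is an isomorphism commuting with $-\dot\Delta$, (i) reduces to the case $s=2\theta\in(0,2)$. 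Set $X:=B^{2\theta}_{p,q}=K_\Phi(\cX_p,D_{\cA_p})$ and $\|\cdot\|_*:=[\cdot]_{\Phi,\cA_p}$. The semigroup induced on the completion $\widetilde X$ acts on the dense subspace $J\cS_\infty$ as the classical heat evolution modulo polynomials and is therefore, under the identification $\widetilde X\simeq\dot B^{2\theta}_{p,q}$ of Proposition \ref{equivalent norm on besov}, the canonical heat semigroup on $\dot B^{2\theta}_{p,q}$; its negative generator is $-\dot\Delta$.

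For (i), one first obtains $L^q$-m.r. on $\R_+$. When $q=1$, hypothesis \eqref{estimate on test functions} of Proposition \ref{hom ext p}(a) is exactly the homogeneous estimate \eqref{hom est1} of Theorem \ref{thm main}(ii); when $q\in(1,\infty)$, it is delivered by Corollary \ref{hom ext 1-p} with $E_v=L^q$; and for $q=\infty$ the corresponding hypothesis of Proposition \ref{hom ext infty} is verified by Theorem \ref{thm main 2}(ii). Since $-\dot\Delta\colon\dot B^{s+2}_{p,q}\to\dot B^s_{p,q}$ is an isomorphism, i.e.\ $0\in\rho(-\dot\Delta)$, Dore's extrapolation \cite[Theorem 7.1]{Do00} then upgrades $L^q$-m.r. on $\R_+$ to $L^r$-m.r. on $\R_+$ for every $r\in(1,\infty)$.

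For (ii), fix $f\in L^r_{loc}([0,\infty);B^s_{p,q})$ with $s>0$ and $r\in(1,\infty)\cup\{q\}$. Since $B^s_{p,q}$ is an interpolation space between $\cX_p$ and $D_{\cA_p}\hookrightarrow\cX_p$, it embeds continuously into $\cX_p$, so the mild solution $u=U_{\cA_p}f$ is well-defined in $\cX_p$. The part of $\cA_p$ in $B^s_{p,q}$ has $L^1$-m.r.\ on finite intervals by Theorem \ref{thm main}(ii), and hence $L^r$-m.r.\ on finite intervals for all $r\in[1,\infty)$ by Dore's extrapolation; for $s\geq 2$ one iterates using the lifting $\dot I^{-2}$. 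Consequently $u$ is a strong solution in $B^s_{p,q}$, so $u\in W^{1,1}_{loc}([0,\infty);\cX_p)$, $u(t)\in D(\cA_p)$, and $u'-\Delta u=f$ a.e.\ The bounds on $\|u'\|_{\dot B^{2\theta}_{p,q}}$ and $\|\Delta u\|_{\dot B^{2\theta}_{p,q}}$ in $E_v(\R_+)$ follow from Corollary \ref{hom ext 1-p} (or Proposition \ref{hom ext p}(b)) via the identification $[\cdot]_{\Phi,\cA_p}\simeq\|J\cdot\|_{\dot B^{2\theta}_{p,q}}$ of Proposition \ref{equivalent norm on besov}. The control of $\|u\|_{\dot B^{2\theta}_{p,q}}\in E_v(\R_+)$ comes from $0\in\rho(-\dot\Delta)$: by \cite[Theorem 5.2]{Do00}, (i) implies that the solution operator $U_{-\dot\Delta}$ is bounded on $L^{r}(\R_+;\dot B^{2\theta}_{p,q})$ for any $r\in(1,\infty)$, and \cite[Theorem 5.1]{ChKr18} extrapolates this boundedness to $E_v(\R_+;\dot B^{2\theta}_{p,q})$. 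The \emph{in particular} clause is immediate since $L^\alpha$ has Boyd indices equal to $\alpha\in(1,\infty)$ and any non-increasing weight satisfies \eqref{Ap- weight}.

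The main obstacle will be the identification, underlying the setup, of the semigroup on the completion $\widetilde X$ with the heat semigroup on $\dot B^{2\theta}_{p,q}$ (and, consequently, of its generator with $-\dot\Delta$), together with the compatibility of mild solutions under the quotient $J$. This requires checking that both semigroups act identically on the dense subspace $J\cS_\infty$ and then passing to closures; for $q=\infty$ one uses the completion $\dot b^{2\theta}_p$ in place of $\dot B^{2\theta}_{p,\infty}$.
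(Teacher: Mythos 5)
Your proposal follows essentially the same route as the paper's proof: the homogeneous maximal regularity estimates of Theorems \ref{thm main}, \ref{thm main 2} and Corollary \ref{hom ext 1-p} are transferred to the completion via Propositions \ref{hom ext p}(a)/\ref{hom ext infty}, the completion is identified with $\dot B^{2\theta}_{p,q}$ (resp. $\dot b^{2\theta}_p$) by Proposition \ref{equivalent norm on besov}, the case of general $s$ is handled by the lifting operator $\dot I^{\sigma}$, and the $E_v$-bound on $\| u \|_{\dot B^{2\theta}_{p,q}}$ is obtained from $0\in\rho(\dot A)$ together with the Calder\'on--Zygmund extrapolation of \cite[Theorems 5.1 and 4.3]{ChKr18}. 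The one step you explicitly flag as an obstacle --- identifying the generator of the induced semigroup on $\widetilde X$ with the part of $\dot I^2$ in $\dot B^{2\theta}_{p,q}$ --- is resolved in the paper exactly along the lines you sketch: $\widetilde A$ is shown to be the \emph{closure} of $A$ (since $D(A)$ is dense in $\widetilde X$ and invariant under the semigroup), while the part of $\dot I^2$ is a closed extension of $A$ because $\dot I^2$ is an isomorphism of $\dot B^{s+2}_{p,q}$ onto $\dot B^{s}_{p,q}$, so its graph-norm domain is complete; hence the two operators coincide.
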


\begin{proof} 
(i) First, we prove the statement for any $s \in (0,2)$.
Recall that the part $-\Delta_{B^{s}_{p,q}}$ of $\cA_p = -\Delta$ (on $\cX_p$) in $B^s_{p,q}$ has $L^r$-m.r.\,on finite intervals for any $r \in (1,\infty) \cup \{q\}$. Note that each $g\in J\cX_p$ (see \eqref{Definition of J})  has exactly one representative in $\cX_p$, so the map $J\cX_p \ni g \mapsto \phi \in \cX_p$, where $\phi \in g\cap \cX_p$ is well-defined bijection. Denote its inverse, which of course coincides with $J_{|\cX_p}$, by $j$. 
Let $A:= -j \Delta_{B^s_{p,q}} j^{-1}$ with $D(A): = jD(\Delta_{B^s_{p,q}})$ and let $X: = j B^s_{p,q}$ be equipped with the image norm, i.e. $\|j\phi\|_X: = \|\phi\|_{B^s_{p,q}}$, $\phi\in B^s_{p,q}$. Of course, $A$ inherits the $L^r$-m.r.\,on finite intervals from $-\Delta_{B^{s}_{p,q}}$. 
Set $\|j\phi\|_{*} := [\phi]_{\Phi, \cA_p}$, $j\phi \in X$, where $\Phi$ is the parameter corresponding to the functor $(\,\cdot\,)_{\frac{s}{2}, q}$, that is, $\Phi = L^q(t^{q(1-\frac{s}{2}) - 1} )$ when $q\in [1,\infty)$ and $\Phi = L^\infty(t^{s/2})$ when $q=\infty$.  
Combining Theorem \ref{thm main}(ii) with Proposition \ref{hom ext p}(a) if $q\in [1,\infty)$ and the proof of Theorem \ref{thm main 2} (see \eqref{charact oo}) with Proposition \ref{hom ext infty} if $q=\infty$, we deduce that the canonical extension $\wA$ of $A$ on a completion of $(X, \|\cdot\|_{*})$ has $L^r$-m.r.\,on $\R_+$ for any $r\in (1,\infty)\cup \{q\}$. 

By Proposition \ref{equivalent norm on besov}, we can assume that $\wX = \dot B^{s}_{p,q}$ (as sets) with equivalent norms and $\wA$ is an extension of $A$, i.e. $A\subset \wA$. 
Note that the operator $\wA$ is a closure of $A$, that is, the smallest closed extension of $A$. Indeed, since $D(A)$ is dense in $\wX$ and is invariant under the $C_0$-semigroup $\wT$ generated by $\wA$, the set of vectors $\{ \frac{1}{\epsilon} \int_0^\epsilon \wT (\tau) j\phi\, \ud \tau: \, \epsilon >0, \, j\phi  \in D(A)\} \subset D(A)$ is dense in $D_{\wA}$. 
In particular, for any $\widetilde \phi \in D(\wA)$ there exists a sequence $(j\phi_n)$ in $D(A)$ such that $j\phi_n\rightarrow \widetilde \phi$ and $\wA j \phi_n = -j \Delta \phi_n = \dot I^2 \phi_n \rightarrow \wA \widetilde \phi$ in $\cX$ as $n\rightarrow \infty$. 
Therefore, to prove that the part $\dot A  = \dot  A_{s,p,q}$ of $\dot I^2$ in $\dot B^s_{p,q}$ has $L^r$-m.r.\,on $\R_+$, it suffices to show
that $\dot A$ is closed. However, since $\dot I^2$ is an isomorphism from $\dot B^{s+2}_{p,q}$ onto $\dot B^{s}_{p,q}$,  $D(\dot A) = \dot{B}^s_{p,q} \cap \dot{B}^{s+2}_{p,q}$ equipped with the graph norm of $\dot A$ is a Banach space, it yields the desired claim and completes the proof in the case when $s\in (0,2)$.

For other values of the parameter $s \in \R$, fix any $s_0 \in (0,2)$ and $r \in (1,\infty)\cup\{q\}$. Let $f \in L^r(\R_+;\dot{B}_{p,q}^{s})$. Since $\dot{I}^{s_0-s} : \dot{B}^{s_0}_{p,q} \to \dot{B}^{s}_{p,q}$ is an isomorphism, clearly $\dot{I}^{s-s_0}f \in L^r(\R_+;\dot{B}_{p,q}^{s_0})$. By the $L^r$-m.r.\,of $A_{p,q,s_0} =: \dot A$, there exists $\widetilde{u} \in W^{1,1}_{loc}(\R_+;\dot{B}^{s_0}_{p,q})$ with $\widetilde u (t) \in D(\dot A)$, 
\[
\widetilde{u}'(t) + \dot A \widetilde{u}(t) = \dot{I}^{s-s_0}f(t)\qquad 
\textrm{for a.e. } t>0
\]
and
\[
\|\dot A \tilde{u}\|_{L^r(\R_+;\dot{B}^{s_0}_{p,q})} \leq C\|\dot{I}^{s-s_0}f\|_{L^r(\R_+;\dot{B}^{s_0}_{p,q})}
\]
for some constant $C$ independent on $f$. Since $\dot{I}^{s_0-s} \dot A \widetilde{u}(t) = A_{p,q,s} \dot{I}^{s_0-s} \widetilde{u}(t)$ and $\dot{I}^{s_0-s}\widetilde{u}'(t) = (\dot{I}^{s_0-s}\widetilde{u})'(t)$ for a.e. $t>0$, it follows that the function $u(t) := \dot{I}^{s_0-s} \widetilde{u}(t)$, $t>0$, belongs to $W^{1,1}_{loc}(\R_+;\dot{B}^{s}_{p,q})$, $u(t) \in D(A_{p,q,s})$  and 
\[
u'(t) + \cA_{p,q,s}u(t) = f(t) \qquad 
\textrm{for a.e. } t>0.
\]
Moreover, the following estimate holds
\[
\|\cA_{p,q,s}u\|_{L^r(\R_+;\dot{B}^{s}_{p,q})} \leq C \|\dot{I}^{s_0-s}\|_{\cL(\dot{B}^s_{p,q};\dot{B}^{s_0}_{p,q})}\|\dot{I}^{s-s_0}\|_{\cL(\dot{B}^{s_0}_{p,q};\dot{B}^s_{p,q})}\|f\|_{L^r(\R_+;\dot{B}^{s}_{p,q})},
\]
which completes the proof.

(ii) All the statements in (ii), except for the implication
\[
\| f \|_{\dot{B}^{2\theta}_{p,q}} \in E_v(\R_+) \quad \Rightarrow \quad \| u \|_{\dot{B}^{2\theta}_{p,q}} \in E_v(\R_+)
\] 
can be deduced from Corollary $\ref{hom ext 1-p}$.
We turn to the proof of the above implication.
As the first part of the proof of (i) shows, the operator $\dot A = \dot A_{2\theta, p,q}$ has $L^r$-m.r.\,on $\R_+$ and $0\in \rho(\dot A)$. Therefore, since a semigroup $\dot T$ generated by $\dot A$ on $B^{2\theta}_{p,q}$ is uniformly bounded on $\R_+$ (see \cite[Theorem 4.3]{Do00}) and holomorphic, $\dot T$ has the negative exponential growth bound. In particular, by Young's inequality, the corresponding solution operator $U_{\dot A}$ is bounded on $L^2(\R_+; \dot B^{2\theta}_{p,q})$. Again, the boundedness and analyticity of $\dot T$, gives 
\[
\sup_{t>0,\, l=0,1}\Big \|t^{l+1}\frac{\ud^l }{\ud t^l } \dot T(t)\Big\|_{\cL(\dot B^{2\theta}_{p,q})} = \sup_{t>0,\, l=0,1}\|t^{l+1} \dot A^l \dot T(t)\|_{\cL(\dot B^{2\theta}_{p,q})}<\infty.
\]
Thus, $\dot U$ is a singular operator with the kernel $\dot T$ satisfying the standard Calder\'on-Zygmund condition. 
The same extrapolation argument as that applied in the proof of Proposition \ref{hom ext p}(a), based on \cite[Theorems 5.1 and 4.3]{ChKr18}, gives that $U_{\dot A}$ is bounded on $E_v(\R_+; \dot B^s_{p,q})$. It readily gives the desired implication. The fact, that every positive, non-increasing function on $(0,\infty)$ belongs to $A^-_{\alpha}(\R_+)$ is stated in the remark before Proposition~\ref{hom ext p}. This completes the proof.
\end{proof}

We conclude with the following observation. Since for any $s'>s>0$, $B^{s'}_{p,q} \subset B^{s}_{p,q} \subset \cX_p$, the space $\mathbb{F}$ in Corollary \ref{final application}(ii) could be considered as a {\it limiting} force term space for the $L^q$-m.r.\,property of the negative Laplacian $\cA_p = - \Delta$ on $\cX_p$.  Recall that, e.g. if $q=1$, then $\cA_p$ on $\cX_p$ ($p\in [1,\infty]$) does not have $L^1$-m.r.\,on any finite interval; cf. also Theorem \ref{thm main}(iii). 

\vspace{5 mm}

\noindent\textbf{Data availability:} no data was used for the research described in the article.

\providecommand{\bysame}{\leavevmode\hbox to3em{\hrulefill}\thinspace}
\providecommand{\MR}{\relax\ifhmode\unskip\space\fi MR }
\providecommand{\MRhref}[2]{%
  \href{http://www.ams.org/mathscinet-getitem?mr=#1}{#2}
}
\providecommand{\href}[2]{#2}

\vspace{5 mm}


\begin{thebibliography}{10}
\bibitem{ADM96}
D.~Albrecht, X.~Duong and A.~McIntosh, \emph{Operator theory and harmonic analysis}, Instructional {W}orkshop on {A}nalysis and {G}eometry, {P}art {III}, 
Proc. Centre Math. Appl. Austral. Nat. Univ. \textbf{34} (1996), 77--136. 

\bibitem{Am95}
H.~Amann, \emph{Linear and {Q}uasilinear {P}arabolic {P}roblems: {A}bstract {L}inear {T}heory}, Monographs in Mathematics, vol.~89, Birkh\"auser, Basel, 1995.

\bibitem{Am00}
\bysame, \emph{On the strong solvability of the {N}avier-{S}tokes equations}, J. Math. Fluid Mech. \textbf{2} (2000), 16--98. 

\bibitem{ABHN01}
W.~Arendt, C.\,J.\,K.~Batty, M.~Hieber and F.~Neubrander, \emph{Vector-valued {L}aplace {T}ransforms and {C}auchy {P}roblems}, Monographs in Mathematics, vol.~96, Birkh\"auser, Basel, 2001.

\bibitem{ArGa65}
N.~Aronszajn and E.~Gagliardo, \emph{Interpolation spaces and interpolation methods}, Annali di Matematica Pura ed Applicata \textbf{68} (1965), 51--117.

\bibitem{BaChDa11}
H.~Bahouri, J.\,Y.~Chemin and R.~Danchin, \emph{Fourier analysis and nonlinear partial differential equations}, Grundlehren der mathematischen Wissenschaften, Springer, 2011.

\bibitem{Ba80}
J.\,B.~Baillon, \emph{Caract\`ere born\'e de certains g\'en\'erateurs de semi-groupes lin\'eaires dans les espaces de {B}anach}, C. R. Acad. Sci. Paris S\'er. A-B \textbf{290} (1980), A757--A760.

\bibitem{BeCaPa62}
A.~Benedek, A.\,P.~Calder\'on and R.~Panzone, \emph{Convolution operators on {B}anach space valued functions}, Proc. Nat. Acad. Sci. USA \textbf{48} (1962), 356--365.

\bibitem{BeLo76}
J.~Bergh and J.~L{\"o}fstr{\"o}m, \emph{Interpolation spaces}, Springer,  New York, 1976.

\bibitem{Bourd13}
G.~Bourdaud, \emph{Realizations of homogeneous {B}esov and {L}izorkin-{T}riebel spaces}, Math. Nachr. \textbf{286} (2013), 476--491.

\bibitem{Br10}
H.~Brezis, \emph{Functional Analysis, Sobolev Spaces and Partial Differential Equations},
Springer, New York, 2010.

\bibitem{BrKr91}
Y.\,A.~Brudny{\u\i} and N.\,Y.~Krugljak, \emph{Interpolation functors and interpolation spaces. {V}ol. {I}}, North-Holland Mathematical Library, vol.~47, North-Holland Publishing Co., Amsterdam, 1991, Translated from the Russian by Natalie Wadhwa, With a preface by Jaak Peetre.

\bibitem{BuBe67}
P.\,L.~Butzer and H.~Berens, \emph{Semi-groups of operators and approximation}, Springer, Berlin,  1967.

\bibitem{ChKr17}
R.~Chill and S.~Kr\'{o}l, \emph{Extrapolation of {$L^p$} maximal regularity for second order {C}auchy problems}, \'{E}tudes op\'{e}ratorielles, Banach Center Publ., vol. 112, Polish Acad. Sci. Inst. Math., Warsaw, 2017, 33--52.

\bibitem{ChKr16}
\bysame, \emph{Real interpolation with weighted rearrangement invariant {B}anach function spaces}, J. Evol. Equ. \textbf{17} (2017),  173--195.

\bibitem{ChKr18}
\bysame, \emph{Weighted inequalities for singular integral operators on the half-line}, Studia Math. \textbf{243} (2018),  171--206.

\bibitem{DaPrGr75}
G.\,Da~Prato and P.~Grisvard, \emph{Sommes d'op\'erateurs lin\'eaires et \'equations diff\'erentielles op\'erationnelles}, J. Math. Pures Appl. (9) \textbf{54} (1975),  305--387. 

\bibitem{DaPrGr79}
\bysame, \emph{Equations d'\'{e}volution abstraites non lin\'{e}aires de type parabolique}, Ann. Mat. Pura Appl. (4) \textbf{120} (1979), 329--396. 

\bibitem{DaMu12}
R.~Danchin and P.\,B.~Mucha, \emph{New maximal regularity results for the heat equation in exterior domains, and applications}, Studies in Phase Space Analysis with Applications to PDEs, 2013, 101--128.

\bibitem{DeHiPr01}
W.~Desch, M.~Hieber and J.~Pr{\"u}ss, \emph{{$L^p$}-theory of the stokes equation in a half space}, Journal of Evolution Equations \textbf{1} (2001),  115--142.

\bibitem{Do00}
G.~Dore, \emph{Maximal regularity in {$L^p$} spaces for an abstract {C}auchy problem}, Adv. Differential Equations \textbf{5} (2000),  293--322. 

\bibitem{Fa16}
S.~Fackler, \emph{Maximal regularity: positive counterexamples on {UMD}-{B}anach lattices and exact intervals for the negative solution of the extrapolation problem}, Proc. Amer. Math. Soc. \textbf{144} (2016), 2015--2028.

\bibitem{GCRu85}
J.~Garc{\'{\i}}a-Cuerva and J.\,L.~Rubio~de Francia, \emph{Weighted norm inequalities and related topics}, North-Holland Mathematics Studies, vol. 116, North-Holland Publishing Co., Amsterdam, 1985, Notas de Matem{\'a}tica [Mathematical Notes], 104. 

\bibitem{GeHeHiSa12}
M.~Geissert, H.~Heck, M.~Hieber and O.~Sawada, \emph{Weak neumann implies stokes}, Journal f{\"u}r die reine und angewandte Mathematik (Crelles Journal) \textbf{2012} (2012),  75--100.

\bibitem{GeKu15}
M.~Geissert and P.~C.~Kunstmann, \emph{Weak neumann implies {$H^{\infty}$} for stokes}, Journal of the Mathematical Society of Japan \textbf{67} (2015), 183--193.

\bibitem{GiSa11}
Y.~Giga and J.~Saal, \emph{{$L^1$} maximal regularity for the {L}aplacian and applications}, Discrete Contin. Dyn. Syst. (2011), 495--504. 

\bibitem{Gr14}
L.~Grafakos, \emph{Modern Fourier analysis}, Third edition, Graduate Texts in Mathematics, vol. 250. Springer, New York, 2014.

\bibitem{GrSo91}
G.~Grubb and V.\,A.~Solonnikov, \emph{Boundary value problems for the nonstationary navier-stokes equations treated by pseudo-differential methods.}, Mathematica Scandinavica \textbf{69} (1991), 217–290.

\bibitem{GuDe95}
S.~Guerre-Delabri\`ere, \emph{{$L_p$}-regularity of the {C}auchy problem and the geometry of {B}anach spaces}, Illinois J. Math. \textbf{39} (1995),  556--566. 

\bibitem{Ha06}
M.~Haase, \emph{The functional calculus for sectorial operators}, Operator Theory: Advances and Applications, vol. 169, Birkh\"{a}user, Basel, 2006. 

\bibitem{HiSa18}
M.~Hieber and J.~Saal, \emph{The stokes equation in the lp-setting: Well-posedness and regularity properties}, Springer International Publishing, Cham, 2018, pp.~117--206.

\bibitem{HNVW24}
T.~Hyt\"{o}nen, J.~van Neerven, M.~Veraar and L.~Weis, \emph{Analysis in {B}anach spaces. {V}ol. {III.} Harmonic Analysis and Spectral Theory}, Ergebnisse der Mathematik und ihrer Grenzgebiete. 3. Folge. A Series of Modern Surveys in Mathematics [Results in Mathematics and Related Areas. 3rd Series. A Series of Modern Surveys in Mathematics], vol.~67, Springer, Cham, 2024.

\bibitem{Hy05}
T.\,P.~Hyt\"onen, \emph{Convolutions, multipliers and maximal regularity on vector-valued {H}ardy spaces}, J. Evol. Equ. \textbf{5} (2005), 205--225. 

\bibitem{JoSc77}
H.~Johnen and K.~Scherer, \emph{On the equivalence of the k-functional and moduli of continuity and some applications}, Constructive Theory of Functions of Several Variables, Springer, Berlin, Heidelberg, 1977, 119--140.

\bibitem{KaKu10}
N.\,J.~Kalton and T.~Kucherenko, \emph{Operators with an absolute functional calculus}, Math. Ann. \textbf{346} (2010),  259--306. 

\bibitem{KaLa00}
N.\,J.~Kalton and G.~Lancien, \emph{A solution to the problem of {$L^p$}-maximal regularity}, Math. Z. \textbf{235} (2000),  559--568. 

\bibitem{KaPo08}
N.\,J.~Kalton and P.~Portal, \emph{Remarks on {$\ell_1$} and {$\ell_\infty$}-maximal regularity for power-bounded operators}, J. Aust. Math. Soc. \textbf{84} (2008),  345--365. 

\bibitem{Kr23}
S.~Kr\'{o}l, \emph{The maximal regularity property of abstract integrodifferential equations}, J. Evol. Equ. \textbf{23} (2023),  Paper No. 27, pp.~38.

\bibitem{KuWe04}
P.\,C.~Kunstmann and L.~Weis, \emph{Maximal {$L^p$} regularity for parabolic equations, {F}ourier multiplier theorems and {$H^\infty$} functional calculus}, Levico Lectures, Proceedings of the Autumn School on Evolution Equations and Semigroups (M. Iannelli, R. Nagel, S. Piazzera eds.), vol.~69, Springer Verlag, Heidelberg, Berlin, 2004, ~65--320.

\bibitem{Lu09}
A.~Lunardi, \emph{Interpolation theory}, second ed., Appunti. Scuola Normale Superiore di Pisa (Nuova Serie). [Lecture Notes. Scuola Normale Superiore di Pisa (New Series)], Edizioni della Normale, Pisa, 2009. 

\bibitem{Mous18}
M.~Moussai, \emph{Characterizations of realized homogeneous {B}esov and {T}riebel-{L}izorkin spaces via differences}, Appl. Math. J. Chinese Univ. Ser. B \textbf{33} (2018),  188--208.

\bibitem{Mu72b}
B.~Muckenhoupt, \emph{Hardy's inequality with weights}, Studia Math. \textbf{44} (1972), 31--38. 

\bibitem{NoSa03}
A.~Noll and J.~Saal, \emph{{$H^{\infty}$}-calculus for the stokes operator on {$L^q$}-spaces}, Mathematische Zeitschrift \textbf{244} (2003), 651--688.

\bibitem{OgSh20}
T.~Ogawa and S.~Shimizu, \emph{Maximal {$L^1$}-regularity for parabolic boundary value problems with inhomogeneous data in the half-space}, Proc. Japan Acad. Ser. A Math. Sci. \textbf{96} (2020), no.~7, 57--62. 

\bibitem{Pa83}
A.~Pazy, \emph{Semigroups of {L}inear {O}perators and {A}pplications to {P}artial {D}ifferential {E}quations}, Applied Mathematical Sciences, vol.~44, Springer, Berlin, 1983.

\bibitem{Pe76}
J.~Peetre, \emph{New thoughts on besov spaces}, Duke University mathematics series, Mathematics Dept., Duke University, 1976.

\bibitem{PrSi16}
J.~Pr\"uss and G.~Simonett, \emph{Moving interfaces and quasilinear parabolic evolution equations}, Monographs in Mathematics, 105, Birkh\"auser, Basel, 2016. 

\bibitem{PrSo90}
J.~Pr{\"u}ss and H.~Sohr, \emph{On operators with bounded imaginary powers in banach spaces}, Mathematische Zeitschrift \textbf{203} (1990),  429--452.

\bibitem{RdeF86}
J.\,L.~Rubio~de Francia, \emph{Martingale and integral transforms of {B}anach space valued functions}, Probability and {B}anach spaces ({Z}aragoza, 1985), Lecture Notes in Math., vol. 1221, Springer, Berlin, 1986, 195--222.

\bibitem{Sawa18}
Y.~Sawano, \emph{Theory of {B}esov spaces}, Developments in Mathematics, vol.~56, Springer, Singapore, 2018.

\bibitem{DaHiMuTo}
R.~Danchin, M.~Hieber, P.~B.~Mucha and P.~Tolksdorf, \emph{Free boundary problems by da prato-grisvard theory}, arxiv.org/abs/2011.07918.

\bibitem{Tr83}
H.~Triebel, \emph{Theory of {F}unction {S}paces}, Monographs in Mathematics, vol.~78, Birkh\"auser, Basel, 1983.

\bibitem{Tr95}
\bysame, \emph{Interpolation {T}heory, {F}unction {S}paces, {D}ifferential {O}perators}, Johann Ambrosius Barth, Heidelberg, 1995.

\bibitem{Tri92}
\bysame, \emph{Theory of function spaces. {II}}, Monographs in Mathematics, vol.~84, Birkh\"auser Verlag, Basel, 1992.
\end{thebibliography}
\end{document}